\documentclass[preprint]{elsarticle}

\usepackage{algorithm}
\usepackage{algorithmic}
\usepackage{amssymb}
\usepackage{amsmath}
\usepackage{amsthm}
\usepackage{array}
\usepackage{booktabs}
\usepackage{caption}
\usepackage{comment}
\usepackage{float}
\usepackage[latin1]{inputenc}
\usepackage[pdfborder={0 0 0}, colorlinks=true, linkcolor=blue, citecolor=red]{hyperref}\hypersetup{pdfborder=0 0 0}
\usepackage{mathrsfs}
\usepackage{multicol}
\usepackage{multirow}
\usepackage{subfigure}
\usepackage{wrapfig}
\usepackage{ulem}
\usepackage{upgreek}

\usepackage{soul,xargs}
\usepackage[pdftex,dvipsnames]{xcolor}

\usepackage[colorinlistoftodos,prependcaption,textsize=tiny]{todonotes}
\newcommandx{\question}[2][1=]{\todo[linecolor=red,backgroundcolor=red!20,bordercolor=red,#1]{#2}}
\newcommandx{\change}[2][1=]{\todo[linecolor=blue,backgroundcolor=blue!25,bordercolor=blue,#1]{#2}}
\newcommandx{\info}[2][1=]{\todo[linecolor=OliveGreen,backgroundcolor=OliveGreen!25,bordercolor=OliveGreen,#1]{#2}}
\newcommandx{\improve}[2][1=]{\todo[linecolor=Plum,backgroundcolor=Plum!25,bordercolor=Plum,#1]{#2}}
\newcommandx{\thiswillnotshow}[2][1=]{\todo[disable,#1]{#2}}
\newcommandx{\answer}[2][1=]{\todo[linecolor=blue,backgroundcolor=White!25,bordercolor=Plum,#1]{#2}}
\newcommandx{\sriram}[2][1=]{\todo[linecolor=orange,backgroundcolor=orange!25,bordercolor=orange,#1]{\textbf{SM:} #2}}

\newtheorem{theorem}{Theorem}
\newtheorem{definition}{Definition}
\newtheorem{corollary}{Corollary}
\newtheorem{lemma}{Lemma}
\newtheorem{proposition}{Proposition}
\newtheorem{remark}{Remark}

\newcommand*\patchAmsMathEnvironmentForLineno[1]{%
   \expandafter\let\csname old#1\expandafter\endcsname\csname #1\endcsname
   \expandafter\let\csname oldend#1\expandafter\endcsname\csname end#1\endcsname
   \renewenvironment{#1}%
      {\linenomath\csname old#1\endcsname}%
      {\csname oldend#1\endcsname\endlinenomath}}%
\newcommand*\patchBothAmsMathEnvironmentsForLineno[1]{%
   \patchAmsMathEnvironmentForLineno{#1}%
   \patchAmsMathEnvironmentForLineno{#1*}}%
\AtBeginDocument{%
\patchBothAmsMathEnvironmentsForLineno{equation}%
\patchBothAmsMathEnvironmentsForLineno{align}%
\patchBothAmsMathEnvironmentsForLineno{flalign}%
\patchBothAmsMathEnvironmentsForLineno{alignat}%
\patchBothAmsMathEnvironmentsForLineno{gather}%
\patchBothAmsMathEnvironmentsForLineno{multline}%
}

\usepackage{lineno}

\makeatletter
\def\ps@pprintTitle{%
  \let\@oddhead\@empty \let\@evenhead\@empty
  \def\@oddfoot{\footnotesize\itshape Preprint, October 2026\hfill}%
  \let\@evenfoot\@oddfoot}
\makeatother

\begin{document}

\newcommand{\TODO}[1]{ \fbox{\parbox{3in}{\bf TODO: #1}}}

\newcommand{\grbf}[1] {\mbox{\boldmath${#1}$\unboldmath}}
\newcommand{\gbf}[1] {\mathbf{#1}}

\newcommand{\CPP}{C\nolinebreak\hspace{-.05em}\raisebox{.4ex}{\tiny\bf +}\nolinebreak\hspace{-.10em}\raisebox{.4ex}{\tiny\bf +}}

\newcommand{\beq} {\begin{equation}}
\newcommand{\eeq} {\end{equation}}
\newcommand{\bdm} {\begin{displaymath}}
\newcommand{\edm} {\end{displaymath}}
\newcommand{\bit}{\begin{itemize}}
\newcommand{\eit}{\end{itemize}}
\newcommand{\bde}{\begin{description}}
\newcommand{\ede}{\end{description}}
\newcommand{\bce}{\begin{center}}
\newcommand{\ece}{\end{center}}
\newcommand{\ben} {\begin{enumerate}}
\newcommand{\een} {\end{enumerate}}
\newcommand{\bea} {\begin{eqnarray}}
\newcommand{\eea} {\end{eqnarray}}
\newcommand{\barr} {\begin{array}}
\newcommand{\earr} {\end{array}}
\newcommand{\bean} {\begin{eqnarray*}}
\newcommand{\eean} {\end{eqnarray*}}
\newcommand{\edoc} {\end{document}}
\newcommand{\On}{\hspace{0.2cm} \mbox{on} \hspace{0.2cm}}
\newcommand{\In}{\hspace{0.2cm} \mbox{in} \hspace{0.2cm}}
\newcommand{\Minimize}{\hspace{0.2cm} \mbox{minimize} \hspace{0.2cm}}
\newcommand{\Maximize}{\hspace{0.2cm} \mbox{maximize} \hspace{0.2cm}}
\newcommand{\SubjectTo}{\hspace{0.2cm} \mbox{subject} \hspace{0.15cm}
            \mbox{to} \hspace{0.2cm}}

\def\etal{{\it et al.~}}

\newcommand{\point}[1] {\ensuremath{\boldsymbol{#1}}}
\newcommand{\vvec}[1] {\ensuremath{\boldsymbol{#1}}}
\renewcommand{\vec}[1] {\ensuremath{\boldsymbol{#1}}}
\newcommand{\cvec}[1] {\ensuremath{\boldsymbol{{#1}}}}
\newcommand{\ten}[1] {\ensuremath{\boldsymbol{#1}}}
\newcommand{\tenfour}[1] {\ensuremath{\boldsymbol{\mathsf{#1}}}}
\newcommand{\comp}[1]{\begin{pmatrix}{ #1 }\end{pmatrix}}
\newcommand{\vv}{\ensuremath{\vec{v}}}
\newcommand{\vr}{\left(\ensuremath{\vec{r}}\right)}
\newcommand{\att}{\left(t\right)}
\newcommand{\tvv}{\ensuremath{\tilde{\vec{v}}}}
\newcommand{\ww}{\ensuremath{\vec{w}}}
\newcommand{\GG}{\ensuremath{\ten{G}}}
\newcommand{\FF}{\ensuremath{\boldsymbol{\mathfrak{F}}}}
\newcommand{\tEE}{\ensuremath{\tilde{\vec{E}}}}
\newcommand{\nn}{\ensuremath{\vec{n}}}
\renewcommand{\SS}{\ensuremath{\ten{S}}}
\newcommand{\tSS}{\ensuremath{\tilde{\ten{S}}}}
\newcommand\tK{{\bf K}}
\newcommand{\cp}{\ensuremath{{c_p}}}
\newcommand{\cs}{\ensuremath{{c_s}}}
\newcommand{\tcs}{\ensuremath{{\tilde{c}_s}}}
\newcommand{\Vp}{\ensuremath{{V^e_p}}}
\newcommand{\Vs}{\ensuremath{{V^e_s}}}
\newcommand{\Ss}{\ensuremath{{S^e_s}}}
\newcommand{\nxnx}[1] {\ensuremath{\nn\times\left(\nn\times{#1}\right)}}

\newcommand{\dd}[2] {\ensuremath{\frac{\partial {#1}}{\partial {#2}}}}
\newcommand{\DD}[2] {\ensuremath{\frac{d {#1}}{d {#2}}}}
\newcommand{\Grad} {\ensuremath{\nabla}}
\newcommand{\Gradv}[1]{\ensuremath{\nabla_{#1}}}
\newcommand{\Div} {\ensuremath{\nabla\cdot}}
\newcommand{\Divv}[1] {\ensuremath{\nabla_{#1}\cdot}}
\newcommand{\Curl} {\ensuremath{\nabla\times}}
\newcommand{\Cten}{\ensuremath{\tenfour{C}}}

\newcommand{\eval}[2][\right]{\relax
  \ifx#1\right\relax \left.\fi#2#1\rvert}

\providecommand{\abs}[1]{\left\lvert#1\right\rvert}
\providecommand{\norm}[1]{\left\Vert#1\right\Vert}

\newcommand{\Nel} {\ensuremath{{N_T}}}
\newcommand{\Ned} {\ensuremath{{N_\text{ed}}}}
\newcommand{\De} {\ensuremath{{\mathsf{D}^e}}}
\newcommand{\Dep} {\ensuremath{{\mathsf{D}^{e'}}}}
\newcommand{\Dhat} {\ensuremath{\hat{\mathsf{D}}}}
\newcommand{\Dehat} {\ensuremath{{\hat{\mathsf{D}}^e}}}
\newcommand{\half} {\ensuremath{\frac{1}{2}}}
\newcommand{\IN} {\ensuremath{{\mathcal{I}_N}}}
\newcommand{\INe} {\ensuremath{{\mathcal{I}_{N_e}}}}

\newcommand{\mc}[1]{\mathcal{#1}}
\newcommand{\mcC}[1]{\mathcal{#1}}
\newcommand{\mb}[1]{\mathbold{#1}}
\newcommand{\mbb}[1]{\mathbb{#1}}
\newcommand{\mi}[1]{\mathit{#1}}
\newcommand{\mf}[1]{\mathfrak{#1}}
\newcommand{\Oi}[1]{\int_{\mc{D}} #1 \, d\vec{x}}
\newcommand{\TOi}[1]{\int_{0}^T \int_{\mc{D}} #1 \, d\vec{x}dt}
\newcommand{\Ti}[1]{\int_{0}^T #1 \, dt}
\newcommand{\TDei}[1]{\int_{0}^T\int_{\De} #1 \, d\vec{x}dt}
\newcommand{\TDeid}[1]{\int_{0}^T\int_{\De,N} #1 \, d\vec{x}dt}
\newcommand{\Dei}[1]{\int_{\De} #1 \, d\vec{x}}
\newcommand{\DeI}[1]{\int_{D} #1 \, d\vec{x}}
\newcommand{\DeiM}[1]{\int_{\Dhat} #1 \, d\vec{r}}
\newcommand{\DeMd}[1]{\int_{\De,N_e} #1 \, d\vec{x}}
\newcommand{\DeiMd}[1]{\int_{\Dhat,N_e} #1 \, d\vec{r}}
\newcommand{\DeMdN}[1]{\int_{D,N} #1 \, d\vec{x}}
\newcommand{\DeiMdN}[1]{\int_{\Dhat,N} #1 \, d\vec{r}}
\newcommand{\DeiMdNC}[2]{\int_{\Dhat,N_{#2}} #1 \, d\vec{r}}
\newcommand{\pDei}[1]{\int_{\partial \De} #1 \, d\vec{x}}
\newcommand{\pDeiM}[1]{\int_{\partial \Dhat} #1 \, d\vec{r}}
\newcommand{\pDeiMd}[1]{\int_{\partial \Dhat,N_e} #1 \, d\vec{r}}
\newcommand{\pDeiMdN}[1]{\int_{\partial \Dhat,N} #1 \, d\vec{r}}
\newcommand{\pDeiMdNC}[2]{\int_{\partial \Dhat,N_{#2}} #1 \, d\vec{r}}
\newcommand{\pMortar}[2]{\int_{\mc{M}_{#2}} #1 \, d\vec{r}}
\newcommand{\pMortard}[2]{\int_{\mc{M}^{#2},N_{#2}} #1 \, d\vec{r}}
\newcommand{\pMortardd}[3]{\int_{\mc{M}^{#2},N_{#3}} #1 \, d\vec{r}}
\newcommand{\pMortardh}[3]{\int_{\mc{M}_{#2},N_{#3}} #1 \, d\vec{r}}
\newcommand{\pMortardhn}[4]{\int_{{#2}_{#3},N_{#4}} #1 \, d\vec{r}}
\newcommand{\TpDei}[1]{\int_{0}^T\int_{\partial \De} #1 \, d\vec{x}}
\newcommand{\TpDeid}[1]{\int_{0}^T\int_{\partial \De,N} #1 \, d\vec{x}}
\newcommand{\Deid}[1]{\int_{\De,N_e} #1 \, d\vec{x}}
\newcommand{\DeidN}[1]{\int_{\De,N} #1 \, d\vec{x}}
\newcommand{\pDeidN}[1]{\int_{\partial \De,N} #1 \, d\vec{x}}
\newcommand{\pDeid}[1]{\int_{\partial \De,N_e} #1 \, d\vec{x}}
\newcommand{\nor}[1]{\left\| #1 \right\|}
\newcommand{\snor}[1]{\left| #1 \right|}
\newcommand{\LRp}[1]{\left( #1 \right)}
\newcommand{\LRs}[1]{\left[ #1 \right]}
\newcommand{\LRa}[1]{\left< #1 \right>}
\newcommand{\LRc}[1]{\left\{ #1 \right\}}
\newcommand{\pp}[2]{\frac{\partial #1}{\partial #2}}
\newcommand{\ppcp}[1]{\frac{\partial #1}{\partial \vec{c_p}}}
\newcommand{\ppcpj}[1]{\frac{\partial #1}{\partial c_p}}
\newcommand{\ppcpmf}[1]{\frac{\partial #1}{\partial \vec{\mf{c_p}}}}
\newcommand{\ppm}[1]{\frac{\partial #1}{\partial \vec{m}}}
\newcommand{\ppho}[3]{\frac{\partial^{#3} #1}{{\partial #2}^{#3}}}
\newcommand{\ppmi}[1]{\frac{\partial #1}{\partial m_i}}
\newcommand{\ppmj}[1]{\frac{\partial #1}{\partial m_j}}
\newcommand{\ppcpm}[1]{\frac{\partial #1}{\partial \vec{c_p}^-}}
\newcommand{\ppcpp}[1]{\frac{\partial #1}{\partial \vec{c_p}^+}}
\newcommand{\ppcs}[1]{\frac{\partial #1}{\partial \vec{c_s}}}
\newcommand{\ppcsm}[1]{\frac{\partial #1}{\partial \vec{c_s}^-}}
\newcommand{\ppcsp}[1]{\frac{\partial #1}{\partial \vec{c_s}^+}}
\newcommand{\td}[2]{\frac{d #1}{d #2}}
\newcommand{\Rvert}[1]{\left. #1 \right|}
\newcommand{\bs}[1]{\boldsymbol{#1}}
\newcommand{\ipDed}[3]{\LRp{ #1, #2}_{\De,N}^{#3}}

\newcommand{\bigO}{\mathcal{O}}

\newcommand{\iOm}[1]{\int_{\Omega} #1 \, d\Omega}
\newcommand{\iGb}[1]{\int_{\Gamma_{b}} #1 \, ds}
\newcommand{\iGs}[1]{\int_{\Gamma_{s}} #1 \, ds}
\newcommand{\iGsy}[1]{\int_{\Gamma_{s}} #1 \, ds(\mb{y})}
\newcommand{\RE}{\mbox{{I}}\hspace{-0.5ex}\mbox{{R}}}
\newcommand{\iC}[1]{\int_{0}^{2\pi} #1 \, d\theta}
\newcommand{\iGr}[1]{\int_{\Gamma_{\infty}} #1 \, ds}

\newcommand{\jump}[1] {\ensuremath{[\![{#1}]\!]}}
\newcommand{\diff}[1] {\ensuremath{\LRs{#1}}}
\newcommand{\average}[1] {\ensuremath{\LRc{\!\!\LRc{#1}\!\!}}}

\newcounter{tablecell}

\newcommand*{\numbercell}{%
  \stepcounter{tablecell}%
  \thetablecell. 
}

\newtheorem{propo}{Proposition}
\newtheorem{coro}{Corollary}
\newtheorem{theo}{Theorem}
\newtheorem{lem}{Lemma}

\newtheorem{defi}{definition}

\newtheorem{rema}{remark}

\newcommand{\figlab}[1]{\label{fig:#1}}
\newcommand{\eqnlab}[1]{\label{eq:#1}}
\newcommand{\theolab}[1]{\label{theo:#1}}
\newcommand{\corolab}[1]{\label{coro:#1}}
\newcommand{\propolab}[1]{\label{propo:#1}}
\newcommand{\lemlab}[1]{\label{lem:#1}}
\newcommand{\defilab}[1]{\label{defi:#1}}
\newcommand{\remalab}[1]{\label{rema:#1}}
\newcommand{\tablab}[1]{\label{tab:#1}}

\newcommand{\figref}[1]{\ref{fig:#1}}
\newcommand{\theoref}[1]{\ref{theo:#1}}
\newcommand{\defiref}[1]{\ref{defi:#1}}
\newcommand{\remaref}[1]{\ref{rema:#1}}
\newcommand{\cororef}[1]{\ref{coro:#1}}
\newcommand{\proporef}[1]{\ref{propo:#1}}
\newcommand{\lemref}[1]{\ref{lem:#1}}
\newcommand{\eqnref}[1]{\eqref{eq:#1}}
\newcommand{\alglab}[1]{\label{alg:#1}}
\newcommand{\algref}[1]{\ref{alg:#1}}
\newcommand{\seclab}[1]{\label{sect:#1}}
\newcommand{\secref}[1]{\ref{sect:#1}}
\newcommand{\tabref}[1]{\ref{tab:#1}}

\renewcommand{\u}{u}
\newcommand{\uk}{\u^k}
\newcommand{\upk}{\u^{+k}}
\newcommand{\ukp}{\u^{k+1}}
\newcommand{\umkp}{\u^{-k+1}}
\newcommand{\umk}{\u^{-k}}
\newcommand{\ubk}{\mb{\u}_{\mb{k}}}
\newcommand{\vbk}{\mb{\v}_{\mb{k}}}
\newcommand{\fbk}{\mb{f}_{\mb{k}}}
\newcommand{\dbk}{\mb{d}_{\mb{k}}}
\newcommand{\gbk}{\mb{g}_{\mb{k}}}
\newcommand{\lbk}{\mb{l}_{\mb{k}}}
\newcommand{\sbk}{\mb{s}_{\mb{k}}}
\newcommand{\Ob}{\mathbold{\omega}}
\newcommand{\obk}{\mathbold{\omega}_{\mb{k}}}
\newcommand{\rhok}{\rho_{\mb{k}}}
\renewcommand{\v}{v}
\newcommand{\vk}{{\v}^k}
\newcommand{\vkp}{{\v}^{k+1}}
\newcommand{\un}{\u_h}
\newcommand{\e}{e}
\newcommand{\w}{w}
\newcommand{\wb}{\mb{\w}}
\newcommand{\J}{J}
\newcommand{\Jb}{\mb{J}}
\newcommand{\Jbk}{\Jb_{\mb{k}}}
\newcommand{\kb}{\mb{k}}
\newcommand{\lb}{\mb{l}}
\newcommand{\pk}{\p_{\mb{k}}}
\newcommand{\q}{q}
\renewcommand{\r}{r}
\newcommand{\qh}{\hat{\q}}
\newcommand{\qs}{\q^*}
\newcommand{\qht}{\tilde{\qh}}
\newcommand{\qbar}{\overline{\q}}
\newcommand{\qb}{{\mb{\q}}}
\newcommand{\sig}{{\sigma}}
\newcommand{\Sk}{S_{\mb{k}}}
\newcommand{\thetah}{{\hat{\theta}}}
\newcommand{\thetas}{{\theta^*}}
\newcommand{\thetahn}{{\thetah_h}}
\newcommand{\sign}[1]{\text{ sgn}\!\LRp{#1}}
\newcommand{\sigh}{\hat{\sig}}
\newcommand{\sigs}{\sig^*}
\newcommand{\sigk}{\sig^{k}}
\newcommand{\sigpk}{\sig^{+k}}
\newcommand{\sigkp}{\sig^{k+1}}
\newcommand{\sigmkp}{\sig^{-k+1}}
\newcommand{\sigmk}{\sig^{-k}}
\newcommand{\sigb}{{\bs{\sig}}}
\newcommand{\sigbk}{\sigb^k}
\newcommand{\sigbkp}{\sigb^{k+1}}
\newcommand{\sigbn}{\sigb_h}
\newcommand{\sigbs}{\sigb^*}
\newcommand{\taub}{{\bs{\tau}}}
\newcommand{\taustar}{\tau_*}
\newcommand{\kappab}{{\bs{\kappa}}}
\newcommand{\gamb}{{\bs{\gamma}}}
\newcommand{\ut}{\tilde{\u}}
\newcommand{\sigbt}{\tilde{\sigb}}
\newcommand{\vt}{\tilde{\v}}
\newcommand{\taubt}{\tilde{\taub}}
\newcommand{\ub}{\mb{\u}}
\newcommand{\ubl}{\mb{\u}_{\lambda}}
\newcommand{\ubf}{\mb{\u}_{f}}
\newcommand{\ubp}{\mb{\u}^+}
\newcommand{\ubm}{\mb{\u}^-}
\newcommand{\vb}{\mb{\v}}
\newcommand{\vs}{\mc{\vb}}
\newcommand{\diffusion}{\mb{\d}}
\newcommand{\xb}{\mb{x}}
\newcommand{\um}{\u^-}
\newcommand{\up}{\u^+}
\newcommand{\ubkp}{\ub^{k+1}}

\newcommand{\m}{{m}}

\newcommand{\ud}{{\dot{\u}}}
\newcommand{\vd}{{\dot{\v}}}
\newcommand{\sigbd}{{\dot{\sigb}}}
\newcommand{\taubd}{\dot{\taub}}

\renewcommand{\d}{{d}}
\newcommand{\R}{{\mathbb{R}}}
\newcommand{\kap}{\kappa}
\newcommand{\F}{\mb{F}}
\newcommand{\f}{f}
\newcommand{\g}{g}
\newcommand{\fb}{\mb{\f}}
\newcommand{\gb}{\mb{\g}}
\newcommand{\gD}{g_D}
\newcommand{\pOmega}{\partial \Omega}
\newcommand{\K}{T}
\newcommand{\Kp}{\K^+}
\newcommand{\Km}{\K^-}
\newcommand{\pK}{{\partial \K}}
\newcommand{\pKin}{{\pK^{\text{in}}}}
\newcommand{\pKout}{{\pK^{\text{out}}}}
\newcommand{\Kj}{{\K_j}}
\newcommand{\Gh}{{\mc{E}_h}}
\newcommand{\Gho}{{\mc{E}_h^o}}
\newcommand{\Ghb}{{\mc{E}_h^{\partial}}}
\newcommand{\Poly}{{\mc{P}}}
\newcommand{\Cb}{{\mb{C}}}
\newcommand{\Db}{{\mb{D}}}
\newcommand{\Fb}{{\mb{F}}}
\newcommand{\Mb}{{\mb{M}}}
\newcommand{\D}{{D}}
\newcommand{\p}{{p}}
\newcommand{\px}{\partial_x}
\newcommand{\pt}{\partial_t}
\newcommand{\ppx}{\partial^2_x}
\newcommand{\pres}{{q}}
\newcommand{\presl}{{q}_{\lambda}}
\newcommand{\presf}{{q}_{f}}
\newcommand{\Ts}{{\mb{T}}}
\newcommand{\Vb}{{\mb{V}}}
\newcommand{\V}{{V}}
\newcommand{\Lam}{{\Lambda}}
\newcommand{\Lamb}{\bs{\Lambda}}
\newcommand{\mub}{\bs{\mu}}
\newcommand{\lambdab}{\bs{\lambda}}
\newcommand{\Thn}{\mathcal{T}_h}
\newcommand{\Th}{{\Ts_h}}
\newcommand{\Vh}{{\V_h}}
\newcommand{\Vbh}{{\Vb_h}}
\newcommand{\Lamh}{{\Lam_h}}
\newcommand{\Lambh}{{\Lamb_h}}
\newcommand{\Lamho}{{\overline{\Lam}_h}}
\newcommand{\Lambht}{{\Lamb_h^t}}
\newcommand{\Ltwo}{{L^2\LRp{\Omega}}}
\newcommand{\Ltwoe}{{L^2\LRp{\e}}}
\newcommand{\Ltw}{{L^2\LRp{\K}}}
\newcommand{\VhK}{{\V_h\LRp{\K}}}
\newcommand{\ThK}{{\Ts_h\LRp{\K}}}
\newcommand{\VbhK}{{\Vb_h\LRp{\K}}}
\newcommand{\Lamhe}{{\Lam_h\LRp{\e}}}
\newcommand{\Lambhe}{{\Lamb_h\LRp{\e}}}
\renewcommand{\L}{\mc{L}}
\renewcommand{\D}{\mc{D}}
\newcommand{\T}{\mc{T}}
\newcommand{\Tt}{\tilde{\T}}
\newcommand{\A}{\mb{A}}
\newcommand{\B}{\mb{B}}
\renewcommand{\D}{\mb{D}}
\newcommand{\Am}{\A^-}
\newcommand{\Ap}{\A^+}
\newcommand{\Aa}{\snor{\A}}
\newcommand{\Bb}{\mb{B}}
\newcommand{\Bbk}{\Bb_{\mb{k}}}
\newcommand{\Rb}{\mb{R}}
\newcommand{\C}{\mb{C}}
\renewcommand{\S}{\mb{S}}
\newcommand{\Sm}{\S^{-}}
\newcommand{\Sp}{\S^{+}}
\newcommand{\Spm}{\S^{\pm}}

\newcommand{\Lte}{{L^2\LRp{\Gh}}}
\newcommand{\n}{\mb{n}}
\newcommand{\nm}{\n^-}
\newcommand{\np}{\n^+}
\newcommand{\uh}{{\hat{\u}}}
\newcommand{\uhk}{{\uh}^k}
\newcommand{\uhkp}{{\uh}^{k+1}}
\newcommand{\us}{{\u^*}}
\newcommand{\uhn}{{\uh_h}}
\newcommand{\ubh}{{\hat{\ub}}}
\newcommand{\ubs}{{\ub^*}}
\newcommand{\ubht}{{\tilde{\ubh}}}
\newcommand{\uht}{{\tilde{\uh}}}
\newcommand{\uhtn}{{\uht_h}}
\newcommand{\uhd}{{\dot{\uh}}}
\newcommand{\ubhk}{\ubh^k}
\newcommand{\ubhkp}{\ubh^{k+1}}
\newcommand{\sigbh}{{\hat{\sigb}}}
\newcommand{\Fh}{{\hat{\F}}}
\newcommand{\fh}{{\bar{\f}}}
\newcommand{\Fs}{\F^*}
\renewcommand{\c}{{c}}

\newcommand{\zb}{\mb{z}}
\renewcommand{\sb}{\mb{s}}
\newcommand{\rb}{\mb{r}}
\newcommand{\betab}{\bs{\beta}}
\newcommand{\veps}{{\varepsilon}}
\newcommand{\vepsb}{\bs{\veps}}
\newcommand{\Hb}{\mb{H}}
\newcommand{\Hbt}{\Hb^t}
\newcommand\E{\mathcal{E}}
\newcommand\Eh{\mathcal{E}_h}
\newcommand\Ehi{\mathcal{E}_{h,i}}
\newcommand\Ehiint{\mathcal{E}_{h,i}^\mathrm{int}}
\newcommand\EhG{\mathcal{E}_{h}^{\Gamma}}
\newcommand\EhiG{\mathcal{E}_{h,i}^{\Gamma}}
\newcommand\EhijG{\mathcal{E}_{h,i,j}^{\Gamma}}
\newcommand\EhjiG{\mathcal{E}_{h,j,i}^{\Gamma}}
\newcommand{\Eb}{\mb{E}}
\newcommand{\Ebb}{\mbb{E}}
\newcommand{\Ebt}{\Eb^t}
\newcommand{\Ebk}{\Eb_{\mb{k}}}
\newcommand{\hb}{\mb{h}}
\newcommand{\eb}{\mb{e}}
\newcommand{\Hbh}{\hat{\mb{H}}}
\newcommand{\Ebh}{\hat{\mb{E}}}
\newcommand{\Hbs}{{\mb{H}^*}}
\newcommand{\Ebs}{{\mb{E}^*}}
\newcommand{\Ebht}{\Ebh^t}
\newcommand{\Hbht}{\Hbh^t}
\newcommand{\Lb}{\mb{L}}
\newcommand{\Gb}{\mb{G}}
\newcommand{\Lbh}{\hat{\Lb}}
\newcommand{\Lbs}{\Lb^*}
\newcommand{\Ib}{\mb{I}}
\newcommand{\I}{I}
\newcommand{\Q}{Q}
\newcommand{\Sigb}{\bs{\Sigma}}
\newcommand{\Piu}{\Pi_\u}
\newcommand{\Pisigb}{\Pi_\sigb}
\newcommand{\Z}{Z}
\newcommand{\Y}{Y}
\newcommand{\rhou}{\rho\u}
\newcommand{\rhov}{\rho\v}
\newcommand{\rhoe}{\rho_e}
\newcommand{\rhoa}{\tilde\rho}
\newcommand{\rhoaa}{\hat\rho}
\renewcommand{\P}{P}
\newcommand{\h}{H}

\newcommand{\mygreen}[1]{{\color[rgb]{0,.65,0} #1}}
\newcommand{\mywhite}[1]{{\color[rgb]{1.0,1.0,1.0} #1}}
\newcommand{\myred}[1]{{\color[rgb]{0.65,0.0,0.0} #1}}
\newcommand{\myblue}[1]{{\color[rgb]{0.0,0.0,1.0} #1}}
\newcommand{\mygray}[1]{{\color[rgb]{.15,.35,.60} #1}}
\newcommand{\mythemec}[1]{{\color[RGB]{51,108,121} #1}}
\newcommand{\mydarkred}[1]{{\color[rgb]{.6,.1,.1} #1}}
\newcommand{\mydarkblue}[1]{{\color[rgb]{.1,.1,.9} #1}}
\newcommand{\mygreenback}[1]{{\color[rgb]{.75,1,.75} #1}}
\newcommand{\myorange}[1]{{\color[rgb]{.76,.39,.13} #1}}
\newcommand{\mygrass}[1]{{\color[rgb]{.19,.64,.13} #1}}
\newcommand{\mysierp}[1]{{\color[RGB]{209,28,209} #1}}

\newcommand{\tanbui}[2]{\textcolor{blue}{#1} \textcolor{red}{#2}}

\newcommand{\vel}{\bs{\vartheta}}
\newcommand{\vels}{\vel^*}
\newcommand{\vele}{\vel^e}
\newcommand{\velh}{\hat{\vel}}
\newcommand{\velk}{\vel^k}
\newcommand{\velkp}{\vel^{k+1}}
\newcommand{\vhk}{\hat\v^{k}}

\newcommand{\phis}{\phi^*}
\newcommand{\phie}{\phi^e}
\newcommand{\phih}{\hat{\phi}}
\newcommand{\phihk}{\phih^k}
\newcommand{\phik}{\phi^k}
\newcommand{\phikp}{\phi^{k+1}}

\begin{frontmatter}

\title{NEOPIC: A Neural Operator Framework for Particle-based Kinetic Plasma Simulations}

    \author[Sriram]{Sriramkrishnan Muralikrishnan}
    \author[Sriram]{Chelsea Maria John}
    \author[Jeffrey1,Jeffrey2]{Jeffrey Kelling}
    \author[Jeffrey1,Michael1]{Michael Bussmann}
    \author[Sriram]{Robert Speck}
    \author[Sriram]{Paul Gibbon}
    \address[Sriram]{J\"ulich Supercomputing Centre, Forschungszentrum J\"ulich GmbH, J\"ulich, Germany}
    \address[Jeffrey1]{Helmholtz-Zentrum Dresden-Rossendorf, Dresden, Germany}
    \address[Jeffrey2]{Chemnitz University of Technology, Chemnitz, Germany}
    \address[Michael1]{Center for Advanced Systems Understanding (CASUS), G\"orlitz, Germany}
\begin{abstract}
We introduce NEOPIC, a framework for learning the field operator in particle-based methods applied to kinetic plasma simulations. Its key novelty is a neural operator that acts directly
on the empirical measure of the particles, rather than on a grid, and is
evaluated at the particle positions. No mesh or tree is needed and, unlike
grid-based neural operators, no deposition onto a mesh takes place, so the
particle noise that dominates the inputs of particle simulations is not
aliased. We develop NEOPIC for the self-consistent electric field in electrostatic kinetic
plasma simulations. Since it replaces only the field solve, it serves as a
plug-and-play field solver for particle-based methods: it can be trained on
data from particle-in-cell, particle-in-Fourier or tree codes, and combined
with any time integrator, including implicit schemes, for which the Jacobians
needed by the nonlinear solver are obtained exactly by automatic
differentiation. We prove that the discrete evaluation coincides with the
continuous operator at any number of particles (representation equivalence)
and derive an error analysis that separates sampling, learning and
time-stepping errors: up to the learning error, NEOPIC has the error
structure of standard Monte Carlo particle methods. We show that the particle
noise in the training data acts as an implicit regularizer, which gives
guidance on the number of training particles in relation to the particle
numbers used at inference. A complexity analysis shows that NEOPIC is an
$\mathcal{O}(N_p)$ method whose constants are set by the architecture alone,
independently of any mesh or tree. Numerical experiments on electrostatic
benchmarks in two and three dimensions, including Landau damping, two-stream
and bump-on-tail instabilities, a cyclotron mini-app (a magnetized electron
cloud with open boundaries) and a Penning trap, confirm the theoretical findings. A single
model trained on several mini-apps generated by different reference methods
captures all of them and remains stable up to twice the training time window, which points towards foundation models for fields in
particle-based methods.
\end{abstract}

\begin{keyword}
Neural operators \sep PIC \sep Particle methods \sep Kinetic plasma physics \sep FNO \sep Mesh-free methods \sep Measure spaces
\end{keyword}

\end{frontmatter}

\newcommand{\apptocnumberline}{\def\numberline##1{##1\enspace}}
{\small\setcounter{tocdepth}{2}\tableofcontents}
\newpage

\section{Introduction}
\seclab{intro}

Particle-based methods are the most widely used class of numerical methods
for kinetic plasma simulations. A large fraction of them is based on the
particle-in-cell (PIC)
formulation~\cite{dawson1983particle,birdsall2004plasma,hockney2021computer},
which comes in many flavours: the standard finite-difference and fast Fourier
transform (FFT) based PIC schemes~\cite{birdsall2004plasma,hockney2021computer},
finite element PIC~\cite{sonnendrucker1995finite,bettencourt2021empire},
structure-preserving PIC schemes based on a variational
formulation~\cite{squire2012geometric,jianyuan2018structure,campos2022variational,campos2024variational}
or on a discretization of the underlying Hamiltonian
structure~\cite{kraus2017gempic,he2016hamiltonian}, energy-conserving fully
implicit and semi-implicit PIC
schemes~\cite{lewis1970energy,chen2011energy,markidis2011energy,lapenta2017exactly,chen2020semi,pinto2022semi},
and PIC schemes with noise
reduction~\cite{denton1995deltaf,ricketson2016sparse,wang2011particle}.
Mesh-free methods based on the Barnes--Hut algorithm~\cite{barnes1986hierarchical},
the fast multipole method (FMM)~\cite{greengard1987fast} and the
particle-in-Fourier (PIF) scheme~\cite{evstatiev2013variational,mitchell2019efficient}
form the other class of particle-based methods used in kinetic plasma
simulations~\cite{pfalzner19943d,ambrosiano1988fast,christlieb2006grid,gibbon2010progress,masek2010mesh,siddi2017mesh,shen2024particle,muralikrishnan2025parapif}.
The particle-particle particle-mesh (P$^3$M)
algorithm~\cite{hockney2021computer} resolves the short-range interactions in
strongly coupled plasmas, and binary Monte Carlo
collisions~\cite{takizuka1977binary,nanbu1997theory} and Langevin
approaches~\cite{manheimer1997langevin,cadjan1999langevin} add collisions to
particle-based simulations.

An important advantage of particle-based methods is that, unlike grid-based
continuum approaches, they do not suffer from the curse of dimensionality.
The price is statistical noise, which decreases only slowly, as
$\mc{O}(N_p^{-1/2})$ with the total number of particles $N_p$. Particle
methods are also well suited to large-scale parallelization, and most
production codes for kinetic plasma simulations use one of the methods listed
above~\cite{winkel2012massively,vay2018warp,burau2010picongpu,muralikrishnan2024scaling}.

In standard PIC schemes, the particles live in continuous phase space while
the fields are represented on a grid. Modes that the grid does not resolve
are therefore aliased onto the resolved low-frequency modes, which leads to
the finite grid instability and to a loss of energy conservation when the
Debye length is not
resolved~\cite{langdon1970theory,huang2016finite,birdsall2004plasma}. The
statistical noise decreases as $\mc{O}(P_c^{-1/2})$, where $P_c$ is the
number of particles per cell (typically between $\mc{O}(10)$ and
$\mc{O}(1000)$). Together with the requirement that the mesh resolves the
Debye length, this leads to large numbers of grid points and particles and
makes high-fidelity simulations expensive. Advanced PIC schemes, such as
energy-conserving~\cite{lewis1970energy},
structure-preserving~\cite{squire2012geometric,jianyuan2018structure,campos2022variational,campos2024variational,kraus2017gempic,he2016hamiltonian},
semi-implicit and fully
implicit~\cite{chen2011energy,markidis2011energy,lapenta2017exactly,chen2020semi,pinto2022semi}
and sparse-grid PIC schemes~\cite{ricketson2016sparse}, mitigate these
effects and can reduce the total cost in certain regimes by reducing the
number of particles and/or grid points. Many of them, however, depart
considerably from the simplicity of the standard PIC algorithm, which makes
them harder to adopt in applications and existing codes.

Among the mesh-free methods, PIF retains the simplicity of standard PIC but
replaces the mesh-based scatter and gather operations by a spectral scatter
from the particles to Fourier space and a gather back to the particles, both
computed with nonuniform FFTs (NUFFTs)~\cite{dutt1993fast,barnett2019parallel}.
The result is a structure-preserving scheme with excellent stability and
conservation properties~\cite{evstatiev2013variational,mitchell2019efficient},
and PIF has recently been extended to non-periodic boundary
conditions~\cite{shen2024particle} and to massively parallel
simulations~\cite{muralikrishnan2025parapif,mayani2026comparison}. It remains
expensive, however, when many Fourier modes are needed, and it cannot handle
boundary conditions in complex domains.

Tree-based methods such as Barnes--Hut and the FMM involve no grid and
therefore do not alias the particle noise, and they are adaptive by
construction, which uniform, structured PIC schemes are not. Although their asymptotic cost is
comparable to ($\mc{O}(N_p\log N_p)$ for Barnes--Hut) or even lower than
($\mc{O}(N_p)$ for the FMM) that of PIC and PIF schemes, the constant in
front of $N_p$ is much larger, so they remain slower than PIC at the particle
numbers typical of kinetic plasma simulations. Parallelizing tree
structures is also more involved than parallelizing standard PIC schemes.

Each of these schemes has its strengths and limitations and outperforms the
others in certain regions of the parameter space (number of particles, type
of distribution, required accuracy) and in certain applications. Consider
now an abstract particle--field method, in which the particles are advanced
by ordinary differential equations with fields computed by a field solver.
All of the methods above differ only in how the fields at the particles are
computed from the particle attributes, that is, in the field operator
$\mathbf{E}_h[\mu_N]$ (Section~\secref{abstract}), which is specific to each
method. In this work we ask \emph{whether a field operator for this abstract
particle--field method can be learned from the data of any or all of
these methods}, each preferably run in the part of the parameter space where
it performs best, so that the data contribute complementary information.
Such a learned field solver can then be coupled to any production particle
code, with or without a mesh, in a plug-and-play fashion: at each time step
the code passes the particle attributes to the solver, which returns the
self-consistent fields at the particle positions.

To this end, we present NEOPIC, a neural operator (``NEO'') framework for the
field operator in particle-based methods. Although no mesh is involved and
the framework can be used with any particle-based method, the suffix ``PIC''
refers to the PIC community at which it is aimed, since most kinetic plasma
simulations are performed with PIC codes. In this work we focus on
collisionless electrostatic kinetic plasma simulations; the framework is
general and will be extended to other kinetic models (Darwin, electromagnetic
and collisional) in future work. NEOPIC is trained on data from a variety of
particle-based methods, such as PIC, PIF and Barnes--Hut, and the learned
operator can be used with any time integrator, including implicit ones, for
which the exact Jacobian needed by the nonlinear solver is obtained by
automatic differentiation.

Neural operators~\cite{patel2021physics,kovachki2023neural,lu2021learning} are a natural choice
for this task, since the object to be learned is an operator. Since PIC, PIF
and tree codes are discretizations of the same continuum field operator,
their data can be used to train a single neural operator, which is defined at
the continuum level; where the teachers differ, the model learns a weighted
combination of their responses (Section~\secref{noise}). The discretization
invariance of neural operators further allows different numbers of particles
in training and inference, and since the particle attributes at each time
step act as the input of the operator, the model must generalize to the
inputs generated by the time loop of the particle method. Neural
operators, however, typically act on function spaces with much more
regularity than the empirical particle measures that NEOPIC must handle. A
key novelty of our work is therefore a nonuniform Fourier neural operator
(NUFNO) that acts entirely on particle measures: the inputs are Monte Carlo
particle positions, i.e.\ sums of Dirac measures, the output is the electric
field at these particles, and the input enters the network only through the
particle measure. The layers are those of~\cite{lingsch2023beyond}, but the
data they act on, and hence the properties of the operator, are different
(Section~\ref{sec:related}). We show in Section~\secref{reno} that the NUFNO
applied to these particle measures is the continuous operator evaluated on
the same input, without any aliasing, a property known as representation
equivalence~\cite{bartolucci2023representation}. This is critical in our
context: without it, the high frequencies of the particle noise could be
aliased onto the low frequencies retained in the FNO, an effect that the
nonlinear activation functions make worse. This could cause numerical
instabilities similar to the finite grid instability and grid heating of
standard PIC schemes and would partly defeat the purpose of the approach.
Our further contributions are the following.
\begin{enumerate}
\item An error analysis of the particle method with the NEOPIC field operator
  (Section~\secref{error}), which shows that the total error consists of a
  Monte Carlo sampling error, a time-stepping error and a model error. When
  the model error is small, the total error has the same structure as that of
  other Monte Carlo particle methods.
\item An analysis showing that the noise in the training data acts as an
  implicit regularizer that drives the trained model to behave like its
  teacher (Section~\secref{noise}). The noise term leads to an
  accuracy--robustness trade-off, which gives guidance on the number of
  particles to train with, given the particle numbers expected at inference.
\item A complexity analysis (Section~\secref{complexity}) showing that NEOPIC
  is an $\mc{O}(N_p)$ particle method, which is optimal in its asymptotic
  complexity, with a constant determined by the architecture of the operator
  and independent of the number of particles and of any grid size.
\item A validation of the theoretical findings on a number of benchmark
  kinetic plasma simulations in 2D and 3D (Section~\secref{results}). We
  also show that a single model can be trained on data from different methods
  and benchmarks and remains stable up to twice the training time horizon,
  with errors that only grow moderately beyond the end of the training data. This encourages the development of a foundation model for
  fields in particle-based methods, which could be fine-tuned with a small
  number of application-specific samples, as has been done for grid-based
  neural operators~\cite{herde2024poseidon,subramanian2023towards}; we leave
  this to future work.
\end{enumerate}

\subsection{Related work}
\label{sec:related}
Deep learning has been actively studied both for solving partial differential
equations (PDEs) and as a component of hybrid numerical methods. One reason
for this shift is that emerging hardware architectures support deep learning
workloads better than conventional numerical methods. Since solving a PDE
amounts to learning its solution operator, neural
operators~\cite{li2020fourier,kovachki2023neural,fanaskov2023spectral,lu2021learning},
which act on function spaces rather than on finite-dimensional vectors, have
gained traction in recent years, with applications in many
domains~\cite{pathak2022fourcastnet,azizzadenesheli2024neural}. As noted by
McGreivy and Hakim~\cite{mcgreivy2024weak}, however, many such approaches
have been compared against weak baselines, so that reported gains have to be
assessed case by case.

For kinetic plasma simulations, Aguilar and Markidis~\cite{aguilar2021deep}
used a multi-layer perceptron and a convolutional neural network (CNN) to
compute the electric field at the grid points from the particle phase space in
a PIC scheme. In studies of the 1D-1V two-stream instability, the deep
learning based PIC simulation reproduced the growth rate of the standard PIC
scheme but did not conserve energy or momentum; it was, however, stable
against the cold-beam instability that affects standard PIC schemes. No
speedup over the standard PIC scheme was observed, and since standard neural
networks were used, training and inference must use the same resolution and
the network must be retrained when the parameters change. Kube et
al.~\cite{kube2021machine} used a CNN to accelerate implicit PIC schemes by
augmenting the search space of the iterative linear solver with the output
of the CNN, which reduced the number of solver iterations by $25\%$ for 1D-1V
electron plasma oscillations; a net speedup was left to future work.

Lyu and Zhong~\cite{lyu2026latent} recently replaced the grid-based field
solve of PIC by a latent residual-closure FNO: an autoencoder compresses the
particle-deposited charge and current densities, a coarse FNO predicts the
dominant field response and a second FNO corrects it on the full grid. Used
as a stand-alone field solver and as an initial guess for an iterative
solver, it reduces the field-solve time of a bounded scrape-off-layer model by
factors of $33$ and $18$, respectively, measured against a finite-difference
iterative solver. Like other grid-based surrogates, it operates on the
deposited sources, which carry the particle noise and require a dedicated
compression stage. NEOPIC instead acts directly on the particles: no charge
is deposited on a mesh, so the particle noise is not aliased, and the
deposition, field solve and interpolation of PIC are replaced together.

Graph neural networks and graph neural operators offer another route to
particle and point-cloud data. Graph network simulators learn the complete
update of particle systems, such as fluids and granular media, by message
passing between neighbouring particles~\cite{sanchez2020learning}, and of
mesh-based simulations~\cite{pfaff2020learning,brandstetter2022message};
Carvalho et al.~\cite{carvalho2024learning} applied this approach to a
one-dimensional electrostatic sheet model of a plasma. Graph neural
operators~\cite{li2020neural} approximate integral kernels by sums over
neighbourhoods in a graph, and the geometry-informed neural operator
(GINO)~\cite{li2023geometry} maps point clouds with a graph neural operator
onto a latent regular grid, on which an FNO is applied. Message passing over
neighbourhoods captures long-range interactions such as the Coulomb field
only through many layers or large radii, and the map onto a latent grid in
GINO acts like a learned deposition, which samples the particle noise on a
grid. NEOPIC instead learns only the field, couples all particles through the
Fourier modes of the particle measure, and leaves the particle update to the
time integrator of the particle method.

The architectures closest to ours are the direct spectral evaluation of
Lingsch et al.~\cite{lingsch2023beyond} and the long-range convolutional (LRC)
layer of Peng et al.~\cite{peng2023efficient}. Lingsch et al.\ evaluate the
Fourier transforms of the FNO layers directly at nonuniform points, and
NEOPIC uses the same layer. There, however, the points are sampling locations
of continuum input and output functions, the sums over the points approximate
integrals, and two discretizations of the same function differ by a
quadrature error in every layer (Remark~\ref{rem:measure_input}). In NEOPIC
the point measure is itself the state of the particle method, and the
discrete evaluation is the continuous operator applied to it
(Theorem~\ref{thm:reno}). Peng et al.\ treat a point cloud as a weighted sum
of Dirac measures and apply a trainable Fourier multiplier with a NUFFT, so
that, as in NEOPIC, the convolution with the point measure is computed
without a mesh. Their LRC layer is a single linear convolution with a
parameterized, Yukawa-type multiplier, whose output is passed to pointwise
networks and combined with short-range descriptors to learn the energies and
forces of model $N$-body potentials. NEOPIC instead composes several Fourier
layers with general learned multipliers and pointwise nonlinearities, which
allows nonlinear and non-translation-invariant field operators such as those
of PIC and Barnes--Hut teachers, predicts the field directly, and is trained
on noisy particle data and used inside the time loop of kinetic simulations.
Neither work analyses the role of particle noise in the training data, the
dependence on the number of particles, or the error of the coupled
particle--field evolution, which are central here.

The remainder of the paper is organized as follows.
Section~\secref{vp} recalls the Vlasov--Poisson system and the common
structure of particle--field methods, and Section~\secref{neopic} introduces
NEOPIC. Section~\secref{architecture} defines the NUFNO on particle measures
and proves its representation equivalence. Section~\secref{error} contains
the error analysis, Section~\secref{noise} the role of the particle noise in
the training data, and Section~\secref{complexity} the complexity analysis.
Numerical results are presented in Section~\secref{results}, and conclusions
are drawn in Section~\secref{conclusions}. The appendices collect further
remarks on the architecture and the analysis, the diagnostics of the
cyclotron mini-app, a convergence study in the time step, a demonstration of
implicit time integration, and the dependence of the cost on the
architecture.
\section{The Vlasov--Poisson system and particle--field methods}
\seclab{vp}

\subsection{The Vlasov--Poisson system}

We consider a non-relativistic, collisionless electron plasma in an
electrostatic setting, with possibly external electric and magnetic fields.
The electron phase-space distribution $f(t,\mathbf{x},\mathbf{v})$,
$\mathbf{x}\in\Omega\subseteq\mathbb{R}^d$, $\mathbf{v}\in\mathbb{R}^d$, evolves
according to the Vlasov equation
\begin{equation}
  \partial_tf+\mathbf{v}\cdot\nabla_{\mathbf{x}}f
  +\frac{q_e}{m_e}\big(\mathbf{E}+\mathbf{v}\times\mathbf{B}_{\mathrm{ext}}\big)\cdot\nabla_{\mathbf{v}}f=0,
  \label{eq:vlasov}
\end{equation}
where $q_e$ and $m_e$ are the electron charge and mass,
$\mathbf{E}=\mathbf{E}_{\mathrm{sc}}+\mathbf{E}_{\mathrm{ext}}$ is the total
electric field, and $\mathbf{E}_{\mathrm{ext}}$ and $\mathbf{B}_{\mathrm{ext}}$
are given external fields. The self-consistent (space-charge) field is
obtained from the Poisson equation for the electrostatic potential $\phi$,
\begin{equation}
  -\Delta\phi=\frac{\rho_e+\rho_i}{\varepsilon_0},\qquad
  \mathbf{E}_{\mathrm{sc}}=-\nabla\phi,\qquad
  \rho_e(t,\mathbf{x})=q_e\int_{\mathbb{R}^d}f(t,\mathbf{x},\mathbf{v})\,d\mathbf{v},
  \label{eq:poisson}
\end{equation}
with the permittivity of free space $\varepsilon_0$. Two kinds of boundary
conditions are considered. On a periodic box
$\Omega=\prod_{i=1}^d[0,L_i)$ the electrons are immersed in a uniform,
immobile, neutralizing ion background with charge density
$\rho_i=-Q_{\mathrm{tot}}/|\Omega|$, where
$Q_{\mathrm{tot}}=\int_\Omega\rho_e\,d\mathbf{x}$ is the total electron
charge, so that the right-hand side of~\eqref{eq:poisson} has zero mean. On $\Omega=\mathbb{R}^d$ (open, or
free-space, boundary conditions) there is no background, $\rho_i=0$, and
$\phi$ decays at infinity; this is the setting of non-neutral plasmas, such
as an electron cloud confined by an external magnetic field. External fields
are also used on periodic domains, for example for the Penning trap of
Section~\secref{multi3d}. In both cases
\eqref{eq:poisson} defines the space-charge field as a linear, nonlocal
functional of the charge density,
\begin{equation}
  \mathbf{E}_{\mathrm{sc}}[\rho](\mathbf{x})
  =-\frac{1}{\varepsilon_0}\int_\Omega\nabla G(\mathbf{x}-\mathbf{y})\,
   \big(\rho_e(\mathbf{y})+\rho_i\big)\,d\mathbf{y},
  \label{eq:field_green}
\end{equation}
where $G$ is the Green's function of $-\Delta$: the periodic Green's function
on the torus, or $G(\mathbf{x})=-\frac{1}{2\pi}\ln|\mathbf{x}|$ for $d=2$ and
$G(\mathbf{x})=\frac{1}{4\pi|\mathbf{x}|}$ for $d=3$ in free space. On the
periodic box the Fourier representation of~\eqref{eq:field_green} is the
operator $\mathcal{E}$ of~\eqref{eq:exact_field_op}, which we use in the
error analysis in normalized form. The characteristics of~\eqref{eq:vlasov}
are the trajectories of individual electrons,
\begin{equation}
  \frac{d\mathbf{x}}{dt}=\mathbf{v},\qquad
  \frac{d\mathbf{v}}{dt}=\frac{q_e}{m_e}\big(\mathbf{E}(t,\mathbf{x})+\mathbf{v}\times\mathbf{B}_{\mathrm{ext}}(t,\mathbf{x})\big),
  \label{eq:characteristics}
\end{equation}
along which $f$ is constant. The system conserves the total charge and the
total energy (including the potential energy of static external fields) and,
in the absence of external fields, the total momentum; in a uniform magnetic
field in free space, the canonical angular momentum about the field axis is
conserved as well. In the numerical experiments we use normalized units with
$\varepsilon_0=1$ and $q_e/m_e=-1$.

\subsection{Particle discretization}

Particle methods discretize $f$ in a Lagrangian way by $N_p$ computational
particles (macro-particles) with positions $\mathbf{x}_p$, velocities
$\mathbf{v}_p$ and weights $w_p>0$, $\sum_pw_p=1$,
\begin{equation}
  f_N(t,\mathbf{x},\mathbf{v})=\frac{Q_{\mathrm{tot}}}{q_e}\sum_{p=1}^{N_p}w_p\,
  \delta\big(\mathbf{x}-\mathbf{x}_p(t)\big)\,\delta\big(\mathbf{v}-\mathbf{v}_p(t)\big),
  \qquad
  \mu_N(t)=\sum_{p=1}^{N_p}w_p\,\delta_{\mathbf{x}_p(t)},
  \label{eq:particle_measure}
\end{equation}
so that particle $p$ carries the charge $Q_{\mathrm{tot}}w_p$ and the charge
density is $Q_{\mathrm{tot}}\mu_N$. At $t=0$ the particles are sampled from
$f(0,\cdot,\cdot)$, and the particles then follow the
characteristics~\eqref{eq:characteristics} with the field evaluated at their
positions,
\begin{equation}
  \begin{gathered}
  \frac{d\mathbf{x}_p}{dt}=\mathbf{v}_p,\qquad
  \frac{d\mathbf{v}_p}{dt}=\frac{q_e}{m_e}\big(\mathbf{E}_p+\mathbf{v}_p\times\mathbf{B}_{\mathrm{ext}}(\mathbf{x}_p)\big),\\
  \mathbf{E}_p=\mathbf{E}_{h}[\mu_N](\mathbf{x}_p)+\mathbf{E}_{\mathrm{ext}}(\mathbf{x}_p),
  \end{gathered}
  \label{eq:particle_odes}
\end{equation}
where $\mathbf{E}_h[\mu_N]$ is an approximation of the space-charge
field~\eqref{eq:field_green} for the charge measure of the particles. The
weights are carried along the trajectories, so that the total charge is
conserved exactly, and $f_N$ is a weak solution of~\eqref{eq:vlasov} with the
field $\mathbf{E}_p$. The ODEs~\eqref{eq:particle_odes} are typically integrated with an explicit
one-step method, in this work the kick--drift--kick leapfrog
scheme~\eqref{eq:kdk} or, with a magnetic field, the Boris
scheme~\cite{birdsall2004plasma}, and each time step then requires one
evaluation of the field at all particles. Implicit schemes, such as the
implicit midpoint (Crank--Nicolson) rule~\eqref{eq:implicit_midpoint} of
\secref{implicit1d}, couple
the particles and the field in a nonlinear system at every time
step~\cite{chen2011energy,markidis2011energy}, so that each step requires
several field evaluations, one per nonlinear iteration and, in Newton--Krylov
solvers, one per Jacobian--vector product. Note that the field of a point measure is
singular at the particles. Every particle method therefore regularizes the
interaction, by a shape function, a truncation of the Fourier series or a
softened kernel, and $\mathbf{E}_h$ is an approximation of the field of a
smoothed charge distribution.

\subsection{An abstract particle--field method}
\seclab{abstract}

Particle methods for~\eqref{eq:vlasov}--\eqref{eq:poisson} differ mostly in
how $\mathbf{E}_h[\mu_N]$ is computed. They share a common structure, which
is also found in particle methods for other particle--field systems, such as
the Vlasov--Maxwell system, vortex methods and immersed boundary
methods~\cite{cottet2000vortex,peskin2002immersed}: particles evolve by ODEs,
fields by PDEs, and the two are coupled by mappings between them. In the
general form, with particle quantities $\mathbf{p}$, a field $\mathbf{f}_l$
that acts on the particles and a field $\mathbf{f}_r$ that the particles
generate,
\begin{equation}
  \frac{d\mathbf{p}}{dt}=\mathbf{g}(\mathbf{f}_l,\mathbf{p}),\qquad
  \mathscr{L}(\mathbf{f}_l)=\mathbf{h}(\mathbf{f}_r),\qquad
  \mathbf{f}_r=\mathscr{R}_{\mathbf{p}\to\mathbf{f}_r}(\mathbf{p}),\qquad
  \mathbf{f}_l|_{\mathbf{p}}=\mathscr{R}^*_{\mathbf{f}_l\to\mathbf{p}}(\mathbf{f}_l),
  \label{eq:abstract}
\end{equation}
where $\mathscr{L}$ is a (possibly nonlinear) differential or
differential-algebraic operator, $\mathscr{R}_{\mathbf{p}\to\mathbf{f}_r}$ maps
the particles to a field (deposition, or scatter) and
$\mathscr{R}^*_{\mathbf{f}_l\to\mathbf{p}}$ maps a field to the particles
(interpolation, or gather). For the Vlasov--Poisson system,
$\mathbf{p}=(\mathbf{x}_p,\mathbf{v}_p,w_p)_p$, $\mathbf{f}_r=\rho_e$,
$\mathbf{f}_l=(\phi,\mathbf{E}_{\mathrm{sc}})$,
$\mathscr{L}=-\varepsilon_0\Delta$ with $\mathbf{E}_{\mathrm{sc}}=-\nabla\phi$,
$\mathbf{h}(\rho_e)=\rho_e+\rho_i$, and $\mathbf{g}$ is the right-hand side
of~\eqref{eq:particle_odes}.

\paragraph{The kernel form} Here $\mathscr{L}$ is linear, $\mathbf{h}$ is
affine and $\mathscr{R}$ is linear in the weights, so that the field can be
eliminated from~\eqref{eq:abstract}: the particles interact through the
kernel of the composition
$\mathscr{R}^*\mathscr{L}^{-1}\mathbf{h}\,\mathscr{R}$, i.e.\ the field at
one particle generated by another particle of unit weight. By~\eqref{eq:field_green}
with $\rho_e=Q_{\mathrm{tot}}\mu_N$, and the zero-mean Green's function on
periodic domains, which accounts for the background,
\begin{equation}
  \mathbf{E}_{\mathrm{sc}}[\mu_N](\mathbf{x})=\int_\Omega K(\mathbf{x},\mathbf{y})\,d\mu_N(\mathbf{y}),
  \qquad
  K(\mathbf{x},\mathbf{y})=-\frac{Q_{\mathrm{tot}}}{\varepsilon_0}\nabla G(\mathbf{x}-\mathbf{y}).
  \label{eq:exact_kernel}
\end{equation}
The kernel is singular at $\mathbf{x}=\mathbf{y}$, translation invariant and
odd. Every particle method replaces it by a regularized, discrete kernel
$K_h$, which may depend on the whole particle configuration $X$, and
evaluates, for $p=1,\dots,N_p$,
\begin{equation}
  \mathbf{E}_h[\mu_N](\mathbf{x}_p)
  =\int_\Omega K_h(\mathbf{x}_p,\mathbf{y};X)\,d\mu_N(\mathbf{y})
  =\sum_{q=1}^{N_p}w_q\,K_h(\mathbf{x}_p,\mathbf{x}_q;X)
  \label{eq:kernel_form}
\end{equation}
The methods differ in how $K_h$ is defined and how the sum is evaluated; the
field is linear in the weights in all of them, and the dependence on the
positions is nonlinear. Table~\tabref{instantiations} lists the kernels of the
methods considered here, and Fig.~\figref{neopic_loop}(a) shows the resulting
particle--field cycle. If $K_h(\mathbf{x},\mathbf{y};X)=-K_h(\mathbf{y},\mathbf{x};X)$,
the self-forces of the particles sum to zero,
$\sum_pw_p\mathbf{E}_h(\mathbf{x}_p)=\sum_{p,q}w_pw_qK_h(\mathbf{x}_p,\mathbf{x}_q;X)=0$,
and the scheme conserves momentum in the absence of external
fields~\cite{birdsall2004plasma,hockney2021computer}.

\paragraph{Mesh- and basis-based methods} In particle-in-cell and
particle-in-Fourier methods the field is represented in a finite-dimensional
space $V_h$ that does not depend on the particles, with deposition functions
$\psi_j$ and interpolation functions $\psi^*_i$. The
deposition
$\mathscr{R}:\mu_N\mapsto(\int\psi_j\,d\mu_N)_j$, the discrete field solve
$\mathscr{L}_h^{-1}$ on $V_h$, which maps a charge density to the field, and
the interpolation $\mathscr{R}^*$ are separate linear maps; the gradient is
part of $\mathscr{L}_h^{-1}$ or, as for finite elements, of the
interpolation functions. The kernel factorizes,
\begin{equation}
  K_h(\mathbf{x},\mathbf{y})=Q_{\mathrm{tot}}\sum_{i,j}\psi^*_i(\mathbf{x})\,
  \big(\mathsf{L}_h^{-1}\big)_{ij}\,\psi_j(\mathbf{y}),
  \qquad
  \mathbf{E}_h=\mathscr{R}^*\,\mathscr{L}_h^{-1}\,\mathscr{R}\,\mu_N .
  \label{eq:factorized_kernel}
\end{equation}
In matrix form $\mathbf{E}=\mathsf{R}^*\mathsf{L}_h^{-1}\mathsf{R}\,\mathbf{q}$,
with matrices that depend on the positions and the vector $\mathbf{q}$ of
particle charges~\cite{muralikrishnan2025parapif}. This kernel does not depend
on $X$ beyond the evaluation points. It is odd if the interpolation functions
are the complex conjugates of the deposition functions,
$\psi^*_i=\overline{\psi_i}$, and $\mathsf{L}_h^{-1}$,
which then contains the discrete gradient, is antisymmetric and translation
invariant; variational
derivations lead to energy-conserving schemes~\cite{evstatiev2013variational}.

\paragraph{Particle-in-cell} The PIC method~\cite{birdsall2004plasma,hockney2021computer,dawson1983particle}
uses a mesh: $\psi_j$ are shape functions or basis functions associated with
the mesh, and $\mathsf{L}_h$ is a discretization of the Poisson equation,
solved with FFTs on periodic boxes, with FFT-based free-space
solvers~\cite{vico2016fast,mayani2025Vico}, or with sparse solvers for finite
difference, finite element or discontinuous Galerkin discretizations on
general geometries~\cite{jacobs2006high,pfeiffer2019particle}.

\paragraph{Particle-in-Fourier} The PIF
method~\cite{langdon1970theory,evstatiev2013variational,huang2016finite,mitchell2019efficient}
uses a truncated Fourier basis, $\psi_{\mathbf{k}}(\mathbf{y})=\hat S_{\mathbf{k}}e^{-\mathrm{i}\mathbf{k}\cdot\mathbf{y}}$
and $\psi^*_{\mathbf{k}}=\overline{\psi_{\mathbf{k}}}$, for which
$\mathsf{L}_h^{-1}$ is the diagonal multiplier
$-\mathrm{i}\mathbf{k}/(\varepsilon_0|\Omega||\mathbf{k}|^2)$ and the kernel is a
truncated, smoothed Fourier series of~\eqref{eq:exact_kernel}
(Table~\tabref{instantiations}). The deposition and interpolation are
type-1 and type-2 nonuniform Fourier transforms, computed with
NUFFTs~\cite{dutt1993fast,barnett2019parallel,mitchell2019efficient}, so that
the cost is comparable to PIC with a larger constant. 

\paragraph{Tree codes and fast multipole methods} Direct summation
evaluates~\eqref{eq:kernel_form} with a softened kernel
$K_h=-\frac{Q_{\mathrm{tot}}}{\varepsilon_0}\nabla G_\delta(\mathbf{x}-\mathbf{y})$
at a cost of $\mathcal{O}(N_p^2)$. Tree codes~\cite{barnes1986hierarchical}
reduce the cost by grouping the source particles into the cells of a
hierarchical tree built on the particles. For each target, cells that appear
small enough from it (opening angle $\vartheta$) contribute through a truncated
multipole expansion about their centre, and the remaining particles, nearby,
contribute directly. The kernel $K_h(\mathbf{x}_p,\mathbf{y};X)$ is therefore
the softened kernel in the near field and its multipole approximation in the
far field, and it depends on the configuration $X$ through the tree and on the
target through its interaction list. It is in general not odd, so
Barnes--Hut does not conserve momentum exactly. The fast multipole method~\cite{greengard1987fast}
also expands the field about the targets, in local expansions, and translates
multipole into local expansions between well-separated cells. Its far-field
kernel is then separable cell pair by cell pair, a hierarchical low-rank
version of~\eqref{eq:factorized_kernel} with the multipole-to-local
translations in the role of $\mathsf{L}_h^{-1}$. In both, no field equation is
discretized and no mesh is used, which makes them a natural choice for open
boundaries and non-uniform particle distributions; but they can also be used on
periodic domains.
 Periodicity can be treated by a tree walk over
the box and its nearest images and an Ewald summation of the multipole
expansion of the box for the remaining images.

\begin{table}[!htbp]
  \centering
  \caption{Discrete kernels $K_h$ in~\eqref{eq:kernel_form} (up to the factor
  $Q_{\mathrm{tot}}/\varepsilon_0$). $\psi_j,\psi^*_i$: deposition and
  interpolation functions; $\mathsf{L}_h$: discrete field operator;
  $\hat S_{\mathbf{k}}$: Fourier transform of the shape function;
  $\mathcal{K}$: retained Fourier modes; $G_\delta$: softened Green's function;
  $T_P$: multipole expansion of order $P$ about the cell centre $\mathbf{x}_c$;
  $\mathcal{F}(\mathbf{x})$, $\mathcal{N}(\mathbf{x})$: far-field cells and
  near-field particles of target $\mathbf{x}$ for opening angle $\vartheta$;
  $\varepsilon$: NUFFT tolerance. Structure: whether the kernel factorizes
  through a space that does not depend on the particles,
  as in~\eqref{eq:factorized_kernel}, whether it is translation invariant, and
  whether it depends on the particle configuration $X$. Costs per field
  evaluation, with $C_h$ the cost of the mesh solve.}
  \tablab{instantiations}
  \small
  \renewcommand{\arraystretch}{1.3}
  \begin{tabular}{@{}>{\raggedright\arraybackslash}p{0.17\linewidth}>{\raggedright\arraybackslash}p{0.46\linewidth}>{\raggedright\arraybackslash}p{0.3\linewidth}@{}}
    \toprule
    Method & Kernel $K_h(\mathbf{x},\mathbf{y};X)$ & Structure, cost \\
    \midrule
    PIC (mesh)
      & $\sum_{i,j}\psi^*_i(\mathbf{x})\,(\mathsf{L}_h^{-1})_{ij}\,\psi_j(\mathbf{y})$
      & factorized~\eqref{eq:factorized_kernel}; $\mathcal{O}(N_p)+C_h$ \\
    PIF
      & $\frac{1}{|\Omega|}\sum_{\mathbf{k}\in\mathcal{K}\setminus\{0\}}|\hat S_{\mathbf{k}}|^2\,\frac{-\mathrm{i}\mathbf{k}}{|\mathbf{k}|^2}\,e^{\mathrm{i}\mathbf{k}\cdot(\mathbf{x}-\mathbf{y})}$
      & factorized~\eqref{eq:factorized_kernel}, translation invariant; $\mathcal{O}\big((|\log\varepsilon|+1)^dN_p+|\mathcal{K}|\log|\mathcal{K}|\big)$ \\
    Direct sum
      & $-\nabla G_\delta(\mathbf{x}-\mathbf{y})$
      & translation invariant; $\mathcal{O}(N_p^2)$ \\
    Barnes--Hut
      & $-\nabla G_\delta(\mathbf{x}-\mathbf{y})$ for $\mathbf{y}\in\mathcal{N}(\mathbf{x})$;
        $T_P[-\nabla G](\mathbf{x}-\mathbf{x}_c;\mathbf{y}-\mathbf{x}_c)$ for $\mathbf{y}\in c\in\mathcal{F}(\mathbf{x})$
      & depends on $X$ and on the target; $\mathcal{O}(N_p\log N_p)$ \\
    FMM
      & near field as above; far field separable between well-separated cells
        (multipole-to-local translations)
      & hierarchical low rank, depends on $X$; $\mathcal{O}(N_p)$ \\
    \bottomrule
  \end{tabular}
\end{table}

\paragraph{The common interface} All these methods share the particle
part~\eqref{eq:particle_odes} and its time integration, and differ only in the
kernel $K_h$ and in how~\eqref{eq:kernel_form} is evaluated. Each has its own
discretization parameters (mesh size and shape function, Fourier modes and
NUFFT tolerance, opening angle, expansion order and softening), its own errors
(aliasing, truncation, multipole truncation) and its own cost, and each
approximates the same operator~\eqref{eq:exact_kernel}. At the level of the
particle scheme, a field solver is therefore a map
\begin{equation}
  \{(\mathbf{x}_p,w_p)\}_{p=1}^{N_p}\ \longmapsto\ \{\mathbf{E}_p\}_{p=1}^{N_p},
  \label{eq:interface}
\end{equation}
which depends on the particles only through $\mu_N$, takes any number of
particles, and returns the field at the particles. This interface is what
NEOPIC learns.

\section{NEOPIC: learning the field operator of particle methods}
\seclab{neopic}

\begin{figure}[!htbp]
  \centering
  \includegraphics[width=\linewidth]{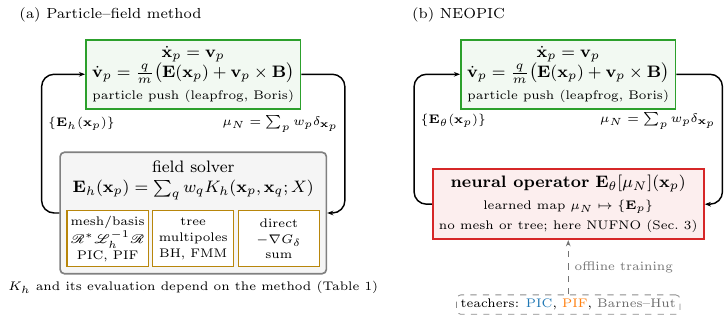}
  \caption{(a) The particle--field cycle of a particle method for the
  Vlasov--Poisson system. The particles are advanced by the time integrator
  (green); the field solver (grey box) evaluates the field at the particles
  with a discrete kernel $K_h$~\eqref{eq:kernel_form}, whose form depends on
  the method (Table~\tabref{instantiations}): a mesh or basis
  ($\mathscr{R}^*\mathscr{L}_h^{-1}\mathscr{R}$), a tree, or direct
  summation. (b) NEOPIC keeps the particles and the time integrator and
  replaces the field solver by a neural operator $\mathbf{E}_\theta$, which
  maps the charge measure of the particles directly to the field at the
  particles. It is trained offline on data from particle methods; in this work
  PIC, PIF and Barnes--Hut, and the neural operator is the NUFNO of
  Section~\secref{architecture}. With an implicit time integrator
  (\secref{implicit1d}) the cycle is the same, but the field is
  evaluated several times per step within the nonlinear solve.}
  \figlab{neopic_loop}
\end{figure}

\subsection{Learning the particle--field--particle map}

NEOPIC replaces the field solver, i.e.\ the evaluation of~\eqref{eq:kernel_form}
in~\eqref{eq:particle_odes}, by a neural operator $\mathbf{E}_\theta$,
\begin{equation}
  \mathbf{E}_p=\mathbf{E}_\theta[\mu_N](\mathbf{x}_p)+\mathbf{E}_{\mathrm{ext}}(\mathbf{x}_p),
  \qquad p=1,\dots,N_p,
  \label{eq:neopic_field}
\end{equation}
which is trained on data generated by particle methods
(Fig.~\figref{neopic_loop}(b)). Everything else is left unchanged: the
particles are sampled and weighted as before, they follow the
characteristics~\eqref{eq:particle_odes}, and they are advanced by the same
time integrator. NEOPIC accompanied by a time integrator is thus a particle method in its own right, with
a learned instance of the interface~\eqref{eq:interface}, and its learned
operator can replace the field solver of an existing particle code without
changes to the rest of the code. Two choices define the approach: what is
learned, and what the learned operator acts on.

\paragraph{Learning the field, not the dynamics} A common approach to data-driven
surrogates of time-dependent problems is to learn the evolution of the whole
state, i.e.\ to map the state at one time to the state at a later time,
either directly or autoregressively~\cite{mousavi2025rigno,wen2025geometry}.
For kinetic plasmas the state is the distribution function in the
$2d$-dimensional phase space, or its particle representation, and its
evolution combines transport along the characteristics, which is
nonlinear and multi-scale, with the self-consistent field. NEOPIC learns only
the map from the particles to the field at a fixed time, which is the
instantaneous solution operator of an elliptic problem. This operator is much
simpler than the flow map: it does not depend on time or on the velocities,
it is nonlocal but smooth, and for the Vlasov--Poisson system it is linear in
the charge density. NEOPIC itself does not rely on this linearity: the map
from the positions to the field at the positions is nonlinear anyway, and the
same approach applies to particle methods with nonlinear field equations,
although parts of the analysis in Section~\secref{noise} use the linearity of
the teacher.
The particle part, on the other hand, is the least expensive and most
scalable part of a particle method: it is pointwise and embarrassingly
parallel. Keeping it has three consequences. First, the particles follow the
characteristics of the system with the learned field, so that charge is
conserved exactly and the errors of the scheme enter only through the field.
This is the basis of the error analysis of Section~\secref{error}, which
follows the analysis of PIC schemes. Second, structure-preserving time
integrators, such as the symplectic leapfrog scheme or the Boris scheme for
magnetized plasmas, can be used as in any particle method, and so can
implicit schemes, for which the Jacobian of the learned field with respect to
the positions is available exactly by automatic differentiation. In
\secref{implicit1d} we demonstrate this with the implicit midpoint rule in one
dimension, where NEOPIC remains stable and nearly energy conserving well
beyond the stability limit of the leapfrog scheme. Third, the learned
operator depends neither on time nor on the time step: time stepping beyond
the training horizon, or with another time step, uses the same field solver
at every step, and the dynamics is integrated by the scheme itself rather
than extrapolated by a learned flow map. Only its inputs, the particle configurations, may move away from
the training distribution (Section~\secref{results}).

\paragraph{Learning the whole map, not the field solve} Neural operators are
usually trained to approximate solution operators between functions on a
mesh. Applied to a particle method this would mean learning
$\mathscr{L}_h^{-1}$ on a mesh, and keeping the deposition $\mathscr{R}$ and
the interpolation $\mathscr{R}^*$ of PIC similar to \cite{lyu2026latent}. The mesh would then bring back the
constraints of PIC: (i) the mesh must resolve the Debye length to avoid the
finite grid instability and grid
heating~\cite{langdon1970theory,birdsall2004plasma}; (ii) the noise of the
deposited density is controlled by the number of particles per cell, so that
the number of particles grows with the number of mesh cells; (iii) unless the
mesh is adaptive, the adaptivity of the particles to the distribution is lost
in the field computation; and (iv) in electromagnetic simulations, the
Courant--Friedrichs--Lewy condition of the field solver on the mesh requires
a time step much smaller than that needed by the particles. NEOPIC instead
approximates the whole map~\eqref{eq:kernel_form}, for mesh-based methods the
whole composition $\mathscr{R}^*\mathscr{L}_h^{-1}\mathscr{R}$, which takes
particle data as input and returns fields at the particles. No mesh enters the
scheme, and data from methods without a field discretization, such as tree
codes, can be used for training on the same footing as data from PIC and
PIF.

\paragraph{Operators on measures} The input of this map is not a function
sampled at given points but the measure $\mu_N$, an ensemble of random
Monte Carlo samples of the charge distribution. The operator must
approximate the continuum field operator~\eqref{eq:field_green} from
input--output pairs defined over such measure-valued data, and the same
operator must apply to measures with any number of particles and in the
limit $N_p\to\infty$. This departs from the classical neural operator
framework~\cite{kovachki2023neural}, which is formulated for functions with
some regularity and their discretizations on grids or point clouds. Neural
operators on point clouds, such as the direct spectral evaluation
of~\cite{lingsch2023beyond} and the graph-based operators
of~\cite{mousavi2025rigno,wen2025geometry}, treat the points as sampling
locations of an input function. In NEOPIC the points, with their weights,
are the input itself. The distinction is made precise in
Section~\secref{architecture} (Remark~\ref{rem:measure_input}).

\subsection{Training on several particle methods}

The training data are pairs of particle configurations and fields at the
particles, $\big(\mu^n_N,\{\mathbf{E}^{\mathrm{ref}}_p(t_n)\}_p\big)$, collected
along the trajectories of reference runs with a particle method, which we call
the teacher. Each teacher evaluates~\eqref{eq:kernel_form} with its own
kernel $K_h$, an approximation of the same kernel~\eqref{eq:exact_kernel}
with errors of a different nature. A single operator can therefore be trained on data from several
teachers and several physical scenarios (``mini-apps''). This also allows
each teacher's data to be used where it is accurate: for every mini-app, the
training data can be generated with the method best suited to it. The trained
operator inherits the error of its teacher in addition to its own learning
error (Remark~\ref{rem:resolution_mismatch}). Within one training set all
mini-apps use the same number of particles. The operator itself does not
depend on this number, nor on the discretization of the teacher, and NEOPIC
is evaluated with particle numbers different from those used in training
(Section~\secref{landau2d}). The training procedure and the data of the
individual experiments are given in Section~\secref{results}.

\subsection{Requirements on the learned operator}
\seclab{requirements}

From the role of $\mathbf{E}_\theta$ in~\eqref{eq:neopic_field} and the
interface~\eqref{eq:interface}, we require that the learned operator
\begin{enumerate}
  \item[(R1)] takes as input a weighted point measure with any number of
  particles at arbitrary positions, and depends on the particle data only
  through the measure, i.e.\ is invariant under permutations of the particles;
  \item[(R2)] returns the field at the particle positions, and can be
  evaluated at any other point;
  \item[(R3)] is one operator on measures with unit total charge, so that its
  outputs for different $N_p$ are consistent and its limit $N_p\to\infty$ is
  the same operator applied to the continuous density;
  \item[(R4)] does not transfer the particle data to a mesh, so that the
  particle noise is not aliased onto the resolved scales;
  \item[(R5)] is global, as the Coulomb interaction, and costs
  $\mathcal{O}(N_p)$ operations per evaluation for a fixed resolution of the
  operator, which is the minimum complexity needed by any particle-based method;
  \item[(R6)] is Lipschitz continuous in the particle positions, as required
  by the error analysis of Section~\secref{error}.
\end{enumerate}
A Fourier neural operator (FNO)~\cite{li2020fourier} on a uniform grid
violates (R1) and (R4), since the particle data must first be deposited on
the grid; operators with fixed sensor locations such as
DeepONet~\cite{lu2021learning} violate (R1). Graph-based
operators~\cite{mousavi2025rigno,wen2025geometry} can be made to satisfy
(R1)--(R5), with aggregations that are sums weighted by the particle weights
and a deterministic graph that is rebuilt at every time step as the particles
move. They propagate the long-range interaction through local message passing
on multiscale graphs, or through attention on latent tokens, and with hard
neighbourhoods (fixed radius or nearest neighbours) their output jumps when an
edge appears or disappears, which violates (R6). The FNO with the Fourier
transforms evaluated directly at the points~\cite{lingsch2023beyond}, written
with the transforms as integrals against the input measure, satisfies all six:
its spectral layers are global, the nonuniform transforms of type 1 and 2 map
the particles to Fourier modes and back without a mesh, with NUFFTs they cost
$\mathcal{O}(N_p)$ operations for a fixed number of modes, and they are smooth
in the positions. Moreover, its layers are exact integrals against the measure
at every $N_p$, and its spectral layers match the structure of the Poisson
operator, which is diagonal in Fourier space.
We call this operator the nonuniform Fourier neural operator (NUFNO). Its
structure is shown in Fig.~\figref{nufno_schematic}: the particle measure is
normalized to the torus, lifted pointwise, processed by Fourier layers whose
transforms are nonuniform transforms between the particles and a fixed set
of modes, and projected pointwise to the field at the particles. The
nonuniform transforms play the role of $\mathscr{R}$ and $\mathscr{R}^*$ in
PIF, and the spectral weights and nonlinearities replace the fixed multiplier
$-\mathrm{i}\mathbf{k}/|\mathbf{k}|^2$; NUFNO can thus be seen as a learned
generalization of PIF that is trained on data from any particle method.
NEOPIC itself is not tied to this choice: any neural operator that satisfies
(R1)--(R6) can be used in~\eqref{eq:neopic_field}, and the NUFNO is the
instance used in this work. The
next section defines the NUFNO, shows that it satisfies (R1)--(R4), and that it
is a representation equivalent neural operator; (R6) is
Lemma~\ref{lem:lipschitz}, for a fixed normalization of the positions
(\ref{rem:normalization}) and a periodic lifting
(Remark~\ref{rem:periodic_lifting}), and (R5) is shown in
Section~\secref{complexity}. In the remainder of the paper
we use NEOPIC both for the whole particle scheme including time integration with a learned field operator and, where no confusion arises, only for its learned 
field operator.

\begin{figure}[!htbp]
  \centering
  \includegraphics[width=\linewidth]{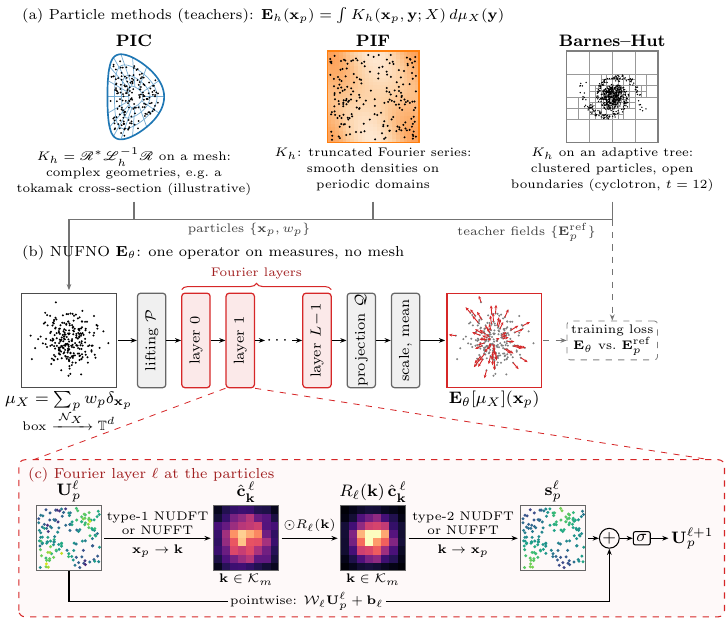}
  \caption{The nonuniform Fourier neural operator (NUFNO) of NEOPIC.
  (a) The training data come from particle methods with different
  kernels $K_h$ (Table~\tabref{instantiations}): PIC uses a mesh, PIF a
  truncated Fourier basis, and Barnes--Hut multipole expansions on the cells
  of a tree. The examples illustrate their complementary strengths: PIF for
  smooth densities on periodic domains, Barnes--Hut for clustered particles
  and open boundaries (particles sampled from the density of the cyclotron
  mini-app at $t=12$), and PIC for complex geometries (a tokamak
  cross-section, for illustration only; the PIC teachers of this work use
  uniform periodic meshes, Section~\secref{results}). All of them provide the
  particles $\{\mathbf{x}_p,w_p\}$ and the field at the particles. (b) The NUFNO maps the charge measure $\mu_X$,
  normalized from its bounding box to the torus by $\mathcal{N}_X$,
  Eq.~\eqref{eq:minmax}, through a pointwise lifting $\mathcal{P}$, $L$ Fourier
  layers and a pointwise projection $\mathcal{Q}$ to the field at the
  particles, followed by the scaling and mean subtraction
  of~\eqref{eq:output}. (c) A Fourier layer: a type-1 nonuniform transform (NUDFT or NUFFT)
  from the particle features $\mathbf{U}^\ell_p$ to the coefficients on the
  modes $\mathcal{K}_m$~\eqref{eq:forward}, multiplication by the spectral
  weights, a type-2 nonuniform transform back to the
  particles~\eqref{eq:inverse}, and the pointwise update~\eqref{eq:disc_layer}.
  No mesh is used at any stage. The point clouds and the spectra are
  illustrations; in the PIF panel the colour shows the smooth density from
  which the particles are sampled.}
  \figlab{nufno_schematic}
\end{figure}

\section{Nonuniform Fourier neural operator}
\seclab{architecture}
In this section we define the nonuniform Fourier neural operator and its discrete evaluation,
and show that, unlike a grid-based FNO, it is a representation equivalent
neural operator (ReNO) in the sense of~\cite{bartolucci2023representation}:
exactly when the transforms are evaluated directly, and up to the NUFFT
tolerance when they are evaluated with NUFFTs. This property is the reason
why the learned operator does not alias the particle noise.

\subsection{Notation}
\seclab{notation}

Throughout the paper, $|\mathbf{a}|$ is the Euclidean norm of a vector
$\mathbf{a}\in\mathbb{R}^n$ or $\mathbb{C}^n$, e.g.\ of the $d$ components of a
position or of the $d_v$ channels of a feature vector, $|\mathbf{a}|_1$ the
sum of the absolute values of its components, and $\|A\|$ the matrix norm
induced by $|\cdot|$, i.e.\ the largest singular value of $A$. On the torus
$\mathbb{T}^d$, $|\mathbf{x}-\mathbf{y}|$ denotes the periodic (minimum-image)
distance. For a scalar, $|a|$ is the absolute value; for a finite set, such as
the set of modes $\mathcal{K}_m$, $|\mathcal{K}_m|$ is the number of its
elements; and for a domain, $|\Omega|$ is its volume. For functions, $\|\cdot\|_{L^p}$ is the $L^p$ norm over the domain
indicated, and $L^2(\mu)$ is the space of functions that are square integrable
with respect to a measure $\mu$, with inner product
$\langle\mathbf{f},\mathbf{g}\rangle=\int\mathbf{f}\cdot\overline{\mathbf{g}}\,d\mu$; in a
general Hilbert space, $\langle\cdot,\cdot\rangle$ and $\|\cdot\|$ are its inner
product and norm, and $\mathrm{dist}(\mathbf{a},V)=\inf_{\mathbf{b}\in V}\|\mathbf{a}-\mathbf{b}\|$ is the distance of an element
$\mathbf{a}$ from a set $V$. A vector or sequence with entries $a_p$ indexed by
$p$ is written $(a_p)_p$, or $(a_p)_{p=1}^{N_p}$, and $(a_{p,i})_{p,i}$ for two
indices; for example $(\mathbf{f}(\mathbf{x}_p))_p$ is the vector of the values of
$\mathbf{f}$ at the particles. $\delta_{\mathbf{x}}$ is the Dirac measure at
$\mathbf{x}$, i.e.\ $\int\varphi\,d\delta_{\mathbf{x}}=\varphi(\mathbf{x})$;
$X\sim\rho$ means that the random variable $X$ has the probability density
$\rho$; iid stands for independent and identically distributed; and
$\mathbb{E}$ is the expectation. $\mathcal{M}_1(\Omega)$ is the set of
probability measures on $\Omega$. A map $F$ is Lipschitz with constant $L_F$ if
$|F(\mathbf{a})-F(\mathbf{b})|\le L_F|\mathbf{a}-\mathbf{b}|$.

\subsection{Particles as a measure}

Positions are normalized per sample, i.e.\ at every time step, by a min--max
normalization based on the particle cloud, without padding, for all
mini-apps. With $\mathbf{a}_X$ and $\mathbf{b}_X$ the componentwise
minimum and maximum of the positions, the affine map
\begin{equation}
  \mathcal{N}_X(\mathbf{x})=-\pi\mathbf{1}+2\pi\,(\mathbf{x}-\mathbf{a}_X)\oslash(\mathbf{b}_X-\mathbf{a}_X),
  \label{eq:minmax}
\end{equation}
with componentwise division $\oslash$, maps the bounding box of the cloud onto
the torus $\mathbb{T}^d=[-\pi,\pi)^d$, $d\in\{1,2,3\}$. In the following,
$\mathbf{x}_p$ denotes the normalized positions; the consequences of the
normalization are discussed in \ref{rem:normalization}. A configuration of $N_p$
particles with positions $X=(\mathbf{x}_p)_{p=1}^{N_p}$ and weights $w_p>0$,
$\sum_pw_p=1$, defines the charge measure
\begin{equation}
  \mu_X=\sum_{p=1}^{N_p}w_p\,\delta_{\mathbf{x}_p},
  \qquad
  \hat\mu_{\mathbf{k}}=\int_{\mathbb{T}^d}e^{-\mathrm{i}\mathbf{k}\cdot\mathbf{y}}\,d\mu(\mathbf{y})
  =\sum_{p=1}^{N_p}w_p\,e^{-\mathrm{i}\mathbf{k}\cdot\mathbf{x}_p},
  \label{eq:charge_measure}
\end{equation}
normalized to unit total charge. In all our experiments the particles carry
equal charges, $w_p=1/N_p$, and we then write $\mu_N=\mu_X$; general weights
cover importance-sampled initial data. In the limit $N_p\to\infty$ the empirical
measure approaches the absolutely continuous measure $\rho\,d\mathbf{x}$,
defined by $(\rho\,d\mathbf{x})(A)=\int_A\rho\,d\mathbf{x}$ for every region
$A$, where $\rho$ is the normalized charge density (unit total charge;
$\rho_e=Q_{\mathrm{tot}}\rho$ in the notation of Section~\secref{vp}). The
field operator of the Vlasov--Poisson system, \eqref{eq:exact_field_op} below,
is defined for both, and so is NEOPIC. Let
$\mathcal{K}_m=\{-m,\dots,m-1\}^d$ be the set of retained wave vectors, with
$M=|\mathcal{K}_m|=(2m)^d$.

\subsection{The operator}

For a probability measure $\mu$ on $\mathbb{T}^d$, NEOPIC is defined by a
lifting, $L$ Fourier layers and a projection,
\begin{align}
  \mathbf{u}^0_\mu(\mathbf{x}) &= \mathcal{P}(\mathbf{x}),
  \label{eq:err_lift}\\
  \mathbf{u}^{\ell+1}_\mu(\mathbf{x}) &= \sigma\Big(
     \mathcal{W}_\ell\,\mathbf{u}^\ell_\mu(\mathbf{x}) + \mathbf{b}_\ell
     + \mathrm{Re}\!\sum_{\mathbf{k}\in\mathcal{K}_m}
       R_\ell(\mathbf{k})\,\hat{\mathbf{c}}^{\,\ell}_\mu(\mathbf{k})\,
       e^{\mathrm{i}\mathbf{k}\cdot\mathbf{x}} \Big),
  \label{eq:err_layer}\\
  \hat{\mathbf{c}}^{\,\ell}_\mu(\mathbf{k})
   &= \int_{\mathbb{T}^d}\mathbf{u}^\ell_\mu(\mathbf{y})\,
     e^{-\mathrm{i}\mathbf{k}\cdot\mathbf{y}}\,d\mu(\mathbf{y}),
  \label{eq:err_coef}\\
  \mathcal{G}_\theta[\mu](\mathbf{x}) &= \mathcal{Q}\big(\mathbf{u}^L_\mu(\mathbf{x})\big),
  \label{eq:err_proj}
\end{align}
for $\ell=0,\dots,L-1$ and $\mathbf{x}\in\mathbb{T}^d$. Here
$\mathbf{u}^\ell_\mu(\mathbf{x})\in\mathbb{R}^{d_v}$, the lifting
$\mathcal{P}:[-\pi,\pi)^d\to\mathbb{R}^{d_v}$, defined on the fundamental
domain of $\mathbb{T}^d$, the skip weights
$\mathcal{W}_\ell\in\mathbb{R}^{d_v\times d_v}$ and the projection
$\mathcal{Q}:\mathbb{R}^{d_v}\to\mathbb{R}^d$ act pointwise, $\sigma$ is applied
componentwise, and $R_\ell(\mathbf{k})\in\mathbb{C}^{d_v\times d_v}$ are the
spectral weights. The real part is needed because $\mathcal{K}_m$ is not
symmetric under $\mathbf{k}\mapsto-\mathbf{k}$. As in~\cite{li2020fourier},
the activation is omitted in the last Fourier layer: for $\ell=L-1$, $\sigma$
in~\eqref{eq:err_layer} is replaced by the identity. In this work $\mathcal{P}$ is a
single linear layer applied to the normalized positions, which is not
periodic (Remark~\ref{rem:periodic_lifting}),
$\mathcal{W}_\ell$ a convolution with kernel size one, $\mathcal{Q}$ a two-layer
MLP with hidden width $h$, and $\sigma$ is the GELU, which is smooth and
Lipschitz with $L_\sigma\approx1.13$, as required in
Sections~\secref{error} and~\secref{noise}. The field used to push the
particles is
\begin{equation}
  \mathbf{E}_\theta[\mu](\mathbf{x})=s\Big(\mathcal{G}_\theta[\mu](\mathbf{x})
  -c_{\mathrm{per}}\int_{\mathbb{T}^d}\mathcal{G}_\theta[\mu]\,d\mu\Big),
  \label{eq:output}
\end{equation}
where $s$ is the constant that restores physical units
($Q_{\mathrm{tot}}/|\Omega|^{(d-1)/d}$, see Section~\secref{results}), and
$c_{\mathrm{per}}=1$ for periodic and $c_{\mathrm{per}}=0$ for open boundaries; other field
boundary conditions are discussed in Remark~\ref{rem:field_bc}. On periodic domains
the subtraction of the mean makes the sum of the forces on the particles
vanish, so that momentum is conserved. In open domains no correction is
applied; the self-forces of the exact free-space field also sum to zero, but
the learned field is not constrained to do so.

Three properties of~\eqref{eq:err_lift}--\eqref{eq:output} are used throughout
the paper. First, the measure enters only through the coefficients
$\hat{\mathbf{c}}^{\,\ell}_\mu(\mathbf{k})$, which are integrals against $\mu$;
all other operations are pointwise. Second, the functions
$\mathbf{u}^\ell_\mu$ are defined for every $\mathbf{x}\in\mathbb{T}^d$, not
only at the particles, so the field can be evaluated anywhere. Third, the
definition refers neither to the number of particles nor to a grid. It
defines one operator on probability measures, which can be applied to an
empirical measure with any number of particles as well as to the measure
$\rho\,d\mathbf{x}$ of a continuous density. The normalization of the weights to unit total
charge, i.e.\ a charge $1/N_p$ per particle, is what makes this possible.

\paragraph{Constants of the network}
\seclab{constants}
The bounds in this and the following sections use the following constants.
$L_\sigma$, $L_{\mathcal{P}}$ and $L_{\mathcal{Q}}$ are Lipschitz constants of
$\sigma$, of $\mathcal{P}$ (with respect to the periodic distance) and of
$\mathcal{Q}$, and $\sigma(0)$ is the value of the activation at zero, which
vanishes for ReLU and GELU. For the last Fourier layer, $\sigma$ is the
identity, with $L_\sigma=1$ and $\sigma(0)=0$. The spectral weights of layer
$\ell$ enter through
\begin{equation}
  r^\ell_0=\sum_{\mathbf{k}\in\mathcal{K}_m}\|R_\ell(\mathbf{k})\|,
  \qquad
  r^\ell_1=\sum_{\mathbf{k}\in\mathcal{K}_m}|\mathbf{k}|\,\|R_\ell(\mathbf{k})\|,
  \label{eq:r_constants}
\end{equation}
the features are bounded by the constants
\begin{equation}
  M_0=\sup_{\mathbf{x}}|\mathcal{P}(\mathbf{x})|,\qquad
  M_{\ell+1}=|\sigma(0)|+L_\sigma\big((\|\mathcal{W}_\ell\|+r^\ell_0)M_\ell+|\mathbf{b}_\ell|\big)
  \label{eq:M_bound}
\end{equation}
(Lemma~\ref{lem:lipschitz}), and
$\Gamma_\ell=L_\sigma\big(\|\mathcal{W}_\ell\|+r^\ell_0\big)$ is a Lipschitz
constant of layer $\ell$ with respect to the maximum over the particles of the
features.

\subsection{Discrete evaluation}

For a particle measure $\mu_X$ let $\mathbf{U}^\ell_p\in\mathbb{R}^{d_v}$ be the
features of particle $p$ in layer $\ell$, with $\mathbf{U}^0_p=\mathcal{P}(\mathbf{x}_p)$.
A Fourier layer consists of a forward transform, a spectral product, an
inverse transform and the pointwise update,
\begin{align}
  \hat{\mathbf{c}}^{\,\ell}_{\mathbf{k}} &= \sum_{p=1}^{N_p}w_p\,\mathbf{U}^\ell_p\,
     e^{-\mathrm{i}\mathbf{k}\cdot\mathbf{x}_p},\qquad\mathbf{k}\in\mathcal{K}_m,
  \label{eq:forward}\\
  \mathbf{s}^\ell_p &= \sum_{\mathbf{k}\in\mathcal{K}_m}R_\ell(\mathbf{k})\,
     \hat{\mathbf{c}}^{\,\ell}_{\mathbf{k}}\,e^{\mathrm{i}\mathbf{k}\cdot\mathbf{x}_p},
     \qquad p=1,\dots,N_p,
  \label{eq:inverse}\\
  \mathbf{U}^{\ell+1}_p &= \sigma\big(\mathcal{W}_\ell\,\mathbf{U}^\ell_p
     +\mathbf{b}_\ell+\mathrm{Re}\,\mathbf{s}^\ell_p\big),
  \label{eq:disc_layer}
\end{align}
with $\sigma$ replaced by the identity for $\ell=L-1$, and the output is
$\mathbf{E}_p=s\big(\mathcal{Q}(\mathbf{U}^L_p)-\sum_qw_q\mathcal{Q}(\mathbf{U}^L_q)\big)$.
With the nonuniform Vandermonde matrix
$V_{p\mathbf{k}}=e^{\mathrm{-i}\mathbf{k}\cdot\mathbf{x}_p}\in\mathbb{C}^{N_p\times M}$
and $D_w=\mathrm{diag}(w_p)$, \eqref{eq:forward} is $\hat{\mathbf{c}}=VD_w\mathbf{U}$
(a type-1 transform) and \eqref{eq:inverse} is $\mathbf{s}=V^*(R\odot\hat{\mathbf{c}})$
(a type-2 transform), where $(R\odot\hat{\mathbf{c}})_{\mathbf{k}}=R_\ell(\mathbf{k})\,\hat{\mathbf{c}}_{\mathbf{k}}$ is the product with the spectral weights mode by mode; $V$ and $V^*$ act on each of the $d_v$ channels. They are evaluated
either directly (NUDFT), as in the direct spectral evaluation
of~\cite{lingsch2023beyond}, or with NUFFTs~\cite{barnett2019parallel,shih2021cufinufft},
which compute the same sums to a prescribed tolerance $\varepsilon$ at a cost
that is linear in $N_p$ for a fixed number of modes
(Section~\secref{complexity}). This is the architecture
of~\cite{lingsch2023beyond}, in which the weights $w_p$ make the forward
transform an integral against the charge measure. In our implementation,
following~\cite{lingsch2023beyond}, each transform is scaled by
$\sqrt{d/N_p}$, so that their composition carries the weights $d/N_p$. This
is equivalent to $w_p=1/N_p$ with $R_\ell(\mathbf{k})$ replaced by
$d\,R_\ell(\mathbf{k})$, and all statements below apply with these effective
spectral weights.

\begin{lemma}[Exactness of the particle evaluation]
\label{lem:exact_eval}
If~\eqref{eq:forward}--\eqref{eq:disc_layer} are evaluated exactly (NUDFT),
then $\mathbf{U}^\ell_p=\mathbf{u}^\ell_{\mu_X}(\mathbf{x}_p)$ for all $\ell$
and $p$, and $\mathbf{E}_p=\mathbf{E}_\theta[\mu_X](\mathbf{x}_p)$. In
particular, the output depends on the particle data $(X,w)$ only through the
measure $\mu_X$: it is invariant under permutations of the particles, under
splitting a particle into several particles of smaller weight at the same
position, and under merging particles at the same position.
\end{lemma}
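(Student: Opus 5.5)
The proof is an induction on the layer index $\ell$. The claim at each stage is that the discrete features coincide with the continuous features evaluated at the particles. The base case $\ell=0$ is immediate: by~\eqref{eq:err_lift}, $\mathbf{u}^0_{\mu_X}(\mathbf{x}_p)=\mathcal{P}(\mathbf{x}_p)=\mathbf{U}^0_p$.

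For the inductive step, assume $\mathbf{U}^\ell_p=\mathbf{u}^\ell_{\mu_X}(\mathbf{x}_p)$ for all $p$. Since $\mu_X=\sum_qw_q\delta_{\mathbf{x}_q}$, the coefficient~\eqref{eq:err_coef} is the integral of a function against a finite sum of Dirac measures, so by $\int\varphi\,d\delta_{\mathbf{x}}=\varphi(\mathbf{x})$,
\[
\hat{\mathbf{c}}^{\,\ell}_{\mu_X}(\mathbf{k})=\sum_{q=1}^{N_p}w_q\,\mathbf{u}^\ell_{\mu_X}(\mathbf{x}_q)\,e^{-\mathrm{i}\mathbf{k}\cdot\mathbf{x}_q}=\hat{\mathbf{c}}^{\,\ell}_{\mathbf{k}}
\]
by the hypothesis and~\eqref{eq:forward}. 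Only the values of $\mathbf{u}^\ell_{\mu_X}$ at the particle positions enter here. Substituting $\mathbf{x}=\mathbf{x}_p$ in~\eqref{eq:err_layer} turns the spectral sum into $\mathrm{Re}\,\mathbf{s}^\ell_p$ of~\eqref{eq:inverse}. The remaining operations $\mathcal{W}_\ell$, $\mathbf{b}_\ell$ and $\sigma$ (or the identity for $\ell=L-1$) act pointwise, so~\eqref{eq:disc_layer} gives $\mathbf{U}^{\ell+1}_p=\mathbf{u}^{\ell+1}_{\mu_X}(\mathbf{x}_p)$. After the last layer, applying $\mathcal{Q}$ gives $\mathcal{Q}(\mathbf{U}^L_p)=\mathcal{G}_\theta[\mu_X](\mathbf{x}_p)$. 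The mean term in~\eqref{eq:output} is again an integral against $\mu_X$, namely $\int\mathcal{G}_\theta[\mu_X]\,d\mu_X=\sum_qw_q\mathcal{Q}(\mathbf{U}^L_q)$, which yields $\mathbf{E}_p=\mathbf{E}_\theta[\mu_X](\mathbf{x}_p)$.

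There is a bookkeeping point in this last step. The discrete output formula subtracts the mean unconditionally, while~\eqref{eq:output} carries the factor $c_{\mathrm{per}}$. I would state the identity with $c_{\mathrm{per}}$ inserted in the discrete formula, or equivalently restrict to the periodic case, and note that the open case is the same argument without the mean term. I would also remark that the $\sqrt{d/N_p}$ scaling of the implementation is absorbed into the effective spectral weights, as the paper explains before the lemma.

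The invariance statements follow because the continuous definition~\eqref{eq:err_lift}--\eqref{eq:output} refers to the particles only through $\mu$. Permuting the particles, splitting one particle at $\mathbf{x}$ into several of total weight $w$ at the same $\mathbf{x}$, or merging particles at a common position all leave $\mu_X$ unchanged. Hence the function $\mathbf{E}_\theta[\mu_X]$ is unchanged, and so are its values at every particle position, including all copies of a split particle.

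The main obstacle is not the computation but one subtlety. The statement implicitly takes the normalized positions as given. The min--max normalization $\mathcal{N}_X$ depends on $X$ only through the bounding box of the support, which is itself invariant under all three operations. I would remark on this so that the invariance also holds for the raw physical positions.
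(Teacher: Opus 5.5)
Your proof is correct and follows the paper's argument: induction on $\ell$, identifying~\eqref{eq:forward} with the integral~\eqref{eq:err_coef} against $\mu_X$ and~\eqref{eq:inverse}--\eqref{eq:disc_layer} with~\eqref{eq:err_layer} at $\mathbf{x}=\mathbf{x}_p$, the mean as a sum over the particles, and the invariances from the fact that the continuous definition depends on the particles only through $\mu$. Your side remarks are accurate and go slightly beyond the paper's proof: the discrete output formula does omit the factor $c_{\mathrm{per}}$, and the min--max normalization depends only on the bounding box of the support (the paper makes the latter point in its appendix on the normalization).
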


\begin{proof}
For $\ell=0$ the claim is the definition. If
$\mathbf{U}^\ell_q=\mathbf{u}^\ell_{\mu_X}(\mathbf{x}_q)$ for all $q$, then
\eqref{eq:forward} is the integral~\eqref{eq:err_coef} for $\mu=\mu_X$, and
\eqref{eq:inverse}--\eqref{eq:disc_layer} is~\eqref{eq:err_layer} at
$\mathbf{x}=\mathbf{x}_p$. The mean in~\eqref{eq:output} is likewise the sum
over the particles. The operations in~\eqref{eq:err_lift}--\eqref{eq:output}
depend on $\mu$ only, which gives the invariances.
\end{proof}

The lemma is elementary, but it is the property that distinguishes the
nonuniform evaluation from a grid-based one. \emph{The discrete computation is not
an approximation of the operator~\eqref{eq:err_lift}--\eqref{eq:output} but
its exact evaluation for the input that the particle method provides, at
every particle number.}

\subsection{Representation equivalence}
\seclab{reno}

We recall the framework of~\cite{bartolucci2023representation}. Let
$\mathcal{H}$ be a separable Hilbert space. A countable family
$\Psi=\{\psi_i\}_{i\in I}\subset\mathcal{H}$ is a frame sequence if it is a
frame for its closed span $\mathcal{M}_\Psi$, i.e.\ if there are constants
$0<A_\Psi\le B_\Psi$ with
$A_\Psi\|f\|^2\le\sum_i|\langle f,\psi_i\rangle|^2\le B_\Psi\|f\|^2$ for all
$f\in\mathcal{M}_\Psi$, where $\langle\cdot,\cdot\rangle$ and $\|\cdot\|$ are the
inner product and the norm of $\mathcal{H}$. Its synthesis operator is
$T_\Psi:\ell^2(I)\to\mathcal{H}$, $T_\Psi\mathbf{c}=\sum_ic_i\psi_i$, which maps
coefficients to a function. Its adjoint, the analysis operator
$T^*_\Psi f=(\langle f,\psi_i\rangle)_{i\in I}$, maps a function to its inner
products with the frame elements, and $S=T_\Psi T^*_\Psi$ is the frame
operator, which is invertible on $\mathcal{M}_\Psi$. The pseudo-inverse
$T_\Psi^\dagger f=(\langle f,S^{-1}\psi_i\rangle)_{i\in I}$ maps a function to its
discrete representation, the coefficients with respect to the canonical dual
frame, and $T_\Psi T_\Psi^\dagger$ is the orthogonal projection onto $\mathcal{M}_\Psi$.
The implementation of an operator $U:\mathcal{H}\to\mathcal{Y}$ is performed through a 
discrete map $u:\ell^2(I)\to\ell^2(J)$ for frame sequences $\Psi$ of
$\mathcal{H}$ and $\Phi$ of $\mathcal{Y}$.

\begin{definition}[Operator aliasing and ReNO~{\cite[Def.~3.1, Def.~3.4, Sec.~3.4]{bartolucci2023representation}}]
\label{def:reno}
The aliasing error of $(U,u)$ is
$\varepsilon(U,u,\Psi,\Phi)=U-T_\Phi\circ u\circ T_\Psi^\dagger$.
The pair $(U,u)$ is a representation equivalent neural operator (ReNO) if $\varepsilon(U,u,\Psi,\Phi)=0$ for every pair of
frame sequences with $\mathrm{Dom}\,U\subseteq\mathcal{M}_\Psi$ and
$\mathrm{Ran}\,U\subseteq\mathcal{M}_\Phi$, and an $\varepsilon$-ReNO if
$\|\varepsilon(U,u,\Psi,\Phi)\|\le\varepsilon$ for all such pairs. For
linear $U$, $\|\varepsilon(U,u,\Psi,\Phi)\|$ is the operator norm. For
nonlinear $U$ the aliasing error is a nonlinear map, for which no operator
norm is defined, and we measure instead the largest error over a bounded set
$B$ of inputs,
$\|\varepsilon\|_B=\sup_{f\in B}\big\|Uf-T_\Phi\, u(T_\Psi^\dagger f)\big\|$.
\end{definition}

A ReNO has a discrete representation $u(\Psi,\Phi)=T_\Phi^\dagger\circ U\circ T_\Psi$
for each admissible discretization, and any two of them are equivalent: they
are representations of the same operator~\cite[Prop.~3.6]{bartolucci2023representation}.
ReNOs are stable under composition~\cite[Prop.~3.7]{bartolucci2023representation},
so it suffices to examine the layers. Pointwise linear maps, such as the
lifting and the skip connections, and the spectral product are ReNOs. 

\paragraph{The hidden spaces of NEOPIC} The transforms
in~\eqref{eq:err_coef} are integrals against the input measure, so a layer
needs the hidden functions only on the support of $\mu$, and only the values
at the particles are ever used. We therefore take as hidden spaces
\begin{equation}
  \mathcal{H}_\mu=L^2\big(\mathbb{T}^d,\mu;\mathbb{R}^{d_v}\big),
  \label{eq:hidden_space}
\end{equation}
and as layer operators $U_\ell:\mathcal{H}_\mu\to\mathcal{H}_\mu$ the maps
of~\eqref{eq:err_layer},
\begin{equation}
  (U_\ell\mathbf{f})(\mathbf{x})=\sigma\Big(\mathcal{W}_\ell\mathbf{f}(\mathbf{x})+\mathbf{b}_\ell
  +\mathrm{Re}\sum_{\mathbf{k}\in\mathcal{K}_m}R_\ell(\mathbf{k})
  \Big(\int_{\mathbb{T}^d}\mathbf{f}(\mathbf{y})e^{-\mathrm{i}\mathbf{k}\cdot\mathbf{y}}\,d\mu(\mathbf{y})\Big)
  e^{\mathrm{i}\mathbf{k}\cdot\mathbf{x}}\Big).
  \label{eq:layer_operator}
\end{equation}
These are well defined on $\mu$-equivalence classes,
and they map bounded functions to bounded functions. The lifting provides the
input $\mathbf{u}^0_\mu=\mathcal{P}|_{\mathrm{supp}\,\mu}\in\mathcal{H}_\mu$, and
the output map $U_{\mathcal{Q}}:\mathcal{H}_\mu\to L^2(\mathbb{T}^d,\mu;\mathbb{R}^d)$,
$\mathbf{f}\mapsto s\big(\mathcal{Q}\circ\mathbf{f}-\int\mathcal{Q}\circ\mathbf{f}\,d\mu\big)$,
is~\eqref{eq:output}. For a particle measure, the natural discretization is
by the values at the particles: let
\begin{equation}
  \Psi_X=\big\{\chi_{\{\mathbf{x}_p\}}\,\mathbf{e}_i:\ p=1,\dots,N_p,\ i=1,\dots,d_v\big\},
  \label{eq:particle_frame}
\end{equation}
where $\mathbf{e}_i$ are the unit vectors of $\mathbb{R}^{d_v}$ and
$\chi_{\{\mathbf{x}_p\}}$ is the indicator function of the point
$\mathbf{x}_p$. We assume the positions $\mathbf{x}_p$ to be distinct, which
holds with probability one for random particles and otherwise can be
achieved by merging coincident particles (Lemma~\ref{lem:exact_eval}).

\begin{theorem}[NEOPIC with NUDFTs is a ReNO]
\label{thm:reno}
Let $\mu_X$ be a particle measure~\eqref{eq:charge_measure} with distinct
positions. Then:
\begin{enumerate}
  \item[(i)] $\Psi_X$ is an orthogonal basis of $\mathcal{H}_{\mu_X}$ with frame
  bounds $A=\min_pw_p$ and $B=\max_pw_p$ (a tight frame for equal weights).
  Its pseudo-inverse is the evaluation at the particles,
  $T_{\Psi_X}^\dagger\mathbf{f}=\big(\mathbf{f}(\mathbf{x}_p)\big)_p$, and its
  synthesis operator maps particle values to the function with these values on
  $\mathrm{supp}\,\mu_X$.
  \item[(ii)] For every $\ell$, the discrete map
  $u_\ell=T_{\Psi_X}^\dagger\circ U_\ell\circ T_{\Psi_X}$ is the Fourier
  layer~\eqref{eq:forward}--\eqref{eq:disc_layer} evaluated with NUDFTs, and
  $\varepsilon(U_\ell,u_\ell,\Psi_X,\Psi_X)=0$. The same holds for the output
  map $U_{\mathcal{Q}}$ and its discrete form.
  \item[(iii)] Every pair of frames $(\Psi,\Phi)$ of $\mathcal{H}_{\mu_X}$
  satisfies the conditions of Definition~\ref{def:reno}, and
  $u_\ell(\Psi,\Phi)=T^\dagger_\Phi\circ U_\ell\circ T_\Psi$ has zero aliasing
  error. Hence each layer is a ReNO, and so is the composition
  $U_{\mathcal{Q}}\circ U_{L-1}\circ\dots\circ U_0$, whose discrete
  representation in the particle frame is NEOPIC evaluated with NUDFTs.
\end{enumerate}
\end{theorem}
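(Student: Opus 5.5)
The proof rests on one observation: for a particle measure with distinct positions, $\mathcal{H}_{\mu_X}=L^2(\mathbb{T}^d,\mu_X;\mathbb{R}^{d_v})$ is finite dimensional. It is isometric to $\mathbb{R}^{N_p d_v}$ with the weighted inner product $\langle\mathbf{f},\mathbf{g}\rangle=\sum_p w_p\,\mathbf{f}(\mathbf{x}_p)\cdot\mathbf{g}(\mathbf{x}_p)$, since two functions are $\mu_X$-equivalent exactly when they agree at the particles. Everything then reduces to linear algebra in this space together with Lemma~\ref{lem:exact_eval}.

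\emph{Part (i).} I would first compute the Gram matrix of $\Psi_X$:
\[
\langle\chi_{\{\mathbf{x}_p\}}\mathbf{e}_i,\chi_{\{\mathbf{x}_q\}}\mathbf{e}_j\rangle=w_p\,\delta_{pq}\delta_{ij}.
\]
Distinctness of the positions is used here, since it makes $\chi_{\{\mathbf{x}_p\}}\chi_{\{\mathbf{x}_q\}}=0$ $\mu_X$-a.e.\ for $p\ne q$. Hence the family is orthogonal, and it spans $\mathcal{H}_{\mu_X}$ because every $\mathbf{f}$ equals $\sum_{p,i}f_i(\mathbf{x}_p)\chi_{\{\mathbf{x}_p\}}\mathbf{e}_i$ $\mu_X$-a.e. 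The analysis operator is $T^*\mathbf{f}=(w_p f_i(\mathbf{x}_p))_{p,i}$, so
\[
\sum|\langle\mathbf{f},\psi\rangle|^2=\sum_p w_p^2|\mathbf{f}(\mathbf{x}_p)|^2 .
\]
Comparing this with $\|\mathbf{f}\|^2=\sum_p w_p|\mathbf{f}(\mathbf{x}_p)|^2$ gives the bounds $\min_p w_p$ and $\max_p w_p$. The frame operator is $S\mathbf{f}=\sum_p w_p\mathbf{f}(\mathbf{x}_p)\chi_{\{\mathbf{x}_p\}}$, i.e.\ multiplication by $w_p$ at $\mathbf{x}_p$. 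Then $S^{-1}\psi_{p,i}=w_p^{-1}\psi_{p,i}$, and therefore
\[
T^\dagger\mathbf{f}=\big(\langle\mathbf{f},w_p^{-1}\psi_{p,i}\rangle\big)_{p,i}=\big(f_i(\mathbf{x}_p)\big)_{p,i}.
\]
The synthesis statement is immediate from the definition of $T_{\Psi_X}$.

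\emph{Part (ii).} The operator $T_{\Psi_X}$ sends $\mathbf{U}$ to the function taking the values $\mathbf{U}_p$ on $\mathrm{supp}\,\mu_X$. Applying $U_\ell$ from \eqref{eq:layer_operator}, the coefficient integral against $\mu_X$ becomes exactly \eqref{eq:forward}, and evaluating the result at $\mathbf{x}_p$ (this is $T^\dagger$) gives \eqref{eq:inverse}--\eqref{eq:disc_layer}. This is the inductive step of Lemma~\ref{lem:exact_eval}. For zero aliasing, write
\[
T\,u_\ell\,T^\dagger=(TT^\dagger)\,U_\ell\,(TT^\dagger),
\]
and note that $TT^\dagger$ is the orthogonal projection onto $\mathcal{M}_{\Psi_X}=\mathcal{H}_{\mu_X}$, which is the identity. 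Hence $\varepsilon=U_\ell-U_\ell=0$; this holds for nonlinear $U_\ell$ too, since only the identity projection is used. The output map $U_{\mathcal{Q}}$ is handled identically, with the frame of $L^2(\mu_X;\mathbb{R}^d)$ as output frame and the mean integral becoming $\sum_q w_q\mathcal{Q}(\mathbf{U}_q)$.

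\emph{Part (iii).} The key point is that any frame sequences $\Psi,\Phi$ with $\mathrm{Dom}\,U_\ell\subseteq\mathcal{M}_\Psi$ and $\mathrm{Ran}\subseteq\mathcal{M}_\Phi$ must have closed spans equal to the whole (finite-dimensional) space. So $T_\Psi T_\Psi^\dagger=T_\Phi T_\Phi^\dagger=I$, and the same computation as in (ii) gives
\[
T_\Phi\,(T^\dagger_\Phi U_\ell T_\Psi)\,T^\dagger_\Psi=U_\ell .
\]
Composition then follows from \cite[Prop.~3.7]{bartolucci2023representation}, or directly, because the intermediate factors $T^\dagger T$ telescope to the identity on coefficients in the range of $T^\dagger$. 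In the particle frame, the resulting composite discrete map is NEOPIC with NUDFTs by (ii) and Lemma~\ref{lem:exact_eval}.

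I expect the main obstacle to be the precise handling of $\mathrm{Dom}\,U_\ell$. I will take $\mathcal{H}_{\mu_X}$ itself as the domain, noting that all functions are bounded there because the space is finite dimensional. I will also make sure the pseudo-inverse identity $TT^\dagger=I$ is invoked only for frames spanning $\mathcal{H}_{\mu_X}$, and I will remark that the aliasing bound is meant in the sense of Definition~\ref{def:reno} for nonlinear maps, where zero error holds pointwise on every $\mathbf{f}$.
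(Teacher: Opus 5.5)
Your proposal is correct and follows the paper's proof essentially step for step: the Gram and frame-operator computation in (i), the cancellation $(T_{\Psi_X}T_{\Psi_X}^\dagger)\,U_\ell\,(T_{\Psi_X}T_{\Psi_X}^\dagger)=U_\ell$ in (ii), and in (iii) the fact that frames of the finite-dimensional space $\mathcal{H}_{\mu_X}$ span it, together with \cite[Prop.~3.7]{bartolucci2023representation}. One small tightening in (iii): admissibility forces $\mathcal{M}_\Psi=\mathcal{H}_{\mu_X}$ but only $\mathcal{M}_\Phi\supseteq\mathrm{Ran}\,U_\ell$, which can be a proper subspace (e.g.\ for zero weights and biases), so instead of claiming $T_\Phi T_\Phi^\dagger=I$ use that $T_\Phi T_\Phi^\dagger$ is the orthogonal projection onto $\mathcal{M}_\Phi$ and hence fixes every $U_\ell\mathbf{f}$.
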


\begin{proof}
(i) Since $\mu_X$ is supported on the $N_p$ distinct points, two functions are
equal in $\mathcal{H}_{\mu_X}$ if and only if they agree at the particles, so
$\mathcal{H}_{\mu_X}$ has dimension $N_pd_v$ and
$\|\mathbf{f}\|^2=\sum_pw_p|\mathbf{f}(\mathbf{x}_p)|^2$. The elements
of~\eqref{eq:particle_frame} are orthogonal, with
$\|\chi_{\{\mathbf{x}_p\}}\mathbf{e}_i\|^2=w_p$ and
$\langle\mathbf{f},\chi_{\{\mathbf{x}_p\}}\mathbf{e}_i\rangle=w_pf_i(\mathbf{x}_p)$.
Hence
$\sum_{p,i}|\langle\mathbf{f},\chi_{\{\mathbf{x}_p\}}\mathbf{e}_i\rangle|^2=\sum_pw_p^2|\mathbf{f}(\mathbf{x}_p)|^2$
lies between $\min_pw_p\|\mathbf{f}\|^2$ and $\max_pw_p\|\mathbf{f}\|^2$. The
analysis operator is
$T^*_{\Psi_X}\mathbf{f}=\big(w_pf_i(\mathbf{x}_p)\big)_{p,i}$, and the frame
operator $S=T_{\Psi_X}T^*_{\Psi_X}$ multiplies the value at $\mathbf{x}_p$ by
$w_p$, so
    $T_{\Psi_X}^\dagger\mathbf{f}=\big(\langle\mathbf{f},S^{-1}\chi_{\{\mathbf{x}_p\}}\mathbf{e}_i\rangle\big)_{p,i}=\big(f_i(\mathbf{x}_p)\big)_{p,i}$, the vector of the values at the particles, indexed by the
particle $p$ and the channel $i$. Conversely, the synthesis operator maps such
a vector $\mathbf{U}=(U_{p,i})_{p,i}$ to
$T_{\Psi_X}\mathbf{U}=\sum_{p,i}U_{p,i}\,\chi_{\{\mathbf{x}_p\}}\mathbf{e}_i$, the
function with the values $\mathbf{U}_p$ at the particles.
(ii) By (i), $T_{\Psi_X}\mathbf{U}$ is the function with values
$\mathbf{U}_p$ at the particles, and $U_\ell$ applied to it, evaluated at
$\mathbf{x}_p$, is~\eqref{eq:forward}--\eqref{eq:disc_layer}; this is the
induction step of Lemma~\ref{lem:exact_eval}. Since $\Psi_X$ spans
$\mathcal{H}_{\mu_X}$, $T_{\Psi_X}T_{\Psi_X}^\dagger$ is the identity, and
$T_{\Psi_X}\circ u_\ell\circ T_{\Psi_X}^\dagger
=(T_{\Psi_X}T_{\Psi_X}^\dagger)\circ U_\ell\circ(T_{\Psi_X}T_{\Psi_X}^\dagger)=U_\ell$.
The output map is treated in the same way.
(iii) A frame of the finite-dimensional space $\mathcal{H}_{\mu_X}$ spans it,
so $\mathrm{Dom}\,U_\ell=\mathcal{H}_{\mu_X}=\mathcal{M}_\Psi$ and
$\mathrm{Ran}\,U_\ell\subseteq\mathcal{H}_{\mu_X}=\mathcal{M}_\Phi$, and the
computation in (ii) applies with $T_\Psi T_\Psi^\dagger=T_\Phi T_\Phi^\dagger=I$.
The statement for the composition is~\cite[Prop.~3.7]{bartolucci2023representation}.
\end{proof}

The frame computation is elementary; the content of the theorem lies in the
choice of spaces that the nonuniform evaluation permits.   
For the standard grid-based FNO \cite{li2020fourier}, the hidden spaces are spaces of band-limited functions, discretized by
their values on a uniform grid, and the pointwise activation is the
obstruction: $\sigma$ applied to a band-limited function is not band-limited,
so the next DFT folds the new high frequencies onto the retained
ones~\cite[Sec.~4]{bartolucci2023representation} as quantified in
Proposition~\ref{prop:alias}(i). Thus the activation fails to be a ReNO because the space spanned by the grid values 
is smaller than the space into which $\sigma$
maps. With the transforms written as integrals against the input measure, the
space spanned by the particle values is the whole hidden
space~\eqref{eq:hidden_space}, so no pointwise operation can leave it, and
the NUDFT evaluates the integrals~\eqref{eq:err_coef} exactly. This holds for
every number and distribution of the particles and for every activation. For
a grid-based FNO applied to particle data the corresponding choice is not
available, because the grid is not the support of the input: the charge must
first be transferred from $\mu_X$ to the grid, and the integrals of the layers
are then computed against the grid instead of $\mu_X$. Both steps alias
(Proposition~\ref{prop:alias}).

Theorem~\ref{thm:reno} compares discretizations of the same measure, which is
the setting of~\cite{bartolucci2023representation}: two particle data sets
$(X,w)$ and $(X',w')$ with $\mu_X=\mu_{X'}$ give identical outputs, i.e.\ a zero
representation equivalence error~\cite[Def.~3.3]{bartolucci2023representation}.
Two particle sets with different $N_p$ drawn from the same density $\rho$,
however, are different measures. They carry different sampling noise, and
their outputs differ by the sampling error of Lemma~\ref{lem:sampling},
$\mathcal{O}(N_p^{-1/2})$ with a constant that does not depend on any grid.
This difference is a property of the input, i.e.\ of the particle
discretization of the Vlasov equation, and it is shared by every particle
method, including the teacher; the operator itself, $\mathbf{E}_\theta$, is
the same for all $N_p$. No evaluation with finitely many particles can be an
exact representation of $\mathbf{E}_\theta[\rho\,d\mathbf{x}]$ for the measure
of a continuous density, but the resulting error is of a different nature
than grid aliasing (Proposition~\ref{prop:alias} in
\secref{app_alias}). On a grid, the frequencies of the integrand above
the grid resolution are folded onto the retained modes coherently and with
full weight, in every layer of a grid-based FNO, and this error does not
decrease with the number of particles. For particles, they contribute only
through the sampling fluctuations, exactly as for the continuous operator
applied to $\mu_N$: the error is unbiased, decreases as $N_p^{-1/2}$, and is
bounded independently of the smoothness of the integrand.
\begin{remark}[When is the input a measure?]
\label{rem:measure_input}

Lemma~\ref{lem:exact_eval} and Theorem~\ref{thm:reno} hold for any weighted
point measure, for example the nodes of an unstructured mesh with quadrature
weights. Their meaning depends on what the measure represents. In a particle
method the point measure is the state of the scheme. The particles carry their
weights along the characteristics, so that
$f_N(t)=\sum_pw_p\delta_{\mathbf{z}_p(t)}$ is a weak solution of the Vlasov
equation with the field computed from it, which is the classical mean-field
interpretation of particle methods. The field operator, a functional of the
charge measure, is evaluated on this state exactly. How well $f_N$
approximates $f$ is a property of the particle discretization, shared with
every particle method including the teacher. It enters through the initial
sampling, and through remapping steps where these are used, and not in each
evaluation of the operator. For Monte Carlo sampling it is controlled by
Lemma~\ref{lem:sampling} and Theorem~\ref{thm:trajectory}. For structured
initial loadings it is a deterministic quadrature error, which converges
faster but depends on the smoothness of the integrands and may alias
coherently (Proposition~\ref{prop:alias}(i)); Monte Carlo gives the slower
error of Proposition~\ref{prop:alias}(iii), which is unbiased and independent
of the smoothness. The situation is different when the state of the scheme
is not the point measure, for example nodal values of a function on an
Eulerian mesh. The points then only serve to evaluate integrals of a
continuum function, the sums over the points replace integrals that the
scheme defines otherwise, and two discretizations of the same function
differ by a quadrature error in every layer. The nonuniform transforms then
give an $\varepsilon$-ReNO with $\varepsilon$ set by this error. This is the
situation of~\cite{lingsch2023beyond}, where the input functions enter
through their values at the points.

Additional particle attributes keep the input a measure. Charges are
carried by the weights: for several species $\mu_X$ is a signed measure, and
$\sum_p|w_p|$ replaces $\sum_pw_p$ in the bounds. Velocities, or other
per-particle coordinates, enter through a lifting
$\mathcal{P}(\mathbf{x},\mathbf{v})$. The input is then the phase-space
measure
\begin{equation*}
  \nu=\sum_pw_p\,\delta_{(\mathbf{x}_p,\mathbf{v}_p)},
\end{equation*}
the hidden functions are elements of $L^2(\nu)$, and the forward transforms compute
spatial Fourier coefficients of velocity moments, e.g.\ of the current
density for a lifting that is linear in $\mathbf{v}$.
Lemma~\ref{lem:exact_eval} and Theorems~\ref{thm:reno}
and~\ref{thm:eps_reno} hold with $\mu$ replaced by $\nu$. The statements
become approximate only if a feature is itself a discretized approximation
of a continuum function, such as a mesh field interpolated to the particles;
its interpolation error then enters as above.
\end{remark}

\paragraph{NUFFT evaluation} The NUFFT computes~\eqref{eq:forward}
and~\eqref{eq:inverse} by spreading onto an oversampled uniform grid with a
smooth kernel, an FFT, and a division by the Fourier transform of the kernel
(and the reverse for type 2)~\cite{dutt1993fast,potts2001fast,barnett2019parallel}.
The spreading step is a deposition in the sense
of~\eqref{eq:deposit_alias}, and the error of the NUFFT is exactly the
aliasing of the spreading kernel, which the kernel width and the upsampling
factor make smaller than the tolerance~\cite{barnett2021aliasing}. We use
error bounds of the following form, which is that of the available rigorous
estimates~\cite{barnett2021aliasing}.

\paragraph{Assumption}

\begin{enumerate}
  \item[(A0)] \textit{NUFFT accuracy.} For strengths
  $(g_p)_p\in\mathbb{C}^{N_p}$ and coefficients
  $(a_{\mathbf{k}})_{\mathbf{k}}\in\mathbb{C}^M$, the computed type-1 and type-2
  sums satisfy, for all $\mathbf{k}\in\mathcal{K}_m$ and all $p$,
  \begin{equation}
    \Big|\tilde c_{\mathbf{k}}-\sum_qg_qe^{-\mathrm{i}\mathbf{k}\cdot\mathbf{x}_q}\Big|\le\varepsilon\sum_q|g_q|,
    \qquad
    \Big|\tilde s_p-\sum_{\mathbf{k}}a_{\mathbf{k}}e^{\mathrm{i}\mathbf{k}\cdot\mathbf{x}_p}\Big|\le\varepsilon\sum_{\mathbf{k}}|a_{\mathbf{k}}|.
    \label{eq:nufft_accuracy}
  \end{equation}
\end{enumerate}
The tolerance requested from NUFFT libraries like FINUFFT \cite{barnett2019parallel} refers to the relative $\ell^2$ error,
which is of the same order in practice~\cite{barnett2019parallel}. Let
$\tilde u_\ell$ denote the Fourier layer~\eqref{eq:forward}--\eqref{eq:disc_layer}
with both transforms computed by the NUFFT, $\tilde{\mathbf{E}}_\theta[\mu_X]$
the resulting output, and $M_\ell$, $r^\ell_0$ and $\Gamma_\ell$ the constants of
Section~\secref{constants}.

\begin{theorem}[NEOPIC with NUFFTs is an $\varepsilon$-ReNO]
\label{thm:eps_reno}
Assume (A0), and that $\sigma$ and $\mathcal{Q}$ are Lipschitz with constants
$L_\sigma$ and $L_{\mathcal{Q}}$. Let $\mu_X$ be any particle
measure~\eqref{eq:charge_measure}.
\begin{enumerate}
  \item[(i)] For every $\ell$ and every $\mathbf{f}\in\mathcal{H}_{\mu_X}$,
  \begin{equation}
    \begin{aligned}
    \big\|U_\ell\mathbf{f}-T_{\Psi_X}\tilde u_\ell T^\dagger_{\Psi_X}\mathbf{f}\big\|_{\mathcal{H}_{\mu_X}}
    &\le\max_p\big|(U_\ell\mathbf{f})(\mathbf{x}_p)-(\tilde u_\ell T^\dagger_{\Psi_X}\mathbf{f})_p\big|\\
    &\le\varepsilon\,(2+\varepsilon)\,L_\sigma\,r^\ell_0\,\max_p|\mathbf{f}(\mathbf{x}_p)|.
    \end{aligned}
    \label{eq:eps_layer}
  \end{equation}
  \item[(ii)] For the whole network,
  \begin{equation}
    \begin{gathered}
    \max_p\big|\tilde{\mathbf{E}}_\theta[\mu_X](\mathbf{x}_p)-\mathbf{E}_\theta[\mu_X](\mathbf{x}_p)\big|
    \le C_{\varepsilon}\,\varepsilon,\\
    C_{\varepsilon}=2|s|L_{\mathcal{Q}}(2+\varepsilon)\sum_{\ell=0}^{L-1}
      \Big(\prod_{j=\ell+1}^{L-1}\Gamma_j\Big)L_\sigma\,r^\ell_0\,\tilde M_\ell,
    \end{gathered}
    \label{eq:eps_network}
  \end{equation}
  where $\tilde M_0=M_0$ and
  $\tilde M_{\ell+1}=|\sigma(0)|+L_\sigma\big((\|\mathcal{W}_\ell\|+(1+\varepsilon)^2r^\ell_0)\tilde M_\ell+|\mathbf{b}_\ell|\big)$,
  so that $\tilde M_\ell\to M_\ell$ as $\varepsilon\to0$.
\end{enumerate}
The constants do not depend on $N_p$, on the positions or on the weights.
Hence each layer, and the network, is an $\varepsilon'$-ReNO on bounded sets
of $\mathcal{H}_{\mu_X}$ with $\varepsilon'$ proportional to $\varepsilon$,
uniformly over all particle discretizations.
\end{theorem}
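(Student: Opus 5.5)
The plan is to reduce everything to the particle values, using Theorem~\ref{thm:reno}(i), and then to bound the NUFFT perturbation one transform at a time.

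\emph{Part (i).} For any $\mathbf{g}\in\mathcal{H}_{\mu_X}$ we have $\|\mathbf{g}\|^2=\sum_pw_p|\mathbf{g}(\mathbf{x}_p)|^2\le\max_p|\mathbf{g}(\mathbf{x}_p)|^2$, since $\sum_pw_p=1$. This gives the first inequality in \eqref{eq:eps_layer}, because $T_{\Psi_X}$ reproduces the particle values. For the second inequality, set $\mathbf{U}_p=\mathbf{f}(\mathbf{x}_p)$ and compare the exact layer with the NUFFT layer.

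1. By Lemma~\ref{lem:exact_eval}, the exact layer is \eqref{eq:forward}--\eqref{eq:disc_layer}. The two layers share the skip term and the bias, so by the Lipschitz property of $\sigma$ the difference is at most $L_\sigma|\mathrm{Re}(\tilde{\mathbf{s}}_p-\mathbf{s}_p)|\le L_\sigma|\tilde{\mathbf{s}}_p-\mathbf{s}_p|$.

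2. Apply (A0) channelwise with strengths $g_q=w_qU_{q,i}$. Since $\sum_q|g_q|\le\max_q|\mathbf{U}_q|$, this gives $|\tilde{\mathbf{c}}_{\mathbf{k}}-\hat{\mathbf{c}}_{\mathbf{k}}|\le\varepsilon\max_q|\mathbf{U}_q|$, up to the channel-norm conversion discussed below. Combined with $|\hat{\mathbf{c}}_{\mathbf{k}}|\le\max_q|\mathbf{U}_q|$, it also gives $|\tilde{\mathbf{c}}_{\mathbf{k}}|\le(1+\varepsilon)\max_q|\mathbf{U}_q|$.

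3. Split the inverse-transform error as
\[
\tilde{\mathbf{s}}_p-\mathbf{s}_p=\big(\tilde{\mathbf{s}}_p-\textstyle\sum_{\mathbf{k}}R\tilde{\mathbf{c}}e^{\mathrm{i}\mathbf{k}\cdot\mathbf{x}_p}\big)+\textstyle\sum_{\mathbf{k}}R(\tilde{\mathbf{c}}-\hat{\mathbf{c}})e^{\mathrm{i}\mathbf{k}\cdot\mathbf{x}_p}.
\]
By (A0), the first term is at most $\varepsilon\sum_{\mathbf{k}}\|R_\ell(\mathbf{k})\||\tilde{\mathbf{c}}_{\mathbf{k}}|\le\varepsilon(1+\varepsilon)r^\ell_0\max|\mathbf{U}|$. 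The second term is at most $\varepsilon r^\ell_0\max|\mathbf{U}|$. Adding the two gives $\varepsilon(2+\varepsilon)r^\ell_0$, which yields \eqref{eq:eps_layer}.

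The delicate point is that (A0) is stated per scalar channel, while the bound is in the Euclidean norm over channels. I would read (A0) as holding for vector strengths with $|\cdot|$ in place of the absolute value. Alternatively, I would absorb any $\sqrt{d_v}$ factor into the constants; the statement implicitly takes the former reading.

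\emph{Part (ii).} Let $\delta_\ell=\max_p|\tilde{\mathbf{U}}^\ell_p-\mathbf{U}^\ell_p|$, with $\delta_0=0$. Then
\[
\delta_{\ell+1}\le\max_p|u_\ell(\tilde{\mathbf{U}}^\ell)_p-u_\ell(\mathbf{U}^\ell)_p|+\max_p|\tilde u_\ell(\tilde{\mathbf{U}}^\ell)_p-u_\ell(\tilde{\mathbf{U}}^\ell)_p|.
\]
The first term is at most $\Gamma_\ell\delta_\ell$, since the exact layer is $\Gamma_\ell$-Lipschitz in the max-over-particles norm; this follows by the same estimate as $|\hat{\mathbf{c}}|\le\max|\mathbf{U}|$. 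The second term is at most $\varepsilon(2+\varepsilon)L_\sigma r^\ell_0\max_p|\tilde{\mathbf{U}}^\ell_p|$ by part (i).

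To bound the perturbed features $\max_p|\tilde{\mathbf{U}}^\ell_p|\le\tilde M_\ell$, I will use induction. The NUFFT spectral term is bounded by $(1+\varepsilon)^2r^\ell_0\tilde M_\ell$, using $|\tilde{\mathbf{c}}|\le(1+\varepsilon)\max|\mathbf{U}|$ and $|\tilde{\mathbf{s}}_p|\le(1+\varepsilon)\sum_{\mathbf{k}}\|R\||\tilde{\mathbf{c}}|$ from (A0). Together with $|\sigma(z)|\le|\sigma(0)|+L_\sigma|z|$, this gives exactly the stated recursion for $\tilde M_{\ell+1}$.

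Unrolling the recursion for $\delta_\ell$ gives
\[
\delta_L\le\varepsilon(2+\varepsilon)\sum_\ell\Big(\prod_{j>\ell}\Gamma_j\Big)L_\sigma r^\ell_0\tilde M_\ell.
\]

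\emph{Output map.} Write $\mathbf{E}_p=s\big(\mathcal{Q}(\mathbf{U}^L_p)-\sum_qw_q\mathcal{Q}(\mathbf{U}^L_q)\big)$. Each of the two terms changes by at most $L_{\mathcal{Q}}\delta_L$, because $\sum_qw_q=1$. This produces the factor $2|s|L_{\mathcal{Q}}$ and hence $C_\varepsilon$.

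The resulting constants involve only the network parameters, so they are independent of $N_p$, of the positions and of the weights. Consequently each layer, and the whole network, has aliasing error $\le\varepsilon'$ on bounded sets in the sense of Definition~\ref{def:reno}.
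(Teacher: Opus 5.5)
Your proposal follows the paper's proof essentially step by step. It uses the same splitting of the NUFFT error into the type-1 perturbation $\boldsymbol\delta_{\mathbf{k}}$ and the type-2 error $\boldsymbol\eta_p$ to get $\varepsilon(2+\varepsilon)r^\ell_0$, the same recursion $D_{\ell+1}\le\Gamma_\ell D_\ell+\varepsilon(2+\varepsilon)L_\sigma r^\ell_0\tilde M_\ell$, and the same factor $2|s|L_{\mathcal{Q}}$ from the mean subtraction.

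The one point you leave open, the channel-norm conversion, is settled in the paper by Minkowski's inequality. Applying (A0) channel by channel and taking the Euclidean norm over channels gives $\big(\sum_i(\sum_q|g_{q,i}|)^2\big)^{1/2}\le\sum_q|\mathbf{g}_q|$. So (A0) holds for vector-valued strengths and coefficients with Euclidean norms, your first reading is justified, and no $\sqrt{d_v}$ factor appears.
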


\begin{proof}
Throughout, $|\cdot|$ is the Euclidean norm on $\mathbb{C}^{d_v}$, applied to
vectors of channels, and $\|\mathbf{U}\|_\infty=\max_p|\mathbf{U}_p|$. If a
bound of the form~\eqref{eq:nufft_accuracy} holds for each channel, then by
Minkowski's inequality it holds for the vectors of channels, with $|g_q|$ and
$|a_{\mathbf{k}}|$ the Euclidean norms.

(i) The synthesis operator returns the function with the given values at
the particles (Theorem~\ref{thm:reno}(i)), so that
$\big(T_{\Psi_X}\tilde u_\ell T^\dagger_{\Psi_X}\mathbf{f}\big)(\mathbf{x}_p)=(\tilde u_\ell T^\dagger_{\Psi_X}\mathbf{f})_p$,
and in $\mathcal{H}_{\mu_X}$ a function is determined by these values. The
first inequality then holds because
$\|\mathbf{g}\|^2_{\mathcal{H}_{\mu_X}}=\sum_pw_p|\mathbf{g}(\mathbf{x}_p)|^2$
is at most $\max_p|\mathbf{g}(\mathbf{x}_p)|^2$.
Let $\mathbf{U}=T^\dagger_{\Psi_X}\mathbf{f}$ be the particle values. The exact
coefficients~\eqref{eq:forward} satisfy
$|\hat{\mathbf{c}}_{\mathbf{k}}|\le\sum_pw_p|\mathbf{U}_p|\le\|\mathbf{U}\|_\infty$,
and by (A0) the computed ones are
$\tilde{\mathbf{c}}_{\mathbf{k}}=\hat{\mathbf{c}}_{\mathbf{k}}+\boldsymbol\delta_{\mathbf{k}}$
with $|\boldsymbol\delta_{\mathbf{k}}|\le\varepsilon\|\mathbf{U}\|_\infty$. The
type-2 transform of $\mathbf{a}_{\mathbf{k}}=R_\ell(\mathbf{k})\tilde{\mathbf{c}}_{\mathbf{k}}$
is computed with an error $\boldsymbol\eta_p$,
$|\boldsymbol\eta_p|\le\varepsilon\sum_{\mathbf{k}}\|R_\ell(\mathbf{k})\||\tilde{\mathbf{c}}_{\mathbf{k}}|\le\varepsilon(1+\varepsilon)r^\ell_0\|\mathbf{U}\|_\infty$.
The pre-activations of the exact and the computed layer therefore differ at
particle $p$ by
\begin{equation*}
  \Big|\mathrm{Re}\Big(\sum_{\mathbf{k}}R_\ell(\mathbf{k})\boldsymbol\delta_{\mathbf{k}}e^{\mathrm{i}\mathbf{k}\cdot\mathbf{x}_p}+\boldsymbol\eta_p\Big)\Big|
  \le\varepsilon\,r^\ell_0\|\mathbf{U}\|_\infty+\varepsilon(1+\varepsilon)r^\ell_0\|\mathbf{U}\|_\infty
  =\varepsilon(2+\varepsilon)\,r^\ell_0\|\mathbf{U}\|_\infty,
\end{equation*}
and the Lipschitz bound of $\sigma$ gives~\eqref{eq:eps_layer}, since the exact
layer is $U_\ell$ by Theorem~\ref{thm:reno}(ii).

(ii) Let $\mathbf{U}^\ell$ and $\tilde{\mathbf{U}}^\ell$ be the features of the
exact and the computed network, with $\mathbf{U}^0=\tilde{\mathbf{U}}^0$, and
$D_\ell=\|\tilde{\mathbf{U}}^\ell-\mathbf{U}^\ell\|_\infty$. The same estimates as
in (i) show that $\|\tilde{\mathbf{U}}^\ell\|_\infty\le\tilde M_\ell$. The exact
layer is Lipschitz in $\|\cdot\|_\infty$ with constant $\Gamma_\ell$, because
$|\hat{\mathbf{c}}_{\mathbf{k}}(\mathbf{U})-\hat{\mathbf{c}}_{\mathbf{k}}(\mathbf{U}')|\le\|\mathbf{U}-\mathbf{U}'\|_\infty$.
Splitting
$\tilde u_\ell(\tilde{\mathbf{U}}^\ell)-u_\ell(\mathbf{U}^\ell)
=\big(\tilde u_\ell(\tilde{\mathbf{U}}^\ell)-u_\ell(\tilde{\mathbf{U}}^\ell)\big)
+\big(u_\ell(\tilde{\mathbf{U}}^\ell)-u_\ell(\mathbf{U}^\ell)\big)$
and using (i) for the first term,
\begin{equation*}
  D_{\ell+1}\le\Gamma_\ell D_\ell+\varepsilon(2+\varepsilon)L_\sigma r^\ell_0\tilde M_\ell,
  \qquad D_0=0,
\end{equation*}
so that
$D_L\le\varepsilon(2+\varepsilon)\sum_{\ell<L}\big(\prod_{j>\ell}\Gamma_j\big)L_\sigma r^\ell_0\tilde M_\ell$.
The output~\eqref{eq:output} changes by at most $|s|L_{\mathcal{Q}}D_L$ through
the pointwise term and, for $c_{\mathrm{per}}=1$, by at most the same amount through the mean, which
gives~\eqref{eq:eps_network}. The last statement follows from (i) and (ii) with
Definition~\ref{def:reno}, since the frame $\Psi_X$ and the bounds are
available for every particle measure.
\end{proof}

\begin{remark}[What the theorems do and do not say]
\label{rem:reno_scope}
(a) The bound~\eqref{eq:eps_network} does not grow with $N_p$, because the
type-1 error in~\eqref{eq:nufft_accuracy} is relative to the total
$\sum_pw_p|\mathbf{U}_p|$, which the normalization of the weights keeps
bounded. The NUFFT-based network is thus a grid-based Fourier network whose
aliasing is controlled by the tolerance, in contrast to an FNO on a grid
whose aliasing is fixed by the grid.
(b) Representation equivalence concerns the consistency between the operator
and its discrete evaluation. It says nothing about the approximation error
$\varepsilon_{\mathrm{NN}}$ of the trained operator, about the bias of the
teacher, or about the stability of the scheme, which are treated in
Section~\secref{error}. The particle noise
reaches the field of NEOPIC only as it reaches the continuous operator applied
to the particle measure, through the coefficients~\eqref{eq:err_coef}, whose
fluctuations are controlled by Lemma~\ref{lem:sampling} and, in training, by
Section~\secref{noise}. We do not claim that this rules out every numerical heating mechanism: the learned operator is not exactly energy conserving, and
the stability of the scheme is governed by its Lipschitz constant
$L_\theta$ (Theorem~\ref{thm:trajectory}). It removes one of the important reasons for numerical heating which is the aliasing mechanism.
Consistent with this, the energy errors of the mini-apps in
Section~\secref{results} do not grow in the extrapolation windows.
(c) Translation equivariance of the operator, and the role of the lifting
for it, are discussed in \secref{app_equivariance}.
\end{remark}

\section{Error analysis}
\seclab{error}

In this section we bound the error of the NEOPIC scheme with respect to the
exact solution of the Vlasov--Poisson system. The analysis follows the
strategy of~\cite{tranquilli2022deterministic} for PIC schemes (see also~\cite{riva2017methodology} for the corresponding error ansatz): we first bound
the error in the particle trajectories and in the electric field, and then
show that the error in the distribution function is controlled by the error in
the field. Two features of NEOPIC lead to differences from the PIC analysis.
First, the field operator is learned, so an approximation error
$\varepsilon_{\mathrm{NN}}$ appears that does not vanish as $N_p\to\infty$ and
$\Delta t\to0$. Second, the field operator acts on the particles through a
fixed, finite set of Fourier modes, which makes the sampling error
independent of any grid resolution. We also avoid the assumption of
approximately independent particles used in~\cite{tranquilli2022deterministic}
by coupling the numerical particles to an auxiliary set of independent
particles that are transported by the exact field. Further remarks, on the
comparison with PIC, other time integrators, the periodicity of the lifting,
other boundary conditions, the $L^1$ training loss and the scope of the
noise analysis, are collected in \secref{app_error}.

\subsection{Setting and notation}

We consider the Vlasov--Poisson system~\eqref{eq:vlasov}--\eqref{eq:poisson}
without external fields on the periodic domain $\mathbb{T}^d=[-\pi,\pi)^d$,
$d\in\{1,2,3\}$, with a uniform neutralizing background, in normalized form:
$f(t,\cdot,\cdot)$ is a probability density on
$\mathbb{T}^d\times\mathbb{R}^d$ (unit total charge, as in our normalization of
the network inputs), so that $\rho(t,\mathbf{x})=\int_{\mathbb{R}^d}f(t,\mathbf{x},\mathbf{v})\,d\mathbf{v}$
is the normalized charge density, related to $\rho_e$ of
Section~\secref{vp} by $\rho_e=Q_{\mathrm{tot}}\rho$, $q/m$ denotes the
charge-to-mass ratio, and the field is $\mathbf{E}=\mathcal{E}[\rho]$, where
$\mathcal{E}$ is the exact field operator,
\begin{equation}
  \mathcal{E}[\mu](\mathbf{x})
  = \frac{1}{(2\pi)^d}\sum_{\mathbf{k}\in\mathbb{Z}^d\setminus\{0\}}
    \frac{-\mathrm{i}\mathbf{k}}{|\mathbf{k}|^2}\,\hat\mu_{\mathbf{k}}\,
    e^{\mathrm{i}\mathbf{k}\cdot\mathbf{x}},
  \qquad
  \hat\mu_{\mathbf{k}} = \int_{\mathbb{T}^d} e^{-\mathrm{i}\mathbf{k}\cdot\mathbf{y}}\,d\mu(\mathbf{y}),
  \label{eq:exact_field_op}
\end{equation}
defined for any probability measure $\mu$ on $\mathbb{T}^d$, i.e.\ for the
measure $\rho\,d\mathbf{x}$ of a continuous density as well as for empirical
measures of particles. We write $\mathbf{z}=(\mathbf{x},\mathbf{v})$ for points in phase
space; norms are as in Section~\secref{notation}, with
$|\mathbf{x}-\mathbf{y}|$ the periodic distance on $\mathbb{T}^d$.

\paragraph{NEOPIC as an operator on measures} We use the operator
$\mathcal{G}_\theta$ of Section~\secref{architecture},
\eqref{eq:err_lift}--\eqref{eq:err_proj}, which is defined for every
probability measure $\mu$ on $\mathbb{T}^d$. By Lemma~\ref{lem:exact_eval},
its evaluation at the particles for the empirical measure
$\mu_N=\frac1{N_p}\sum_p\delta_{\mathbf{x}_p}$ is the NEOPIC field, and the
same operator applied to the measure $\rho\,d\mathbf{x}$ is the
infinite-particle limit of NEOPIC. For brevity we state the analysis for
$\mathcal{G}_\theta$, without the mean subtraction and the scaling
in~\eqref{eq:output}. Including them changes only the constants: the mean
$\int\mathcal{G}_\theta[\mu]\,d\mu$ is Lipschitz in the particle positions with
constant $2L_\theta$ (in the sense of~\eqref{eq:lip}), and its sampling error
is bounded by $\big(C_S+\sup|\mathcal{G}_\theta[\rho]|\big)N_p^{-1/2}$ where $C_S$ is given by Lemma \ref{lem:sampling}.

\paragraph{The scheme} Particles $\mathbf{z}^n_p=(\mathbf{x}^n_p,\mathbf{v}^n_p)$,
$p=1,\dots,N_p$, are advanced with the kick--drift--kick leapfrog method,
\begin{equation}
  \begin{aligned}
  \mathbf{v}^{n+\frac12}_p &= \mathbf{v}^n_p + \tfrac{\Delta t}{2}\tfrac{q}{m}\mathbf{E}^n_\theta(\mathbf{x}^n_p),
  \qquad
  \mathbf{x}^{n+1}_p = \mathbf{x}^n_p + \Delta t\,\mathbf{v}^{n+\frac12}_p,\\
  \mathbf{v}^{n+1}_p &= \mathbf{v}^{n+\frac12}_p + \tfrac{\Delta t}{2}\tfrac{q}{m}\mathbf{E}^{n+1}_\theta(\mathbf{x}^{n+1}_p),
  \end{aligned}
  \label{eq:kdk}
\end{equation}
with $\mathbf{E}^n_\theta=\mathcal{G}_\theta[\mu^n_N]$ and
$\mu^n_N=\frac1{N_p}\sum_p\delta_{\mathbf{x}^n_p}$, so that one field
evaluation is required per step. Other one-step integrators are discussed in
Remark~\ref{rem:integrators}.

\paragraph{Assumptions}
\begin{enumerate}
  \item[(A1)] \textit{Regularity.} The exact solution $f$ is a classical
  solution on $[0,T]$, the initial datum $f_0$ has compact support in
  $\mathbf{v}$, and $\mathbf{E}\in C^2([0,T]\times\mathbb{T}^d)$. For smooth,
  compactly supported data such solutions exist globally for
  $d\le3$~\cite{batt1991global,pfaffelmoser1992global}. The velocity support
  then stays bounded on $[0,T]$, and we denote by $C_\tau$ the constant in
  the local error of~\eqref{eq:kdk}, which depends on $\|\mathbf{E}\|_{C^2}$
  and on the velocity bound.
  \item[(A2)] \textit{Sampling.} The initial particles are independent and
  identically distributed (iid) samples of $f_0$ (``noisy start''), with
  equal weights. Initial data drawn by importance sampling, with weights
  $w_p=f_0/p^0$, are treated in the same way~\cite{tranquilli2022deterministic}.
  \item[(A3)] \textit{Architecture.} The maps $\mathcal{P}$ and $\mathcal{Q}$
  are Lipschitz with constants $L_{\mathcal{P}}$ and $L_{\mathcal{Q}}$ with
  respect to the periodic distance, the activation $\sigma$ is Lipschitz with
  constant $L_\sigma\ge1$, and the weights are bounded. Since the last
  Fourier layer has no activation (Section~\secref{architecture}), the
  recursions below hold for that layer with $L_\sigma$ replaced by $1$ and
  $|\sigma(0)|$ by $0$; as $L_\sigma\ge1$, the constants as stated remain
  valid upper bounds. The constants $r^\ell_0$, $r^\ell_1$ and $M_\ell$ are those
  of~\eqref{eq:r_constants}--\eqref{eq:M_bound}.
  \item[(A4)] \textit{Model error.} The approximation error of the trained
  operator on the exact solution,
  \begin{equation}
    \varepsilon_{\mathrm{NN}}
    = \sup_{t\in[0,T]}\big\|\mathcal{G}_\theta[\rho(t)] - \mathcal{E}[\rho(t)]\big\|_{L^\infty(\mathbb{T}^d)},
    \label{eq:eps_nn}
  \end{equation}
  is finite.
\end{enumerate}

\subsection{Properties of the learned field operator}

\begin{lemma}[Boundedness and Lipschitz continuity]
\label{lem:lipschitz}
Under (A3), let $M_\ell$ be the constants~\eqref{eq:M_bound}. Then $|\mathbf{u}^\ell_\mu(\mathbf{x})|\le M_\ell$ for every probability
measure $\mu$. Moreover, for any two particle configurations
$X=(\mathbf{x}_p)_p$ and $Y=(\mathbf{y}_p)_p$ with empirical measures $\mu_X$,
$\mu_Y$, and any $\mathbf{x},\mathbf{y}\in\mathbb{T}^d$,
\begin{equation}
  \big|\mathcal{G}_\theta[\mu_X](\mathbf{x})-\mathcal{G}_\theta[\mu_Y](\mathbf{y})\big|
  \le L_\theta\Big(|\mathbf{x}-\mathbf{y}| + \frac1{N_p}\sum_{p=1}^{N_p}|\mathbf{x}_p-\mathbf{y}_p|\Big),
  \label{eq:lip}
\end{equation}
with $L_\theta=L_{\mathcal{Q}}\max(A_L,B_L)$, where $A_0=L_{\mathcal{P}}$,
$B_0=0$ and
\begin{equation}
  \begin{aligned}
  A_{\ell+1}&=L_\sigma\big(\|\mathcal{W}_\ell\|A_\ell + r^\ell_1M_\ell\big),\\
  B_{\ell+1}&=L_\sigma\big(\|\mathcal{W}_\ell\|B_\ell + r^\ell_0(A_\ell+B_\ell) + r^\ell_1M_\ell\big).
  \end{aligned}
  \label{eq:lip_recursion}
\end{equation}
\end{lemma}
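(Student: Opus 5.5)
The proof proceeds by induction over the layers, tracking two quantities at once: a uniform bound on the features, and a Lipschitz bound that separates the dependence on the evaluation point from the dependence on the input measure.

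\textbf{Boundedness.} For the bound $|\mathbf{u}^\ell_\mu|\le M_\ell$, note first that $|\mathbf{u}^0_\mu|=|\mathcal{P}(\mathbf{x})|\le M_0$. Assume $|\mathbf{u}^\ell_\mu|\le M_\ell$. Since $\mu$ is a probability measure, $|\hat{\mathbf{c}}^{\,\ell}_\mu(\mathbf{k})|\le\int|\mathbf{u}^\ell_\mu|\,d\mu\le M_\ell$. Hence the spectral term is bounded by $r^\ell_0M_\ell$; taking the real part does not increase the norm. Using $|\sigma(z)|\le|\sigma(0)|+L_\sigma|z|$ componentwise, in the vector form used to define $M_{\ell+1}$, gives~\eqref{eq:M_bound}.

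\textbf{Lipschitz estimate.} I will prove by induction the stronger claim
\begin{equation*}
|\mathbf{u}^\ell_{\mu_X}(\mathbf{x})-\mathbf{u}^\ell_{\mu_Y}(\mathbf{y})|\le A_\ell|\mathbf{x}-\mathbf{y}|+B_\ell\,\delta_{XY},
\end{equation*}
where $\delta_{XY}=\frac1{N_p}\sum_p|\mathbf{x}_p-\mathbf{y}_p|$. The base case $\ell=0$ is the Lipschitz property of $\mathcal{P}$ with respect to the periodic distance, giving $A_0=L_{\mathcal{P}}$ and $B_0=0$. For the induction step, split the difference of pre-activations into three terms.

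The skip term contributes at most $\|\mathcal{W}_\ell\|(A_\ell|\mathbf{x}-\mathbf{y}|+B_\ell\delta_{XY})$.

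For the spectral term, write
\begin{equation*}
R\hat{\mathbf{c}}_X e^{\mathrm{i}\mathbf{k}\cdot\mathbf{x}}-R\hat{\mathbf{c}}_Y e^{\mathrm{i}\mathbf{k}\cdot\mathbf{y}}=R\hat{\mathbf{c}}_X\big(e^{\mathrm{i}\mathbf{k}\cdot\mathbf{x}}-e^{\mathrm{i}\mathbf{k}\cdot\mathbf{y}}\big)+R\big(\hat{\mathbf{c}}_X-\hat{\mathbf{c}}_Y\big)e^{\mathrm{i}\mathbf{k}\cdot\mathbf{y}}.
\end{equation*}
The first part uses $|e^{\mathrm{i}\mathbf{k}\cdot\mathbf{x}}-e^{\mathrm{i}\mathbf{k}\cdot\mathbf{y}}|\le|\mathbf{k}||\mathbf{x}-\mathbf{y}|$, which holds with the periodic distance because $\mathbf{k}\in\mathbb{Z}^d$. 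Summing over $\mathbf{k}$ gives $r^\ell_1M_\ell|\mathbf{x}-\mathbf{y}|$.

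The second part requires the coefficient difference. Here I use the pairing of particles $p\leftrightarrow p$:
\begin{equation*}
\hat{\mathbf{c}}_X-\hat{\mathbf{c}}_Y=\frac1{N_p}\sum_p\Big[\big(\mathbf{u}^\ell_{\mu_X}(\mathbf{x}_p)-\mathbf{u}^\ell_{\mu_Y}(\mathbf{y}_p)\big)e^{-\mathrm{i}\mathbf{k}\cdot\mathbf{x}_p}+\mathbf{u}^\ell_{\mu_Y}(\mathbf{y}_p)\big(e^{-\mathrm{i}\mathbf{k}\cdot\mathbf{x}_p}-e^{-\mathrm{i}\mathbf{k}\cdot\mathbf{y}_p}\big)\Big].
\end{equation*}
Applying the induction hypothesis at the point pairs $(\mathbf{x}_p,\mathbf{y}_p)$ bounds the first sum by $(A_\ell+B_\ell)\delta_{XY}$. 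The second sum is bounded by $M_\ell|\mathbf{k}|\delta_{XY}$. Multiplying by $\|R_\ell(\mathbf{k})\|$ and summing over $\mathbf{k}$ yields $r^\ell_0(A_\ell+B_\ell)\delta_{XY}+r^\ell_1M_\ell\delta_{XY}$.

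Applying $L_\sigma$ to the sum of the three contributions produces exactly the recursion~\eqref{eq:lip_recursion}. For the last layer, where $\sigma$ is replaced by the identity, the same recursion holds since $L_\sigma\ge1$ by (A3). Finally, composing with $\mathcal{Q}$ gives
\begin{equation*}
L_{\mathcal{Q}}\big(A_L|\mathbf{x}-\mathbf{y}|+B_L\delta_{XY}\big)\le L_\theta\big(|\mathbf{x}-\mathbf{y}|+\delta_{XY}\big),
\end{equation*}
which is~\eqref{eq:lip}.

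\textbf{Main obstacle.} The delicate step is the coefficient difference. The hidden functions depend on the measure itself, so the induction hypothesis must be formulated jointly in the point and the measure, and applied at the paired particle positions. This is why the term $A_\ell+B_\ell$ appears in the recursion for $B_{\ell+1}$.

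A second point to check is the periodicity of the lifting. The paper remarks that the linear lifting $\mathcal{P}$ is not periodic, so its Lipschitz constant with respect to the periodic distance is only valid under assumption (A3); I take that assumption as given. Everything else in the argument is a routine application of the triangle inequality.
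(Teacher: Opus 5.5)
Your proof is correct and follows the paper's argument essentially line by line. You use the same joint induction hypothesis $|\mathbf{u}^\ell_{\mu_X}(\mathbf{x})-\mathbf{u}^\ell_{\mu_Y}(\mathbf{y})|\le A_\ell|\mathbf{x}-\mathbf{y}|+B_\ell\delta$, the same pairing of particle $p$ with particle $p$ to bound the coefficient difference by $(A_\ell+B_\ell+M_\ell|\mathbf{k}|)\delta$, and the same split of the spectral term into a change of the coefficients and a change of the evaluation point. You also state two details that the paper leaves implicit: that $\mathbf{k}\in\mathbb{Z}^d$ is what makes the exponential estimate valid in the periodic distance, and that $L_\sigma\ge1$ covers the linear last layer.
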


\begin{proof}
The bound on $\mathbf{u}^\ell_\mu$ follows by induction, since
$|\hat{\mathbf{c}}^{\,\ell}_\mu(\mathbf{k})|\le M_\ell$ for a probability
measure. For~\eqref{eq:lip}, write
$\delta=\frac1{N_p}\sum_p|\mathbf{x}_p-\mathbf{y}_p|$ and suppose that
$|\mathbf{u}^\ell_{\mu_X}(\mathbf{x})-\mathbf{u}^\ell_{\mu_Y}(\mathbf{y})|\le A_\ell|\mathbf{x}-\mathbf{y}|+B_\ell\delta$
for all $\mathbf{x},\mathbf{y}$, which holds for $\ell=0$. Using
$|e^{-\mathrm{i}\mathbf{k}\cdot\mathbf{x}_p}-e^{-\mathrm{i}\mathbf{k}\cdot\mathbf{y}_p}|\le|\mathbf{k}||\mathbf{x}_p-\mathbf{y}_p|$,
\begin{align*}
  \big|\hat{\mathbf{c}}^{\,\ell}_{\mu_X}(\mathbf{k})-\hat{\mathbf{c}}^{\,\ell}_{\mu_Y}(\mathbf{k})\big|
  &\le \frac1{N_p}\sum_p\Big(\big|\mathbf{u}^\ell_{\mu_X}(\mathbf{x}_p)-\mathbf{u}^\ell_{\mu_Y}(\mathbf{y}_p)\big|
  + M_\ell|\mathbf{k}||\mathbf{x}_p-\mathbf{y}_p|\Big)\\
  &\le (A_\ell+B_\ell+M_\ell|\mathbf{k}|)\,\delta .
\end{align*}
Hence the Fourier term in~\eqref{eq:err_layer} changes by at most
$\sum_{\mathbf{k}}\|R_\ell(\mathbf{k})\|\big(|\hat{\mathbf{c}}_X-\hat{\mathbf{c}}_Y|+M_\ell|\mathbf{k}||\mathbf{x}-\mathbf{y}|\big)
\le r^\ell_0(A_\ell+B_\ell)\delta + r^\ell_1M_\ell(\delta+|\mathbf{x}-\mathbf{y}|)$,
and the pointwise term by at most
$\|\mathcal{W}_\ell\|(A_\ell|\mathbf{x}-\mathbf{y}|+B_\ell\delta)$. Applying
the Lipschitz bound of $\sigma$ gives~\eqref{eq:lip_recursion}, and finally
that of $\mathcal{Q}$ gives~\eqref{eq:lip}.
\end{proof}

\begin{lemma}[Sampling error]
\label{lem:sampling}
Let $\rho$ be a probability density on $\mathbb{T}^d$, let
$\bar{\mathbf{x}}_1,\dots,\bar{\mathbf{x}}_{N_p}$ be iid samples of $\rho$, and
let $\bar\mu_N$ be their empirical measure. Under (A3),
\begin{equation}
  \Big(\mathbb{E}\,\sup_{\mathbf{x}\in\mathbb{T}^d}
  \big|\mathcal{G}_\theta[\bar\mu_N](\mathbf{x})-\mathcal{G}_\theta[\rho](\mathbf{x})\big|^2\Big)^{1/2}
  \le \frac{C_S}{\sqrt{N_p}},
  \label{eq:sampling}
\end{equation}
with $C_S=L_{\mathcal{Q}}\,c_L$, $c_0=0$ and
$c_{\ell+1}=L_\sigma\big((\|\mathcal{W}_\ell\|+r^\ell_0)c_\ell + r^\ell_0M_\ell\big)$.
The constant depends on the architecture (in particular on the number of
modes through $r^\ell_0$) but not on $N_p$ or on any grid resolution, unlike
the sampling error of PIC (Remark~\ref{rem:pic_comparison}).
\end{lemma}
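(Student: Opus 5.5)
We argue by induction over the layers, comparing the hidden functions $\mathbf{u}^\ell_{\bar\mu_N}$ and $\mathbf{u}^\ell_\rho$ in the sup norm over $\mathbb{T}^d$, in the same way as in the proof of Lemma~\ref{lem:lipschitz}. Set
\[
\Delta_\ell=\sup_{\mathbf{x}}\big|\mathbf{u}^\ell_{\bar\mu_N}(\mathbf{x})-\mathbf{u}^\ell_\rho(\mathbf{x})\big| .
\]
We aim for a random recursion
\[
\Delta_{\ell+1}\le L_\sigma\Big(\|\mathcal{W}_\ell\|\Delta_\ell+\sum_{\mathbf{k}}\|R_\ell(\mathbf{k})\|\,\big|\hat{\mathbf{c}}^{\,\ell}_{\bar\mu_N}(\mathbf{k})-\hat{\mathbf{c}}^{\,\ell}_\rho(\mathbf{k})\big|\Big).
\]
Taking $L^2(\mathbb{P})$ norms via Minkowski's inequality then gives $\|\Delta_{\ell+1}\|_{L^2}\le c_{\ell+1}N_p^{-1/2}$. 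The starting point is $\Delta_0=0$, since the lifting $\mathcal{P}$ does not depend on the measure. The final step applies the Lipschitz bound of $\mathcal{Q}$, which yields $C_S=L_{\mathcal{Q}}c_L$.

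The key step is to split each coefficient difference into two terms:
\[
\hat{\mathbf{c}}^{\,\ell}_{\bar\mu_N}(\mathbf{k})-\hat{\mathbf{c}}^{\,\ell}_\rho(\mathbf{k})
=\int\big(\mathbf{u}^\ell_{\bar\mu_N}-\mathbf{u}^\ell_\rho\big)e^{-\mathrm{i}\mathbf{k}\cdot\mathbf{y}}\,d\bar\mu_N
+\int\mathbf{u}^\ell_\rho(\mathbf{y})e^{-\mathrm{i}\mathbf{k}\cdot\mathbf{y}}\,d(\bar\mu_N-\rho\,d\mathbf{y}).
\]
The first term is bounded by $\Delta_\ell$, because $\bar\mu_N$ is a probability measure; this produces the contribution $r^\ell_0 c_\ell$. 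The second term is the important one. Here $\mathbf{u}^\ell_\rho$ is a \emph{deterministic} function, since it depends only on $\rho$ and not on the samples. The second term is therefore an average of $N_p$ iid centred random vectors $\mathbf{g}(\bar{\mathbf{x}}_p)-\mathbb{E}\,\mathbf{g}$, where $\mathbf{g}=\mathbf{u}^\ell_\rho e^{-\mathrm{i}\mathbf{k}\cdot}$ satisfies $|\mathbf{g}|\le M_\ell$ by Lemma~\ref{lem:lipschitz}. Its second moment is therefore at most $\mathbb{E}|\mathbf{g}|^2/N_p\le M_\ell^2/N_p$. Multiplying by $\|R_\ell(\mathbf{k})\|$ and summing over the finitely many modes with Minkowski's inequality gives the contribution $r^\ell_0M_\ell N_p^{-1/2}$. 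Together with $\|\mathcal{W}_\ell\|c_\ell$, this reproduces exactly $c_{\ell+1}=L_\sigma\big((\|\mathcal{W}_\ell\|+r^\ell_0)c_\ell+r^\ell_0M_\ell\big)$.

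The expected obstacle is the supremum over $\mathbf{x}$ inside the expectation. It is avoided because the randomness enters only through the finitely many coefficients. For fixed coefficient errors the bound on the Fourier term holds uniformly in $\mathbf{x}$, since $|e^{\mathrm{i}\mathbf{k}\cdot\mathbf{x}}|=1$ and $|\mathrm{Re}\,z|\le|z|$. The skip term is controlled by $\Delta_\ell$, which is already a supremum. As a result the recursion holds pathwise for the sup-norm quantities, and no chaining or covering argument is needed. A second subtlety is that the first term of the split is correlated with the samples, since $\mathbf{u}^\ell_{\bar\mu_N}$ depends on them. We handle it only through the deterministic bound by $\Delta_\ell$, never through independence, so the correlation causes no difficulty.

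The resulting constant depends on the architecture through $r^\ell_0$ and $M_\ell$, and on nothing else: $N_p$ enters only through the factor $N_p^{-1/2}$, and no grid appears anywhere in the argument.
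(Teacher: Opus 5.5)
Your proposal is correct and follows the paper's proof essentially step by step. It uses the same sup-norm quantity $\Delta_\ell$ with $\Delta_0=0$, and the same splitting of the coefficient difference into a term bounded pathwise by $\Delta_\ell$ and a centred iid average of the deterministic function $\mathbf{u}^\ell_\rho e^{-\mathrm{i}\mathbf{k}\cdot\mathbf{y}}$, whose second moment is at most $M_\ell^2/N_p$; Minkowski's inequality closes the induction, and the Lipschitz bound of $\mathcal{Q}$ finishes, exactly as in the paper (your explicit remarks on why the supremum over $\mathbf{x}$ and the correlation in the first term cause no difficulty are points the paper leaves implicit).
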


\begin{proof}
Let $D_\ell=\sup_{\mathbf{x}}|\mathbf{u}^\ell_{\bar\mu_N}(\mathbf{x})-\mathbf{u}^\ell_\rho(\mathbf{x})|$,
so that $D_0=0$ because $\mathcal{P}$ does not depend on the measure. We split
\begin{align*}
  \hat{\mathbf{c}}^{\,\ell}_{\bar\mu_N}(\mathbf{k})-\hat{\mathbf{c}}^{\,\ell}_\rho(\mathbf{k})
  &= \frac1{N_p}\sum_p\big(\mathbf{u}^\ell_{\bar\mu_N}-\mathbf{u}^\ell_\rho\big)(\bar{\mathbf{x}}_p)\,e^{-\mathrm{i}\mathbf{k}\cdot\bar{\mathbf{x}}_p}
  + \mathbf{S}_\ell(\mathbf{k}),\\
  \mathbf{S}_\ell(\mathbf{k})&=\frac1{N_p}\sum_p \mathbf{g}_{\mathbf{k}}(\bar{\mathbf{x}}_p)-\mathbb{E}\,\mathbf{g}_{\mathbf{k}}(\bar{\mathbf{x}}_1),
\end{align*}
with $\mathbf{g}_{\mathbf{k}}=\mathbf{u}^\ell_\rho\,e^{-\mathrm{i}\mathbf{k}\cdot\mathbf{x}}$.
The first term is bounded by $D_\ell$. Since $\mathbf{u}^\ell_\rho$ is
deterministic and bounded by $M_\ell$, $\mathbf{S}_\ell(\mathbf{k})$ is a
centred average of iid terms and
$(\mathbb{E}|\mathbf{S}_\ell(\mathbf{k})|^2)^{1/2}\le M_\ell/\sqrt{N_p}$.
From~\eqref{eq:err_layer},
$D_{\ell+1}\le L_\sigma\big((\|\mathcal{W}_\ell\|+r^\ell_0)D_\ell+\sum_{\mathbf{k}}\|R_\ell(\mathbf{k})\||\mathbf{S}_\ell(\mathbf{k})|\big)$,
and Minkowski's inequality in $L^2(\Omega)$ gives
$(\mathbb{E}D_{\ell+1}^2)^{1/2}\le c_{\ell+1}/\sqrt{N_p}$ by induction. The
Lipschitz bound of $\mathcal{Q}$ completes the proof.
\end{proof}

\begin{proposition}[PIF is a special case of NEOPIC]
\label{prop:pif}
Let $\sigma$ satisfy $\sigma(z)-\sigma(-z)=z$ and $\sigma(1)\neq0$, which holds
for ReLU, GELU and SiLU, and let the last Fourier layer be linear
(Section~\secref{architecture}). If $d_v\ge d$ and the hidden width of
$\mathcal{Q}$ is at least $2d$, then for every $L\ge1$ there are parameters
$\theta^\ast$ such that, for every probability measure $\mu$,
\begin{equation}
  \mathcal{G}_{\theta^\ast}[\mu]=\Pi_m\mathcal{E}[\mu],
\end{equation}
where $\Pi_m$ is the Fourier truncation to the set
$\mathcal{K}_m'\subset\mathcal{K}_m$ of modes that are not on the Nyquist
boundary, i.e.\ that have no component equal to $-m$. In particular, applied to particles,
$\mathcal{G}_{\theta^\ast}$ is the PIF field solve with the same modes, and
\begin{equation}
  \inf_\theta\varepsilon_{\mathrm{NN}}(\theta)\le
  \varepsilon_{\mathrm{NN}}(\theta^\ast)
  = \sup_{t\in[0,T]}\big\|(I-\Pi_m)\mathcal{E}[\rho(t)]\big\|_{L^\infty}
  \le \frac{1}{(2\pi)^d}\sup_{t\in[0,T]}\sum_{\mathbf{k}\notin\mathcal{K}_m'}\frac{|\hat\rho_{\mathbf{k}}(t)|}{|\mathbf{k}|}.
  \label{eq:pif_bound}
\end{equation}
\end{proposition}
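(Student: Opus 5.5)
The plan is to write down an explicit parameter choice $\theta^\ast$ that turns the network into a fixed Fourier multiplier applied to $\hat\mu_{\mathbf{k}}$. The key observation is that if the hidden feature entering the last Fourier layer is a \emph{constant} function $\mathbf{u}^{L-1}_\mu\equiv\mathbf{c}$, then \eqref{eq:err_coef} gives $\hat{\mathbf{c}}^{\,L-1}_\mu(\mathbf{k})=\mathbf{c}\,\hat\mu_{\mathbf{k}}$ for every probability measure $\mu$. So all layers except the last only need to produce a nonzero constant, and the last (linear) layer applies the Poisson multiplier.

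\emph{Step 1 (constant features).} The lifting $\mathcal{P}$ is a linear layer $\mathbf{x}\mapsto A\mathbf{x}+\mathbf{a}$. I take $A=0$ and $\mathbf{a}=\mathbf{e}_1$, so $\mathbf{u}^0_\mu\equiv\mathbf{e}_1$. For every layer $\ell\le L-2$ (none if $L=1$) I set $\mathcal{W}_\ell=0$, $R_\ell\equiv0$ and $\mathbf{b}_\ell=\mathbf{e}_1$. Then $\mathbf{u}^{\ell+1}_\mu\equiv\sigma(\mathbf{e}_1)$, which is the constant vector with first entry $\sigma(1)\neq0$ and remaining entries $\sigma(0)$. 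Let $\gamma$ denote the first entry of $\mathbf{u}^{L-1}_\mu$: $\gamma=1$ if $L=1$ and $\gamma=\sigma(1)$ otherwise. In all cases $\gamma\neq0$, and $\gamma$ does not depend on $\mu$.

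\emph{Step 2 (last layer).} Set $\mathcal{W}_{L-1}=0$ and $\mathbf{b}_{L-1}=0$. For $\mathbf{k}\in\mathcal{K}_m'\setminus\{0\}$, let $R_{L-1}(\mathbf{k})$ have first column
\[
\frac{1}{\gamma(2\pi)^d}\,\frac{-\mathrm{i}\mathbf{k}}{|\mathbf{k}|^2}
\]
placed in the first $d$ channels, which requires $d_v\ge d$, and all other entries zero. Set $R_{L-1}(\mathbf{k})=0$ for $\mathbf{k}=0$ and for Nyquist modes. The only column used is the one acting on the first channel, so the entries $\sigma(0)$ in the other channels are irrelevant. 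Since the last layer is linear, its output in the first $d$ channels is $\mathrm{Re}\sum_{\mathbf{k}\in\mathcal{K}_m'\setminus\{0\}}a_{\mathbf{k}}\hat\mu_{\mathbf{k}}e^{\mathrm{i}\mathbf{k}\cdot\mathbf{x}}$ with $a_{\mathbf{k}}=(2\pi)^{-d}(-\mathrm{i}\mathbf{k})/|\mathbf{k}|^2$.

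This is where the Nyquist exclusion is essential, and it is the one step that needs care. The set $\mathcal{K}_m'=\{-m+1,\dots,m-1\}^d$ is symmetric under $\mathbf{k}\mapsto-\mathbf{k}$. Moreover $a_{-\mathbf{k}}\hat\mu_{-\mathbf{k}}=\overline{a_{\mathbf{k}}\hat\mu_{\mathbf{k}}}$, because $\mu$ is a real measure. Hence the truncated sum is already real, the real part acts as the identity, and the output is exactly $\Pi_m\mathcal{E}[\mu]$ by \eqref{eq:exact_field_op}. Had a Nyquist mode been included without its partner, taking the real part would halve it and the identity would fail.

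\emph{Step 3 (projection).} $\mathcal{Q}$ must be the identity on the first $d$ channels. Take $\mathcal{Q}(\mathbf{z})=W_2\,\sigma(W_1\mathbf{z})$ with $W_1=\begin{pmatrix}P\\-P\end{pmatrix}$, where $P$ selects the first $d$ channels (hidden width $2d$), and $W_2=(I\ \ {-I})$. The hypothesis $\sigma(z)-\sigma(-z)=z$ then gives $\mathcal{Q}(\mathbf{z})=P\mathbf{z}$. This yields $\mathcal{G}_{\theta^\ast}[\mu]=\Pi_m\mathcal{E}[\mu]$ for every $\mu$.

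Applied to an empirical measure, this is the PIF multiplier of Table~\tabref{instantiations} with $\hat S\equiv1$ on the same modes. By Lemma~\ref{lem:exact_eval} it is evaluated exactly at the particles.

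\emph{Step 4 (bound).} The first inequality in \eqref{eq:pif_bound} holds because the infimum is at most the value at $\theta^\ast$. The equality follows from (A4) together with the identity just proved. For the last inequality, $(I-\Pi_m)\mathcal{E}[\rho]$ is the Fourier series over $\mathbf{k}\notin\mathcal{K}_m'$. Using $|e^{\mathrm{i}\mathbf{k}\cdot\mathbf{x}}|=1$ and $|{-\mathrm{i}\mathbf{k}}|/|\mathbf{k}|^2=1/|\mathbf{k}|$, bounding it term by term gives the stated sum; taking the supremum over $t$ finishes the proof.
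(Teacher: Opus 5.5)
Your proposal is correct and follows the paper's proof essentially step by step. Both use a zero-weight lifting and bias-only hidden layers so that $\mathbf{u}^{L-1}_\mu\equiv\gamma\mathbf{e}_1$ with $\gamma\in\{1,\sigma(1)\}$. Both put the rescaled Poisson multiplier $-\mathrm{i}\mathbf{k}/|\mathbf{k}|^2$ in the linear last layer on the Hermitian-symmetric set $\mathcal{K}_m'\setminus\{0\}$, so that the real part is exact. Both build $\mathcal{Q}$ from the pairs $(z_j,-z_j)$ via $\sigma(z)-\sigma(-z)=z$, and finish with the term-by-term bound from the Fourier representation of $\mathcal{E}$.
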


\begin{proof}
Take $\mathcal{P}\equiv\mathbf{e}_0$ (zero weights, unit bias in channel $0$).
In the layers $\ell<L-1$ set $\mathcal{W}_\ell=0$, $R_\ell=0$ and
$\mathbf{b}_\ell=\mathbf{e}_0$, so that $\mathbf{u}^{L-1}_\mu\equiv c\,\mathbf{e}_0$
with $c=1$ for $L=1$ and $c=\sigma(1)$ otherwise (for activations with
$\sigma(0)\neq0$ the other channels hold constants, which are not used below);
then $\hat{\mathbf{c}}^{\,L-1}_\mu(\mathbf{k})$ has the entry $c\,\hat\mu_{\mathbf{k}}$
in channel $0$. In the last, linear layer set $\mathcal{W}_{L-1}=0$,
$\mathbf{b}_{L-1}=0$, $R_{L-1}(\mathbf{k})=0$ for $\mathbf{k}\notin\mathcal{K}_m'$
and for $\mathbf{k}=0$, and otherwise let $R_{L-1}(\mathbf{k})$ map channel $0$
to channel $j-1$ with entry $c^{-1}(2\pi)^{-d}(-\mathrm{i}k_j/|\mathbf{k}|^2)$,
$j=1,\dots,d$; the output may overwrite channel $0$, so $d_v\ge d$ channels
suffice. Since $\mathcal{K}_m'$ is symmetric under $\mathbf{k}\mapsto-\mathbf{k}$
and the multiplier is Hermitian, the real part is exact and channel $j-1$ of
$\mathbf{u}^L_\mu$ is $z_j=(\Pi_m\mathcal{E}[\mu])_j$.
The first layer of $\mathcal{Q}$ forms the pairs $(z_j,-z_j)$, and its second
layer takes the difference of the activated pair, which is $z_j$ because
$\sigma(z)-\sigma(-z)=z$ (for a linear $\mathcal{Q}$ this step is immediate).
The bound in~\eqref{eq:pif_bound} follows from~\eqref{eq:exact_field_op}.
\end{proof}

Proposition~\ref{prop:pif} shows that NEOPIC is at least as expressive as PIF
with the same modes, and that the best achievable model error is bounded by
the Fourier truncation error, which decays rapidly for smooth densities. This
provides a constructive bound and does not rely on a universal approximation
theorem. For general targets, which may be nonlinear in the measure and
depend on the teacher, no explicit construction is available. Universal
approximation results for neural
operators~\cite{kovachki2021universal,kovachki2023neural,lanthaler2023nonlocal}
are stated for inputs that are functions; for NEOPIC, whose layers integrate
against the input measure, the corresponding statement follows from the
Stone--Weierstrass theorem. For compact $\Omega$, the set
$\mathcal{M}_1(\Omega)$ of probability measures with the weak topology, and
hence $\mathcal{M}_1(\Omega)\times\Omega$, is compact. The functions
$F\big(\int\varphi_1\,d\mu,\dots,\int\varphi_n\,d\mu,\mathbf{x}\big)$ with
continuous $\varphi_i$ and $F$ form an algebra that contains the constants
and separates points, and are therefore dense in
$C(\mathcal{M}_1(\Omega)\times\Omega)$. NEOPIC represents them
approximately: pointwise layers approximate the $\varphi_i$, the mode
$\mathbf{k}=0$ of the next layer forms their integrals against $\mu$, and the
following pointwise layers approximate $F$, as for the averaging neural
operator of~\cite{lanthaler2023nonlocal}. Hence every target that is
continuous in $(\mu,\mathbf{x})$, such as the field operators of PIF and of
PIC with continuous shape functions, can be approximated uniformly on all
probability measures, and in particular on particle measures. The
Barnes--Hut field is not continuous in the particle positions; since
continuous functions are dense in $L^1$, parameters with a small expected
error with respect to the data distribution still exist for such targets.
Neither result guarantees that training finds such parameters, nor does it
control $L_\theta$.

\paragraph{Teacher bias} NEOPIC is trained on data generated by a numerical
method, not on the exact field. Denoting by $\mathcal{E}_h$ the field operator
of the teacher (PIC with mesh size $h$, PIF with a given number of modes,
Barnes--Hut with opening angle $\vartheta$), the triangle inequality gives
\begin{equation}
  \varepsilon_{\mathrm{NN}}
  \le \underbrace{\sup_{t}\big\|\mathcal{G}_\theta[\rho(t)]-\mathcal{E}_h[\rho(t)]\big\|_{L^\infty}}_{\varepsilon_{\mathrm{learn}}}
  + \underbrace{\sup_{t}\big\|\mathcal{E}_h[\rho(t)]-\mathcal{E}[\rho(t)]\big\|_{L^\infty}}_{\varepsilon_{\mathrm{teacher}}},
  \label{eq:eps_split}
\end{equation}
where $\varepsilon_{\mathrm{teacher}}$ is the discretization error of the
teacher (e.g.\ $\mathcal{O}(h^\alpha)$ for PIC with shape functions of order
$\alpha$) and $\varepsilon_{\mathrm{learn}}$ is the error that training
controls. When the model is trained on several teachers,
$\varepsilon_{\mathrm{teacher}}$ is the largest of their biases on the data
the model sees.

\subsection{Convergence of trajectories and fields}

The key device is a set of reference particles. Let
$\bar{\mathbf{z}}_p(t)=(\bar{\mathbf{x}}_p(t),\bar{\mathbf{v}}_p(t))$ be the
exact characteristics of the Vlasov--Poisson system, started from the same initial
samples as the numerical particles,
$\bar{\mathbf{z}}_p(0)=\mathbf{z}^0_p$, and transported by the exact field
$\mathbf{E}(t,\cdot)$. Because this field is a given function and does not
depend on the particles, the reference particles remain independent and, by
Liouville's theorem, $\{\bar{\mathbf{z}}_p(t)\}$ are iid samples of $f(t)$ for
every $t$~\cite[Lemma~2]{tranquilli2022deterministic}. In particular
$\bar{\mathbf{x}}_p(t)$ are iid samples of $\rho(t)$, and
Lemma~\ref{lem:sampling} applies to their empirical measure $\bar\mu_N(t)$.
The numerical particles are compared with the reference particles path by
path, so no independence of the numerical particles is required.

\begin{theorem}[Trajectory error]
\label{thm:trajectory}
Assume (A1)--(A4), and let
\begin{equation*}
  e^n=\|\mathbf{x}^n-\bar{\mathbf{x}}(t_n)\|_N+\|\mathbf{v}^n-\bar{\mathbf{v}}(t_n)\|_N,
  \qquad
  \|\mathbf{a}\|_N=\Big(\frac1{N_p}\sum_p|\mathbf{a}_p|^2\Big)^{1/2}.
\end{equation*}
Let $\Lambda=2\big(1+2|q/m|L_\theta\big)$. Then for
$\Delta t\le\big(1+2|q/m|L_\theta\big)^{-1}$ and $\Delta t\le t_n\le T$,
\begin{equation}
  \mathbb{E}\,e^n
  \le e^{\Lambda t_n}\Big(4|q/m|\,t_n\big(C_S N_p^{-1/2}+\varepsilon_{\mathrm{NN}}\big)
  + C_\tau\,t_n\,\Delta t^2\Big).
  \label{eq:traj_bound}
\end{equation}
\end{theorem}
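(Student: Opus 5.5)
The plan is a discrete Gronwall argument that compares the numerical particles path by path with the reference particles $\bar{\mathbf{z}}_p(t)$. The reference particles satisfy the same kick--drift--kick recursion~\eqref{eq:kdk}, with the exact field $\mathbf{E}(t_n,\cdot)$ in place of $\mathbf{E}^n_\theta$, up to a local truncation residual. By (A1) this residual is bounded by $C_\tau\Delta t^3$ per step in both position and velocity, uniformly in $p$, since the velocity support is bounded and $\mathbf{E}\in C^2$.

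First I subtract the two recursions. The position error satisfies
$\mathbf{x}^{n+1}-\bar{\mathbf{x}}(t_{n+1}) = \mathbf{x}^n-\bar{\mathbf{x}}(t_n)+\Delta t(\mathbf{v}^{n+1/2}-\bar{\mathbf{v}}^{n+1/2}) + \mathcal{O}(\Delta t^3)$.
The velocity error picks up two half-kicks of field differences. Each of these I split as
\[
\mathcal{G}_\theta[\mu^n_N](\mathbf{x}^n_p)-\mathbf{E}(t_n,\bar{\mathbf{x}}_p)
=\underbrace{\mathcal{G}_\theta[\mu^n_N](\mathbf{x}^n_p)-\mathcal{G}_\theta[\bar\mu_N(t_n)](\bar{\mathbf{x}}_p)}_{\text{Lemma~\ref{lem:lipschitz}}}
+\underbrace{\mathcal{G}_\theta[\bar\mu_N(t_n)]-\mathcal{G}_\theta[\rho(t_n)]}_{\text{Lemma~\ref{lem:sampling}}}
+\underbrace{\mathcal{G}_\theta[\rho(t_n)]-\mathcal{E}[\rho(t_n)]}_{\varepsilon_{\mathrm{NN}}}.
\]
By~\eqref{eq:lip}, the first term is bounded by $L_\theta\big(|\mathbf{x}^n_p-\bar{\mathbf{x}}_p|+\frac1{N_p}\sum_q|\mathbf{x}^n_q-\bar{\mathbf{x}}_q|\big)$. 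Taking $\|\cdot\|_N$ and using Cauchy--Schwarz on the average, its contribution is at most $2L_\theta\|\mathbf{x}^n-\bar{\mathbf{x}}(t_n)\|_N$. The second term is a supremum over $\mathbf{x}$, so it is uniform in $p$. Its $\|\cdot\|_N$-norm is therefore bounded by the sup, and its expectation is bounded by $C_S N_p^{-1/2}$ through Jensen's inequality applied to~\eqref{eq:sampling}. This step is crucial: because $\bar{\mathbf{x}}_p(t_n)$ are iid samples of $\rho(t_n)$, Lemma~\ref{lem:sampling} applies even though the numerical particles are correlated. The third term is at most $\varepsilon_{\mathrm{NN}}$ by (A4).

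Combining both half-kicks gives a one-step inequality of the form
\[
e^{n+1}\le \big(1+\Delta t(1+2|q/m|L_\theta)\big)(1+\mathcal{O}(\Delta t))\,e^n + 2|q/m|\Delta t\,\eta^n+2|q/m|\Delta t\,\eta^{n+1} + C_\tau\Delta t^3,
\]
with $\eta^n=\sup_{\mathbf{x}}|\mathcal{G}_\theta[\bar\mu_N(t_n)]-\mathcal{G}_\theta[\rho(t_n)]|+\varepsilon_{\mathrm{NN}}$.

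The main obstacle is the implicit dependence at step $n+1$. The second kick uses $\mathbf{x}^{n+1}$, so the Lipschitz term of the second half-kick involves $e^{n+1}$ on the right-hand side. I handle this by noting that this term carries the coefficient $\Delta t\,|q/m|L_\theta$. Under the hypothesis $\Delta t\le(1+2|q/m|L_\theta)^{-1}$, this coefficient is at most $1/2$, so I can move it to the left and divide by $1-\Delta t|q/m|L_\theta\ge 1/2$. This doubling is where the factor $2$ in $\Lambda$ and the constant $4$ in front of the sampling and model errors come from. Bounding $(1+\Delta t a)/(1-\Delta t b)\le 1+\Lambda\Delta t$ needs exactly this step-size condition.

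Finally I take expectations, which is linear and leaves all coefficients deterministic. Since $e^0=0$ (identical initial samples), I iterate the recursion with the discrete Gronwall lemma $(1+\Lambda\Delta t)^n\le e^{\Lambda t_n}$ and sum the $n=t_n/\Delta t$ source terms. This yields $4|q/m|t_n(C_SN_p^{-1/2}+\varepsilon_{\mathrm{NN}})$ from the field terms and $C_\tau t_n\Delta t^2$ from the local errors, which is~\eqref{eq:traj_bound}.
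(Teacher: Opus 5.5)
Your proposal is correct and follows the paper's proof. It uses the same three-term splitting of the acceleration error through $\mathcal{G}_\theta[\bar\mu_N(t_n)]$ and $\mathcal{G}_\theta[\rho(t_n)]$, with the bounds from Lemma~\ref{lem:lipschitz}, Lemma~\ref{lem:sampling} with Jensen, and (A4); it then closes a one-step recursion with the $C_\tau\Delta t^3$ local error and applies discrete Gronwall from $e^0=0$, followed by expectation.

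The only difference is that the second half-kick is not actually implicit. The paper substitutes the drift bound $a^{n+1}\le a^n+\Delta t\,b^{n+1/2}+C_\tau\Delta t^3$ directly, so your absorption of $\Delta t\,|q/m|L_\theta\,a^{n+1}$ into the left-hand side is valid but unnecessary. Likewise, the paper's factor $4$ comes from each source term $\eta^k$ entering two consecutive steps together with $(n+1)\Delta t\le 2t_n$, not from a division by $1-\Delta t\,|q/m|L_\theta$.
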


\begin{proof}
Let $\mathbf{e}^n_{x,p}=\mathbf{x}^n_p-\bar{\mathbf{x}}_p(t_n)$,
$\mathbf{e}^n_{v,p}=\mathbf{v}^n_p-\bar{\mathbf{v}}_p(t_n)$, and let
$\boldsymbol\eta^n_p=\frac{q}{m}\big(\mathbf{E}^n_\theta(\mathbf{x}^n_p)-\mathbf{E}(t_n,\bar{\mathbf{x}}_p(t_n))\big)$
be the error in the acceleration. Inserting the reference field configuration
$\bar\mu_N(t_n)$ and the exact density $\rho(t_n)$,
\begin{align*}
  \frac{|\boldsymbol\eta^n_p|}{|q/m|}
  &\le \big|\mathcal{G}_\theta[\mu^n_N](\mathbf{x}^n_p)-\mathcal{G}_\theta[\bar\mu_N(t_n)](\bar{\mathbf{x}}_p)\big|\\
  &\quad + \big|\mathcal{G}_\theta[\bar\mu_N(t_n)](\bar{\mathbf{x}}_p)-\mathcal{G}_\theta[\rho(t_n)](\bar{\mathbf{x}}_p)\big|\\
  &\quad + \big|\mathcal{G}_\theta[\rho(t_n)](\bar{\mathbf{x}}_p)-\mathcal{E}[\rho(t_n)](\bar{\mathbf{x}}_p)\big|\\
  &\le L_\theta\Big(|\mathbf{e}^n_{x,p}|+\frac1{N_p}\sum_q|\mathbf{e}^n_{x,q}|\Big) + s^n + \varepsilon_{\mathrm{NN}},
\end{align*}
by Lemma~\ref{lem:lipschitz} and (A4), where
$s^n=\sup_{\mathbf{x}}|\mathcal{G}_\theta[\bar\mu_N(t_n)]-\mathcal{G}_\theta[\rho(t_n)]|$.
Hence $\|\boldsymbol\eta^n\|_N\le\kappa\|\mathbf{e}^n_x\|_N+\xi^n$ with
$\kappa=2|q/m|L_\theta$ and $\xi^n=|q/m|(s^n+\varepsilon_{\mathrm{NN}})$.

By (A1), applying one step of~\eqref{eq:kdk} with the exact field to
$\bar{\mathbf{z}}_p(t_n)$ reproduces $\bar{\mathbf{z}}_p(t_{n+1})$ up to a
local error bounded by $C_\tau\Delta t^3$ (the kick--drift--kick method is
second order). Subtracting this from~\eqref{eq:kdk} gives, with
$a^n=\|\mathbf{e}^n_x\|_N$ and $b^n=\|\mathbf{e}^n_v\|_N$,
\begin{align*}
  b^{n+\frac12}&\le b^n+\tfrac{\Delta t}{2}(\kappa a^n+\xi^n)+C_\tau\Delta t^3,\qquad
  a^{n+1}\le a^n+\Delta t\,b^{n+\frac12}+C_\tau\Delta t^3,\\
  b^{n+1}&\le b^{n+\frac12}+\tfrac{\Delta t}{2}(\kappa a^{n+1}+\xi^{n+1})+C_\tau\Delta t^3 .
\end{align*}
Eliminating $b^{n+1/2}$ and $a^{n+1}$ on the right-hand side and using
$\Delta t\le(1+\kappa)^{-1}$ yields
$e^{n+1}\le(1+\Lambda\Delta t)\,e^n+\Delta t(\xi^n+\xi^{n+1})+C_\tau\Delta t^3$,
where the constant multiplying $\Delta t^3$ has been redefined. Since $e^0=0$,
the discrete Gronwall inequality gives, path by path,
\begin{equation*}
  e^n\le e^{\Lambda t_n}\Big(\Delta t\sum_{k=0}^{n}2\xi^k+C_\tau t_n\Delta t^2\Big).
\end{equation*}
Taking expectations, using Lemma~\ref{lem:sampling} at each $t_k$, i.e.\
$\mathbb{E}\,s^k\le(\mathbb{E}(s^k)^2)^{1/2}\le C_SN_p^{-1/2}$, and
$(n+1)\Delta t\le2t_n$ gives~\eqref{eq:traj_bound}. Only the marginal law of the reference particles
at each fixed time enters, so the correlation between different times plays
no role.
\end{proof}

\begin{corollary}[Field error]
\label{cor:field}
Under the assumptions of Theorem~\ref{thm:trajectory},
\begin{equation}
  \mathbb{E}\,\big\|\mathcal{G}_\theta[\mu^n_N]-\mathbf{E}(t_n)\big\|_{L^\infty(\mathbb{T}^d)}
  \le C_1N_p^{-1/2}+C_2\Delta t^2+C_3\,\varepsilon_{\mathrm{NN}},
  \label{eq:field_bound}
\end{equation}
with
\begin{equation*}
  C_1=C_S\big(1+4|q/m|L_\theta t_ne^{\Lambda t_n}\big),\quad
  C_2=L_\theta C_\tau t_ne^{\Lambda t_n},\quad
  C_3=1+4|q/m|L_\theta t_ne^{\Lambda t_n}.
\end{equation*}
\end{corollary}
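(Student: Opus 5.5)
The proof splits the field error at a fixed point $\mathbf{x}\in\mathbb{T}^d$ by inserting the same two intermediate operators used in the proof of Theorem~\ref{thm:trajectory}. Writing $\bar\mu_N(t_n)$ for the empirical measure of the reference particles and using $\mathbf{E}(t_n)=\mathcal{E}[\rho(t_n)]$, we have
\begin{equation*}
  \big|\mathcal{G}_\theta[\mu^n_N](\mathbf{x})-\mathbf{E}(t_n,\mathbf{x})\big|
  \le \big|\mathcal{G}_\theta[\mu^n_N](\mathbf{x})-\mathcal{G}_\theta[\bar\mu_N(t_n)](\mathbf{x})\big|
  + s^n
  + \varepsilon_{\mathrm{NN}},
\end{equation*}
where $s^n=\sup_{\mathbf{x}}|\mathcal{G}_\theta[\bar\mu_N(t_n)]-\mathcal{G}_\theta[\rho(t_n)]|$ and the last term comes from (A4). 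The right-hand side does not depend on $\mathbf{x}$ except through the first term, so we can take the supremum over $\mathbf{x}$ and then the expectation.

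For the first term we apply Lemma~\ref{lem:lipschitz} with the same evaluation point $\mathbf{x}=\mathbf{y}$. This gives the bound $L_\theta\frac1{N_p}\sum_p|\mathbf{e}^n_{x,p}|\le L_\theta\|\mathbf{e}^n_x\|_N\le L_\theta e^n$ by Cauchy--Schwarz, uniformly in $\mathbf{x}$. Taking expectations and inserting~\eqref{eq:traj_bound} yields
\begin{equation*}
  L_\theta\,\mathbb{E}\,e^n\le L_\theta e^{\Lambda t_n}\big(4|q/m|t_n(C_SN_p^{-1/2}+\varepsilon_{\mathrm{NN}})+C_\tau t_n\Delta t^2\big).
\end{equation*}
For the second term, Lemma~\ref{lem:sampling} at time $t_n$ together with Jensen's inequality gives $\mathbb{E}\,s^n\le C_SN_p^{-1/2}$. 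This applies because the reference positions $\bar{\mathbf{x}}_p(t_n)$ are iid samples of $\rho(t_n)$.

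Collecting the terms gives the coefficients. The $N_p^{-1/2}$ terms sum to $C_S(1+4|q/m|L_\theta t_ne^{\Lambda t_n})=C_1$. The $\Delta t^2$ term is $L_\theta C_\tau t_ne^{\Lambda t_n}=C_2$. The $\varepsilon_{\mathrm{NN}}$ terms sum to $1+4|q/m|L_\theta t_ne^{\Lambda t_n}=C_3$.

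The main obstacle is a technical one: the supremum in $\mathbf{x}$ has to be taken before the expectation. This works because the Lipschitz bound and $s^n$ are both already uniform in $\mathbf{x}$, so no union or chaining argument is needed; the rest is bookkeeping of constants.
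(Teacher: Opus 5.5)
Your proposal is correct and follows the paper's proof essentially step by step. You use the same three-term splitting at a common point $\mathbf{x}$, bound the configuration term by $L_\theta e^n$ via Lemma~\ref{lem:lipschitz} with $\mathbf{x}=\mathbf{y}$, control $s^n$ by Lemma~\ref{lem:sampling} with Jensen, use (A4) for the model term, and take the supremum before the expectation before inserting~\eqref{eq:traj_bound}; your constants $C_1$, $C_2$, $C_3$ match the stated ones.
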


\begin{proof}
We apply the splitting used for the acceleration error in the proof of
Theorem~\ref{thm:trajectory}, now with both fields evaluated at a common point
$\mathbf{x}\in\mathbb{T}^d$ instead of at $\mathbf{x}^n_p$ and
$\bar{\mathbf{x}}_p$. By Lemma~\ref{lem:lipschitz} with $\mathbf{x}=\mathbf{y}$,
for which only the change of the configuration remains,
Lemma~\ref{lem:sampling} and (A4),
$|\mathcal{G}_\theta[\mu^n_N](\mathbf{x})-\mathbf{E}(t_n,\mathbf{x})|
\le L_\theta\frac1{N_p}\sum_p|\mathbf{e}^n_{x,p}| + s^n + \varepsilon_{\mathrm{NN}}
\le L_\theta e^n+s^n+\varepsilon_{\mathrm{NN}}$,
where the last step uses
$\frac1{N_p}\sum_p|\mathbf{e}^n_{x,p}|\le\|\mathbf{e}^n_x\|_N\le e^n$.
Taking the supremum over $\mathbf{x}$, then expectations, and
using~\eqref{eq:traj_bound} proves the claim.
\end{proof}

\subsection{Convergence of the distribution function}

The particles approximate $f$ in the sense of measures. The following result
covers moments and quantities of interest computed from the particles, the
phase-space empirical measure, and, following~\cite{tranquilli2022deterministic},
the continuous distribution function transported by the NEOPIC field.

\begin{theorem}[Distribution function]
\label{thm:distribution}
Under the assumptions of Theorem~\ref{thm:trajectory}, let
$\nu^n_N=\frac1{N_p}\sum_p\delta_{\mathbf{z}^n_p}$ be the phase-space empirical
measure.
\begin{enumerate}
  \item[(i)] For every bounded, Lipschitz observable $\varphi$ on phase space,
  \begin{equation}
    \mathbb{E}\,\Big|\frac1{N_p}\sum_p\varphi(\mathbf{z}^n_p)-\int\varphi\,f(t_n)\,d\mathbf{x}\,d\mathbf{v}\Big|
    \le \mathrm{Lip}(\varphi)\,\mathbb{E}\,e^n + \|\varphi\|_{L^\infty}N_p^{-1/2}.
    \label{eq:observable}
  \end{equation}
  \item[(ii)] In the Wasserstein-1 distance,
  $\mathbb{E}\,W_1\big(\nu^n_N,f(t_n)\big)\le\mathbb{E}\,e^n+C_W\,r_{2d}(N_p)$,
  where $r_{2d}(N)=N^{-1/(2d)}$ for $d\ge2$ and $r_2(N)=N^{-1/2}\log(1+N)$
  for $d=1$~\cite{fournier2015rate}.
  \item[(iii)] Let $\tilde{\mathbf{E}}(t,\mathbf{x})$ be the piecewise linear
  interpolant in time of $\mathcal{G}_\theta[\mu^n_N](\mathbf{x})$ and $\tilde f$
  the solution of the Vlasov equation~\eqref{eq:vlasov} with the field
  $\tilde{\mathbf{E}}$ instead of $\mathbf{E}$ and $\tilde f(0)=f_0$. Then
  \begin{equation}
    \begin{aligned}
    \mathbb{E}\,\|f(t_n)-\tilde f(t_n)\|_{L^2}
    &\le |q/m|\,K_f\,(2\pi)^{d/2}\,t_n\\
    &\quad\times\Big(C_1N_p^{-1/2}+(C_2+C_E)\Delta t^2+C_3\,\varepsilon_{\mathrm{NN}}\Big),
    \end{aligned}
    \label{eq:f_bound}
  \end{equation}
  with $K_f=\sup_{t\le T,\,\mathbf{x}}\|\nabla_{\mathbf{v}}f(t,\mathbf{x},\cdot)\|_{L^2(\mathbb{R}^d)}$
  and $C_E$ the constant of linear interpolation of $\mathbf{E}$ in time.
\end{enumerate}
\end{theorem}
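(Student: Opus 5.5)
All three parts rest on the reference particles $\bar{\mathbf{z}}_p(t_n)$ of Theorem~\ref{thm:trajectory}. These are iid samples of $f(t_n)$ and are coupled path by path to the numerical particles. Each estimate splits into a coupling term, controlled by $e^n$, and a sampling term for the iid reference particles.

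For (i), I will insert $\frac1{N_p}\sum_p\varphi(\bar{\mathbf{z}}_p(t_n))$ and use the triangle inequality. The first difference is at most $\mathrm{Lip}(\varphi)\frac1{N_p}\sum_p|\mathbf{z}^n_p-\bar{\mathbf{z}}_p(t_n)|$. With $|\mathbf{z}|\le|\mathbf{x}|+|\mathbf{v}|$, the Cauchy--Schwarz inequality $\frac1{N_p}\sum_p|\mathbf{a}_p|\le\|\mathbf{a}\|_N$ bounds this by $\mathrm{Lip}(\varphi)\,e^n$. The second difference is a centred average of iid terms bounded by $\|\varphi\|_{L^\infty}$. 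Its expectation is therefore at most its $L^2$ norm, which is at most $\|\varphi\|_{L^\infty}N_p^{-1/2}$, as in the proof of Lemma~\ref{lem:sampling}. Taking expectations gives \eqref{eq:observable}.

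For (ii), I will use the triangle inequality for $W_1$ through the reference empirical measure $\bar\nu_N(t_n)$. The coupling that pairs $\mathbf{z}^n_p$ with $\bar{\mathbf{z}}_p(t_n)$ gives $W_1(\nu^n_N,\bar\nu_N)\le\frac1{N_p}\sum_p|\mathbf{z}^n_p-\bar{\mathbf{z}}_p|\le e^n$. The term $\mathbb{E}\,W_1(\bar\nu_N,f(t_n))$ is the empirical-measure rate of~\cite{fournier2015rate} in dimension $2d$. That rate needs a finite moment of order larger than one, and this holds because the support of $f(t_n)$ is bounded by (A1). This step also fixes $C_W$.

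For (iii), I will write $g=f-\tilde f$. It satisfies the transport equation with field $\tilde{\mathbf{E}}$ and source $-\frac{q}{m}(\mathbf{E}-\tilde{\mathbf{E}})\cdot\nabla_{\mathbf{v}}f$, with $g(0)=0$. The transport field $(\mathbf{v},\frac qm\tilde{\mathbf{E}}(t,\mathbf{x}))$ is divergence free in phase space, because $\tilde{\mathbf{E}}$ does not depend on $\mathbf{v}$, so the $L^2$ norm is conserved along the flow. Duhamel's principle then gives $\|g(t_n)\|_{L^2}\le|q/m|\int_0^{t_n}\|(\mathbf{E}-\tilde{\mathbf{E}})\cdot\nabla_{\mathbf{v}}f\|_{L^2}\,dt$. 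The integrand is at most $\|\mathbf{E}-\tilde{\mathbf{E}}\|_{L^\infty}K_f(2\pi)^{d/2}$, where the factor $(2\pi)^{d/2}$ comes from integrating over $\mathbb{T}^d$. On $[t_k,t_{k+1}]$, I will split $\mathbf{E}-\tilde{\mathbf{E}}$ into two parts: $\mathbf{E}$ minus its linear interpolant, bounded by $C_E\Delta t^2$, and the interpolant of the nodal errors $\mathbf{E}(t_k)-\mathcal{G}_\theta[\mu^k_N]$. The second part is bounded by the convex combination of the two endpoint errors. Taking expectations and applying Corollary~\ref{cor:field} at each node then gives \eqref{eq:f_bound}, using that its constants are nondecreasing in $t_k\le t_n$.

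The main obstacle is (iii), in two places. First, the rigorous justification of the Duhamel and $L^2$-isometry argument when $\tilde{\mathbf{E}}$ is only Lipschitz in $\mathbf{x}$ and piecewise linear in $t$; I would handle it with the characteristic flow and the Lipschitz bound of Lemma~\ref{lem:lipschitz}. Second, the uniformity of the constants $C_1,C_2,C_3$ over the nodes $k\le n$, which the monotonicity noted above settles.
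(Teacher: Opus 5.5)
Your proposal is correct and follows essentially the same route as the paper. Parts (i) and (ii) use the coupling with the reference particles $\bar{\mathbf{z}}_p(t_n)$ exactly as you describe, and in (iii) your Duhamel argument with the $L^2$-isometric flow is the integrated form of the paper's skew-adjointness estimate, followed by the same convex-combination splitting on each $[t_k,t_{k+1}]$ and Corollary~\ref{cor:field} with constants nondecreasing in $t_k$. Your added details, namely $|\mathbf{z}|\le|\mathbf{x}|+|\mathbf{v}|$ to reach $e^n$ and the moment condition behind the Fournier--Guillin rate, are correct and make explicit steps that the paper leaves implicit.
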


\begin{proof}
(i) Split the error at the reference particles:
\begin{equation*}
  \Big|\frac1{N_p}\sum_p\big(\varphi(\mathbf{z}^n_p)-\varphi(\bar{\mathbf{z}}_p(t_n))\big)\Big|
  \le\mathrm{Lip}(\varphi)\frac1{N_p}\sum_p|\mathbf{z}^n_p-\bar{\mathbf{z}}_p(t_n)|\le\mathrm{Lip}(\varphi)\,e^n,
\end{equation*}
and the remaining term is a centred average of iid variables bounded by
$\|\varphi\|_{L^\infty}$, whose mean absolute value is at most
$\|\varphi\|_{L^\infty}N_p^{-1/2}$.
(ii) By the same coupling,
$W_1(\nu^n_N,\bar\nu_N(t_n))\le\frac1{N_p}\sum_p|\mathbf{z}^n_p-\bar{\mathbf{z}}_p(t_n)|\le e^n$,
and $\mathbb{E}\,W_1(\bar\nu_N(t_n),f(t_n))\le C_Wr_{2d}(N_p)$ for iid samples
of a compactly supported density in dimension $2d$~\cite{fournier2015rate}.
(iii) As in~\cite[Lemma~1]{tranquilli2022deterministic}, the difference
$\delta f=f-\tilde f$ satisfies
\begin{equation*}
  \partial_t\delta f+\mathbf{v}\cdot\nabla_{\mathbf{x}}\delta f+\tfrac{q}{m}\tilde{\mathbf{E}}\cdot\nabla_{\mathbf{v}}\delta f
  =-\tfrac{q}{m}(\mathbf{E}-\tilde{\mathbf{E}})\cdot\nabla_{\mathbf{v}}f .
\end{equation*}
The transport operator is skew-adjoint in $L^2$ on the torus, so
$\frac{d}{dt}\|\delta f\|_{L^2}\le|q/m|\,K_f\,\|\mathbf{E}-\tilde{\mathbf{E}}\|_{L^2(\mathbb{T}^d)}$
and, integrating in time,
$\|\delta f(t_n)\|_{L^2}\le|q/m|K_f\int_0^{t_n}\|\mathbf{E}-\tilde{\mathbf{E}}\|_{L^2}\,ds$.
On each interval, at $s=t_k+\theta\Delta t$ with $\theta\in[0,1]$, both
$\tilde{\mathbf{E}}$ and the linear interpolant of $\mathbf{E}$ in time are
convex combinations of their values at $t_k$ and $t_{k+1}$ with the same
weights, so that
$\|\mathbf{E}-\tilde{\mathbf{E}}\|_{L^2}\le(2\pi)^{d/2}\big((1-\theta)\|X_k\|_{L^\infty}+\theta\|X_{k+1}\|_{L^\infty}+C_E\Delta t^2\big)$
with $X_j=\mathbf{E}(t_j)-\mathcal{G}_\theta[\mu^j_N]$. Taking expectations,
which commute with the convex combination and with the time integral, and
using Corollary~\ref{cor:field}, whose constants are nondecreasing in $t_n$,
bounds the expectation of the integrand by the same quantity for all
$s\le t_n$; integrating over $[0,t_n]$ gives~\eqref{eq:f_bound}.
\end{proof}

Theorem~\ref{thm:distribution}(iii) is the analogue of the statement
of~\cite{tranquilli2022deterministic} that convergence of the distribution
function is no slower than convergence of the electric field. As there, the
particles follow the characteristics of $\tilde f$ but are not independent
samples of it, because $\tilde{\mathbf{E}}$ depends on the particles;
statements (i) and (ii), which concern the particles themselves, are rigorous
without this caveat.

\begin{remark}[Interpretation]
\label{rem:interpretation}
Corollary~\ref{cor:field} refines the heuristic decomposition
\textit{field error $\lesssim$ sampling + time stepping + model error} in three
ways. (a) The model error is amplified by the self-consistent dynamics through
$C_3$, which grows with $t_ne^{\Lambda t_n}$. (b) The growth rate $\Lambda$
depends on the Lipschitz constant $L_\theta$ of the learned operator
with respect to the particle positions, not on that of the exact field
operator. Controlling $L_\theta$, for example through spectral normalization
of the weights or a penalty on the weights of high modes, therefore directly
improves the stability of NEOPIC. (c) The error does not vanish as
$N_p\to\infty$ and $\Delta t\to0$; it saturates at $C_3\varepsilon_{\mathrm{NN}}$.
NEOPIC is convergent up to the model error, and by~\eqref{eq:eps_split} this
floor contains the bias of the teacher.
\end{remark}

\begin{remark}[Teacher-forced and in-loop errors]
\label{rem:teacher_forced}
If the field is evaluated on the reference particles, i.e.\ with
$\mathbf{x}^n_p=\bar{\mathbf{x}}_p(t_n)$ (teacher forcing), then $e^n=0$ and the
proof of Corollary~\ref{cor:field} gives
$\mathbb{E}\|\mathcal{G}_\theta[\bar\mu_N(t_n)]-\mathbf{E}(t_n)\|_{L^\infty}\le C_SN_p^{-1/2}+\varepsilon_{\mathrm{NN}}$,
without amplification. The difference between the teacher-forced and the
in-loop errors observed in the numerical experiments is thus the accumulation
captured by Theorem~\ref{thm:trajectory}.
\end{remark}

\begin{remark}[Learning and teacher errors at finite particle number]
\label{rem:resolution_mismatch}

In the proof of Theorem~\ref{thm:trajectory}, the acceleration error can also
be split through the teacher,
$\mathcal{G}_\theta[\bar\mu_{N_p}]-\mathcal{E}[\rho]
=(\mathcal{G}_\theta[\bar\mu_{N_p}]-\mathcal{E}_h[\bar\mu_{N_p}])
+(\mathcal{E}_h[\bar\mu_{N_p}]-\mathcal{E}[\rho])$,
instead of through $\mathcal{G}_\theta[\rho]$. The term
$C_SN_p^{-1/2}+\varepsilon_{\mathrm{NN}}$ in~\eqref{eq:traj_bound} and in
Corollary~\ref{cor:field} is then replaced, with the same constants, by
$\varepsilon_{\mathrm{learn}}(N_{\mathrm{tr}};N_p)+\varepsilon_{\mathrm{teacher}}(N_p)$,
the suprema over $t_n\le T$ of the expected $L^\infty$ norms of the two
differences. The first is the teacher-forced error at $N_p$ particles of a
model trained with $N_{\mathrm{tr}}$ particles, and the second is the error of
the teacher itself with $N_p$ particles. On noise-free inputs they reduce to
the quantities in~\eqref{eq:eps_split}. We state
Theorem~\ref{thm:trajectory} with the first splitting because it requires no
sampling estimate for the teacher, which is not available for a tree code.
The second splitting can be much sharper: if the network responds to sampling
noise like the teacher (small $Q$ in Proposition~\ref{prop:noise_reg}), the
noise cancels in $\mathcal{G}_\theta[\bar\mu_{N_p}]-\mathcal{E}_h[\bar\mu_{N_p}]$.
It also matches the experiments. Against the teacher started from the same
particles, $\varepsilon_{\mathrm{teacher}}$ drops out
(Proposition~\ref{prop:inloop_teacher}). Against a high-fidelity reference,
the in-loop error contains $\varepsilon_{\mathrm{teacher}}(N_p)$, amplified by
the Gronwall factor; its sampling part is of order $N_p^{-1/2}$ and is the
same for all models.

The dependence of $\varepsilon_{\mathrm{learn}}(N_{\mathrm{tr}};N_p)$ on both
particle numbers differs from the resolution dependence of grid-based neural
operators. By Theorem~\ref{thm:reno}, the operator is the same at every $N_p$,
but its accuracy need not be. The Fourier coefficients that enter the first
layer are
$\hat{\mathbf{c}}^{\,0}_{\bar\mu_{N_p}}(\mathbf{k})=\hat{\mathbf{c}}^{\,0}_{\rho}(\mathbf{k})+\boldsymbol{\eta}_{N_p}(\mathbf{k})$,
with sampling fluctuations $\boldsymbol{\eta}_{N_p}$ of variance
$\mathcal{O}(N_p^{-1})$. A change of $N_p$ therefore changes the noise level
of the input, i.e.\ it shifts the input distribution, and nothing in the
training guarantees accuracy outside the training distribution. For
$N_p<N_{\mathrm{tr}}$ the inputs are noisier than any training sample, and
the deviation is bounded only through the Lipschitz estimate~\eqref{eq:lip},
with a constant $L_\theta$ that training does not control. The effect of the
training noise level on both sides of $N_{\mathrm{tr}}$ is analysed in
Section~\ref{sect:noise}.
\end{remark}

\paragraph{Error norms}
The experiments report particle-averaged relative $L^1$ errors. For any
particle quantity $\mathbf{a}=(\mathbf{a}_p)_p$,
\begin{equation}
  \frac1{N_p}\sum_{p=1}^{N_p}|\mathbf{a}_p|
  \le\Big(\frac1{N_p}\sum_{p=1}^{N_p}|\mathbf{a}_p|^2\Big)^{1/2}
  \le\max_p|\mathbf{a}_p|,
  \label{eq:norm_chain}
\end{equation}
so that the bounds of Theorem~\ref{thm:trajectory} and Corollary~\ref{cor:field} hold
for these errors with the same rates, as long as the reference norm used for
the normalization is bounded away from zero. Measured against the teacher, a
teacher-forced error is in this norm a lower estimate of the $L^\infty$
learning error $\varepsilon_{\mathrm{learn}}$ of~\eqref{eq:eps_split} at the
given particle number.
\paragraph{Sharpness of the constants}
The constants are explicit but pessimistic: $e^{\Lambda t_n}$ is the
worst-case Gronwall growth, which does not reflect, e.g., the phase mixing
that saturates errors in Landau damping, and $K_f$ grows with time by
filamentation, so that statements (i)--(ii) of
Theorem~\ref{thm:distribution} are the more natural measures of convergence
for the distribution function. The bounds establish the structure and the
rates of the error, which the experiments confirm.
\section{Sampling noise as an implicit regularizer}
\seclab{noise}
Adding noise to the inputs during training is a well-known
regularizer~\cite{bishop1995training,an1996effects}. In operator learning for
dynamical systems it improves long-time stability, and the noise level must be
tuned to the problem~\cite{nguyen2022mctangent,nguyen2025dgnet}. Neural
operators on grids are typically trained on deterministic solver data, so
noise has to be added on purpose. NEOPIC differs in three ways. First, its
inputs are empirical measures of particles, so sampling noise is present by
construction. Second, the noise level is not a free parameter but is set by
the number of training particles $N_{\mathrm{tr}}$, and its covariance is set
by the physics. Third, the labels are computed by the teacher from the same
noisy particles. This section makes these points quantitative and draws
consequences for the choice of $N_{\mathrm{tr}}$.

\paragraph{Training loss} Let $\mathcal{E}_h$ be the field operator of the
teacher (see~\eqref{eq:eps_split}) and $r_\theta[\mu]=\mathcal{G}_\theta[\mu]-\mathcal{E}_h[\mu]$
the residual. The training data are snapshots $\mu^{h,j}_N$, $j\in\mathcal{J}$,
of teacher runs with $N$ particles, labelled with the teacher field at the
particles. We consider the particle-averaged squared error
\begin{equation}
  J(\theta;\mu)=\int_{\mathbb{T}^d}\big|r_\theta[\mu](\mathbf{x})\big|^2\,d\mu(\mathbf{x}),
  \qquad
  \mathcal{L}_N(\theta)=\frac1{|\mathcal{J}|}\sum_{j\in\mathcal{J}}J\big(\theta;\mu^{h,j}_N\big),
  \label{eq:train_loss}
\end{equation}
and the population loss $\bar{\mathcal{L}}_N(\theta)=\mathbb{E}\,\mathcal{L}_N(\theta)$,
where the expectation is over the particle realizations. Other weightings
of the loss change only the term $P$ below.

The common teachers are linear in the charge distribution: the field is
\begin{equation}
  \mathcal{E}_h[\mu](\mathbf{x})=\int_{\mathbb{T}^d}\mathbf{K}_h(\mathbf{x},\mathbf{y})\,d\mu(\mathbf{y}).
  \label{eq:linear_teacher}
\end{equation}
For PIF, $\mathbf{K}_h$ is the truncated Fourier sum~\eqref{eq:exact_field_op}.
For PIC, $\mathbf{K}_h$ composes charge deposition, the discrete Poisson solve
and field interpolation. Barnes--Hut is linear for a fixed tree. We assume
that $\mathbf{K}_h$ is bounded, which holds for PIF, for PIC, and for
softened tree codes.

\paragraph{Assumptions}
\begin{enumerate}
  \item[(A5)] \textit{Fluctuations.} For each snapshot $j$, the particle
  positions are iid samples of a density $\rho_j$, the limit of
  $\mu^{h,j}_N$ as $N\to\infty$.
  \item[(A6)] \textit{Smoothness.} For every $\mathbf{x}$, the map
  $\mu\mapsto\mathcal{G}_\theta[\mu](\mathbf{x})$ has bounded linear functional
  derivatives up to third order~\cite{chassagneux2022weak}. This holds for
  smooth activations such as GELU, because by~\eqref{eq:err_layer}--\eqref{eq:err_coef}
  the measure enters only through integrals of smooth functions.
\end{enumerate}
We write $D\mathcal{G}_\theta[\rho]\nu$ and $D^2\mathcal{G}_\theta[\rho](\nu,\nu)$
for the first and second derivatives in the direction of a signed measure
$\nu$, and $Dr_\theta[\rho]=D\mathcal{G}_\theta[\rho]-\mathcal{E}_h$; the
teacher contributes $\mathcal{E}_h$ itself because it is linear. Finally,
$\boldsymbol\xi=\delta_X-\rho$, where $X\sim\rho$ is a random particle
position drawn from $\rho$ and $\delta_X$ the Dirac measure at it, denotes the fluctuation of a
single particle, so that $\mu_N-\rho=\frac1N\sum_p\boldsymbol\xi_p$.
Assumption (A5) is exact at $t=0$ and for the reference particles of
Section~\ref{sect:error}. For the interacting particles of a teacher run it
holds approximately (propagation of chaos), and we discuss the correction
in Remark~\ref{rem:noise_caveats}.

\begin{proposition}[Noise-induced regularization]
\label{prop:noise_reg}
Under (A5)--(A6),
\begin{equation}
  \bar{\mathcal{L}}_N(\theta)=\mathcal{L}_\infty(\theta)
  +\frac1N\,\frac1{|\mathcal{J}|}\sum_{j\in\mathcal{J}}\big(Q(\theta;\rho_j)+P(\theta;\rho_j)\big)
  +\mathcal{O}(N^{-2}),
  \label{eq:noise_expansion}
\end{equation}
where $\mathcal{L}_\infty(\theta)=\frac1{|\mathcal{J}|}\sum_jJ(\theta;\rho_j)$
is the loss on noise-free inputs,
\begin{equation}
  Q(\theta;\rho)=\mathbb{E}\int_{\mathbb{T}^d}\big|Dr_\theta[\rho]\,\boldsymbol\xi\big|^2\,d\rho\;\ge0,
  \label{eq:Q}
\end{equation}
and
\begin{equation}
  \begin{aligned}
  P(\theta;\rho)&=2\,\mathbb{E}\int_{\mathbb{T}^d} r_\theta[\rho]\cdot\big(Dr_\theta[\rho]\,\boldsymbol\xi\big)\,d\boldsymbol\xi\\
  &\quad+\mathbb{E}\int_{\mathbb{T}^d} r_\theta[\rho]\cdot D^2\mathcal{G}_\theta[\rho](\boldsymbol\xi,\boldsymbol\xi)\,d\rho
  \end{aligned}
  \label{eq:P}
\end{equation}
is linear in the residual on noise-free inputs, so that
$|P(\theta;\rho)|\le C_P\|r_\theta[\rho]\|_{L^\infty}$.
\end{proposition}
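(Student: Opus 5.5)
The plan is a second-order expansion of $J(\theta;\mu_N)$ around $\rho=\rho_j$ in the fluctuation $\nu_N=\mu_N-\rho=\frac1N\sum_p\boldsymbol\xi_p$, followed by an expectation using the independence in (A5). I write $J(\theta;\mu)=\int|r_\theta[\mu]|^2\,d\mu$ and note that $\mu$ enters twice: through the residual and through the integrating measure. I treat each snapshot separately and average over $j$ at the end, since the expansion is linear in the snapshot loss.

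\emph{Step 1 (expansion of the residual).} By (A6) and the linearity of the teacher~\eqref{eq:linear_teacher}, I will write
\[
r_\theta[\mu_N]=r_\theta[\rho]+Dr_\theta[\rho]\nu_N+\tfrac12D^2\mathcal{G}_\theta[\rho](\nu_N,\nu_N)+R_3,
\]
with $R_3$ bounded uniformly in $\mathbf{x}$ by the third functional derivative. I will control $R_3$ in the cubic and quartic moments of $\nu_N$, using the weak-error framework of~\cite{chassagneux2022weak}. I expect $\mathbb{E}\|R_3\|\lesssim N^{-3/2}$ naively, which is improved to $\mathcal{O}(N^{-2})$ after expectation. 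The third-order term in the centred iid fluctuations has mean of order $N^{-2}$, since its mean only involves diagonal triples $p=q=s$. \emph{This is the step I expect to be the main obstacle.} Naive Taylor bounds give only $N^{-3/2}$. I would first try to invoke the linear-functional-derivative expansion of~\cite{chassagneux2022weak} directly, where the $N^{-2}$ weak rate is proved for functions of the empirical measure with three bounded derivatives. I would apply it to the functional $\mu\mapsto J(\theta;\mu)$, which inherits (A6) because it is a smooth function of $r_\theta$ integrated against $\mu$.

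\emph{Step 2 (insert into $J$ and expand).} I split $J(\theta;\mu_N)=\int|r_\theta[\mu_N]|^2d\rho+\int|r_\theta[\mu_N]|^2d\nu_N$ and collect the terms up to second order in $\nu_N$. The zeroth-order term is $J(\theta;\rho)$. The first-order terms are $2\int r\cdot Dr\,\nu_N\,d\rho+\int|r|^2d\nu_N$, and both have zero expectation because $\mathbb{E}\,\boldsymbol\xi=0$ and the maps are linear in $\nu_N$. The second-order terms are
\[
\int|Dr\,\nu_N|^2d\rho+\int r\cdot D^2\mathcal{G}_\theta(\nu_N,\nu_N)\,d\rho+2\int r\cdot(Dr\,\nu_N)\,d\nu_N .
\]
In the middle term I have used $D^2r=D^2\mathcal{G}_\theta$, since the teacher is linear.

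\emph{Step 3 (expectations of the quadratic terms).} Each quadratic term is a bilinear form $B(\nu_N,\nu_N)=\frac1{N^2}\sum_{p,q}B(\boldsymbol\xi_p,\boldsymbol\xi_q)$. The cross terms with $p\neq q$ vanish in expectation by independence and centring, which requires $B$ to be bounded. This holds because $D\mathcal{G}_\theta$ and $D^2\mathcal{G}_\theta$ are bounded and $\mathbf{K}_h$ is bounded, the latter keeping $Dr_\theta\,\boldsymbol\xi$ bounded when $\boldsymbol\xi$ is a Dirac mass minus $\rho$. Only the $N$ diagonal terms survive, giving $\frac1N\mathbb{E}B(\boldsymbol\xi,\boldsymbol\xi)$. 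The first quadratic term produces $Q(\theta;\rho)\ge0$, and the other two produce $P(\theta;\rho)$, matching~\eqref{eq:Q}--\eqref{eq:P}. The remaining terms are of third and fourth order, and together with $R_3$ they give the $\mathcal{O}(N^{-2})$ term.

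\emph{Step 4 (the bound on $P$).} Both parts of $P$ contain $r_\theta[\rho]$ exactly once, so I bound it in $L^\infty$. The other factors are bounded uniformly. For the first part, $\big|\int g\,d\boldsymbol\xi\big|\le2\sup|g|$ and $Dr\,\boldsymbol\xi$ is bounded by the constants above. For the second part, $D^2\mathcal{G}_\theta(\boldsymbol\xi,\boldsymbol\xi)$ is bounded by (A6). This gives $|P|\le C_P\|r_\theta[\rho]\|_{L^\infty}$, and averaging over $j$ yields~\eqref{eq:noise_expansion}.
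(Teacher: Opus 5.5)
Your proposal is correct and follows the paper's proof. The paper also applies the expansion of \cite{chassagneux2022weak} directly to $F(\mu)=J(\theta;\mu)$, removes the first-order and off-diagonal terms by $\mathbb{E}\,\boldsymbol\xi=0$ and independence, and gets the same three quadratic terms by differentiating $s\mapsto F(\rho+s\nu)$ twice with $D^2\mathcal{E}_h=0$, which is exactly your substitution of the residual expansion into $J$ with the split $d\mu_N=d\rho+d\nu_N$. The one point to keep straight is that the $\mathcal{O}(N^{-2})$ must come from that cited expansion, not from your remainder $R_3$: $R_3$ sits at a random intermediate measure, so the diagonal-triples argument applies only to the exact cubic term at $\rho$, which is why invoking \cite{chassagneux2022weak} on $J$, as the paper does, is the right step.
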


\begin{proof}
Fix $j$ and write $F(\mu)=J(\theta;\mu)$ and $\rho=\rho_j$. For iid samples
and $F$ regular in the sense of (A6), the expansion of~\cite{chassagneux2022weak}
gives
$\mathbb{E}\,F(\mu_N)=F(\rho)+\frac1{2N}\,\mathbb{E}\,D^2F[\rho](\boldsymbol\xi,\boldsymbol\xi)+\mathcal{O}(N^{-2})$.
There is no first-order term because $\mathbb{E}\,\boldsymbol\xi=0$, and the
cross terms $p\ne q$ vanish by independence. Differentiating
$s\mapsto F(\rho+s\nu)=\int|r_\theta[\rho+s\nu]|^2\,d(\rho+s\nu)$ twice at
$s=0$, and using $D^2\mathcal{E}_h=0$, gives
\begin{equation*}
  \begin{aligned}
  \tfrac12D^2F[\rho](\nu,\nu)
  &=\int|Dr_\theta[\rho]\nu|^2\,d\rho
  +\int r_\theta[\rho]\cdot D^2\mathcal{G}_\theta[\rho](\nu,\nu)\,d\rho\\
  &\quad+2\int r_\theta[\rho]\cdot(Dr_\theta[\rho]\nu)\,d\nu .
  \end{aligned}
\end{equation*}
Setting $\nu=\boldsymbol\xi$, taking expectations and averaging over $j$
proves~\eqref{eq:noise_expansion}. The bound on $P$ follows from the
boundedness of $\mathbf{K}_h$ and of the derivatives in (A6).
\end{proof}

The proposition is stated for the squared loss; the models of
Section~\secref{results} are trained with the $L^1$ loss, for which the same
conclusions hold in the regime discussed in Remark~\ref{rem:l1_loss}.
Proposition~\ref{prop:noise_reg} is also the particle analogue of the
randomized-loss expansions of~\cite[Theorem~2.6]{nguyen2022mctangent}
and~\cite[Theorem~1]{nguyen2025dgnet}. The added noise variance $\delta^2$ is
replaced by $1/N_{\mathrm{tr}}$, and the isotropic noise is replaced by the
fluctuation $\boldsymbol\xi$, whose covariance is fixed by the density:
$\mathrm{Cov}\big(\varphi(X),\psi(X)\big)$ for test functions $\varphi$ and
$\psi$. For a Fourier mode, $\mathrm{Var}\,e^{-\mathrm{i}\mathbf{k}\cdot X}=1-|\hat\rho_{\mathbf{k}}|^2$.
There is one structural difference. In~\cite{nguyen2022mctangent} only the
inputs are randomized, and the data-misfit term produces a penalty on the
network Jacobian itself, which pushes the network towards insensitivity. In
NEOPIC the label $\mathcal{E}_h[\mu_N]$ is computed from the same noisy
particles. Only the mismatch $D\mathcal{G}_\theta-\mathcal{E}_h$ is penalized,
i.e.\ the regularizer has the form of the model-constrained term
of~\cite{nguyen2022mctangent}. Sampling noise thus drives the linear response
of the network to density fluctuations towards that of the teacher, rather
than towards zero. In particular, if the teacher is realizable,
$r_\theta\equiv0$ for some $\theta$ (for example a PIF teacher,
Proposition~\ref{prop:pif}), then $Q=P=0$ there and the noise does not bias
the minimizer.

\begin{proposition}[Structure of the regularizer]
\label{prop:noise_structure}
Assume (A5)--(A6) and let $\rho$ be a snapshot density.
\begin{enumerate}
  \item[(i)] \textit{Matching of linear responses.} For every $g\in L^2(\rho)$
  and $\nu_g=(g-\bar g)\rho$ with $\bar g=\int g\,d\rho$,
  \begin{equation}
    \big\|Dr_\theta[\rho]\,\nu_g\big\|_{L^2(\rho)}\le Q(\theta;\rho)^{1/2}\,\|g-\bar g\|_{L^2(\rho)} .
    \label{eq:jacobian_match}
  \end{equation}
  \item[(ii)] \textit{Irreducible part.} Let $\lambda_1\ge\lambda_2\ge\dots$
  be the eigenvalues of the covariance operator of the random field
  $\mathcal{E}_h\boldsymbol\xi$ in $L^2(\rho;\mathbb{R}^d)$, and
  $D_\theta=2L\,|\mathcal{K}_m|\,d_v$. Then, for every $\theta$,
  \begin{equation}
    Q(\theta;\rho)\ge Q_0(\rho):=\sum_{i>D_\theta}\lambda_i .
    \label{eq:Q0}
  \end{equation}
\end{enumerate}
\end{proposition}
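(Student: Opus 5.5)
The plan is to express both parts through the kernel of the linearized residual. By (A6) and the linearity of the teacher~\eqref{eq:linear_teacher}, $Dr_\theta[\rho]$ acts on a signed measure $\nu$ through a bounded kernel:
\begin{equation*}
  \big(Dr_\theta[\rho]\nu\big)(\mathbf{x})=\int_{\mathbb{T}^d}\mathbf{k}_\rho(\mathbf{x},\mathbf{y})\,d\nu(\mathbf{y}),
  \qquad
  \mathbf{k}_\rho=\frac{\delta\mathcal{G}_\theta}{\delta\mu}[\rho]-\mathbf{K}_h .
\end{equation*}
Here $\delta\mathcal{G}_\theta/\delta\mu$ is the linear functional derivative of~\cite{chassagneux2022weak}, and $\mathbf{k}_\rho$ is defined up to a function of $\mathbf{x}$ alone, which does not matter because only zero-mass directions occur. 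For $\boldsymbol\xi=\delta_X-\rho$ this gives $(Dr_\theta[\rho]\boldsymbol\xi)(\mathbf{x})=\mathbf{k}_\rho(\mathbf{x},X)-\mathbb{E}\,\mathbf{k}_\rho(\mathbf{x},X)$. This is a centred random field, and by~\eqref{eq:Q} its second moment integrated against $\rho$ is $Q$.

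For (i), I would compute, for fixed $\mathbf{x}$,
\begin{equation*}
  \big(Dr_\theta[\rho]\nu_g\big)(\mathbf{x})
  =\int\mathbf{k}_\rho(\mathbf{x},\mathbf{y})\,(g(\mathbf{y})-\bar g)\,\rho(\mathbf{y})\,d\mathbf{y}
  =\mathbb{E}\big[(Dr_\theta[\rho]\boldsymbol\xi)(\mathbf{x})\,(g(X)-\bar g)\big].
\end{equation*}
The last equality holds because $g(X)-\bar g$ has mean zero, so subtracting the mean of $\mathbf{k}_\rho(\mathbf{x},X)$ costs nothing. Cauchy--Schwarz in the probability space then gives
\begin{equation*}
  \big|Dr_\theta[\rho]\nu_g(\mathbf{x})\big|^2\le\mathbb{E}\big|Dr_\theta[\rho]\boldsymbol\xi(\mathbf{x})\big|^2\;\|g-\bar g\|^2_{L^2(\rho)} .
\end{equation*}
Integrating in $d\rho(\mathbf{x})$ and using Fubini yields~\eqref{eq:jacobian_match}. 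This step is routine.

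For (ii), the key observation is that the measure enters $\mathcal{G}_\theta$ only through finitely many numbers. By~\eqref{eq:err_lift}--\eqref{eq:err_proj}, $\mathcal{G}_\theta[\mu](\mathbf{x})=\Phi\big(\mathbf{x};\mathbf{c}(\mu)\big)$, where $\mathbf{c}(\mu)=(\hat{\mathbf{c}}^{\,\ell}_\mu(\mathbf{k}))_{\ell<L,\,\mathbf{k}\in\mathcal{K}_m}$ collects at most $D_\theta=2L|\mathcal{K}_m|d_v$ real parameters (real and imaginary parts). Given these coefficients, each $\mathbf{u}^\ell_\mu$ is a fixed smooth function of $\mathbf{x}$, computed recursively from the lower-layer coefficients. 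The chain rule then gives
\begin{equation*}
  D\mathcal{G}_\theta[\rho]\boldsymbol\xi=\sum_{j=1}^{D_\theta}\alpha_j(\boldsymbol\xi)\,\partial_{c_j}\Phi\big(\cdot\,;\mathbf{c}(\rho)\big).
\end{equation*}
The scalar coefficients $\alpha_j(\boldsymbol\xi)=D c_j[\rho]\boldsymbol\xi$ are random, but the functions $\partial_{c_j}\Phi$ are deterministic. Hence $D\mathcal{G}_\theta[\rho]\boldsymbol\xi$ takes values in a deterministic subspace $V_\rho\subset L^2(\rho;\mathbb{R}^d)$ of dimension at most $D_\theta$. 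Here $c_j(\mu)$ depends on $\mu$ both explicitly and through $\mathbf{u}^\ell_\mu$, but the total derivative is still a scalar. It follows that
\begin{equation*}
  Q=\mathbb{E}\,\big\|D\mathcal{G}_\theta[\rho]\boldsymbol\xi-\mathcal{E}_h\boldsymbol\xi\big\|^2_{L^2(\rho)}\ge\mathbb{E}\,\mathrm{dist}\big(\mathcal{E}_h\boldsymbol\xi,V_\rho\big)^2=\mathbb{E}\,\big\|(I-\Pi_{V_\rho})\mathcal{E}_h\boldsymbol\xi\big\|^2_{L^2(\rho)},
\end{equation*}
where $\Pi_{V_\rho}$ is the orthogonal projection onto $V_\rho$. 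The random field $\mathcal{E}_h\boldsymbol\xi$ is centred, and since $\mathbf{K}_h$ is bounded its covariance operator $C$ is trace class on $L^2(\rho;\mathbb{R}^d)$. The right-hand side therefore equals $\mathrm{tr}\big((I-\Pi_{V_\rho})C\big)$. By Ky Fan's maximum principle (the Eckart--Young/PCA characterization), $\mathrm{tr}(\Pi_VC)\le\sum_{i\le D_\theta}\lambda_i$ for every subspace of dimension at most $D_\theta$. This gives~\eqref{eq:Q0} uniformly in $\theta$.

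I expect the main obstacle to be the rigorous justification of the finite-dimensional factorization. One must check that the functional derivative of the full network, including the nonlinear dependence of the later coefficients on $\mu$ through the earlier layers, really factors through the $D_\theta$ scalars $\alpha_j$, with deterministic directions $\partial_{c_j}\Phi$ depending only on $\rho$ and $\theta$. I would prove this by induction over the layers, mirroring the proof of Lemma~\ref{lem:lipschitz}, differentiating $\mathbf{u}^{\ell+1}_{\rho+s\nu}$ at $s=0$. At each layer I would show that the derivative lies in the span of the derivatives with respect to the coefficients up to layer $\ell$. The trace-class property, and hence the well-posedness of the eigenvalue sum, follows from the boundedness of $\mathbf{K}_h$, which makes $C$ the covariance of a bounded random element.
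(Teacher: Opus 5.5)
Your proposal is correct and follows the paper's proof. In (i) you write $Dr_\theta[\rho]\nu_g$ as the expectation of $(g(X)-\bar g)\,Dr_\theta[\rho]\boldsymbol\xi$ and apply Cauchy--Schwarz, pointwise in $\mathbf{x}$ and then Fubini, where the paper bounds the norm of $\mathbb{E}[(g(X)-\bar g)\,Dr_\theta[\rho]\boldsymbol\xi]$ directly. In (ii) you use that the measure enters only through the $D_\theta$ real coefficients, so $D\mathcal{G}_\theta[\rho]\boldsymbol\xi$ lies in a fixed subspace of dimension at most $D_\theta$, and then apply Ky Fan's principle, as the paper does. Your kernel representation and the chain-rule factorization through $\Phi(\mathbf{x};\mathbf{c})$ make explicit two steps the paper states in one line each: commuting $Dr_\theta[\rho]$ with the expectation, and identifying the fixed subspace $V$.
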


\begin{proof}
(i) Let $A=Dr_\theta[\rho]$. For every test function $\varphi$,
$\mathbb{E}\big[(g(X)-\bar g)\int\varphi\,d\boldsymbol\xi\big]=\int\varphi\,(g-\bar g)\,d\rho$,
so $\nu_g=\mathbb{E}[(g(X)-\bar g)\boldsymbol\xi]$. By linearity and the
Cauchy--Schwarz inequality,
$\|A\nu_g\|\le\mathbb{E}\big[|g(X)-\bar g|\,\|A\boldsymbol\xi\|\big]\le\|g-\bar g\|_{L^2(\rho)}\,Q^{1/2}$.
(ii) By~\eqref{eq:err_layer}--\eqref{eq:err_coef}, a perturbation of the
measure enters $\mathcal{G}_\theta$ only through the coefficients
$\hat{\mathbf{c}}^{\,\ell}(\mathbf{k})\in\mathbb{C}^{d_v}$,
$\ell<L$, $\mathbf{k}\in\mathcal{K}_m$, and $\mathbf{u}^0=\mathcal{P}$ does not
depend on it. Hence $D\mathcal{G}_\theta[\rho]\boldsymbol\xi$ lies in a fixed
subspace $V\subset L^2(\rho;\mathbb{R}^d)$ of dimension at most $D_\theta$,
and $Q\ge\mathbb{E}\,\mathrm{dist}(\mathcal{E}_h\boldsymbol\xi,V)^2$, with
$\mathrm{dist}$ the distance in $L^2(\rho;\mathbb{R}^d)$ as defined in Section \secref{notation}. Since
$\mathcal{E}_h\boldsymbol\xi$ has mean zero and finite second moment
($\mathbf{K}_h$ is bounded), Ky Fan's maximum principle gives
$\mathbb{E}\,\mathrm{dist}(\mathcal{E}_h\boldsymbol\xi,V)^2\ge\sum_{i>D_\theta}\lambda_i$.
\end{proof}

Part (i) states that a small $Q$ forces the network and the teacher to
respond alike to every density perturbation of the form $\nu_g$. Displacing
the particles by a smooth field $\mathbf{e}$ changes the density by
$-\nabla\cdot(\rho\mathbf{e})$ to first order, which is of this form with
$g=-\nabla\cdot(\rho\mathbf{e})/\rho$. This is the perturbation produced by
errors accumulated in the loop. Part (ii) bounds the part of $Q$ that no network of the given size can remove:
the variance of the teacher's fluctuation field outside the best subspace of
dimension $D_\theta$. The response of NEOPIC is not band-limited, since the
activations and skip connections generate high wave numbers, but it depends on
the fluctuation only through the $D_\theta$ real and imaginary parts of the
coefficients $\hat{\mathbf{c}}^{\,\ell}(\mathbf{k})$, $\ell<L$,
$\mathbf{k}\in\mathcal{K}_m$. Two regimes follow. If $D_\theta$ is smaller
than the rank of the teacher's response, $Q_0>0$. For a nearly homogeneous
plasma and a Poisson-type teacher with shape factor $\hat S_h$, the
eigenfunctions are approximately Fourier modes with
$\lambda\propto|\hat S_h(\mathbf{k})|^2/|\mathbf{k}|^2$, and $Q_0$ is the sum
of the eigenvalues beyond the $D_\theta$ largest; it depends on the resolution
of the teacher. If $D_\theta$ exceeds the rank of the teacher's response, which
for PIC and PIF is at most the number of grid points or modes (for a softened
tree code it is infinite), then $Q_0=0$ and part (ii) gives no lower bound. The
value $Q(\theta)$ reached in training is then set by the response functions
that the network can produce and, when several teachers are trained together,
by the fact that a single operator has only one linear response at a given
density. In either case, by~\eqref{eq:noise_expansion}, the expected
teacher-forced loss of a trained model at $N_p$ particles exceeds
$\mathcal{L}_\infty$ by about $(Q+P)(\theta)/N_p$, so that, as long as
$Q(\theta)>0$, the error measured against the teacher on the same particles
decreases like $N_p^{-1/2}$. This error reflects the teacher's particle noise,
which the NEOPIC field does not reproduce. It is not an error with respect to
the exact solution.

\begin{proposition}[Accuracy--robustness trade-off]
\label{prop:tradeoff}
Let
\begin{equation*}
  R(\theta)=\frac1{|\mathcal{J}|}\sum_{j\in\mathcal{J}}\big(Q+P\big)(\theta;\rho_j),
  \qquad
  \Phi_\beta(\theta)=\mathcal{L}_\infty(\theta)+\beta R(\theta),
\end{equation*}
so that $\bar{\mathcal{L}}_N=\Phi_{1/N}+\mathcal{O}(N^{-2})$, and let
$\theta_\beta$ be a minimizer of $\Phi_\beta$. This idealizes training with
$N_{\mathrm{tr}}=1/\beta$ particles.
\begin{enumerate}
  \item[(i)] If $\beta_1<\beta_2$, then
  $\mathcal{L}_\infty(\theta_{\beta_1})\le\mathcal{L}_\infty(\theta_{\beta_2})$
  and $R(\theta_{\beta_1})\ge R(\theta_{\beta_2})$. Training with more
  particles gives a smaller error on noise-free inputs and a larger
  sensitivity to sampling noise.
  \item[(ii)] The expected teacher-forced loss of $\theta_\beta$ at
  inference with $N_p$ particles is $\Phi_{1/N_p}(\theta_\beta)$ up to
  $\mathcal{O}(N_p^{-2})$. Over $\beta$, it is smallest for $\beta=1/N_p$.
  \item[(iii)] If inference uses particle numbers $N_i$ with frequencies $\pi_i\ge0$,
  $\sum_i\pi_i=1$, the
  averaged expected loss is $\Phi_{\bar\beta}$ with $\bar\beta=\sum_i\pi_i/N_i$.
  Among single training particle numbers, it is minimized by the harmonic
  mean $N_H=\bar\beta^{-1}$ of the inference particle numbers. If
  $R(\theta)\ge0$ and inference must cover all $N_p\ge N_{\min}$, the
  worst-case expected loss is $\Phi_{1/N_{\min}}$, which is minimized by
  training with $N_{\min}$ particles.
\end{enumerate}
\end{proposition}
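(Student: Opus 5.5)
The plan is to treat $\Phi_\beta$ as a one-parameter family of regularized objectives and use the standard exchange argument for minimizers of $\mathcal{L}_\infty+\beta R$. The expansion in Proposition~\ref{prop:noise_reg} then links $\beta$ to particle numbers.

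For (i), I will write the two optimality inequalities
$\Phi_{\beta_1}(\theta_{\beta_1})\le\Phi_{\beta_1}(\theta_{\beta_2})$ and $\Phi_{\beta_2}(\theta_{\beta_2})\le\Phi_{\beta_2}(\theta_{\beta_1})$. Adding them, the $\mathcal{L}_\infty$ terms cancel and I get $(\beta_2-\beta_1)\big(R(\theta_{\beta_1})-R(\theta_{\beta_2})\big)\ge0$, hence $R(\theta_{\beta_1})\ge R(\theta_{\beta_2})$. Inserting this into the first inequality, $\mathcal{L}_\infty(\theta_{\beta_1})-\mathcal{L}_\infty(\theta_{\beta_2})\le\beta_1\big(R(\theta_{\beta_2})-R(\theta_{\beta_1})\big)\le0$. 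No convexity or uniqueness is needed; it suffices that minimizers exist, which the statement assumes.

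For (ii), I will apply Proposition~\ref{prop:noise_reg} to the fixed parameter $\theta_\beta$ with $N=N_p$. The expected teacher-forced loss on fresh snapshots with $N_p$ particles (same densities $\rho_j$, assumption (A5)) is $\bar{\mathcal{L}}_{N_p}(\theta_\beta)=\Phi_{1/N_p}(\theta_\beta)+\mathcal{O}(N_p^{-2})$. The main point to check is that the $\mathcal{O}(N_p^{-2})$ remainder is uniform over the family $\theta_\beta$. I would argue this from (A6) with bounds on the third functional derivatives uniform over bounded weights, as in (A3), and otherwise state the result up to that remainder. Optimality over $\beta$ is then immediate: $\theta_{1/N_p}$ minimizes $\Phi_{1/N_p}$ over all $\theta$, in particular over all $\theta_\beta$.

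For (iii), linearity of $\Phi_\beta$ in $\beta$ gives $\sum_i\pi_i\Phi_{1/N_i}(\theta)=\mathcal{L}_\infty(\theta)+\big(\sum_i\pi_i/N_i\big)R(\theta)=\Phi_{\bar\beta}(\theta)$, with the remainders summing to $\mathcal{O}(N_{\min}^{-2})$. By the argument of (ii), the averaged loss over the trained family is minimized at $\beta=\bar\beta$, i.e.\ $N_{\mathrm{tr}}=N_H$. For the worst case, $R(\theta)\ge0$ makes $\Phi_{1/N}(\theta)$ nonincreasing in $N$, so $\sup_{N_p\ge N_{\min}}\Phi_{1/N_p}(\theta)=\Phi_{1/N_{\min}}(\theta)$, which is minimized by $\theta_{1/N_{\min}}$.

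The only delicate step is the uniformity of the remainder, which I flag as part of the idealization stated in the proposition.
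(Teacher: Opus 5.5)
Your proposal is correct and follows the paper's proof: the two optimality inequalities added together for (i), the expansion of Proposition~\ref{prop:noise_reg} at $N=N_p$ combined with the optimality of $\theta_{1/N_p}$ for (ii), and for (iii) the fact that $\Phi_\beta$ is affine in $\beta$ and, when $R\ge0$, nondecreasing in $\beta$. Your remark that the $\mathcal{O}(N_p^{-2})$ remainder must be uniform over the family $\theta_\beta$ addresses a point the paper's proof passes over, and treating it as part of the stated idealization is appropriate.
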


\begin{proof}
(i) Optimality gives
$\Phi_{\beta_1}(\theta_{\beta_1})\le\Phi_{\beta_1}(\theta_{\beta_2})$
and $\Phi_{\beta_2}(\theta_{\beta_2})\le\Phi_{\beta_2}(\theta_{\beta_1})$.
Adding the two inequalities gives
$(\beta_1-\beta_2)\big(R(\theta_{\beta_1})-R(\theta_{\beta_2})\big)\le0$,
hence $R(\theta_{\beta_1})\ge R(\theta_{\beta_2})$, and then the first
inequality gives the statement for $\mathcal{L}_\infty$. (ii) follows
from~\eqref{eq:noise_expansion} with $N=N_p$ and the definition of
$\theta_{1/N_p}$. (iii) $\Phi_\beta$ is affine in $\beta$, and for
$R\ge0$ it is nondecreasing in $\beta$.
\end{proof}

The proposition explains the off-diagonal behaviour of the error-scaling
experiments. A model trained with many particles has a large $R$, and its
loss $\mathcal{L}_\infty+R/N_p$ grows when it is evaluated with fewer
particles. A model trained with few particles has a small $R$; its loss is
nearly flat in $N_p$, but at the higher level $\mathcal{L}_\infty(\theta_\beta)$.
Along the diagonal, the loss is $\min_\theta\Phi_{1/N}$, which decreases with
$N$ and approaches $Q(\theta_{1/N})/N\ge Q_0/N$ when the model error on
noise-free inputs is small.

The noise also enters the stability of the in-loop scheme. To isolate the
learning error, we compare NEOPIC with the teacher run from the same initial
particles, as in the experiments with the CIC reference. Let
$\mathbf{z}^{h,n}_p$ be the teacher particles, advanced by~\eqref{eq:kdk} with
the field $\mathcal{E}_h[\mu^{h,n}_N]$ and the same $\Delta t$, and let $L_h$
and $L_r$ be constants for which~\eqref{eq:lip} holds for $\mathcal{E}_h$ and
for $r_\theta$, respectively. Thus $L_\theta\le L_h+L_r$, where $L_h$ is the
stability constant of the teacher itself.

\begin{proposition}[In-loop error relative to the teacher]
\label{prop:inloop_teacher}
Let
$e^n_h=\|\mathbf{x}^n-\mathbf{x}^{h,n}\|_N+\|\mathbf{v}^n-\mathbf{v}^{h,n}\|_N$
and $\Lambda_h=2\big(1+2|q/m|(L_h+L_r)\big)$. Then for
$\Delta t\le\big(1+2|q/m|(L_h+L_r)\big)^{-1}$,
\begin{equation}
  \mathbb{E}\,e^n_h\le4|q/m|\,t_n\,e^{\Lambda_ht_n}\,
  \max_{k\le n}\Big(\mathbb{E}\,J\big(\theta;\mu^{h,k}_N\big)\Big)^{1/2}.
  \label{eq:inloop_teacher}
\end{equation}
Under (A5)--(A6), the expectation on the right-hand side is given
by~\eqref{eq:noise_expansion} with $N=N_p$.
\end{proposition}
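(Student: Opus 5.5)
The plan is to repeat the proof of Theorem~\ref{thm:trajectory}, with the teacher particles $\mathbf{z}^{h,n}_p$ taking the place of the reference particles $\bar{\mathbf{z}}_p(t_n)$. Both the NEOPIC particles and the teacher particles are advanced by the same scheme~\eqref{eq:kdk} with the same $\Delta t$. The local truncation term $C_\tau\Delta t^3$ therefore disappears: no comparison with exact characteristics is needed, and the error is driven only by the difference of the two fields.

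First I bound the acceleration error at particle $p$ and step $n$. Write $\mathcal{G}_\theta=\mathcal{E}_h+r_\theta$ and split
\begin{equation*}
  \mathcal{G}_\theta[\mu^n_N](\mathbf{x}^n_p)-\mathcal{E}_h[\mu^{h,n}_N](\mathbf{x}^{h,n}_p)
  =\big(\mathcal{G}_\theta[\mu^n_N](\mathbf{x}^n_p)-\mathcal{G}_\theta[\mu^{h,n}_N](\mathbf{x}^{h,n}_p)\big)
  +r_\theta[\mu^{h,n}_N](\mathbf{x}^{h,n}_p).
\end{equation*}
The first term is controlled by~\eqref{eq:lip} for $\mathcal{G}_\theta$, whose constant is at most $L_h+L_r$. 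Taking the $\|\cdot\|_N$ norm over particles and using $\frac1{N_p}\sum_q|\mathbf{e}_q|\le\|\mathbf{e}\|_N$ gives $\kappa\|\mathbf{x}^n-\mathbf{x}^{h,n}\|_N$ with $\kappa=2|q/m|(L_h+L_r)$. For the second term, $\|r_\theta[\mu^{h,n}_N](\mathbf{x}^{h,n}_\cdot)\|_N^2$ is exactly $J(\theta;\mu^{h,n}_N)$ by~\eqref{eq:train_loss}, because the integral against the empirical measure is the particle average. This gives $\xi^n=|q/m|\,J(\theta;\mu^{h,n}_N)^{1/2}$.

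Next I insert this bound into the kick--drift--kick recursion for $a^n,b^n$ exactly as in Theorem~\ref{thm:trajectory}, now without the $C_\tau$ terms. Under the stated step-size restriction this yields $e^{n+1}_h\le(1+\Lambda_h\Delta t)e^n_h+\Delta t(\xi^n+\xi^{n+1})$. Since $e^0_h=0$, the discrete Gronwall inequality gives, path by path, $e^n_h\le e^{\Lambda_h t_n}\Delta t\sum_{k\le n}2\xi^k$. Taking expectations, I use Jensen's inequality, $\mathbb{E}J^{1/2}\le(\mathbb{E}J)^{1/2}$, and bound each term by the maximum over $k\le n$. With $(n+1)\Delta t\le2t_n$ for $n\ge1$, this gives~\eqref{eq:inloop_teacher}.

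The last sentence of the statement follows by applying Proposition~\ref{prop:noise_reg} with $N=N_p$ to each snapshot density, taken as a single-element $\mathcal{J}$.

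The main subtlety is the use of the Lipschitz constant: \eqref{eq:lip} must be applied to $\mathcal{G}_\theta$ through the decomposition $\mathcal{E}_h+r_\theta$, giving $L_h+L_r$. No Lipschitz property of the teacher is needed on the residual term, because that term is evaluated on the teacher's own configuration. One also has to check that the step-size condition absorbs the cross terms in the elimination step in the same way as before. Otherwise the argument is a direct transcription of the earlier proof.
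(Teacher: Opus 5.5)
Your proposal is correct and follows the paper's proof: same acceleration-error split, same Gronwall recursion with no truncation term because both runs use the same integrator and $\Delta t$, and the same Jensen step $\mathbb{E}J^{1/2}\le(\mathbb{E}J)^{1/2}$. The only cosmetic difference is that you combine the paper's first two terms ($\mathcal{E}_h$-difference and $r_\theta$-difference) into one Lipschitz estimate for $\mathcal{G}_\theta=\mathcal{E}_h+r_\theta$. This gives the same constant $L_h+L_r$.
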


\begin{proof}
As in the proof of Theorem~\ref{thm:trajectory}, with the acceleration error
split as
\begin{equation*}
  \begin{aligned}
  &\mathcal{G}_\theta[\mu^n_N](\mathbf{x}^n_p)-\mathcal{E}_h[\mu^{h,n}_N](\mathbf{x}^{h,n}_p)\\
  &\quad=\big(\mathcal{E}_h[\mu^n_N](\mathbf{x}^n_p)-\mathcal{E}_h[\mu^{h,n}_N](\mathbf{x}^{h,n}_p)\big)\\
  &\qquad+\big(r_\theta[\mu^n_N](\mathbf{x}^n_p)-r_\theta[\mu^{h,n}_N](\mathbf{x}^{h,n}_p)\big)
  +r_\theta[\mu^{h,n}_N](\mathbf{x}^{h,n}_p).
  \end{aligned}
\end{equation*}
The first two terms are bounded by
$(L_h+L_r)\big(|\mathbf{e}^n_{x,p}|+\frac1N\sum_q|\mathbf{e}^n_{x,q}|\big)$, and
the particle norm of the last term is $J(\theta;\mu^{h,n}_N)^{1/2}$. Both runs
use the same integrator and time step, so there is no truncation error.
The discrete Gronwall inequality and
$\mathbb{E}\,J^{1/2}\le(\mathbb{E}\,J)^{1/2}$ give~\eqref{eq:inloop_teacher}.
\end{proof}

Relative to the teacher, the in-loop error is thus controlled by the
teacher-forced loss on teacher data at the inference particle number,
amplified at the rate $\Lambda_h$. The rate depends on the stability of the
teacher, $L_h$, and on the Lipschitz constant of the residual, $L_r$, not on
that of the network as a whole. The measure part of $L_r$ is the sensitivity
of the residual to changes of the particle configuration. By
Proposition~\ref{prop:noise_structure}(i), $Q$ controls this sensitivity in
mean square for the perturbations produced by particle displacements. Sampling
noise in the training data therefore regularizes the quantity that enters the
exponent, in analogy with the Jacobian-mismatch term
in~\cite[Theorem~2.7]{nguyen2022mctangent}. This link is heuristic: $Q$
controls an average over fluctuation directions, whereas $L_r$ is a supremum.

\begin{remark}[Choosing the number of training particles]
\label{rem:choose_ntr}
In~\cite{nguyen2022mctangent,nguyen2025dgnet} the noise level is tuned by
hand: too little noise gives no regularization, and too much destroys the
information in the data. In NEOPIC, choosing the noise level means choosing
$N_{\mathrm{tr}}$, and both limits have a physical meaning. The expansion
of Proposition~\ref{prop:noise_reg} assumes that the fluctuations are small
compared with the signal in the modes that carry the field, i.e.\ a
signal-to-noise ratio
$\mathrm{SNR}_{\mathbf{k}}(t)=\sqrt{N_{\mathrm{tr}}}\,|\hat\rho_{\mathbf{k}}(t)|\gg1$.
Snapshots with $\mathrm{SNR}\lesssim1$ carry little information about the map
on noise-free inputs. At the same time they contribute label noise that the network cannot explain
from its input, of variance about $Q(\theta)/N_{\mathrm{tr}}$ (at least
$Q_0/N_{\mathrm{tr}}$), which a network trained on a finite data set can
partly memorize. This is the information-loss limit. In the other limit,
Proposition~\ref{prop:tradeoff} shows that a large $N_{\mathrm{tr}}$ gives an
accurate but sensitive model. These considerations suggest the smallest
$N_{\mathrm{tr}}$ for which the relevant modes stay above the noise over the
training horizon, and not larger than the smallest (or the harmonic mean of
the) particle numbers planned for inference. Training on data with several
particle numbers would be an alternative, but it requires such data; see
Remark~\ref{rem:noise_caveats}(d).
\end{remark}

\section{Computational complexity}
\seclab{complexity}

We estimate the cost of one evaluation of the NEOPIC field operator, i.e.\ the
map from the $N_p$ particle positions to the electric field at those positions. The particle push and all other parts of
the time step are identical to those of the reference particle methods and
scale as $\mathcal{O}(N_p)$; they are therefore not considered further.

\subsection{Architecture and notation}
\seclab{complexity_architecture}

We use the operator of Section~\secref{architecture} in its discrete
form~\eqref{eq:forward}--\eqref{eq:disc_layer}, with positions
$X=(\mathbf{x}_p)_{p=1}^{N_p}$, equal weights $w_p=1/N_p$, features
$\mathbf{U}^\ell_p\in\mathbb{R}^{d_v}$, $M=|\mathcal{K}_m|=(2m)^d$ retained modes
and the nonuniform Vandermonde matrix $V\in\mathbb{C}^{N_p\times M}$. Since all
particles carry the same charge, the positions are the only input, so each
particle has $d$ input channels and $d$ output channels (the components of
$\mathbf{E}$). One evaluation of the field operator is the map
$X\mapsto\big(\mathbf{E}_\theta[\mu_X](\mathbf{x}_p)\big)_p$. The
transforms~\eqref{eq:forward} and~\eqref{eq:inverse} are evaluated either
directly (NUDFT, as in the DSE formulation of~\cite{lingsch2023beyond}) or with
type-1 and type-2 nonuniform FFTs
(NUFFT)~\cite{barnett2019parallel,shih2021cufinufft}, which compute the same
quantities to a prescribed relative tolerance~$\varepsilon$.

We only require that the maps $\mathcal{P}$, $\mathcal{Q}$ and $\mathcal{W}_\ell$
act pointwise, i.e.\ independently on each particle, and denote their
cost per particle by $C_{\mathcal{P}}$, $C_{\mathcal{Q}}$ and $C_{\mathcal{W}}$.
Any pointwise network (a linear map, an MLP, a $1\times1$ convolution) satisfies
this. In the present work $\mathcal{P}$ is a single linear layer,
$\mathcal{W}_\ell$ is a convolution with kernel size one, and $\mathcal{Q}$ is a
two-layer MLP with hidden width $h$, so that
\begin{equation}
  C_{\mathcal{P}} = d\,d_v,\qquad
  C_{\mathcal{W}} = d_v^2,\qquad
  C_{\mathcal{Q}} = d_v h + h\,d
  \label{eq:pointwise_costs}
\end{equation}
multiply--add operations per particle. Since the output depends on the
particles only through their measure (Lemma~\ref{lem:exact_eval}), the
operator is permutation-equivariant with respect to the particle index, as
required for a field solver. 

\subsection{Cost of the individual operations}

For the NUFFT we use the standard parameters of exponential-of-semicircle
kernel methods~\cite{barnett2019parallel}: a kernel of width
$w\approx\lceil\log_{10}(1/\varepsilon)\rceil+1$ points per dimension and an
upsampling factor $\sigma$ (typically $\sigma=2$), giving a fine grid with
$\sigma^d M$ points. A type-1 transform consists of spreading onto the fine grid
($\mathcal{O}(N_p w^d)$ per channel), an FFT on the fine grid
($\mathcal{O}(\sigma^d M\log M)$ per channel), and a deconvolution
($\mathcal{O}(M)$ per channel). A type-2 transform performs the same steps in
reverse order, with spreading replaced by interpolation. Sorting the
particles and evaluating the one-dimensional kernel values costs
$\mathcal{O}(N_p\,d\,w)$ per field operator evaluation and is shared by all channels and
layers, because the positions do not change within a time step (for multi-stage time integrators this corresponds to every stage).
Table~\ref{tab:complexity} summarizes the costs.

\begin{table}[t]
  \centering\small
  \caption{Leading-order cost of one evaluation of the NEOPIC field operator.
  $L$: number of Fourier layers; $d_v$: lifting (channel) width;
  $M=(2m)^d$: number of retained modes; $w$: NUFFT kernel width;
  $\sigma$: upsampling factor.}
  \label{tab:complexity}
  \begin{tabular}{lll}
    \toprule
    Operation & NUDFT (DSE) & NUFFT \\
    \midrule
    Lifting $\mathcal{P}$ & $N_p C_{\mathcal{P}}$ & $N_p C_{\mathcal{P}}$ \\
    Forward transform (per layer)
      & $N_p M d_v$ & $(N_p w^d + \sigma^d M\log M)d_v$ \\
    Spectral mixing (per layer) & $M d_v^2$ & $M d_v^2$ \\
    Inverse transform (per layer)
      & $N_p M d_v$ & $(N_p w^d + \sigma^d M\log M)d_v$ \\
    Skip $\mathcal{W}_\ell$, activation (per layer)
      & $N_p (C_{\mathcal{W}} + d_v)$ & $N_p (C_{\mathcal{W}} + d_v)$ \\
    Projection $\mathcal{Q}$ & $N_p C_{\mathcal{Q}}$ & $N_p C_{\mathcal{Q}}$ \\
    Sorting and kernel evaluation & -- & $N_p\, d\, w$ \\
    \bottomrule
  \end{tabular}
\end{table}

\subsection{Total cost}

Summing the entries of Table~\ref{tab:complexity} gives the following result.

\begin{proposition}[Cost of the NEOPIC field evaluation]
\label{prop:complexity}
For fixed architecture parameters $(L, d_v, m, h)$ and fixed tolerance
$\varepsilon$, the cost of one evaluation of $\mathcal{G}_\theta$ is
\begin{equation}
  T(N_p) = \alpha\, N_p + \beta ,
  \label{eq:T_total}
\end{equation}
where, for the NUFFT-based evaluation,
\begin{align}
  \alpha_{\mathrm{NUFFT}} &= C_{\mathcal{P}} + C_{\mathcal{Q}}
     + L\big(C_{\mathcal{W}} + d_v + 2\, d_v\, w^d\big) + d\,w,
     \label{eq:alpha_nufft}\\
  \beta_{\mathrm{NUFFT}} &= L\big(2\, d_v\, \sigma^d M\log M + d_v^2 M\big),
     \label{eq:beta_nufft}
\end{align}
and, for the direct evaluation,
\begin{align}
  \alpha_{\mathrm{NUDFT}} &= C_{\mathcal{P}} + C_{\mathcal{Q}}
     + L\big(C_{\mathcal{W}} + d_v + 2\, d_v\, M\big),
     \label{eq:alpha_nudft}\\
  \beta_{\mathrm{NUDFT}} &= L\, d_v^2 M .
     \label{eq:beta_nudft}
\end{align}
In both cases $T(N_p)=\mathcal{O}(N_p)$, since $\alpha$ and $\beta$ are
independent of $N_p$.
\end{proposition}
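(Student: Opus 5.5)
The plan is a direct operation count: sum the entries of Table~\ref{tab:complexity} over one evaluation of~\eqref{eq:forward}--\eqref{eq:disc_layer} preceded by the lifting and followed by the projection. Then sort each contribution by whether it scales with $N_p$ or only with the number of modes $M$. I will first fix the per-particle costs of the pointwise maps by~\eqref{eq:pointwise_costs}. Each pointwise operation applied to $N_p$ particles contributes to $\alpha$ only: the lifting gives $N_pC_{\mathcal{P}}$, the projection gives $N_pC_{\mathcal{Q}}$, and in each of the $L$ layers the skip and activation give $N_p(C_{\mathcal{W}}+d_v)$. The mean subtraction in~\eqref{eq:output} costs $\mathcal{O}(N_pd)$ and is absorbed into $C_{\mathcal{Q}}$.

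Next I will count the spectral part of one layer. For the NUDFT, the forward sum~\eqref{eq:forward} needs, per channel and mode, a sum over $N_p$ particles, which is $N_pMd_v$ in total; the inverse~\eqref{eq:inverse} needs the same. Together these give $2d_vM$ per particle in each layer. The spectral product $R_\ell(\mathbf{k})\hat{\mathbf{c}}_{\mathbf{k}}$ is a $d_v\times d_v$ matrix--vector product for each of the $M$ modes, costing $d_v^2M$ independently of $N_p$. Summing over $L$ layers gives~\eqref{eq:alpha_nudft}--\eqref{eq:beta_nudft}. For the NUFFT, I will rely on the stated cost model of~\cite{barnett2019parallel}. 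Spreading and interpolation each cost $N_pw^d$ per channel, and these go into $\alpha$ as $2d_vw^d$ per layer. The two fine-grid FFTs cost $\sigma^dM\log M$ per channel each, and the deconvolutions cost $\mathcal{O}(M)$; both go into $\beta$ as $2d_v\sigma^dM\log M$, with the $\mathcal{O}(M)$ deconvolutions absorbed at leading order. The spectral mixing adds another $d_v^2M$ to $\beta$. The sorting and kernel evaluation, $N_pdw$, is counted once and not per layer, because the positions are fixed throughout the evaluation; this gives the final $dw$ term in~\eqref{eq:alpha_nufft}.

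The last step is to observe that none of $C_{\mathcal{P}},C_{\mathcal{Q}},C_{\mathcal{W}},d_v,L,M,w,\sigma,h$ depends on $N_p$. Here $w$ depends only on $\varepsilon$, and $M=(2m)^d$ is fixed by the architecture. Hence $T(N_p)=\alpha N_p+\beta=\mathcal{O}(N_p)$.

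I expect the only delicate point to be bookkeeping rather than analysis. I have to justify that the shared sorting and kernel-evaluation cost is paid once per field evaluation and not per layer or per channel. I also have to state clearly that the result is a leading-order count under the NUFFT cost model of~\cite{barnett2019parallel}, not an exact flop count, so that lower-order terms such as the deconvolution and the bias additions are legitimately absorbed.
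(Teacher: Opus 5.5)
Your proposal is correct and follows the paper's own argument: the paper's proof is just the summation of the entries of Table~\ref{tab:complexity}, split into per-particle and mode-only terms, with the sorting and kernel evaluation counted once per evaluation. One small quibble: the mean subtraction belongs to $\mathbf{E}_\theta$ in~\eqref{eq:output}, not to $\mathcal{G}_\theta$, so it does not need to be counted, and absorbing it into $C_{\mathcal{Q}}$ would in any case change the value fixed by~\eqref{eq:pointwise_costs}.
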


We make the following remarks.

\begin{remark}[Linear scaling]
\label{rem:linear_scaling}
The number of modes $M$ is a property of the
operator being learned (the resolved wavelengths), not of the particle count.
NEOPIC is therefore an $\mathcal{O}(N_p)$ particle method in the same sense as
PIC, PIF (when the particle costs dominate) or FMM, with a constant determined by the architecture through $\alpha$
and a fixed per-step cost $\beta$ that dominates only for small $N_p$.
\end{remark}

\begin{remark}[NUDFT versus NUFFT]
\label{rem:nudft_vs_nufft}
The two evaluations differ in the per-particle
cost of the transforms, $2Ld_vM$ for the NUDFT versus $2Ld_vw^d$ for the NUFFT.
Since $w^d\ll M$ in practice ($w^3=343$ versus $M=32^3=32768$ for
$\varepsilon=10^{-6}$ and $m=16$ in three dimensions), the NUFFT reduces the
arithmetic per particle by roughly two orders of magnitude in 3D. It also removes the
$\mathcal{O}(N_pM)$ memory of an explicitly assembled $V$
(Section~\secref{memory}). For small $N_p$, small $M$, or in two
dimensions, the direct evaluation can nevertheless be faster, because it
avoids the fixed cost of the fine-grid FFTs in $\beta_{\mathrm{NUFFT}}$ and
consists of dense matrix products that run efficiently on GPUs.
\end{remark}

\begin{remark}[Relation to PIF]
\label{rem:pif_relation}
A PIF field solve with the same $M$ and
$\varepsilon$ performs one type-1 transform (charge deposition) and $d$ type-2
transforms (field components), so $\alpha_{\mathrm{PIF}}\approx(1+d)\,w^d$. In
fact PIF is a special case of~\eqref{eq:err_lift}--\eqref{eq:err_proj} with $L=1$,
$d_v=1$, no pointwise terms, no activation and the fixed diagonal multiplier
$R(k)=-\mathrm{i}k/|k|^2$. The ratio of the transform costs,
\begin{equation}
  \frac{\alpha_{\mathrm{NEOPIC}}}{\alpha_{\mathrm{PIF}}}
  \approx \frac{2\,L\,d_v}{1+d},
  \label{eq:ratio_pif}
\end{equation}
is therefore the price of the additional capacity of the learned operator
(depth, width and nonlinearity), and can be reduced by choosing smaller $L$ or
$d_v$ at a price of reduced capacity. In practice the measured ratio is smaller than~\eqref{eq:ratio_pif},
because the $d_v$ channels of each transform are processed as a batch that shares
sorting, kernel evaluation and memory traffic for the particle coordinates.
\end{remark}

\begin{remark}[Structure of the coupling between particles]
\label{rem:comm_structure}
In NEOPIC, particles
interact statically only through the $M$ Fourier coefficients~\eqref{eq:forward} in each
channel and layer, which are sums over particles. All other operations are local
to individual particles. This makes the scheme well suited to a particle-only
domain decomposition \cite{muralikrishnan2026distributed}, with communication of $\mathcal{O}(L\,d_v\,M)$ numbers per
time step that is independent of $N_p$ and of the spatial distribution of the
particles; this is explored in a follow-up work.
\end{remark}

\begin{remark}[Training]
\label{rem:training}
Backpropagation through~\eqref{eq:forward}
and~\eqref{eq:inverse} costs the same order as the forward pass, since the adjoint
of a type-1 transform is a type-2 transform with opposite sign of the exponent,
and vice versa. The training cost per sample is therefore also given
by~\eqref{eq:T_total} up to a constant factor.
\end{remark}

\subsection{Memory requirements}
\seclab{memory}

We count memory in stored scalars (real numbers; a complex number counts as
two) during one evaluation of $\mathcal{G}_\theta$. As for the cost, the memory
splits into a part proportional to $N_p$ and a part independent of it.

\paragraph{Per-particle storage} The positions and the field require $2dN_p$
scalars. Within a Fourier layer only a few feature arrays of width $d_v$ per
particle are needed at the same time: the input $\mathbf{U}^{\ell}$, its complex copy
passed to the type-1 transform, the complex output of the type-2 transform, and
$\mathcal{W}_\ell\mathbf{U}^{\ell}$. We denote their combined size by $\kappa\,d_v$ per
particle, with $\kappa\approx6$ for a straightforward implementation. The hidden
layer of the projection $\mathcal{Q}$ requires $h$ scalars per particle. During
inference the features of earlier layers can be discarded, so the peak
per-particle storage is $N_p\big(2d+\max(\kappa d_v,\,d_v+h)\big)$, independent of
$L$, $M$ and $\varepsilon$. The NUFFT additionally stores $\mathcal{O}(N_p)$
integers for the sorted particle ordering.

\paragraph{Particle-independent storage} The largest group of parameters is
the spectral weights $R_\ell(k)$, which require $2LMd_v^2$ scalars. The pointwise
maps contribute $n_{\mathrm{pw}}=d\,d_v+L\,d_v^2+d_vh+h\,d$ parameters, plus
$\mathcal{O}(L\,d_v+h)$ biases, which is negligible compared with the spectral
weights. The
Fourier coefficients~\eqref{eq:forward} of one layer require $2d_vM$ scalars, and
the NUFFT fine grids $2b\,\sigma^dM$ scalars, where $b\le d_v$ is the number of
channels transformed in one batch; the FFT workspace is of the same order.

\paragraph{Direct evaluation} If the Vandermonde matrix $V$ is assembled
explicitly, as in the DSE formulation~\cite{lingsch2023beyond}, it requires
$2N_pM$ additional scalars. Because $\mathcal{K}_m$ is a tensor-product set of
wave vectors, $V$ factorizes as
\begin{equation}
  V_{p\mathbf{k}} = \prod_{r=1}^{d} F^{(r)}_{k_r p}, \qquad
  F^{(r)}_{k_r p} = e^{-\mathrm{i}\,k_r x_{p,r}},\quad
  F^{(r)}\in\mathbb{C}^{2m\times N_p},
  \label{eq:tensor_factor}
\end{equation}
so that only the $d$ one-dimensional factors, with $2d\,(2m)$ scalars per
particle, need to be stored. The sums~\eqref{eq:forward} and~\eqref{eq:inverse}
are then evaluated dimension by dimension. For the forward transform, for example,
one first forms the products $U_{pc}F^{(1)}_{k_1 p}$ and
$F^{(2)}_{k_2 p}\cdots F^{(d)}_{k_d p}$ and then contracts over the particle
index $p$ with a single matrix product; the inverse transform proceeds in the
reverse order. The intermediates require $2\big(d_v(2m)+(2m)^{d-1}\big)$ scalars
per particle instead of $2M=2(2m)^d$, while the operation count
$\mathcal{O}(N_pMd_v)$ is unchanged. In three dimensions with $d_v=2m=32$ this
reduces the memory of the transform by a factor of $16$. In two dimensions,
however, $d_v(2m)=M$, so the factorization brings no reduction, and we
therefore use the explicit matrix there.

\begin{proposition}[Memory of the NEOPIC field evaluation]
\label{prop:memory}
Under the assumptions of Proposition~\ref{prop:complexity}, the peak memory of
one inference evaluation of $\mathcal{G}_\theta$ is
\begin{equation}
  S(N_p) = \gamma\,N_p + \delta ,
  \label{eq:S_total}
\end{equation}
with
\begin{align}
  \gamma_{\mathrm{NUFFT}} &= 2d+\max(\kappa d_v,\,d_v+h),
  \label{eq:mem_nufft}\\
  \delta_{\mathrm{NUFFT}} &= 2LMd_v^2 + n_{\mathrm{pw}} + 2d_vM + 2b\,\sigma^dM,
  \label{eq:mem_nufft_delta}\\
  \gamma_{\mathrm{NUDFT}} &= \gamma_{\mathrm{NUFFT}} + \mu,
  \label{eq:mem_nudft}\\
  \delta_{\mathrm{NUDFT}} &= 2LMd_v^2 + n_{\mathrm{pw}} + 2d_vM,
  \label{eq:mem_nudft_delta}
\end{align}
where $\mu=2M$ for an explicitly assembled $V$ and
\begin{equation*}
  \mu=\underbrace{2d\,(2m)}_{\text{factors }F^{(r)}}
  +\underbrace{2\big(d_v(2m)+(2m)^{d-1}\big)}_{\text{intermediates}}
\end{equation*}
for the dimension-wise factorization~\eqref{eq:tensor_factor}. In both cases $S(N_p)=\mathcal{O}(N_p)$.
\end{proposition}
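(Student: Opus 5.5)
The plan is a direct accounting of the arrays alive at the moment of peak usage during one forward (inference) evaluation. We split them into those whose size scales with $N_p$ and those that do not, and then take the maximum over the stages of the evaluation. Since features of earlier layers are discarded at inference, the peak is attained inside a single stage: either a Fourier layer or the projection $\mathcal{Q}$. So it suffices to bound each stage separately and take the larger.

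First, the per-particle part. The positions and the output field occupy $2dN_p$ scalars throughout. Within Fourier layer $\ell$ the live particle arrays are:
\begin{itemize}
\item the input $\mathbf{U}^\ell$;
\item its complex copy handed to the type-1 transform;
\item the complex output of the type-2 transform;
\item the skip term $\mathcal{W}_\ell\mathbf{U}^\ell$.
\end{itemize}
Their combined size is the $\kappa d_v$ per particle introduced in Section~\secref{memory}. During the projection, the input $\mathbf{U}^L$ ($d_v$ scalars) and the hidden layer of $\mathcal{Q}$ ($h$ scalars) are live. The peak per-particle count is therefore $2d+\max(\kappa d_v,d_v+h)$, which is $\gamma_{\mathrm{NUFFT}}$. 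The sorting permutation adds $\mathcal{O}(N_p)$ integers; we either absorb these into the constant or state them separately, as the text does.

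Second, the particle-independent part $\delta$, which is stored for the whole evaluation.
\begin{itemize}
\item The spectral weights $R_\ell(\mathbf{k})\in\mathbb{C}^{d_v\times d_v}$ for $L$ layers and $M$ modes take $2LMd_v^2$ scalars.
\item The pointwise parameters take $n_{\mathrm{pw}}$ scalars (biases are lower order).
\item The Fourier coefficients $\hat{\mathbf{c}}^\ell$ of the current layer take $2d_vM$ scalars.
\item For the NUFFT, the fine grid for a batch of $b$ channels takes $2b\sigma^dM$ scalars, with FFT workspace of the same order.
\end{itemize}
Summing gives $\delta_{\mathrm{NUFFT}}$. For the direct evaluation the fine grid is absent, which gives $\delta_{\mathrm{NUDFT}}$. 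Instead, the transform needs per-particle storage $\mu$:
\begin{itemize}
\item $2M$ per particle for the explicit $V\in\mathbb{C}^{N_p\times M}$;
\item for the factorization~\eqref{eq:tensor_factor}, the $d$ factors $F^{(r)}$ with $2d(2m)$ scalars per particle, plus the intermediates $U_{pc}F^{(1)}_{k_1p}$ of size $2d_v(2m)$ and $F^{(2)}\cdots F^{(d)}$ of size $2(2m)^{d-1}$.
\end{itemize}
These are added to $\gamma$, giving $\gamma_{\mathrm{NUDFT}}=\gamma_{\mathrm{NUFFT}}+\mu$.

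The step needing most care is justifying that these quantities are simultaneously live, and that nothing larger coexists. Two points have to be checked. The Vandermonde storage and the feature arrays coexist in a layer; bounding them jointly as a sum is the safe, slightly pessimistic choice, consistent with the statement. The inverse transform can reuse the same factors, so $V$ is counted once and not twice. Finally, all the constants $\gamma,\delta,\mu$ involve only $d,d_v,h,L,m,\sigma,b,\kappa$ and not $N_p$, so $S(N_p)=\gamma N_p+\delta=\mathcal{O}(N_p)$ follows immediately.
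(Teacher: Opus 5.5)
Your proposal is correct and follows the paper's own accounting essentially line by line. It counts the same per-particle arrays: positions and field, the $\kappa d_v$ feature arrays inside a Fourier layer versus $d_v+h$ in the projection. It counts the same particle-independent terms, and adds the same $\mu$ to $\gamma$ for the two direct-evaluation variants. Your extra remarks on liveness make explicit what the paper leaves implicit: the peak falls within a single stage, $V$ is counted once, and adding $\mu$ to the maximum over stages is a safe upper bound.
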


\begin{remark}[NUFFT versus direct evaluation memory requirements]
\label{rem:nufft_versus_direct_memory}
For the NUFFT-based evaluation the memory per particle
does not depend on the number of modes, whereas for the direct evaluation it
grows with $m$. In three dimensions with $m=16$, an explicit $V$ requires
$2M\approx6.6\times10^4$ scalars per particle, i.e.\ about $260$\,GB for
$N_p=10^6$ in single precision. The dimension-wise factorization reduces this
to $\mu\approx4.3\times10^3$ scalars per particle (about $17$\,GB for
$N_p=10^6$), which is still more than an order of magnitude above the
per-particle memory of the NUFFT-based evaluation. This, rather than the
operation count, is what limits the direct evaluation at large $N_p$ in three
dimensions.
\end{remark}

\begin{remark}[Memory requirements for Training]
\label{rem:train_memory}
For backpropagation the features of all layers must be
kept, so the per-particle storage becomes approximately
$2d+L\kappa d_v+h$ per sample, multiplied by the batch size. The optimizer
additionally stores the gradients and, for Adam, two moment estimates of all
parameters, so that the particle-independent part is dominated by
$4\times2LMd_v^2$ scalars. Because the operator is trained at particle numbers
below those used at inference, which Proposition~\ref{prop:tradeoff} also
recommends, the larger training memory does not limit $N_p$ at inference,
which is bounded by $\gamma$ alone.
\end{remark}

\section{Numerical results}
\seclab{results}

We assess NEOPIC in three sets of experiments of increasing generality, followed
by a comparison of computational cost. Section~\secref{landau2d} studies the
dependence of the error on the number of training and inference particles for
2D-2V weak Landau damping, where the error analysis of
Section~\secref{error} applies directly. Section~\secref{multi2d} considers a
single model trained on five 2D-2V mini-apps generated by three different
teachers (PIF, PIC and Barnes--Hut), and Section~\secref{multi3d} repeats this
in 3D-3V with two models that differ by one mini-app in the training set.
Section~\secref{cost} compares the cost of the field evaluation with PIC, PIF
and Barnes--Hut. Further experiments are given in the appendices: the
convergence in the time step (\secref{dtconv}), implicit time
integration in one dimension (\secref{implicit1d}), and the cost of
different architectures (\secref{app_cost}).

\paragraph{Common setting} Unless stated otherwise the operator has $L=4$
Fourier layers of width $d_v=32$ with $m=16$ modes per dimension and a
projection of width $h=128$ (Section~\secref{complexity_architecture}). All accuracy results
use the direct evaluation of the nonuniform transforms (NUDFT); the NUFFT-based
evaluation, which was introduced to remove the memory limit of the direct
evaluation at large $N_p$, is used only in the timings of Section~\secref{cost} for comparison. For
weak Landau damping the trained model with NUDFT gives very close results with both
evaluations (NUFFT tolerance $10^{-6}$). Particles are advanced with the
leapfrog scheme in staggered form, with positions at integer and velocities at
half-integer time steps, started with a half kick
$\mathbf{v}^{1/2}=\mathbf{v}^0+\frac{\Delta t}{2}\frac qm\mathbf{E}^0$; this is
equivalent to~\eqnref{kdk} (Remark~\ref{rem:integrators}). With a magnetic
field we use the Boris scheme, in the same formulation as the corresponding
teacher simulations: for the cyclotron mini-app in the same staggered form, with
the initial velocities taken as $\mathbf{v}^{-1/2}$, and for the Penning trap
in kick--drift--kick form, with positions and velocities at integer time steps
and the initial velocities at $t=0$. On periodic domains the mean of the predicted field over the
particles is subtracted, so that the sum of the self-forces vanishes
($c_{\mathrm{per}}=1$ in~\eqref{eq:output}); in open domains it is not. Positions are normalized per
sample before they enter the network, and the predicted field is rescaled by
$Q_{\mathrm{tot}}/|\Omega|^{(d-1)/d}$, with $Q_{\mathrm{tot}}$ the total charge
and $|\Omega|$ the volume of the domain. After this scaling, each component of
the labels is standardized with its mean and standard deviation over the whole
training set of a model, i.e.\ over all its mini-apps and teachers. These
constants are stored with the model and the standardization is undone at
inference before the mean subtraction in~\eqref{eq:output}. Since this is a
fixed affine map, it can be absorbed into the last linear layer of
$\mathcal{Q}$, and the analysis of Section~\secref{error} applies unchanged.
The models are trained with the mean absolute error ($L^1$ loss) of the predicted field
over all particles and components (Remark~\ref{rem:l1_loss}), using Adam with
learning rate $10^{-3}$ and weight decay $10^{-4}$. The learning rate is halved
every $50$ epochs, and the models are trained for $250$ epochs in the 2D weak
Landau study (Section~\secref{landau2d}) and for $300$ epochs in the 2D and 3D multi-app
studies (Sections~\secref{multi2d} and~\secref{multi3d}).
The models are
trained with a global batch size of $12$ using distributed data parallelism on $12$ NVIDIA A100 GPUs. Training
takes $7$--$34$\,min for the 2D weak-Landau models, increasing with
$N_{\mathrm{tr}}$, $3.2$\,h for the 2D multi-app model and $5.9$ and $7.4$\,h for the 3D models with four and five mini-apps.
For all models the final validation loss is within a factor of $1.5$ of the
training loss.

\paragraph{Error measures} As in Section~\secref{error}, we compare NEOPIC
particle by particle with a reference run that starts from the same initial
particles, and report
\begin{equation}
\begin{gathered}
  e_E(t)=\frac{\sum_p|\mathbf{E}_p(t)-\mathbf{E}^{\mathrm{ref}}_p(t)|_1}
               {\max_{s}\sum_p|\mathbf{E}^{\mathrm{ref}}_p(s)|_1},\\
  e_x(t)=\frac{\sum_p|\mathbf{x}_p(t)-\mathbf{x}^{\mathrm{ref}}_p(t)|_1}{d\,N_p\,L},\qquad
  e_v(t)=\frac{\sum_p|\mathbf{v}_p(t)-\mathbf{v}^{\mathrm{ref}}_p(t)|_1}
               {\sum_p|\mathbf{v}^{\mathrm{ref}}_p(t)|_1},
\end{gathered}
  \label{eq:err_measures}
\end{equation}
where $|\cdot|_1$ is the sum over components and the position difference is the
minimum-image (periodic) distance for periodic problems. In all the cases the length of the domain in all the dimensions is the same and equal to $L$. The field error is
normalized by the maximum over time of the reference norm, because the
reference field itself decays to the noise level in Landau damping and a
pointwise relative error would grow without bound; by~\eqref{eq:norm_chain}
the bounds of Section~\secref{error} apply to these errors. We write $\langle\cdot\rangle_t$ for the average over
the stored time steps. Two kinds of reference are used. With the teacher as reference (the method that generated the training data, run
with the same particles), $\varepsilon_{\mathrm{teacher}}$ drops out and the
measured error is the learned-operator error $\varepsilon_{\mathrm{learn}}$
amplified by the dynamics; with a high-fidelity reference, also run
from the same initial particles, the error contains in addition the teacher's
own error relative to that reference
(Remark~\ref{rem:resolution_mismatch}). In the teacher-forced evaluation the
model is applied to the particles of the teacher run and no time stepping by
the model is involved (Remark~\ref{rem:teacher_forced}).

\paragraph{Field energy} As quantities of interest we use, besides the energy
budget, the field energy per component,
\begin{equation}
  W_i(t)=\int_\Omega E_i^2(\mathbf{x},t)\,d\mathbf{x}
  \approx\frac{|\Omega|}{N_p}\sum_{p=1}^{N_p}E_{p,i}^2(t),\qquad i\in\{x,y,z\},
  \label{eq:field_energy}
\end{equation}
evaluated from the fields at the particles, for the weak Landau, strong Landau, bump-on-tail and two-stream instability test cases. Linear theory predicts $W_i\propto e^{2\gamma t}$ for a single mode
with growth or damping rate $\gamma$, so that the slopes of $\ln W_i$ quoted below
are $2\gamma$. For the cyclotron and the Penning trap mini-apps, where the
particles do not fill the domain, the electrostatic energy is computed
differently, as described there.

\subsection{2D-2V weak Landau damping}
\seclab{landau2d}

The initial distribution is
$f_0(\mathbf{x},\mathbf{v})=(1+\alpha\cos kx)(1+\alpha\cos ky)\,
e^{-|\mathbf{v}|^2/2}/(2\pi)$ with $\alpha=0.05$ and $k=0.5$ on the periodic
domain $[0,4\pi)^2$. The teacher is PIF with cloud-in-cell (CIC) shape function,
$32$ modes per dimension and NUFFT tolerance $10^{-12}$, and the
high-fidelity reference is PIF with a fourth-order shape function and the
same number of modes and tolerance. Five models
are trained on teacher runs with $N_{\mathrm{tr}}\in\{50,100,200,400,800\}\times10^3$
particles over $T=20$ ($400$ steps of $\Delta t=0.05$), using a random
$80/20$ split of the time steps into training and validation data, and each
model is evaluated with $N_p\in\{50,100,200,400,800,1600\}\times10^3$
particles over the same horizon.

\paragraph{Teacher-forced error} Figure~\figref{tf_scaling} shows the time-averaged
teacher-forced field error for all pairs $(N_{\mathrm{tr}},N_p)$. Along the
diagonal $N_p=N_{\mathrm{tr}}$ the error decreases from $2.3\times10^{-3}$ to
$7.3\times10^{-4}$, approximately as $N^{-0.39}$. The diagonal is the column minimum up to small differences, which near the diagonal are second order in $1/N_{\mathrm{tr}}$ and comparable to training variability. Away from the diagonal the
matrix is strongly asymmetric, as predicted by
Proposition~\ref{prop:tradeoff}. Inference at fewer particles than in
training ($N_p<N_{\mathrm{tr}}$, upper left) increases the error considerably:
the model trained with $8\times10^5$ particles has an error of
$7.6\times10^{-3}$ at $N_p=5\times10^4$, ten times its diagonal value and
$3.4$ times the error of the model trained at $N_p$. Inference at more
particles than in training (lower right) is benign. The error of each model
decreases with $N_p$ until it levels off at a value set by the model:
the model trained with $5\times10^4$ particles stays at about
$2\times10^{-3}$ for $N_p\geq10^5$, the one trained with $10^5$ at
$1.2\times10^{-3}$ for $N_p\geq2\times10^5$, and the one trained with
$2\times10^5$ at $7.5\times10^{-4}$ for $N_p\geq8\times10^5$. The models
trained with $4\times10^5$ and $8\times10^5$ particles still improve at
$N_p=1.6\times10^6$. The particle number at which the error levels off thus
moves from about $2N_{\mathrm{tr}}$ for the two smallest models to beyond
$4N_{\mathrm{tr}}$ for $N_{\mathrm{tr}}=4\times10^5$. This is the
accuracy--robustness trade-off of Proposition~\ref{prop:tradeoff}. By~\eqref{eq:noise_expansion} and Remark~\ref{rem:l1_loss},
the expected loss of a trained model at $N_p$ particles is the loss on
noise-free inputs plus a noise term that decreases with $N_p$: $R(\theta)/N_p$
for the squared loss, i.e.\ $(R(\theta)/N_p)^{1/2}$ in the units of the field,
and of order $N_p^{-1/2}$ for the $L^1$ loss used here.
It levels off once the noise term falls below the noise-free loss. Training with more particles lowers
$\mathcal{L}_\infty$ and raises $R$ (Proposition~\ref{prop:tradeoff}(i)), so
that this point moves to larger $N_p$. Sampling noise in the training data
thus regularizes the model against noisier inputs, at the price of a higher
error floor on clean inputs. The evaluation data are independent teacher
runs with a different random seed, so that no training snapshot enters the
evaluation, also on the diagonal (cf.\ Remark~\ref{rem:noise_caveats}(a)).
\begin{figure}[!htbp]
  \centering
  \includegraphics[width=\linewidth]{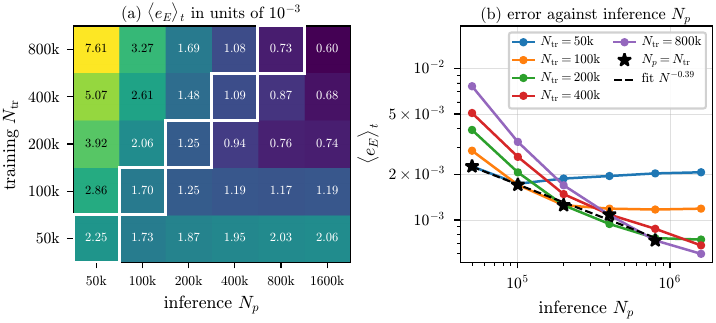}
  \caption{2D-2V weak Landau damping, teacher-forced field error
  $\langle e_E\rangle_t$ of the five models (training particle number
  $N_{\mathrm{tr}}$) evaluated with $N_p$ particles. (a) All pairs, in units
  of $10^{-3}$; diagonal entries ($N_p=N_{\mathrm{tr}}$) outlined. (b) Error
  against $N_p$, with the diagonal (stars) and a power-law fit to it.}
  \figlab{tf_scaling}
\end{figure}

\paragraph{In-loop error} When NEOPIC advances the particles itself, the same
structure appears with larger values (Fig.~\figref{inloop_scaling}(a)--(c)). Against
the CIC teacher, the diagonal field error decreases from $6.7\times10^{-3}$
to $1.5\times10^{-3}$, approximately as $N^{-0.54}$, close to the rate $N^{-1/2}$
expected from Proposition~\ref{prop:inloop_teacher}. The ratio of the in-loop
to the teacher-forced diagonal error is $3.0$ at $5\times10^4$ particles and
decreases to $2.0$ at $8\times10^5$; this is the amplification
by the dynamics in Proposition~\ref{prop:inloop_teacher}, which over $T=20$ is far
below the worst-case Gronwall factor because of phase mixing. The position
and velocity errors are small ($\langle e_x\rangle_t\le3.4\times10^{-4}$ and
$\langle e_v\rangle_t\le7.9\times10^{-4}$ on the diagonal) and follow the same pattern
without a clean power law. Inference below the training particle number is
again the costly direction: at $N_p=5\times10^4$ the electric field error grows from
$6.7\times10^{-3}$ for the model trained at $5\times10^4$ to
$2.0\times10^{-2}$ for the one trained at $8\times10^5$. Over the whole
range of $N_p$, the models trained with $5\times10^4$--$2\times10^5$ particles
stay within a factor $2$ of the best model for each $N_p$, whereas the model
trained with $8\times10^5$ is up to three times worse, in line with
Proposition~\ref{prop:tradeoff}(iii) and Remark~\ref{rem:choose_ntr}. The time
histories in Fig.~\figref{inloop_history} show that the field error is largest
during the first plasma periods and then saturates, while the position error
grows smoothly and the velocity error saturates after $t\approx1$.

Against the high-fidelity reference (Fig.~\figref{inloop_scaling}(d)--(f)) the five
models are indistinguishable, and all three errors decrease as
$N_p^{-0.51}$, $N_p^{-0.47}$ and $N_p^{-0.49}$. The reference is run from the
same initial particles and shares their sampling noise, so this error is
dominated by the different responses of the CIC and the fourth-order shape
functions to that noise. In the splitting of
Remark~\ref{rem:resolution_mismatch}, this is the teacher error
$\varepsilon_{\mathrm{teacher}}(N_p)$ against the high-fidelity reference,
amplified by the dynamics. Its sampling part is the difference
$\mathcal{E}_{\mathrm{CIC}}-\mathcal{E}_{\mathrm{hifi}}$ of the two field
operators applied to the shared fluctuations, of order $N_p^{-1/2}$, and is
thus the same for all models. At $N_p=1.6\times10^6$ it is ten times larger
than the learned-operator error measured against the teacher. The learned
operator therefore does not limit the accuracy at the particle numbers
considered: the remaining error is that between two particle discretizations
of the same particles.

\begin{figure}[!htbp]
  \centering
  \includegraphics[width=\linewidth]{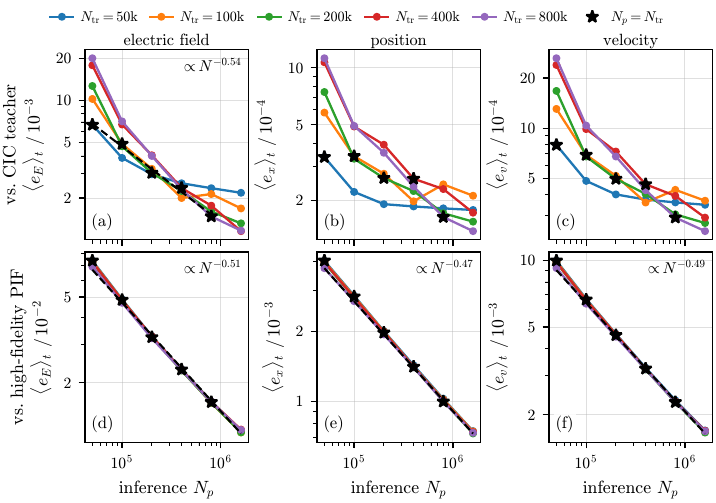}
  \caption{2D-2V weak Landau damping, in-loop errors~\eqref{eq:err_measures}
  averaged over $t\le20$, against the inference particle number $N_p$ for the
  five trained models. (a)--(c) Reference: the CIC teacher with the same
  particles; (d)--(f) reference: high-fidelity PIF (fourth-order shape). Stars
  mark $N_p=N_{\mathrm{tr}}$; dashed lines are power-law fits to the diagonal
  (a) and to the model with $N_{\mathrm{tr}}=8\times10^5$ (d)--(f).}
  \figlab{inloop_scaling}
\end{figure}

\begin{figure}[!htbp]
  \centering
  \includegraphics[width=\linewidth]{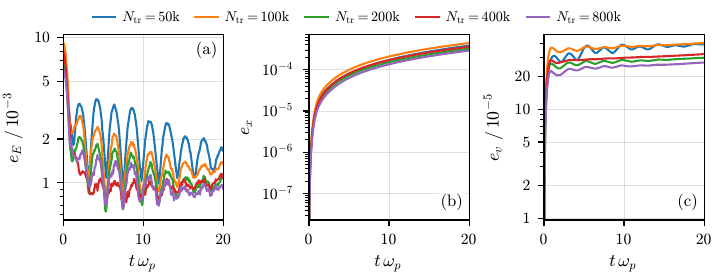}
  \caption{2D-2V weak Landau damping, time histories of the in-loop errors
  against the CIC teacher at $N_p=1.6\times10^6$ for the five trained models.}
  \figlab{inloop_history}
\end{figure}

\paragraph{Generalization} The model trained with $8\times10^5$ particles at
$k=0.5$ was also evaluated at other wave numbers and on other mini-apps
(Fig.~\figref{generalization}). Because positions are normalized per sample and
the predicted field is rescaled by $Q_{\mathrm{tot}}/\sqrt{|\Omega|}$, the
field operator in normalized variables is the same for every $k$: the Poisson
problem on $[0,2\pi/k)^2$ maps to the one on a fixed box. A change of $k$
therefore does not change the map to be learned but the distribution of its
inputs, since $k\lambda_D$, where $\lambda_D$ is the Debye length, sets the damping rate and the oscillation
frequency. For $k>0.5$ the errors even decrease slightly, as the stronger
damping brings the field to the noise level sooner. For $k<0.5$ the
perturbation is weakly damped and remains large over the whole run, and the
field error increases to $4.5\times10^{-3}$ at $k=0.25$ and $8.7\times10^{-3}$
at $k=0.125$, about seven times the value at the training wave number.

Applied to other mini-apps (Fig.~\figref{generalization}(b)), the weak-Landau model
remains accurate for the bump-on-tail instability during the linear phase
($\langle e_E\rangle_t=3.7\times10^{-3}$), and for the two-stream instability
until saturation, after which the error jumps: the time median of $e_E$ is
$4.7\times10^{-4}$, the mean $4.2\times10^{-2}$. It fails for strong Landau
damping, whose large-amplitude, nonlinear density profiles were never seen in
training ($\langle e_x\rangle_t=4.6\times10^{-2}$). A single-case model is thus
limited to the region of input space covered by its training data, which
motivates the multi-app models of the following sections.

\begin{figure}[!htbp]
  \centering
  \includegraphics[width=\linewidth]{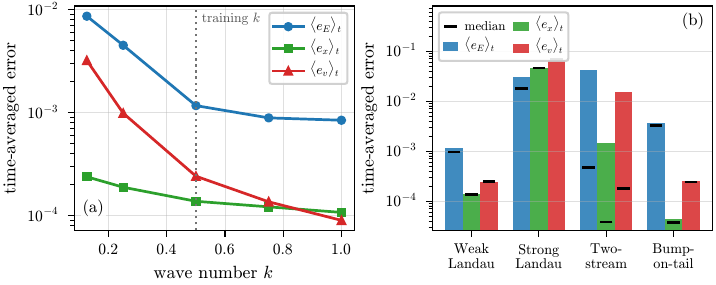}
  \caption{Generalization of the weak-Landau model ($k=0.5$,
  $N_{\mathrm{tr}}=8\times10^5$) at $N_p=1.6\times10^6$, errors against the CIC
  teacher. (a) Other wave numbers; the dotted line marks the training
  wave number. (b) Other mini-apps: time-averaged errors (bars) and time
  medians (black marks).}
  \figlab{generalization}
\end{figure}

\subsection{2D-2V multi-app model}
\seclab{multi2d}

One model is trained on five mini-apps, each generated by a different
reference method: weak ($\alpha=0.05$) and strong ($\alpha=0.5$) Landau
damping with PIF, the two-stream ($k=0.5$) and bump-on-tail ($k=0.21$)
instabilities with PIC (perturbation amplitude $\alpha=0.01$), and the cyclotron mini-app, a
magnetized electron cloud with open boundaries, with Barnes--Hut.
PIF and PIC with FFT field solver use $32$ modes and $32$ grid points per dimension, respectively,
both with the CIC shape function (applied in Fourier space for PIF), PIF
with NUFFT tolerance $10^{-12}$, and Barnes--Hut uses the opening angle
$\vartheta=0.6$ and the softening length $\sqrt{10^{-5}}\approx3.2\times10^{-3}$.
All mini-apps are ran with $5\times10^5$ particles.
The instabilities are the 2D-2V counterparts of the 3D-3V benchmarks
of~\cite{muralikrishnan2024scaling}, with the perturbation and the beams in
the $y$-direction,
\begin{equation*}
  f_0(\mathbf{x},\mathbf{v})=\frac{1+\alpha\cos ky}{2\pi\sigma_x\sigma_y}\,
  e^{-v_x^2/(2\sigma_x^2)}\Big((1-\epsilon)\,e^{-(v_y-v_{b1})^2/(2\sigma_y^2)}
  +\epsilon\,e^{-(v_y-v_{b2})^2/(2\sigma_y^2)}\Big)
\end{equation*}
on $[0,2\pi/k)^2$, with $\epsilon=0.5$, $\sigma_y=0.1$ and
$v_{b1,2}=\mp\pi/2$ for the two-stream and $\epsilon=0.1$,
$\sigma_y=1/\sqrt2$, $v_{b1}=0$ and $v_{b2}=4$ for the bump-on-tail
instability. Unlike in~\cite{muralikrishnan2024scaling}, where all thermal
velocities are equal, we take $\sigma_x=1$ in both cases, so that the
$x$-direction carries an unperturbed Maxwellian and only $W_y$ grows. The cyclotron
mini-app is a Gaussian electron cloud centred at the origin, with standard
deviations $1/(30\sqrt2)$ and $1/(10\sqrt2)$ of the positions in $x$ and $y$,
standard normal velocities and total charge $Q_{\mathrm{tot}}=-10$, in a
uniform magnetic field $B_0=300\,\hat{\mathbf{z}}$, so that $\omega_c\Delta t=3$ for
$\Delta t=0.01$. The time step size $\Delta t=0.05$ for the other four mini-apps and all of them (including the cyclotron) are trained on the first $600$ time steps
($T_{\mathrm{train}}=30$, and $T_{\mathrm{train}}=6$ for the cyclotron) and run
for $1200$ steps at $N_p=10^6$ particles during inference, so that the second half of every run is
an extrapolation in time.

\paragraph{In-loop errors} Figure~\figref{multiapp2d_errors} shows the in-loop errors
against each mini-app's own teacher. Within the training horizon, the time averaged field
error is $4\times10^{-3}$ for both Landau cases, $1.5\times10^{-2}$ and
$8\times10^{-3}$ for the two-stream and bump-on-tail instabilities and $0.11$ for the cyclotron. In the
extrapolation window the time-averaged errors grow by factors between $1.8$
(field, weak Landau) and $6.5$ (positions, bump-on-tail). The largest growth occurs for the
two-stream instability, by a factor $12$ in the field and $39$ in the positions:
after saturation the particles are trapped in the nonlinear potential wells,
and small field differences grow into large trajectory differences. This
sensitivity is a property of the dynamics and not of the model, since two
teacher runs with slightly perturbed fields would separate in the same way;
pointwise errors are therefore not meaningful in this regime and we compare
quantities of interest instead.

\begin{figure}[!htbp]
  \centering
  \includegraphics[width=\linewidth]{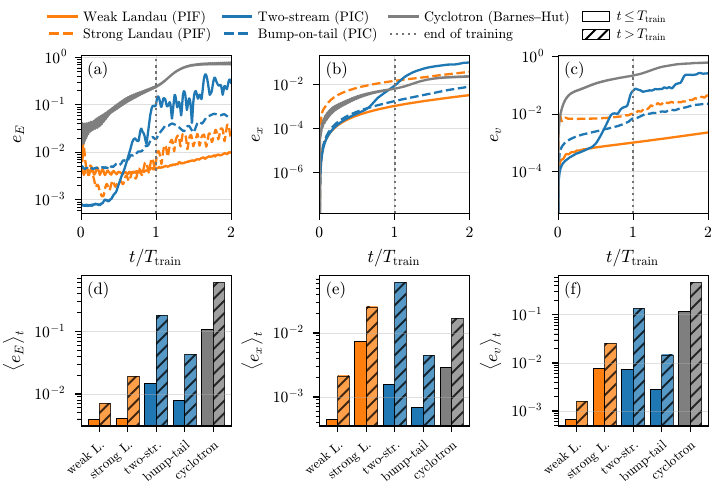}
  \caption{2D-2V multi-app model at $N_p=10^6$, errors~\eqref{eq:err_measures}
  against each mini-app's teacher (colour: teacher). (a)--(c) Time histories
  with time normalized by the training horizon $T_{\mathrm{train}}$; (d)--(f)
  time averages within (plain) and beyond (hatched) the training horizon.}
  \figlab{multiapp2d_errors}
\end{figure}

\paragraph{Quantities of interest} The field energies in
Fig.~\figref{multiapp2d_field} agree closely with the teachers in all four periodic
mini-apps, including the extrapolation window. For weak Landau damping, fits
to the maxima of $W_x$ for $1\le t\le15$ give damping rates of $-0.290$ (PIF)
and $-0.293$ (NEOPIC), against $-0.307$ from linear theory, until the field
reaches the noise level of $10^6$ particles. Strong Landau damping, with its
nonlinear trapping phase, secondary growth and saturated oscillations, is
reproduced over the full horizon. For the instabilities, the growing component
$W_y$ grows at the same rate in NEOPIC and the teacher (fits over the same
windows: $0.614$ versus $0.613$ for the two-stream and $0.297$ versus $0.301$
for the bump-on-tail instability), and the first saturation peak is reached
at the same time ($t=28.70$ versus $28.75$, and $36.35$ versus $36.30$) with
an amplitude that differs by $4\%$ and $1\%$. The non-growing component
$W_x$, which is at the level of the sampling noise, is also reproduced.

\begin{figure}[!htbp]
  \centering
  \includegraphics[width=\linewidth]{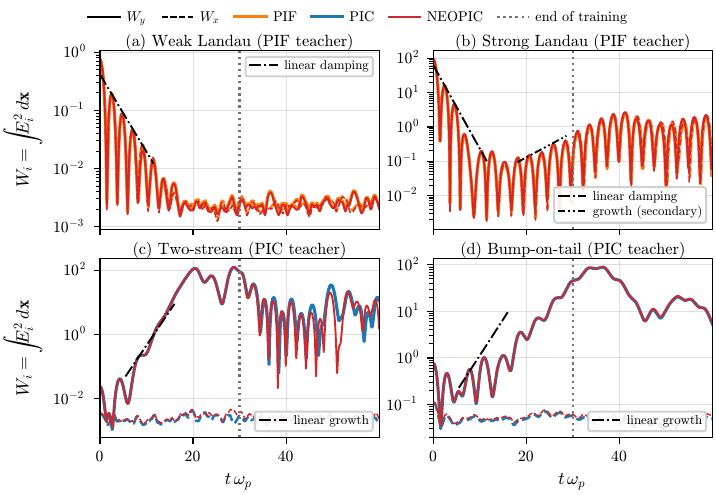}
  \caption{2D-2V multi-app model, field energy per component for the four
  periodic mini-apps: teacher (wide lines, colour as in
  Fig.~\figref{multiapp2d_errors}) and NEOPIC (red). $W_y$ (the growing
  component of the instabilities) is solid and $W_x$ dashed. Dash-dotted: linear theory; dotted: end of
  training.}
  \figlab{multiapp2d_field}
\end{figure}

\paragraph{Conservation} The relative changes of total energy and momentum
are shown in Fig.~\figref{multiapp2d_conservation}. In all periodic mini-apps the energy error of
NEOPIC is of the same size as that of its teacher (for example
$1.5\times10^{-5}$ versus $2.5\times10^{-5}$ for weak Landau damping and
$1.1\times10^{-2}$ versus $9.8\times10^{-3}$ for the two-stream and
$7.4\times10^{-4}$ versus $7.1\times10^{-4}$ for the bump-on-tail
instability), and it does not grow in the extrapolation window. In strong
Landau damping both are of order $10^{-2}$ ($6.7\times10^{-2}$ for NEOPIC and
$3.8\times10^{-2}$ for PIF), which reflects the time step in the strongly
nonlinear phase rather than the field model. Since the mean of the predicted
field is removed, the total force vanishes and momentum is conserved to the
single-precision round-off ($10^{-9}$--$10^{-6}$ relative), whereas a learned
field without this correction would not conserve momentum at all.

\begin{figure}[!htbp]
  \centering
  \includegraphics[width=\linewidth]{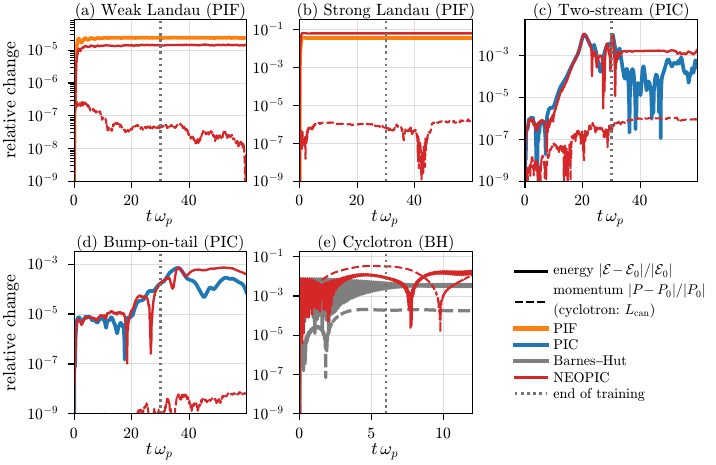}
  \caption{2D-2V multi-app model, relative change of the total energy (solid)
  and of the momentum (dashed; for the cyclotron the canonical angular
  momentum $L_{\mathrm{can}}$~\eqref{eq:Lcan}) for the teachers (wide lines) and NEOPIC (red).}
  \figlab{multiapp2d_conservation}
\end{figure}

\paragraph{Cyclotron mini-app} The cyclotron mini-app is the
most demanding of the five: the domain is open, the cloud occupies about $2\%$
of the solver box, and the strong magnetic field turns the electrostatic
repulsion into an $\mathbf{E}\times\mathbf{B}$ rotation that is sheared
outside the dense core, so that the initially elongated cloud develops two
spiral arms (Fig.~\figref{cyclotron_rho}). Two diagnostics require care in this
case. First, in two dimensions $\frac12\int|\mathbf{E}|^2\,d\mathbf{x}$
diverges logarithmically for a net charge, and the particle average
$\frac{|\Omega|}{2N_p}\sum_p|\mathbf{E}_p|^2$ is only valid for clouds that
fill the box. We therefore use the electrostatic energy
$U=\frac12\sum_pq\,\phi(\mathbf{x}_p)$ with the free-space potential
($G=-\ln r/2\pi$) computed by the spectral free-space solver of Vico, Greengard
and Ferrando \cite{vico2016fast} that the PIC reference uses.
Second, the canonical angular momentum
$\sum_p[m(\mathbf{x}_p\times\mathbf{v}_p)_z+\frac{qB_0}{2}|\mathbf{x}_p|^2]$,
which the exact dynamics conserve for any torque-free field, is not conserved
by the Boris scheme at $\omega_c\Delta t=3$. We therefore use
\begin{equation}
  L_{\mathrm{can}}=\sum_p\Big[\Big(1+\frac{\omega_c^2\Delta t^2}{4}\Big)\,m\,
      (\mathbf{x}^n_p\times\bar{\mathbf{v}}^n_p)_z+\frac{qB_0}{2}|\mathbf{x}^n_p|^2\Big],\quad
  \bar{\mathbf{v}}^n_p=\frac{\mathbf{v}^{n-1/2}_p+\mathbf{v}^{n+1/2}_p}{2},
  \label{eq:Lcan}
\end{equation}
which follows from an exact discrete invariant of the Boris scheme
(\secref{app_cyclotron}). It is conserved exactly for pairwise
Coulomb forces in two dimensions and changes through torques, i.e.\
non-central field errors, and only weakly through errors in the strength of
a central field. The factor $1+(\omega_c\Delta t/2)^2$ tends to one when the
gyration is resolved; the same factor causes the energy oscillation of the
Barnes--Hut reference in Fig.~\figref{multiapp2d_conservation}(e)
(\secref{app_cyclotron}).

Up to the end of training, NEOPIC reproduces the formation of the arms
(Fig.~\figref{cyclotron_rho}), with a field error against a free-space PIC
evaluation at the NEOPIC positions of $1$--$4\%$, against $0.5\%$ for
Barnes--Hut (Fig.~\figref{cyclotron_diagnostics}(a)). Beyond $t\approx7$ this error
rises to about $20\%$. The error maps in Fig.~\figref{cyclotron_dE} show that the
late error is nearly uniform over the cloud and vanishes at its centre,
which indicates an error mainly in the strength of the radial field; the
peak core field is about $10\%$ too large. Such an error leaves $L_{\mathrm{can}}$
nearly unchanged, and indeed the angular momentum of NEOPIC is broken mainly within
the training window (maximum $3.5\times10^{-2}$), where the error maps show a
direction-dependent pattern near the core that exerts a torque. The
overestimated field drives a faster $\mathbf{E}\times\mathbf{B}$ rotation:
measured by the phase of the azimuthal $m=2$ Fourier mode of the density
(\secref{app_cyclotron}), the
arms wind $14\%$ more than in the Barnes--Hut run between $r=0.09$ and
$r=0.29$ at $t=11.98$ (0.815 versus 0.715 turns,
Fig.~\figref{cyclotron_diagnostics}(b)). The extra winding is concentrated in the
shear layer just outside the core, $0.09\lesssim r\lesssim0.14$; for
$0.14\le r\le0.29$ the winding agrees to within $1\%$
(Fig.~\figref{cyclotron_diagnostics}(c)). The total energy of NEOPIC changes by
$1.3\times10^{-2}$ within and $2.0\times10^{-2}$ beyond the training horizon,
against $7.5\times10^{-3}$ for Barnes--Hut.

\begin{figure}[!htbp]
  \centering
  \includegraphics[width=\linewidth]{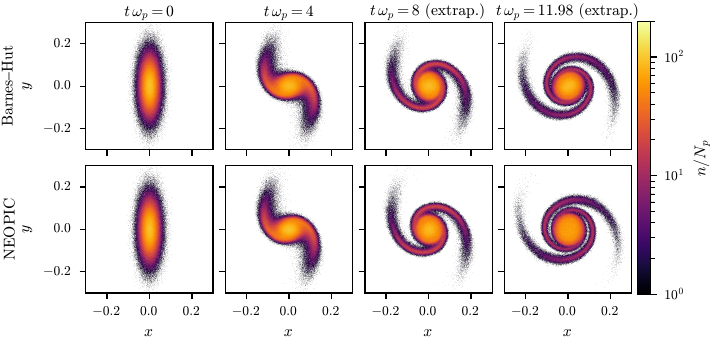}
  \caption{Cyclotron mini-app (2D magnetized electron cloud, open boundaries): particle density
  $n/N_p$ per unit area for Barnes--Hut (top) and NEOPIC (bottom) at
  $N_p=10^6$, binned on a fixed grid about the centre of the cloud. Training
  ends at $t=6$.}
  \figlab{cyclotron_rho}
\end{figure}

\begin{figure}[!htbp]
  \centering
  \includegraphics[width=\linewidth]{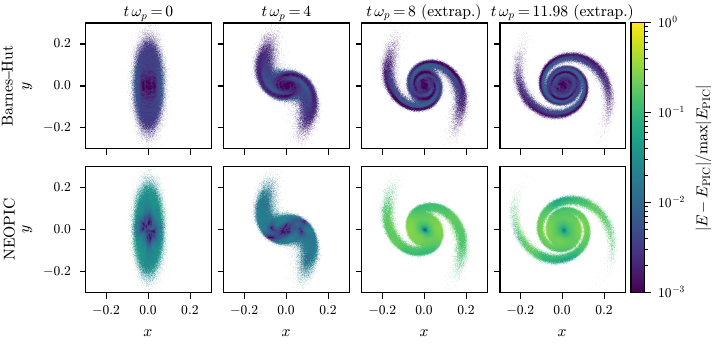}
  \caption{Cyclotron mini-app: local difference between the field of
  each method and a free-space PIC field evaluated at the same particle
  positions, averaged per bin and normalized by the maximum PIC field.}
  \figlab{cyclotron_dE}
\end{figure}

\begin{figure}[!htbp]
  \centering
  \includegraphics[width=\linewidth]{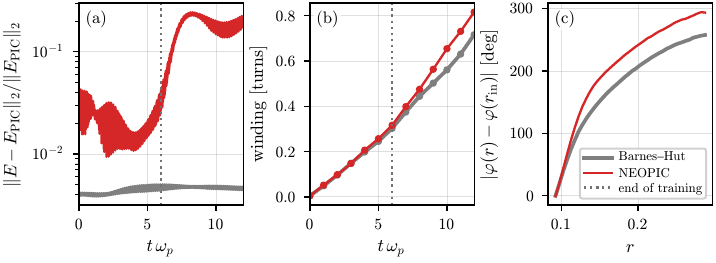}
  \caption{Cyclotron mini-app. (a) Relative $\ell^2$ difference between
  the field of each method and a free-space PIC evaluation at the same
  positions. (b) Winding of the two spiral arms between $r=0.09$ and $r=0.29$,
  from the phase $\varphi(r)$ of the $m=2$ azimuthal mode of the density.
  (c) Arm shape $|\varphi(r)-\varphi(0.09)|$ at $t=11.98$.}
  \figlab{cyclotron_diagnostics}
\end{figure}

\paragraph{Convergence in the time step} The cyclotron mini-app has the
fastest time scale of the 2D mini-apps, and we use it in
\secref{dtconv} to test the time-step dependence of
Theorem~\ref{thm:trajectory}. NEOPIC and PIF started from the same particles
converge at the same rate in the time step, measured by the guiding centres,
and the model error does not depend on the time step, so that the model can
be used with time steps much smaller than those of its training data.

\subsection{3D-3V multi-app models}
\seclab{multi3d}

In three dimensions we train two models: one on four mini-apps (weak and
strong Landau damping with Barnes--Hut, the two-stream and bump-on-tail
instabilities with PIF), and one on the same four mini-apps plus a Penning trap
with PIC as teacher. The second model measures the effect of a more diverse
training set on the original four mini-apps. Training was done with $N_p=100\,000$ particles, whereas inference uses
$163\,840$ particles, $\Delta t=0.05$ and $1200$ steps, of which the first
$600$ ($T_{\mathrm{train}}=30$) are covered by training; the reference runs are
the same for both models.
As in two dimensions, PIF and PIC with FFT field solver use $32$ modes and grid points per
dimension and the CIC shape function, PIF with NUFFT tolerance $10^{-7}$;
Barnes--Hut uses $\vartheta=0.5$ and the softening length $0.05$. The Landau damping and instability mini-apps are those
of~\cite[Section~4]{muralikrishnan2024scaling}: Landau damping with $k=0.5$
and $\alpha=0.05$ (weak) or $\alpha=0.5$ (strong), perturbed in all three
directions, and the instabilities with the parameters given above for 2D,
but with the perturbation and the beams in the $z$-direction and
$\sigma_x=\sigma_y=\sigma_z=\sigma$. The Penning trap also follows~\cite{muralikrishnan2024scaling}:
electrons in the uniform magnetic field $\mathbf{B}_{\mathrm{ext}}=5\,\hat{\mathbf{z}}$
and the quadrupole electric field
$\mathbf{E}_{\mathrm{ext}}=\frac{15}{L}\big(-(x-\tfrac L2),\,-(y-\tfrac L2),\,2(z-\tfrac L2)\big)$
with a neutralizing ion background, total charge $Q_{\mathrm{tot}}=-1562.5$,
Gaussian positions with mean $L/2$ and standard normal velocities. Unlike
in~\cite{muralikrishnan2024scaling}, we take $L=25$ instead of $20$ and the
standard deviations $(3,1,4)$ of the positions.

\paragraph{Quantities of interest} Both models reproduce the field energies
of the four mini-apps (Fig.~\figref{multiapp3d_field}). The damping rates of weak
Landau damping, fitted for $t\le9$ before the field reaches the noise level of
$163\,840$ particles, are $-0.191$ (Barnes--Hut), $-0.190$ (four mini-apps) and
$-0.179$ (five mini-apps). For the two-stream instability the growth rate
agrees within $2\%$ ($0.547$ for PIF, $0.539$ for the four-app and $0.537$ for
the five-app model), and the saturation peak of
$W_z$ is reached about one time unit later and $17$--$26\%$ higher than in
the reference ($t=16.4$ and $W_z=509$ for PIF, against $t=17.7$ and
$W_z=594$ for the four-app and $t=17.3$ and $W_z=642$ for the five-app model). After saturation both models give somewhat lower field
energies than PIF. For the bump-on-tail instability the saturation peak
agrees within $7\%$ in amplitude and $0.4$ in time. In this case the linear
phase is hidden below the noise level at this particle number: $W_z$ stays
within a factor of about three of the non-growing components until $t\approx15$,
and once it has risen well above them it grows more slowly than linear theory
predicts ($2\gamma=0.356$). The slopes of $\ln W_z$ fitted for $10\le t\le25$
($0.151$ for PIF, $0.203$ for the four-app and $0.183$ for the five-app model)
are therefore not growth rates in the linear sense, and no theory line is
shown. In strong Landau damping, whose field stays well above the noise
level, both models follow the reference through the nonlinear phase; the mean
deviation $\langle|\ln(W_i/W_i^{\mathrm{ref}})|\rangle_t$ is $0.07$--$0.11$ for
the four-app and $0.16$--$0.20$ for the five-app model within the training
horizon, and $0.14$--$0.45$ beyond it.

The two models differ most in the level of the field fluctuations, i.e.\ in
the field energy that remains after Landau damping and in the non-growing
components of the instabilities. This level is set by the sampling noise of
the particles and by how the teacher's discretization (resolved modes, shape
function, opening angle) responds to it, so it differs between teachers. To
separate the two effects we repeated weak Landau damping and the bump-on-tail
instability from the same initial particles (taken from the Barnes--Hut run)
with both Barnes--Hut and PIF as reference (Fig.~\figref{multiapp3d_noise}).
In weak Landau damping the fluctuation level of PIF is six times that of
Barnes--Hut ($\bar W=\frac13(W_x+W_y+W_z)\approx1.08$ against $0.17$,
geometric means over $12\le t\le60$).
The four-app model reproduces the level of its teacher, Barnes--Hut ($0.17$),
whereas the five-app model lies in between ($0.31$). For the bump-on-tail
instability, whose teacher is PIF, the field energy of the non-growing
components, $\frac12(W_x+W_y)$, is about $70$ for PIF and $17$--$23$ for
Barnes--Hut. The five-app model again stays in
between, at $25$--$33$, over the whole run including the extrapolation window,
while the four-app model starts at the Barnes--Hut level ($17$ for $t<10$),
rises during the linear phase and switches to the PIF level shortly before the
end of training ($65$ for $25\le t<30$). Its level in the extrapolation window
($47$--$64$) depends on the random seed, which we confirmed with a second seed.
The growing component behaves in the same way during the linear phase: for
$t<12$ the four-app model follows Barnes--Hut closely ($W_z\approx26$ and $39$,
averaged over $0\le t<5$ and $5\le t<12$, against $26$ and $40$ for Barnes--Hut and $84$ and $108$ for
PIF), and the five-app model is slightly above ($35$ and $53$). In both cases
the five-app model sits about one third of the way from Barnes--Hut to PIF on
a logarithmic scale.

We interpret this as follows. The fluctuating part of the field is a linear
response of the operator to the sampling fluctuations of the input, and by
Proposition~\ref{prop:noise_structure} the noise term in the training
loss (Remark~\ref{rem:l1_loss}) penalizes the difference between the response of the model and that of
the teacher. If training data of several teachers cover similar inputs, as
the nearly uniform densities of Landau damping and of the early phase of the
instabilities do, the minimizer of this term is a weighted average of the
teachers' responses, i.e.\ a discretization in between. The four-app model
does not form such an average consistently: it reproduces Barnes--Hut on inputs
that resemble its Landau training data and switches to PIF once the
bump-on-tail density becomes distinctive near saturation. Adding the Penning
trap, a third teacher (PIC) with very different inputs, makes the response
consistent across the mini-apps and over time, including the extrapolation
window. This consistency comes at the price of a higher fluctuation level than
the teacher in weak Landau damping, but it goes along with better energy
conservation in seven of the eight cases of Table~\ref{tab:conservation}, most
clearly for the bump-on-tail instability. A more
diverse training set thus produces a model that behaves like a single
discretization rather than switching between its teachers. In the 2D model, trained and evaluated with more
particles, the fluctuation level of each mini-app follows its teacher
(Fig.~\figref{multiapp2d_field}). Whether a model averages or switches
between its teachers is, however, not the goal in itself; what matters is
the accuracy of the resulting simulations. Averaging the responses of
teachers with different noise responses can be less accurate than either
teacher, and switching between them can make the response inconsistent in
time. Training sets in which the teachers are complementary, each used where
it is accurate, and consistent in the regimes where their inputs overlap
would let a single model follow the most accurate response everywhere.
Constructing such training sets, and making the transition between teachers
smooth, is left to future work (Section~\secref{conclusions}).

\begin{figure}[!htbp]
  \centering
  \includegraphics[width=\linewidth]{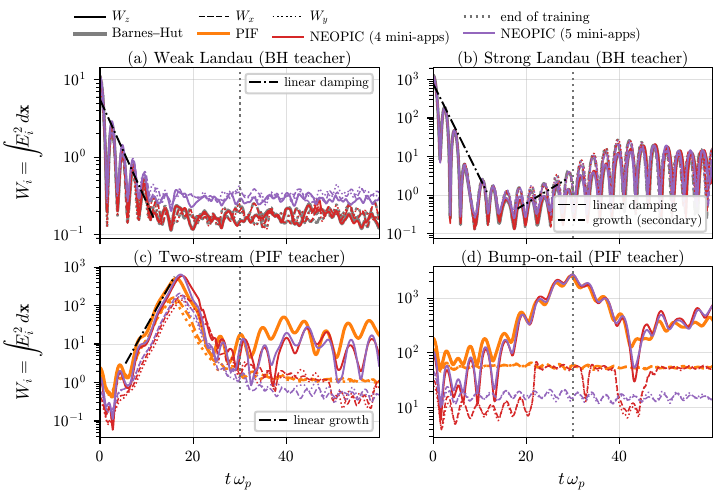}
  \caption{3D-3V multi-app models at $N_p=163\,840$, field energy per component:
  reference (wide lines; grey: Barnes--Hut, orange: PIF), model trained on
  four mini-apps (red) and model trained on five mini-apps including the
  Penning trap (purple). $W_z$ (the growing component of the instabilities)
  is solid, $W_x$ dashed and $W_y$ dotted. For the bump-on-tail instability the
  linear phase is below the noise level and no theory line is shown.}
  \figlab{multiapp3d_field}
\end{figure}

\begin{figure}[!htbp]
  \centering
  \includegraphics[width=\linewidth]{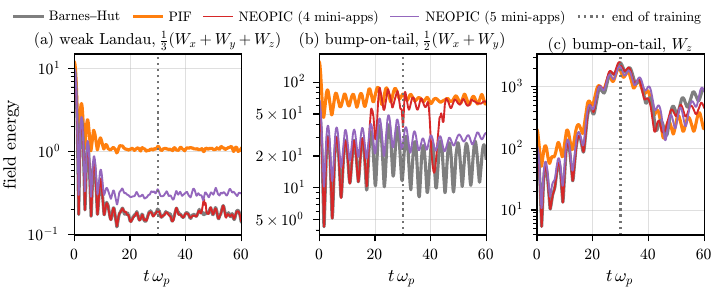}
  \caption{Field fluctuation level of the 3D-3V models: runs from the same
  initial particles (Barnes--Hut) compared with both Barnes--Hut and PIF.
  (a) Weak Landau damping, mean of the three components; (b) bump-on-tail
  instability, mean of the two non-growing components; (c) bump-on-tail
  instability, growing component.}
  \figlab{multiapp3d_noise}
\end{figure}

\paragraph{Conservation} Energy conservation is summarized in
Table~\ref{tab:conservation}. For weak and strong Landau damping both models
conserve energy as well as Barnes--Hut. For the instabilities their energy
errors within the training horizon are larger than those of PIF, by a factor
of $8$--$11$ for the four-app model and $2$--$5$ for the five-app model, and
they do not grow appreciably in the extrapolation window. The five-app model
has the smaller error in all cases except the extrapolation window of the
two-stream instability ($5.1\times10^{-3}$ against $3.5\times10^{-3}$). Barnes--Hut does not conserve momentum, because the tree
approximation breaks the symmetry of the pair forces, whereas NEOPIC
conserves it to $10^{-5}$ relative in these periodic mini-apps through the
removal of the mean field which can also be done for Barnes--Hut if desired.

\begin{table}[!htbp]
  \centering
  \caption{3D-3V multi-app models: maximum relative change of the total
  energy, $\max_t|\mathcal{E}(t)-\mathcal{E}(0)|/|\mathcal{E}(0)|$, within
  ($t\le T_{\mathrm{tr}}$) and beyond ($t>T_{\mathrm{tr}}$) the training
  horizon, for the reference and the models trained on four and five
  mini-apps. The smaller of the two model errors is shown in bold.}
  \tablab{conservation}
  \footnotesize
  \setlength{\tabcolsep}{4pt}
  \begin{tabular}{lcccccc}
    \toprule
    & \multicolumn{2}{c}{reference} & \multicolumn{2}{c}{NEOPIC, 4 apps} & \multicolumn{2}{c}{NEOPIC, 5 apps}\\
    \cmidrule(lr){2-3}\cmidrule(lr){4-5}\cmidrule(lr){6-7}
    mini-app (teacher) & $t\le T_{\mathrm{tr}}$ & $t>T_{\mathrm{tr}}$ & $t\le T_{\mathrm{tr}}$ & $t>T_{\mathrm{tr}}$ & $t\le T_{\mathrm{tr}}$ & $t>T_{\mathrm{tr}}$\\
    \midrule
    weak Landau (BH) & $3.5{\cdot}10^{-4}$ & $3.6{\cdot}10^{-4}$ & $3.8{\cdot}10^{-4}$ & $4.1{\cdot}10^{-4}$ & {\boldmath$3.1{\cdot}10^{-4}$} & {\boldmath$3.2{\cdot}10^{-4}$}\\
    strong Landau (BH) & $8.3{\cdot}10^{-2}$ & $7.9{\cdot}10^{-2}$ & $7.2{\cdot}10^{-2}$ & $6.8{\cdot}10^{-2}$ & {\boldmath$6.9{\cdot}10^{-2}$} & {\boldmath$6.6{\cdot}10^{-2}$}\\
    two-stream (PIF) & $1.9{\cdot}10^{-3}$ & $6.7{\cdot}10^{-5}$ & $2.1{\cdot}10^{-2}$ & {\boldmath$3.5{\cdot}10^{-3}$} & {\boldmath$8.9{\cdot}10^{-3}$} & $5.1{\cdot}10^{-3}$\\
    bump-on-tail (PIF) & $3.9{\cdot}10^{-4}$ & $3.3{\cdot}10^{-4}$ & $3.1{\cdot}10^{-3}$ & $3.6{\cdot}10^{-3}$ & {\boldmath$9.5{\cdot}10^{-4}$} & {\boldmath$8.4{\cdot}10^{-4}$}\\
    Penning trap (PIC) & $3.0{\cdot}10^{-3}$ & $1.6{\cdot}10^{-3}$ & -- & -- & $6.6{\cdot}10^{-2}$ & $5.0{\cdot}10^{-2}$\\
    \bottomrule
  \end{tabular}
\end{table}

\paragraph{Penning trap} The fifth mini-app confines a non-neutral cloud with
an axial magnetic field and an external quadrupole electric field; the total
energy is the sum of the kinetic, self-field and external potential energies
(Fig.~\figref{penning}). Since the cloud does not fill the periodic box, the field
energy is computed on the PIC grid from the particle positions,
$\frac12\int|\mathbf{E}|^2\,d\mathbf{x}$, which is the same diagnostic for both
methods. PIC conserves the total energy to $3\times10^{-3}$. NEOPIC gains up to
$6.6\%$ during the initial relaxation of the cloud in the trap ($t\lesssim10$),
when the self-field changes fastest, and then keeps the energy bounded, with a
maximum change of $5.0\%$ in the extrapolation window. The partition of the energy drifts slowly: at
$t=60$ the self-field energy is $1.04\times10^4$ for NEOPIC against
$1.23\times10^4$ for PIC, and the external potential energy $4.7\times10^3$
against $2.5\times10^3$.

\begin{figure}[!htbp]
  \centering
  \includegraphics[width=\linewidth]{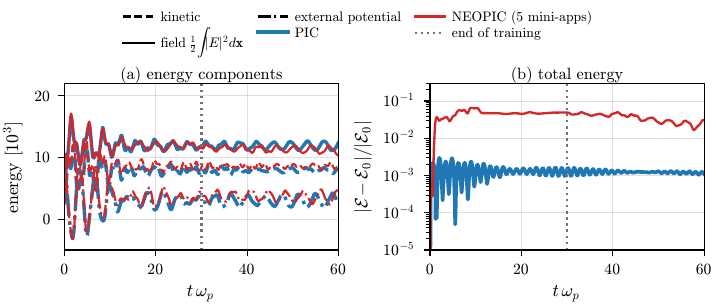}
  \caption{Penning trap (3D, five-app model, $N_p=163\,840$). (a) Kinetic,
  self-field and external potential energy; (b) relative change of the total
  energy.}
  \figlab{penning}
\end{figure}

\subsection{Computational cost}
\seclab{cost}

Figure~\figref{complexity} shows the time per field evaluation on one NVIDIA
A100 GPU as a function of $N_p$ in 2D and 3D, averaged over $20$ time steps
of 2D-2V and 3D-3V weak Landau damping, respectively. For PIC and PIF the
grid or the number of modes grows ($32^2-4096^2$ for 2D and $16^3-256^3$ for 3D) with $N_p$ at a fixed number of particles per
cell ($10$ for 2D and $4$ for 3D), whereas NEOPIC keeps its architecture ($M=32^d$ modes) fixed. We show two
groups of implementations: Python/CuPy implementations of PIC and PIF, the latter with the
same cuFINUFFT interface as NEOPIC (filled markers), and production C++ codes,
IPPL~\cite{matthias_frey_2024_10878166} for PIC and PIF and
Ryoanji~\cite{keller2023cornerstone} for Barnes--Hut (open markers).

Both NEOPIC evaluations scale linearly, as stated in
Proposition~\ref{prop:complexity}; a fit of $T=\alpha N_p+\beta$ gives
$\alpha=79$\,ns and $\beta=64$\,ms in 2D and $\alpha=300$\,ns and $\beta=49$\,ms in
3D for the NUFFT, and $\alpha=155$\,ns ($1.4\,\mu$s) with a small $\beta$ in 2D (3D) for
the direct (NUDFT) evaluation. The direct evaluation is therefore faster in 2D up to
about $7\times10^5$ particles, while in 3D the NUFFT is faster beyond about
$4\times10^4$, in agreement with the operation counts of
Section~\secref{complexity}. The large constant $\beta$ of the NUFFT version
comes from creating the cuFINUFFT plans in every call; persistent plans would
remove most of it. Per particle, NEOPIC with the NUFFT costs $39$ times more
than the Python PIF with the same tolerance in 2D ($2.0$\,ns) and $37$ times more
in 3D ($8.2$\,ns). This is below the ratio $2Ld_v/(1+d)$ of the transform counts
in~\eqref{eq:ratio_pif} ($85$ and $64$), because the $d_v$ channels share the
sorting and the particle data in a batched transform. Against the production
codes the gap is larger: IPPL PIC needs $0.63$\,ns ($1.5$\,ns) per particle in
2D (3D), so NEOPIC is about $120$ to $200$ times more expensive per
particle than an optimized FFT-based PIC code, and about six times more
expensive than Barnes--Hut with opening angle $\vartheta=0.5$ at $1.6\times10^7$
particles ($4.7$\,s against $0.77$\,s). The cost of Barnes--Hut grows rapidly as
$\vartheta$ decreases, and in our tests NEOPIC is cheaper than Barnes--Hut for
$\vartheta\lesssim0.2$ (not shown); such a comparison is only meaningful at
matched accuracy, since the accuracy of NEOPIC is bounded by that of its
teacher. NEOPIC is therefore not competitive with optimized particle methods at equal
particle numbers. However, its cost is independent of the teacher and of the spatial
distribution of the particles, the particles interact only through a constant number of
($\mathcal{O}(Ld_vM)$) Fourier coefficients, which is attractive for a
particle-only parallelization which is load balanced by design \cite{muralikrishnan2026distributed}. In a follow up work, we are exploring these, in a fully distributed parallel implementation of NEOPIC coupled to production particle codes targeting highly non-homogeneous/clustered particle distributions.

The dependence of the cost on the architecture is analysed in
\secref{app_cost}: the cost per particle grows linearly with $Ld_v$
and hardly depends on the number of modes, so that the resolution of the
operator is cheap.

\begin{figure}[!htbp]
  \centering
  \includegraphics[width=\linewidth]{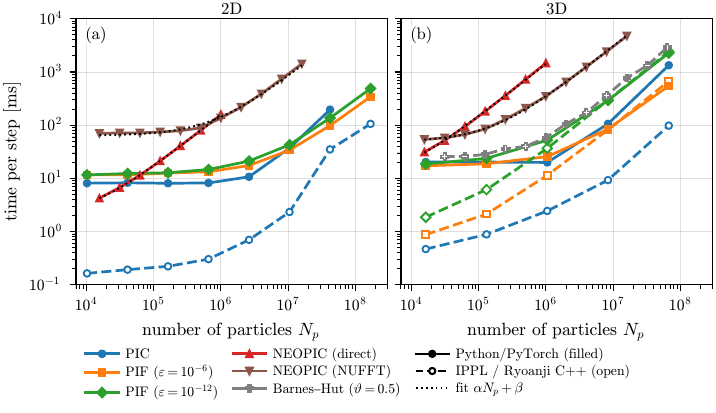}
  \caption{Time per field evaluation (one time step) against the number of
  particles for (a) 2D and (b) 3D Landau damping, on one GPU. Colours
  identify the method, line styles the implementation: Python/PyTorch (solid,
  filled markers) and the C++ codes IPPL and Ryoanji (dashed, open markers).
  Dotted lines: fits $T=\alpha N_p+\beta$ to the NEOPIC timings.}
  \figlab{complexity}
\end{figure}

\clearpage

\section{Conclusions}
\seclab{conclusions}

We have introduced NEOPIC, a framework for learning the field operator of
particle-based methods for kinetic plasma simulations. Its central element is a
nonuniform Fourier neural operator that acts directly on the empirical measure
of the particles and returns the field at the particle positions, without a
mesh, a tree or a deposition step. Because the input is the particle measure
itself, the discrete evaluation coincides with the continuous operator at any
number of particles, and the particle noise is not aliased onto the retained
Fourier modes. NEOPIC replaces only the field solve: it can be trained on data
from PIC, PIF or tree codes, and it can be combined with the standard time
integrators in any production particle code, explicit as well as implicit, the latter with
exact Jacobians obtained by automatic differentiation.

The error analysis shows that the error of a particle simulation with NEOPIC
separates into a Monte Carlo sampling error, a time-stepping error and a
model error, so that, up to the model error, NEOPIC behaves like a standard
Monte Carlo particle method. The particle noise in the training data acts as
an implicit regularizer that drives the response of the model to density
fluctuations towards that of its teacher; the resulting trade-off between
accuracy and robustness gives guidance on the number of training particles in
relation to the particle numbers used in simulations. The cost per time step
is $\mathcal{O}(N_p)$, with constants set by the architecture alone.

The numerical experiments support these findings. For 2D weak Landau
damping, the in-loop field error against the teacher decreases approximately
as $N_p^{-1/2}$ when the training and simulation particle numbers are equal.
For a given model, the teacher-forced error decreases with the number of simulation
particles
until it levels off at a knee, which lies at two to four times the training
particle number for the models trained with fewer particles. Simulations with
fewer particles than in training are the costly direction, whereas the models
trained with fewer particles stay within a factor of two of the best model
over the whole range of particle numbers. Theory and experiments therefore
suggest training close to the smallest particle number expected in the
simulations, which also reduces the cost of generating the training data and
of training. Against a high-fidelity reference, all models converge as
$N_p^{-1/2}$ up to the largest particle number considered, $32$ times the
smallest training particle number. At these particle numbers the error is
dominated by the discretization of the teacher and is about ten times larger
than the error of the learned operator, so that the accuracy of NEOPIC
simulations keeps improving with the number of particles beyond that of the
training data.
A single 2D model trained on five mini-apps generated by three different
methods, including the cyclotron mini-app with open boundaries,
reproduces the damping and growth rates of the teachers to within about $1\%$,
conserves momentum to round-off and energy to the level of its teachers on the
periodic mini-apps, and remains stable up to twice the training horizon. For the
cyclotron mini-app, the field error grows beyond the training horizon, and
diagnostics such as the canonical angular momentum identify its origin in
non-central field errors. In three dimensions, models trained on four and five
mini-apps, including a Penning trap, show that a model trained on several
teachers combines their responses to the particle noise: a more diverse
training set leads to a model that behaves like a single, consistent
discretization rather than switching between its teachers. The time-step
study for the cyclotron mini-app confirms that NEOPIC and PIF have the same
second-order time-stepping error, which is set by the integrator, while the
model error does not depend on the time step, so that the model can be used
with time steps smaller than those of its training data. With the
implicit midpoint rule, NEOPIC remains stable and nearly energy-conserving
in 1D-1V weak Landau damping well beyond the stability limit of the leapfrog scheme. Finally, the cost per
particle grows linearly with the product of the number of layers and the
channel width and hardly depends on the number of Fourier modes, so that the
resolution of the operator is cheap; the smallest architectures we considered
have a cost per time step comparable to that of Barnes--Hut and high-accuracy
PIF, while the current single-GPU implementation is not competitive with
optimized particle codes at equal particle numbers.

This work opens several directions. First, since the particles interact only
through a moderate number of Fourier coefficients per layer, NEOPIC lends
itself to a parallelization over particles alone, without the domain
decomposition and load balancing of mesh- and tree-based methods. Together
with optimizations of the implementation, such as persistent NUFFT plans and
reduced precision, and a coupling to production particle codes, this is the
basis for simulations with billions of particles. Second, the multi-app
results suggest a foundation model for fields in particle-based methods,
pre-trained on many methods and applications and fine-tuned with few
application-specific samples, with the aim of real simulations. Our results
show that a model trained on several teachers switches between their
responses according to the input distribution; making this switch smooth and
controlled, through consistent data in overlapping regimes or explicit gating,
and conditioning the model on the desired fidelity, so that it reproduces
the behaviour of an expensive teacher where only data of a cheaper teacher
were available, are natural ingredients of such a model. Third, the framework
extends beyond electrostatics. Magnetized plasmas with
$\omega_c\Delta t\gg1$ and implicit Darwin models, in which NEOPIC can serve as
the field solver or as a preconditioner of the nonlinear solve, are a natural
next step, followed by electromagnetic models and collisions. Other field
boundary conditions (Remark~\ref{rem:field_bc}) and other particle--field
methods with the same structure, such as vortex methods, can be treated in
the same framework.

\appendix
\addtocontents{toc}{\protect\apptocnumberline}

\section{Further details of the architecture}
\seclab{app_arch}

\subsection{Min--max normalization}
\label{rem:normalization}

The map~\eqref{eq:minmax} depends on the particles only through the bounding
box of the support of $\mu_X$, i.e.\ only through the measure. For a given
configuration it is a fixed affine map applied pointwise, so
Lemma~\ref{lem:exact_eval} and Theorems~\ref{thm:reno} and~\ref{thm:eps_reno}
below apply unchanged. In an open domain it makes the input of the network
invariant under translations and componentwise rescalings of the cloud, which
is useful when the cloud moves and changes its size. Three consequences
should be kept in mind. First, the box is set by the extreme particles: it is
Lipschitz in the positions only with respect to the maximal displacement
$\max_p|\mathbf{x}_p-\mathbf{y}_p|$, not the average displacement used in
Lemma~\ref{lem:lipschitz}, and it is not a continuous function of the measure
in the weak sense. Second, the box depends on $N_p$ through the input
distribution. For a density with compact support it converges to the support
as $N_p$ grows. For a density with unbounded support, such as a Gaussian
cloud, its extent grows slowly, like $\sqrt{2\ln N_p}$ standard deviations per
side, so the same physical density is mapped to slightly different normalized
densities at different particle numbers. On periodic domains filled by the
plasma, the box differs from the domain by $\mathcal{O}(L/N_p)$ per side.
Third, the transforms treat the normalized box as periodic. For periodic
problems this is the physical periodicity, up to the same
$\mathcal{O}(L/N_p)$. For open-boundary problems it identifies opposite faces
of the cloud's bounding box, an artificial periodicity that the learned
operator has to compensate. This periodicity is that of the transforms only. In
the implementation the positions are normalized to
$\boldsymbol{\xi}=(\mathbf{x}-\mathbf{a}_X)\oslash(\mathbf{b}_X-\mathbf{a}_X)\in[0,1]^d$,
the lifting is applied to $\boldsymbol{\xi}$, and the transforms use
$e^{2\pi\mathrm{i}\mathbf{k}\cdot\boldsymbol{\xi}}$, $\mathbf{k}\in\mathcal{K}_m$.
These basis functions are periodic on the unit box, so no explicit map to the
torus is needed; \eqref{eq:minmax} is the same operator written in the
coordinates $-\pi\mathbf{1}+2\pi\boldsymbol{\xi}$, in which an affine lifting
only has rescaled weights and bias. The lifting, however, acts on the coordinates
themselves and not on a periodic function of them. A linear lifting therefore
distinguishes the two sides of the seam, which are neighbours for the
transforms, and on periodic domains a particle that crosses the boundary
changes its lifted features discontinuously
(Remark~\ref{rem:periodic_lifting}).
The error analysis of Section~\secref{error} is written for a fixed
normalization. For the min--max normalization it applies to configurations
whose bounding boxes do not degenerate, and the $N_p$-dependence of the box is
an additional effect that it does not cover.

\subsection{Grid aliasing and particle sampling}
\seclab{app_alias}

The following proposition compares the Fourier coefficients computed on a
uniform grid, after deposition onto a grid, and from particles, and
quantifies the difference between grid aliasing and particle sampling
discussed in Section~\secref{reno}.

\begin{proposition}[Grid aliasing and particle sampling]
\label{prop:alias}
Let $f:\mathbb{T}^d\to\mathbb{C}$ have an absolutely summable Fourier series,
\begin{equation*}
  f(\mathbf{x})=\sum_{\mathbf{q}\in\mathbb{Z}^d}\hat f_{\mathbf{q}}e^{\mathrm{i}\mathbf{q}\cdot\mathbf{x}},
  \qquad
  \hat f_{\mathbf{q}}=\frac{1}{(2\pi)^d}\int_{\mathbb{T}^d}f(\mathbf{x})\,e^{-\mathrm{i}\mathbf{q}\cdot\mathbf{x}}\,d\mathbf{x},
\end{equation*}
and let $\mathbf{k}\in\mathcal{K}_m$.
\begin{enumerate}
  \item[(i)] \emph{Grid quadrature.} Let $\mu_n=n^{-d}\sum_{\mathbf{j}}\delta_{2\pi\mathbf{j}/n}$,
  $\mathbf{j}\in\{-n/2,\dots,n/2-1\}^d$, be the uniform grid with $n\ge2m$
  points per dimension. Then
  \begin{equation}
    \int f\,e^{-\mathrm{i}\mathbf{k}\cdot\mathbf{x}}\,d\mu_n
    =\sum_{\boldsymbol\ell\in\mathbb{Z}^d}\hat f_{\mathbf{k}+n\boldsymbol\ell}
    =\hat f_{\mathbf{k}}+\sum_{\boldsymbol\ell\ne0}\hat f_{\mathbf{k}+n\boldsymbol\ell}.
    \label{eq:grid_alias}
  \end{equation}
  \item[(ii)] \emph{Deposition.} Let the charge of $\mu_X$ be deposited with a
  shape function $S$ (with absolutely summable Fourier coefficients
  $\hat S_{\mathbf{q}}$), $\rho_n(\mathbf{x}_{\mathbf{j}})=\sum_pw_pS(\mathbf{x}_{\mathbf{j}}-\mathbf{x}_p)$.
  Then the DFT coefficients of the grid density are
  \begin{equation}
    n^{-d}\sum_{\mathbf{j}}\rho_n(\mathbf{x}_{\mathbf{j}})\,e^{-\mathrm{i}\mathbf{k}\cdot\mathbf{x}_{\mathbf{j}}}
    =\sum_{\boldsymbol\ell\in\mathbb{Z}^d}\hat S_{\mathbf{k}+n\boldsymbol\ell}\,\hat\mu_{\mathbf{k}+n\boldsymbol\ell}.
    \label{eq:deposit_alias}
  \end{equation}
  \item[(iii)] \emph{Particle quadrature.} Let $\mu_N$ be the empirical measure
  of $N_p$ iid samples of a probability density $\rho$. Then
  \begin{equation}
    \begin{gathered}
    \mathbb{E}\int f\,e^{-\mathrm{i}\mathbf{k}\cdot\mathbf{x}}\,d\mu_N
    =\int f\,e^{-\mathrm{i}\mathbf{k}\cdot\mathbf{x}}\,\rho\,d\mathbf{x},\\
    \mathbb{E}\Big|\int f\,e^{-\mathrm{i}\mathbf{k}\cdot\mathbf{x}}\,d(\mu_N-\rho)\Big|^2
    \le\frac{\|f\|^2_{L^2(\rho)}}{N_p}.
    \end{gathered}
    \label{eq:particle_quadrature}
  \end{equation}
\end{enumerate}
\end{proposition}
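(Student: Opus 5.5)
All three parts reduce to inserting the absolutely convergent Fourier series of the integrand into the sum or integral. The sum is then exchanged with the finite grid sum or with the expectation, which is justified by $\sum_{\mathbf{q}}|\hat f_{\mathbf{q}}|<\infty$ and $\sum_{\mathbf{q}}|\hat S_{\mathbf{q}}|<\infty$.

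For (i), I would write
\begin{equation*}
  \int f\,e^{-\mathrm{i}\mathbf{k}\cdot\mathbf{x}}\,d\mu_n=\sum_{\mathbf{q}}\hat f_{\mathbf{q}}\,n^{-d}\sum_{\mathbf{j}}e^{2\pi\mathrm{i}(\mathbf{q}-\mathbf{k})\cdot\mathbf{j}/n}
\end{equation*}
and use the discrete orthogonality relation. The inner sum over $\mathbf{j}\in\{-n/2,\dots,n/2-1\}^d$ factorizes into one-dimensional geometric sums. Each factor equals $1$ if $q_r-k_r\in n\mathbb{Z}$ and $0$ otherwise. Hence only $\mathbf{q}=\mathbf{k}+n\boldsymbol\ell$ survive, which gives \eqref{eq:grid_alias}. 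Since $n\ge2m$, the term $\boldsymbol\ell=0$ is the only one with $\mathbf{q}\in\mathcal{K}_m$, and this separates the exact coefficient from the aliases.

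For (ii), I would expand $S(\mathbf{x}_{\mathbf{j}}-\mathbf{x}_p)=\sum_{\mathbf{q}}\hat S_{\mathbf{q}}e^{\mathrm{i}\mathbf{q}\cdot\mathbf{x}_{\mathbf{j}}}e^{-\mathrm{i}\mathbf{q}\cdot\mathbf{x}_p}$. Summing over $p$ with weights $w_p$ turns the grid density into $\rho_n(\mathbf{x}_{\mathbf{j}})=\sum_{\mathbf{q}}\hat S_{\mathbf{q}}\hat\mu_{\mathbf{q}}e^{\mathrm{i}\mathbf{q}\cdot\mathbf{x}_{\mathbf{j}}}$, using the definition \eqref{eq:charge_measure}. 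This is a function with absolutely summable coefficients $\hat S_{\mathbf{q}}\hat\mu_{\mathbf{q}}$, because $|\hat\mu_{\mathbf{q}}|\le1$. The grid DFT is then exactly the computation of (i) applied to this function, and \eqref{eq:deposit_alias} follows. The one point needing care is the normalization convention: (i) should be phrased for any function given by such a series, so the same identity applies verbatim.

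For (iii), I would write $\int g\,d\mu_N=\frac1{N_p}\sum_pg(\mathbf{x}_p)$ with $g=f\,e^{-\mathrm{i}\mathbf{k}\cdot\mathbf{x}}$. Here $g$ is bounded and continuous, since its Fourier series converges absolutely. Unbiasedness follows from linearity of expectation. The second moment of the centred average of iid terms is $\mathrm{Var}\,g(X)/N_p\le\mathbb{E}|g(X)|^2/N_p$. Because $|e^{-\mathrm{i}\mathbf{k}\cdot\mathbf{x}}|=1$, we have $\mathbb{E}|g(X)|^2=\|f\|^2_{L^2(\rho)}$, which gives the bound.

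The only real obstacle is bookkeeping, namely the discrete orthogonality over the chosen index range and the justification for exchanging the sums. Nothing deeper is needed.
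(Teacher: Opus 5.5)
Your proposal is correct and follows essentially the same route as the paper. You use discrete orthogonality of the grid exponentials for (i). For (ii) you insert the Fourier series of $S$ and apply the same orthogonality, differing only cosmetically by first summing over $p$ to get the coefficients $\hat S_{\mathbf{q}}\hat\mu_{\mathbf{q}}$ and then invoking (i). For (iii) you bound the variance of the iid mean by $\mathbb{E}|Z_p|^2=\|f\|^2_{L^2(\rho)}$.
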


\begin{proof}
(i) Inserting the Fourier series and using
$n^{-d}\sum_{\mathbf{j}}e^{\mathrm{i}2\pi(\mathbf{q}-\mathbf{k})\cdot\mathbf{j}/n}=1$
if $\mathbf{q}-\mathbf{k}\in n\mathbb{Z}^d$ and $0$ otherwise gives~\eqref{eq:grid_alias}.
(ii) Inserting $S(\mathbf{x}_{\mathbf{j}}-\mathbf{x}_p)=\sum_{\mathbf{q}}\hat S_{\mathbf{q}}e^{\mathrm{i}\mathbf{q}\cdot(\mathbf{x}_{\mathbf{j}}-\mathbf{x}_p)}$
and summing over $\mathbf{j}$ as in (i) gives~\eqref{eq:deposit_alias}.
(iii) The integral against $\mu_N$ is the mean of the iid variables
$Z_p=f(\mathbf{x}_p)e^{-\mathrm{i}\mathbf{k}\cdot\mathbf{x}_p}$, whose expectation
is the integral against $\rho$ and whose variance is at most
$\mathbb{E}|Z_p|^2=\|f\|^2_{L^2(\rho)}$.
\end{proof}

In each case the coefficient of a retained mode $\mathbf{k}$ receives
contributions from all frequencies of the integrand. Writing
$\int f e^{-\mathrm{i}\mathbf{k}\cdot\mathbf{x}}\,d\mu=\sum_{\mathbf{q}}\hat f_{\mathbf{q}}\,\hat\mu_{\mathbf{k}-\mathbf{q}}$
shows how. For the grid, $\hat\mu_{n,\mathbf{k}-\mathbf{q}}$ equals one on the
alias lattice $\mathbf{q}\in\mathbf{k}+n\mathbb{Z}^d$ and zero elsewhere, so the
high frequencies are folded onto $\mathbf{k}$ coherently and with full weight.
The error~\eqref{eq:grid_alias} is deterministic, depends on how slowly
$\hat f_{\mathbf{q}}$ decays, and does not decrease with the number of
particles. In a grid-based FNO it occurs in every layer, because $f$ is then
the output of the activation, whose spectrum is not
band-limited~\cite{bartolucci2023representation}; for ReLU-type activations
the kinks give $|\hat f_{\mathbf{q}}|\sim|\mathbf{q}|^{-2}$ in one dimension.
For particles, $\hat\mu_{N,\mathbf{k}-\mathbf{q}}=\hat\rho_{\mathbf{k}-\mathbf{q}}+\xi_{\mathbf{k}-\mathbf{q}}$,
where the fluctuations $\xi$ have zero mean and variance
$(1-|\hat\rho_{\mathbf{k}-\mathbf{q}}|^2)/N_p$ at every wave vector. High
frequencies of $f$ contribute only through these fluctuations, i.e.\ through
the sampling noise of the particles, exactly as they do for the continuous
operator applied to the measure $\mu_N$. The resulting error is unbiased, it
decreases as $N_p^{-1/2}$, and by~\eqref{eq:particle_quadrature} it is bounded
by the $L^2$ norm of $f$ independently of its smoothness or bandwidth. This
is the familiar insensitivity of Monte Carlo integration to the regularity of
the integrand; in the network it applies to each layer, and its propagation
through the layers is Lemma~\ref{lem:sampling}.

Statement (ii) is the aliasing of charge deposition in
PIC~\cite{langdon1970effects,hockney2021computer,birdsall2004plasma}. The
particle noise $\xi_{\mathbf{q}}$ is white, i.e.\ present at every wave vector
with the same variance, and~\eqref{eq:deposit_alias} folds the noise of all
alias bands into each resolved mode with the weights
$\hat S_{\mathbf{k}+n\boldsymbol\ell}$. For the cloud-in-cell shape these
weights relative to $\hat S_{\mathbf{k}}$ are, in one dimension,
$(k/(k+n\ell))^2$, which is small for $|k|\ll n$ but of order one near the
grid Nyquist mode. A grid-based FNO applied to the deposited density thus
receives aliased noise in its input and folds the harmonics generated by each
activation back onto the retained modes. NEOPIC with NUDFTs does neither.

\subsection{Translation equivariance}
\seclab{app_equivariance}

In an open domain, a translation of the whole cloud leaves the
normalized positions~\eqref{eq:minmax}, and hence the field at the particles,
unchanged, for any lifting. Within the normalized box, i.e.\ on the torus, a
lifting that does not depend on $\mathbf{x}$ is required: the
operator~\eqref{eq:err_lift}--\eqref{eq:output} is then translation equivariant,
$\mathbf{E}_\theta[\tau_{\mathbf{a}}\mu](\mathbf{x})=\mathbf{E}_\theta[\mu](\mathbf{x}-\mathbf{a})$
for every shift $\mathbf{a}\in\mathbb{T}^d$, where $\tau_{\mathbf{a}}\mu$ is the
shifted measure: by induction,
$\hat{\mathbf{c}}^{\,\ell}_{\tau_{\mathbf{a}}\mu}(\mathbf{k})=e^{-\mathrm{i}\mathbf{k}\cdot\mathbf{a}}\hat{\mathbf{c}}^{\,\ell}_\mu(\mathbf{k})$
and $\mathbf{u}^\ell_{\tau_{\mathbf{a}}\mu}=\mathbf{u}^\ell_\mu(\cdot-\mathbf{a})$.
By Lemma~\ref{lem:exact_eval} this holds exactly at every particle number,
whereas grid-based layers are at best equivariant under shifts by multiples
of the grid spacing; the link between aliasing and the loss of translation
equivariance is discussed in~\cite[Sec.~5.2]{bartolucci2023representation}.
The linear lifting of the positions used in this work breaks this second symmetry.
It does so in the same way for every $N_p$, as a property of the learned
operator rather than of its discretization.

\section{Additional remarks on the error analysis}
\seclab{app_error}

\begin{remark}[Comparison with PIC]
\label{rem:pic_comparison}
For PIC, the sampling error of the electric field depends on the grid:
in~\cite{tranquilli2022deterministic} it scales as $N_p^{-1/2}$ in 1D,
$(|\log\Delta x|/N_p)^{1/2}$ in 2D and $(N_p\Delta x)^{-1/2}$ in 3D, because
the field retains all modes up to the grid Nyquist number. For NEOPIC, as for
PIF, the field depends on the particles only through a fixed set of modes, and
Lemma~\ref{lem:sampling} gives a rate $N_p^{-1/2}$ in every dimension with a
constant independent of any resolution. The constant $C_S$ grows with the
number of modes through $r^\ell_0$; for the exact Coulomb multipliers the
corresponding growth reproduces the dimension-dependent factors above. In
addition, the PIC error contains a spatial discretization term
$\mathcal{O}(\Delta x^\alpha)$ that requires $\Delta x$ to resolve the Debye
length; in NEOPIC the corresponding contribution is part of
$\varepsilon_{\mathrm{teacher}}$ and does not constrain the particle
resolution at inference.
\end{remark}

\begin{remark}[Other integrators and quiet starts]
\label{rem:integrators}
For an explicit one-step integrator of order $m$ that is stable with respect
to perturbations of the force, the same argument holds with $\Delta t^2$
replaced by $\Delta t^m$; multi-stage methods require one field evaluation
per stage. With a quiet start the initial particles are not iid and
Lemma~\ref{lem:sampling} does not apply; the sampling term is then replaced
by the corresponding quasi-Monte Carlo discrepancy. Implicit one-step
schemes, such as the implicit midpoint rule of \secref{implicit1d},
are covered by the same argument if the nonlinear system of each step is
solved to a tolerance below the local error: for $\Delta t\,|q/m|\,L_\theta$
small enough the implicit step is a contraction, its local error is of the
order of the scheme, and the Gronwall argument is unchanged. The bound then
carries the same step size restriction and does not reflect the stability of
implicit schemes at large time steps, which a worst-case estimate cannot
resolve. Each implicit step requires several field evaluations, one per
nonlinear iteration and per Jacobian--vector product of the linear solver.

The experiments use the leapfrog scheme in staggered form,
$\mathbf{x}^{n+1}=\mathbf{x}^n+\Delta t\,\mathbf{v}^{n+1/2}$,
$\mathbf{v}^{n+3/2}=\mathbf{v}^{n+1/2}+\Delta t\frac{q}{m}\mathbf{E}^{n+1}_\theta(\mathbf{x}^{n+1})$,
started with
$\mathbf{v}^{1/2}=\mathbf{v}^0+\frac{\Delta t}{2}\frac{q}{m}\mathbf{E}^0_\theta(\mathbf{x}^0)$.
Combining the last half kick of a step of~\eqref{eq:kdk} with the first half
kick of the next shows that this scheme produces the same positions and
half-step velocities as~\eqref{eq:kdk}, so the analysis applies unchanged. For
the cyclotron mini-app of Section~\secref{multi2d}, the Boris scheme is used in
the same staggered form, but the initial velocities are taken as
$\mathbf{v}^{-1/2}$. The initial velocity then has an error
$\mathcal{O}(\Delta t)$, and the time-stepping error with respect to the exact
solution becomes $\mathcal{O}(\Delta t)$. This error cancels in comparisons
with a reference run that uses the same integrator and initialization
(Proposition~\ref{prop:inloop_teacher}), as in our experiments.
\end{remark}

\begin{remark}[Periodicity of the lifting]
\label{rem:periodic_lifting}
Assumption (A3) requires $\mathcal{P}$ to be Lipschitz with respect to the
periodic distance. A linear lifting of the raw coordinates
$\mathbf{x}\in[-\pi,\pi)^d$ is discontinuous across the periodic boundary, so
the field predicted at a particle can jump when the particle crosses the
boundary, and Lemma~\ref{lem:lipschitz} holds only away from it. Using a
periodic embedding of the positions, e.g.\
$(\sin\mathbf{x},\cos\mathbf{x})$, as input to $\mathcal{P}$ satisfies (A3)
exactly. This choice is appropriate for periodic problems only. With a
periodic lifting, every operation of the network preserves periodicity, so
the predicted field is periodic on the normalized box for any weights; the
pointwise path $\mathcal{W}_\ell\mathbf{u}+\mathbf{b}_\ell$ can no longer
represent non-periodic behaviour, which in the FNO~\cite{li2020fourier} enters
through non-periodic input features. For open boundaries the field is not
periodic on the bounding box of the cloud: for a cloud with an outward field,
for example, $E_x$ has opposite signs on opposite faces. There, either
non-periodic information has to enter through the lifting, as with the linear
lifting used in this work, or the normalized box has to be padded so that the
periodic seam lies outside the particle cloud, as free-space spectral Poisson
solvers enlarge the computational domain~\cite{hockney2021computer,vico2016fast}.
These choices do not affect Theorems~\ref{thm:reno} and~\ref{thm:eps_reno},
which hold for any pointwise lifting.
\end{remark}

\begin{remark}[Other boundary conditions]
\label{rem:field_bc}
In this work we consider periodic and free-space boundary conditions for the fields and the particles.
With Dirichlet or Neumann conditions on the walls of a bounded domain $\Omega$,
the field is still linear in the charge,
$\mathcal{E}_\Omega[\mu](\mathbf{x})=\int_\Omega\mathbf{K}_\Omega(\mathbf{x},\mathbf{y})\,d\mu(\mathbf{y})+\mathbf{E}_{\mathrm{bc}}(\mathbf{x},t)$,
where $\mathbf{E}_{\mathrm{bc}}$ is the vacuum field of the imposed wall
potentials and does not depend on the particles, and
Lemma~\ref{lem:exact_eval} and Theorem~\ref{thm:reno} are unaffected. By
superposition, $\mathbf{E}_{\mathrm{bc}}$ can be computed once by a Laplace
solve and added to the output, like the external fields of the Penning trap in
Section~\secref{multi3d} (see also~\cite{lyu2026latent}), so that the network
learns only the self-consistent part with homogeneous boundary conditions;
since walls exert a net force on the plasma, $c_{\mathrm{per}}=0$
in~\eqref{eq:output}. The kernel $\mathbf{K}_\Omega$ is no longer
translation invariant. NEOPIC can represent this through the position-dependent
lifting, as for the cyclotron mini-app with open boundaries, or, on rectangular
domains, exactly by replacing the complex exponentials of the nonuniform
transforms by sine or cosine series, i.e.\ by odd or even reflections of the
particles across the walls, as in spectral Poisson
solvers~\cite{hockney2021computer}. Particle boundary conditions such as absorption or emission change $N_p$ in time, which the operator on measures accommodates; only the scaling $s$ changes with the total
charge. Charge collected on floating conductors could enter as an additional measure supported on
the boundary.
\end{remark}

\begin{remark}[$L^1$ training loss]
\label{rem:l1_loss}
The models of Section~\secref{results} are trained with the mean absolute error
$J_1(\theta;\mu)=\frac1d\int_{\mathbb{T}^d}|r_\theta[\mu]|_1\,d\mu$, where
$|\cdot|_1$ is the sum over components, instead of~\eqref{eq:train_loss}. The
loss is computed on standardized labels (Section~\secref{results}), which
weights each component by a fixed factor; we omit these weights, which change
only the constants below. Since $J_1$ has the units of the field and $J$ those of its square, orders in
$N$ below refer to the units of the field; in these units the noise term $Q/N$
of~\eqref{eq:noise_expansion} is $(Q/N)^{1/2}$. By the
Cauchy--Schwarz inequality $J_1\le d^{-1/2}J^{1/2}$, and
$J\le d\,\|r_\theta[\mu]\|_{L^\infty}J_1$, so the two losses are small together.
The expansion~\eqref{eq:noise_expansion}, however, requires the loss to be
smooth in the measure (Assumption (A6) for $J$), which fails for $J_1$
because the absolute value is not differentiable where a component of the
residual vanishes. Instead let $r_i$ be a component of $r_\theta[\rho](\mathbf{x})$ and
$s_i^2=\mathbb{E}\,|(Dr_\theta[\rho]\boldsymbol\xi)_i(\mathbf{x})|^2$, so that
$r_{\theta,i}[\mu_N](\mathbf{x})$ fluctuates about $r_i$ with standard deviation
$s_i/\sqrt N$ to leading order. Where $|r_i|\gg s_i/\sqrt N$, the absolute value
is linear and the noise enters only at order $1/N$, below the noise level $N^{-1/2}$,
through terms of the
form of $P$: the bias
$\frac1{2N}\mathbb{E}\,D^2\mathcal{G}_\theta[\rho](\boldsymbol\xi,\boldsymbol\xi)$, a
field of order $1/N$, and the correlation between the residual and the evaluation points, which
are the particles themselves. Where $|r_i|\ll s_i/\sqrt N$, the fluctuation is
approximately Gaussian and contributes $\sqrt{2/\pi}\,s_i/\sqrt N$. For a model
whose residual on noise-free inputs lies below the noise level, the noise term
of $\mathbb{E}\,J_1$ is therefore of order $N^{-1/2}$ and bounded by
$\sqrt{2/(\pi d)}\,(Q/N)^{1/2}$. As for the squared loss, it penalizes the
mismatch $Dr_\theta$ rather than $D\mathcal{G}_\theta$ and vanishes for a
realizable teacher. In this regime, the proofs of Proposition~\ref{prop:tradeoff}(i), (ii) and
of the worst-case statement in (iii) use only that the noise term is $R$ times
an increasing function of $\beta$, and hold with $\beta^{1/2}$ in place of
$\beta$; the minimizer of the averaged loss in (iii) becomes the particle number
$N$ with $N^{-1/2}=\sum_i\pi_iN_i^{-1/2}$. Where both regimes occur, the noise
term mixes contributions of order $N^{-1/2}$ and $N^{-1}$, and these statements
hold only approximately. Finally, since~\eqref{eq:lip} couples
the particles in the mean of $|\cdot|$, the proof of
Proposition~\ref{prop:inloop_teacher} carries over to that norm and gives
\eqref{eq:inloop_teacher} with
$\big(\mathbb{E}\,J\big)^{1/2}$ replaced by $d\,\mathbb{E}\,J_1$.
\end{remark}

\begin{remark}[Scope of the analysis]
\label{rem:noise_caveats}

The analysis is of leading order in $1/N$, concerns the population loss,
assumes that training reaches a minimizer, and uses the squared error (for
the $L^1$ loss see Remark~\ref{rem:l1_loss}). (a) Unlike in data
randomization, where a new noise sample is drawn in every
epoch~\cite{nguyen2025dgnet}, each snapshot carries a single, frozen
realization, and consecutive snapshots of a run have strongly correlated
noise. A random split of the snapshots of one run into training and
validation sets therefore does not give independent validation data, the
empirical loss~\eqref{eq:train_loss} approximates $\bar{\mathcal{L}}_N$ only
through the number of independent realizations, and a network can fit the
frozen noise; teacher runs with several seeds reduce this effect. (b) In a
teacher run the particles become correlated at $t>0$, with pair correlations
of order $1/N$. This changes the covariance of the fluctuations, in plasmas to
a shielded spectrum, and adds a term of order $1/N$ that is linear in the
residual, but the structure of~\eqref{eq:noise_expansion} is unchanged.
(c) For grid-based neural operators trained on deterministic data there is
no fluctuation $\boldsymbol\xi$, and the whole correction of order $1/N$
vanishes; the regularization is specific to learning on particle data.
(d) Training with several particle numbers requires data at each of them,
from separate teacher runs or by subsampling a run with more particles.
Subsampling is not straightforward: the labels must be recomputed on the
subsample, since keeping those of the full set turns the task into
denoising, and at $t>0$ a subsample inherits the correlations of the
$N$-particle system, so that its fluctuations differ from those of a run with
fewer particles. We leave mixed-resolution training to future work.
\end{remark}

\section{Diagnostics of the cyclotron mini-app}
\seclab{app_cyclotron}

\paragraph{Discrete canonical angular momentum} In two dimensions, with
$\mathbf{B}=B_0\mathbf{e}_z$ and $\omega_c=qB_0/m$, the staggered Boris scheme
is exactly
\begin{equation}
  \mathbf{v}^{n+1/2}-\mathbf{v}^{n-1/2}
  =\Delta t\,\frac{q}{m}\mathbf{E}^n+\Delta t\,\frac{q}{m}\,\bar{\mathbf{v}}^n\times\mathbf{B},
  \qquad
  \mathbf{x}^{n+1}=\mathbf{x}^n+\Delta t\,\mathbf{v}^{n+1/2},
  \label{eq:boris_staggered}
\end{equation}
with $\bar{\mathbf{v}}^n$ as in~\eqref{eq:Lcan}; we omit the particle index.
In two dimensions
$(\mathbf{x}\times(\bar{\mathbf{v}}\times\mathbf{B}))_z=-B_0\,\mathbf{x}\cdot\bar{\mathbf{v}}$
and $\mathbf{x}\cdot(\bar{\mathbf{v}}\times\mathbf{B})=B_0(\mathbf{x}\times\bar{\mathbf{v}})_z$.
Taking the cross product of the velocity update with $m\,\mathbf{x}^n$ and
using $\bar{\mathbf{v}}^n=(\mathbf{x}^{n+1}-\mathbf{x}^{n-1})/(2\Delta t)$ and
$\mathbf{x}^n\times\mathbf{v}^{n-1/2}=\mathbf{x}^{n-1}\times\mathbf{v}^{n-1/2}$, which
holds because $\mathbf{x}^n-\mathbf{x}^{n-1}$ is parallel to $\mathbf{v}^{n-1/2}$,
gives after summation over the particles
\begin{equation}
  \begin{gathered}
  J^{n+1/2}-J^{n-1/2}=\Delta t\sum_pq\,(\mathbf{x}^n_p\times\mathbf{E}^n_p)_z,\\
  J^{n+1/2}=\sum_p\Big[m(\mathbf{x}^n_p\times\mathbf{v}^{n+1/2}_p)_z+\frac{qB_0}{2}\,\mathbf{x}^n_p\cdot\mathbf{x}^{n+1}_p\Big].
  \end{gathered}
  \label{eq:boris_invariant}
\end{equation}
The scheme therefore conserves $J$ exactly whenever the total torque
$\sum_pq\,(\mathbf{x}_p\times\mathbf{E}_p)_z$ vanishes. Writing $J^{n+1/2}$ and
$J^{n-1/2}$ in terms of $\mathbf{x}^n$, with
$\mathbf{x}^{n\pm1}=\mathbf{x}^n\pm\Delta t\,\mathbf{v}^{n\pm1/2}$, and inserting
the velocity update for $\mathbf{v}^{n+1/2}-\mathbf{v}^{n-1/2}$ gives
\begin{equation}
  \tfrac12\big(J^{n+1/2}+J^{n-1/2}\big)=L^n_{\mathrm{can}}+\frac{\omega_c\Delta t^2}{4}\,W^n,
  \qquad
  W^n=\sum_pq\,\mathbf{x}^n_p\cdot\mathbf{E}^n_p,
  \label{eq:Lcan_J}
\end{equation}
where the factor $1+(\omega_c\Delta t/2)^2$ in~\eqref{eq:Lcan} comes from the
magnetic part of the kick between the two half steps. For pairwise Coulomb
forces in two dimensions the virial $W$ is constant,
$W=\sum_{p<q}q_pq_q/(2\pi)$, because the logarithmic potential is scale
invariant, and $L_{\mathrm{can}}$ is then conserved exactly. For other fields it
changes through torques, i.e.\ non-central field errors, and through changes
of $W$, e.g.\ by errors in the strength of a central field; the latter carry
the factor $\Delta t^2$ and are much smaller in this setting.

\paragraph{Energy diagnostic} A Boris step rotates the velocity by
$\theta=2\arctan(\omega_c\Delta t/2)$ instead of $\omega_c\Delta t$, so that the
gyration is under-resolved at $\omega_c\Delta t=3$ (period $2\pi/\theta\approx3.2$
steps instead of $2\pi/(\omega_c\Delta t)\approx2.1$). The kinetic energy is
computed from $\bar{\mathbf{v}}$, the average of two velocities rotated by
$\theta$, which measures the gyration energy only with the factor
$\cos^2(\theta/2)=1/(1+(\omega_c\Delta t/2)^2)\approx0.31$; the drift part is
measured correctly. Energy exchanged between the field and the gyration
therefore appears only partly in the total energy, which explains the energy
oscillation of the Barnes--Hut reference in
Fig.~\figref{multiapp2d_conservation}(e). This is a property of the integrator
and the diagnostic, not of the field method, and it disappears when the
gyration is resolved.

\paragraph{Winding and arm shape} The particles are binned on a fixed grid,
and the positions are shifted to the density-weighted centroid of the cloud.
For thin rings of width $\Delta r=0.005$ about the centre we compute the
azimuthal Fourier coefficient
\begin{equation*}
  A_2(r)=\sum_{\text{bins in the ring}}n\,e^{-2\mathrm{i}\Theta},
\end{equation*}
where $n$ is the number of particles in a bin and $\Theta$ its polar angle.
A density with two arms opposite each other is invariant under a rotation by
$\pi$, so that its odd azimuthal modes vanish and $m=2$ is the lowest mode that
carries the orientation of the pattern; the centring removes the $m=1$ mode of a
shifted cloud. The angle $\varphi(r)=-\arg A_2(r)/2$ is the orientation of the
arms at radius $r$. It is defined modulo $\pi$, and $2\varphi$ is unwrapped
along $r$ so that $\varphi$ varies continuously between neighbouring rings.
The arm shape is $|\varphi(r)-\varphi(r_{\mathrm{in}})|$, and the winding
between $r_{\mathrm{in}}=0.09$ and $r_{\mathrm{out}}=0.29$ is
$|\varphi(r_{\mathrm{out}})-\varphi(r_{\mathrm{in}})|/(2\pi)$ turns. It vanishes
for the initial elliptical cloud and grows as the sheared
$\mathbf{E}\times\mathbf{B}$ rotation, with angular velocity $E_r/(B_0r)$, winds
the cloud into spiral arms. The ratio of the windings of NEOPIC and
Barnes--Hut is $1.14$ for $0.09\le r\le0.29$, $1.16$ for $0.10\le r\le0.25$,
$1.08$ for $0.12\le r\le0.22$ and $0.995$ for $0.14\le r\le0.29$ at $t=11.98$,
which confirms that the extra winding is made in the shear layer just outside
the core.

\section{Cyclotron mini-app: convergence in the time step}
\seclab{dtconv}

Theorem~\ref{thm:trajectory} splits the error of NEOPIC into a time-stepping
term of order $\Delta t^2$ and terms set by the model and the particle number.
We test this split on the cyclotron mini-app of Section~\secref{multi2d}, which
has the fastest time scale of the 2D mini-apps, $\omega_c=|q/m|B_0=300$. The
2D multi-app model, trained with $\Delta t=0.01$ ($\omega_c\Delta t=3$), is
run with $N_p=10^6$ particles up to $T=3$, within the training horizon, for
$\Delta t=0.02\cdot2^{-k}$, $k=0,\dots,6$, i.e.\
$0.09\le\omega_c\Delta t\le6$. As reference we use PIF with the free-space
spectral solver~\cite{vico2016fast} at the same time steps; all runs start from
the same particles, and particle data are stored every $0.1$ time units.
Unlike in Section~\secref{multi2d}, the Boris scheme is started with a half
step, $\mathbf{v}^{1/2}$ from $\mathbf{v}^0$ by a Boris step of length
$\Delta t/2$, so that it is of second order (Remark~\ref{rem:integrators}).

\paragraph{Gyration phase and guiding centres} The positions of individual
particles are not a suitable measure here. A Boris step rotates the velocity
by $2\arctan(\omega_c\Delta t/2)$ instead of $\omega_c\Delta t$, so after
a time $t$ the gyration phase of a particle is off by
$\delta\varphi\approx\omega_c^3\Delta t^2\,t/12$, which is of second order
but has a large constant. The resulting position error, about $r_L\,\delta\varphi$ with
the Larmor radius $r_L=|\mathbf{v}_\perp|/\omega_c$, cannot exceed $2r_L$ and
saturates once $\delta\varphi=\mathcal{O}(1)$. At $t=3$ this is the case for
$\Delta t\gtrsim4\times10^{-4}$, so the position, velocity and field errors at
the particles level off at the gyroradius scale
($\langle r_L\rangle=\sqrt{\pi/2}/\omega_c\approx4\times10^{-3}$) for all time
steps considered. At early times they follow the estimate
$\langle r_L\rangle\,2|\sin(\delta\varphi/2)|$ to within a few per cent (for
example $1.42\times10^{-3}$ measured against $1.37\times10^{-3}$ at $t=0.1$
and $\Delta t=1.25\times10^{-3}$). Theorem~\ref{thm:trajectory} remains
valid, but with a magnetic field its constant grows with $\omega_c$, and the
asymptotic regime $\Delta t\ll(12/\omega_c^3T)^{1/2}$ lies below the time
steps used. We therefore compare the guiding centres
\begin{equation}
  \mathbf{x}_{\mathrm{gc}}=\mathbf{x}+\frac{m}{qB_0}\,\bar{\mathbf{v}}\times\mathbf{e}_z,
  \label{eq:guiding_centre}
\end{equation}
with the time-centred velocity $\bar{\mathbf{v}}$ of~\eqref{eq:Lcan}. They do
not depend on the gyration phase and move on the slow time scale of the
$\mathbf{E}\times\mathbf{B}$ drift, which determines the density and the
field. Since
$|\mathbf{x}_{\mathrm{gc}}-\mathbf{y}_{\mathrm{gc}}|\le|\mathbf{x}-\mathbf{y}|+|\mathbf{v}-\mathbf{w}|/\omega_c$,
they are covered by Theorem~\ref{thm:trajectory}; their second-order
convergence seen below, while the positions and velocities level off at the
gyroradius scale, is a property of the dynamics, which the theorem does not
resolve.

\paragraph{Results} Figure~\figref{dt_convergence}(a) shows the mean distance
of the guiding centres at $t=3$. NEOPIC and PIF converge at the same rate,
with local orders between $1.7$ and $2.0$ for $\Delta t\le0.005$ and a fitted
order of $1.75$ for $\Delta t\le0.0025$, and their curves lie on top of each
other: the time-stepping error is that of the integrator and is not changed by
the learned field, as assumed in the splitting of
Theorem~\ref{thm:trajectory}. Most of this error is made in the first steps;
for $\Delta t\le2.5\times10^{-3}$ between half and all of the error at $t=3$
is already present at $t=0.1$. The difference between NEOPIC and PIF at the
same $\Delta t$, the model error, is independent of the time step for
$\Delta t\le0.01$ and amounts to $3.5\times10^{-3}$ at $t=3$, about one
gyroradius. The error with respect to the finest PIF run therefore levels off
at the model error for $\Delta t\lesssim0.005$ ($\omega_c\Delta t\lesssim1.5$),
whereas at the training time step the time-stepping error ($9.9\times10^{-3}$)
is about three times the model error. The model error does not grow when the
time step is reduced by a factor of $32$ below that of the training data: the
model sees only particle configurations, and it can be used with other time
steps than those of its training data.

The relative changes of the total energy and of $L_{\mathrm{can}}$
(Fig.~\figref{dt_convergence}(b)) decrease with $\Delta t$ for PIF, to
below $10^{-3}$ and $10^{-4}$, respectively. For NEOPIC they are
$3.2$--$3.4\times10^{-2}$ and $4.3$--$4.5\times10^{-2}$ for
$\Delta t\le0.0025$, independent of the time step, i.e.\ set by the model, consistent with the torque from the
non-central field error discussed in Section~\secref{multi2d}.

\begin{figure}[!htbp]
  \centering
  \includegraphics[width=\linewidth]{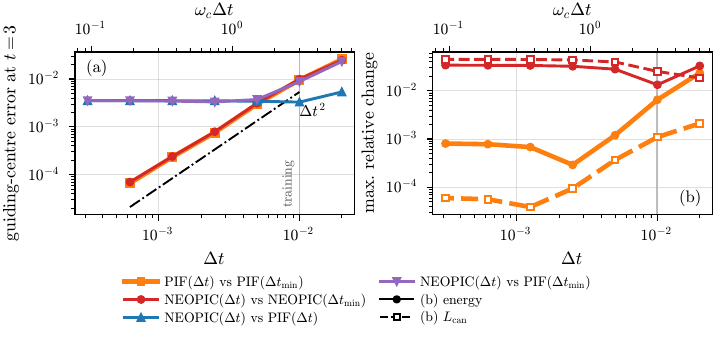}
  \caption{Cyclotron mini-app, 2D multi-app model, $N_p=10^6$, $T=3$.
  (a) Mean distance of the guiding centres~\eqref{eq:guiding_centre} at
  $t=3$ between runs with time step $\Delta t$ and the finest run
  ($\Delta t_{\min}=3.125\times10^{-4}$), for PIF and NEOPIC, and between
  NEOPIC and PIF at the same and at the finest time step; the dash-dotted line
  has slope $2$. (b) Maximum relative change of the total energy (solid) and
  of the canonical angular momentum $L_{\mathrm{can}}$~\eqref{eq:Lcan}
  (dashed) for PIF and NEOPIC. The grey line marks the time step of the
  training data.}
  \figlab{dt_convergence}
\end{figure}

\section{Implicit time integration}
\seclab{implicit1d}

NEOPIC replaces only the field solve, so any particle integrator can be used
with it, including implicit ones that require the Jacobian of the field with
respect to the particle positions. Because $\mathcal{G}_\theta$ is a
differentiable function of the positions, this Jacobian is available exactly by automatic differentiation. This is in contrast to the implicit particle-in-cell methods~\cite{chen2011energy,markidis2011energy}, where the nonlinear system is typically solved by Jacobian-free Newton-Krylov methods with finite-difference approximations of the Jacobian-vector products, and the field depends on the positions only piecewise smoothly through the shape functions. We consider in one dimension the
implicit midpoint (Crank--Nicolson) scheme,
\begin{equation}
  \begin{aligned}
  \mathbf{x}^{n+1}&=\mathbf{x}^n+\Delta t\,\frac{\mathbf{v}^n+\mathbf{v}^{n+1}}{2},\\
  \mathbf{v}^{n+1}&=\mathbf{v}^n+\Delta t\,\frac{q}{m}\,
  \mathbf{E}_\theta\big[\mu^{n+\frac12}_N\big]\big(\mathbf{x}^{n+\frac12}\big),
  \end{aligned}
  \label{eq:implicit_midpoint}
\end{equation}
with $\mathbf{x}^{n+\frac12}=(\mathbf{x}^n+\mathbf{x}^{n+1})/2$ and $\mu^{n+\frac12}_N$
the empirical measure of these midpoint positions. The scheme
is second order, symplectic and free of numerical damping, and is
unconditionally stable for the linear plasma oscillation.

\paragraph{Model and setup} A one-dimensional model with the architecture of
Section~\secref{results} is trained on four mini-apps generated by PIF with
$32$ modes and the cloud-in-cell shape function: weak and strong Landau
damping and the two-stream and bump-on-tail instabilities, each with
$N_{\mathrm{tr}}=10^5$ particles and $600$ time steps of $\Delta t=0.05$
($T_{\mathrm{train}}=30$). The model is trained with batch size $20$ and
weight decay $10^{-5}$ for $250$ epochs on one A100 GPU ($1.2$\,h training time);
all other training settings are as in Section~\secref{results}. 
We then run weak Landau damping ($\alpha=0.05$, $k=0.5$) with $N_p=10^5$ particles up to
$T=60$, twice the training horizon, for $\Delta t\in\{0.05,0.25,0.5,1,1.5,2,2.5\}$,
with NEOPIC advanced by the leapfrog scheme and by~\eqref{eq:implicit_midpoint},
and compare with leapfrog PIF at the same $\Delta t$. All runs start from the
same particles.

\paragraph{Nonlinear solve} The system~\eqref{eq:implicit_midpoint} for
$(\mathbf{x}^{n+1},\mathbf{v}^{n+1})$ is solved by a matrix-free
Newton--Krylov method. Jacobian--vector products are computed by forward-mode
automatic differentiation through the network, so that the Jacobian is exact
for the discrete residual. The linear systems are solved by restarted GMRES
(restart length $10$) with the adaptive relative tolerance
\begin{equation*}
  \eta_k=\max\big(10^{-6},\min(10^{-2},\|F_k\|/\|F_0\|)\big)
\end{equation*}
of inexact Newton methods~\cite{eisenstat1996choosing}, where $F_k$ is the
residual at Newton iteration $k$. As preconditioner we use the cold-plasma response of a
nearly uniform plasma, $\partial a_p/\partial x_q\approx-\omega_p^2(\delta_{pq}-1/N_p)$,
for which the Jacobian decouples into $2\times2$ blocks that are inverted exactly;
$\omega_p^2$ is estimated in every step from the response of the network to a
smooth displacement and comes out between $0.994$ and $0.995$. Newton stops when
the residual has decreased by $10^{-4}$ or when it stagnates at the round-off
level of the single-precision network. Two details matter on a periodic domain.
The Newton iterate must not be mapped back into the box during the iteration,
since this makes the residual discontinuous, and the displacement
$\mathbf{x}^{n+1}-\mathbf{x}^n$ must not be replaced by its minimum image, which
is wrong for particles with $|v|\Delta t>L/2$. Positions are therefore iterated
unwrapped and wrapped only after convergence.

\paragraph{Results} Figure~\figref{implicit1d} shows the field energy and the
maximum relative change of the total energy. At $\Delta t=0.05$ the three runs
coincide and follow the linear damping rate. With the leapfrog scheme, NEOPIC
reproduces the behaviour of PIF at every $\Delta t$: the energy errors agree to
within $6\%$, and both lose stability for $\Delta t\geq2$, beyond the
limit $\omega\Delta t<2$ of the leapfrog scheme for the Langmuir mode
($\omega\approx1.42$), where the field energy grows to $W_x\approx7$, more than three orders of
magnitude above the level of the stable runs, and the total energy changes by
a factor of six to seven. With the implicit
midpoint scheme NEOPIC remains stable at all $\Delta t$ over the whole run,
including the extrapolation to twice the training horizon; the maximum
relative energy change is at most $1.2\times10^{-3}$ up to $\Delta t=2$ and
$1.1\times10^{-2}$ at $\Delta t=2.5$, smaller than that of the leapfrog runs at
every $\Delta t$. Newton converges in $2$ iterations per step at $\Delta t=0.05$
and in $3$ to $4$ for $\Delta t\geq1$. For $\omega\Delta t\gtrsim2$ no scheme
resolves the plasma oscillation or the resonant particles, which then move
more than half a wavelength per step, so that the implicit runs are stable and
nearly energy-conserving but do not reproduce the damping rate
(Fig.~\figref{implicit1d}(c)).

The implicit step is not competitive in cost here: at $\Delta t=1$ it takes
$0.64$\,s, compared with $2.8$\,ms for a leapfrog step of NEOPIC and $3.2$\,ms
for PIF, and requires on average $4.2$ residual evaluations and $43$
Jacobian--vector products. The experiment is meant to show that NEOPIC can be
combined with implicit integrators through exact Jacobians; preconditioners
for two and three dimensions, where only the longitudinal part of a
displacement field responds, and an efficient implementation are left to
future work. We plan to consider the settings in which implicit schemes pay off
most: magnetized plasmas with $\omega_c\Delta t\gg1$, where they can
reproduce the drift dynamics at time steps far larger than explicit schemes
allow~\cite{ricketson2023pseudospectral}, and implicit Darwin
models~\cite{chen2014energy,chen2015multi}, in which NEOPIC can serve either
as the field solver or as a preconditioner of the nonlinear solve.

\begin{figure}[!htbp]
  \centering
  \includegraphics[width=\linewidth]{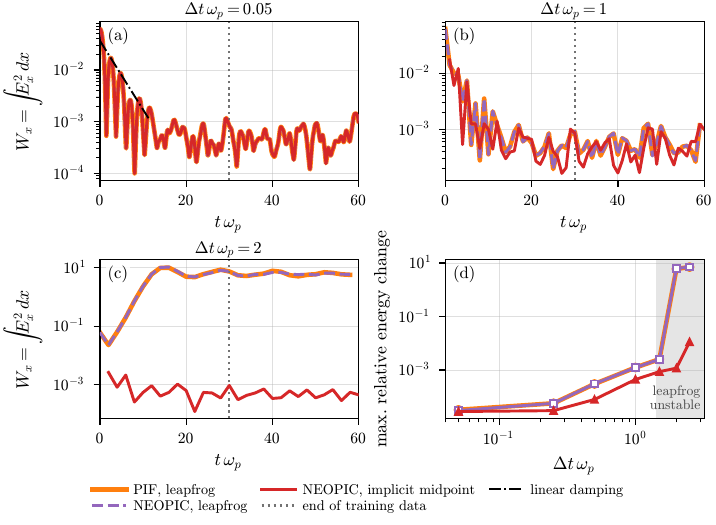}
  \caption{1D-1V weak Landau damping with leapfrog PIF, leapfrog NEOPIC and
  implicit-midpoint NEOPIC ($N_p=10^5$, $T=60$; the training data end at
  $t=30$, dotted). (a)--(c)~Field energy $W_x$ for $\Delta t=0.05$, $1$ and
  $2$; in (a) the dash-dotted line is the linear damping rate. (d)~Maximum
  relative change of the total energy over the run against $\Delta t$; the
  leapfrog curves of PIF and NEOPIC lie on top of each other. The shaded region
  marks $\omega\Delta t>2$, where the leapfrog scheme is unstable for the
  Langmuir mode.}
  \figlab{implicit1d}
\end{figure}

\section{Cost and architecture}
\seclab{app_cost}

The cost of NEOPIC is set by its
architecture. To measure this dependence independently of accuracy, we time
sub-networks of the trained 3D five-app model ($L=4$, $d_v=32$, $m=16$) with
$L\in\{2,3,4\}$, $d_v\in\{8,16,32\}$ and $m\in\{8,16\}$. They are obtained by
keeping the first $L$ layers, the first $d_v$ channels and the lowest modes of
the trained weights; the values of the weights do not affect the cost. Every
sub-network is run with the NUFFT for $20$ steps of 3D weak Landau damping at
$N_p=10^6$ and $4\times10^6$, which gives $\alpha$ and $\beta$ for each
architecture. For the full model this reproduces the timings of
Fig.~\figref{complexity} ($353$ and $1245$\,ms against $348$ and $1244$\,ms).
All $18$ architectures follow
\begin{equation*}
  \alpha\approx2.28\,L\,d_v+4.4\ \text{ns}
\end{equation*}
to within $3\%$ (Fig.~\figref{cost_models}(a)): the cost per particle is
proportional to the number $2Ld_v$ of transforms per particle, as
Proposition~\ref{prop:complexity} predicts when the spreading and
interpolation, $2d_vw^d$ per layer with $w^3=343$, dominate the pointwise
term $d_v^2$. No contribution of $d_v^2$ is visible for $d_v\le32$. With
$2w^3=686$ multiply--add operations per channel and layer, $2.28$\,ns
corresponds to about $3$\,ps per operation, which reflects the memory-bound
spreading and interpolation. The number of modes hardly matters: halving $m$
divides the number of spectral weights by eight but changes the time by at
most $4\%$, since the fine-grid FFTs are cheap compared with spreading and
interpolation. The constant $\beta$ ($18$--$55$\,ms) grows with $L$ and $d_v$
but not with $m$, consistent with the plan creation discussed above rather
than with the FFT term of $\beta_{\mathrm{NUFFT}}$. The time therefore follows
$Ld_v$ and not the number of parameters (Fig.~\figref{cost_models}(b)): the
parameters vary by a factor of $256$, the time by a factor of $6.8$. The
smallest sub-network ($L=2$, $d_v=8$) takes $183$\,ms per step at
$N_p=4\times10^6$, comparable with Barnes--Hut ($172$\,ms) and with PIF at
$\varepsilon=10^{-12}$ ($158$\,ms, interpolated between the measurements at
$1.05\times10^6$ and $8.4\times10^6$ particles), and about three times the
cost of the Python PIC and of PIF at $\varepsilon=10^{-6}$. Since the
sub-networks are not trained, this compares cost only; a trained model of
this size may be less accurate than the full model.

\begin{figure}[!htbp]
  \centering
  \includegraphics[width=\linewidth]{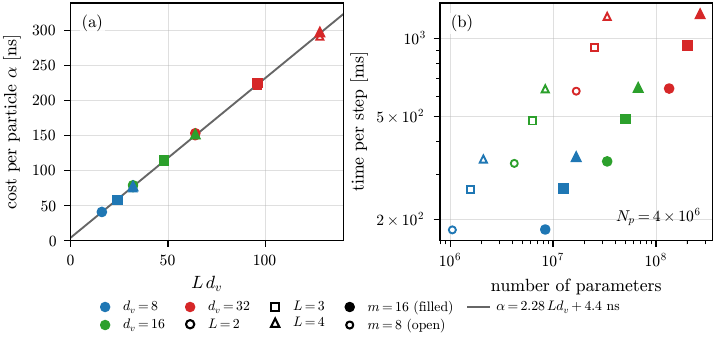}
  \caption{Cost of NEOPIC (NUFFT, 3D) for sub-networks of the trained model
  with $L$ layers, $d_v$ channels and $2m$ modes per direction. (a) Cost per
  particle $\alpha$, from timings at $N_p=10^6$ and $4\times10^6$, against
  $Ld_v$; the line is the least-squares fit. (b) Time per step at
  $N_p=4\times10^6$ against the number of parameters. Colours: $d_v$; markers:
  $L$; filled: $m=16$, open: $m=8$.}
  \figlab{cost_models}
\end{figure}

\section*{Acknowledgments}
The project \textit{A Neural Operator Framework for Particle-based Kinetic Plasma Simulations} - NEOPIC (INF funding code: ZT-I-PF-5-242) underlying this publication
is funded by the Initiative and Networking Fund of the
Helmholtz Association in the framework of the Helmholtz AI project
call. We thank computing resources on JURECA DC from VSR computing project PEPCEXA and on JUWELS Booster from HAICORE@FZJ. The first author would like to thank Prof. Siddhartha Mishra for fruitful discussions on this topic. He also would like to thank Prof. Tan Bui-Thanh and Dr. Hai Nguyen for introducing their work on noise as a regularizer in training, as well as obtaining Jacobians for implicit methods through automatic differentiation.

\section*{Availability}
The NEOPIC implementation, together with the training and evaluation
configurations of all experiments in this paper, is available at
\href{https://github.com/srikrrish/operator_learning}{\texttt{github.com/\allowbreak srikrrish/\allowbreak operator\_learning}}. The version used for
the results in this paper is tagged \texttt{neopic-arxiv-v1}; the version used
for the computational cost measurements in Section~\secref{cost} is tagged
\texttt{neopic-arxiv-v1-nufft}. The scripts for the analysis and the figures are
available, with the same tag \texttt{neopic-arxiv-v1}, at
\href{https://github.com/NEOPIC-framework/neopic-paper-scripts}{\texttt{github.com/\allowbreak NEOPIC-\allowbreak framework/\allowbreak neopic-\allowbreak paper-\allowbreak scripts}}.
The training data sets, trained models and
reference simulation outputs (HDF5 files) are not included in the repository
because of their size; they are available from the corresponding author upon
reasonable request.

\section*{Declaration of generative AI and AI-assisted technologies in \\the manuscript preparation process}

During the preparation of this work the authors used Claude (Anthropic) in order to draft and revise text, structure the manuscript, develop and check mathematical derivations and proofs, and write scripts for data analysis and figure generation, and, in the initial phase of the project, ChatGPT (OpenAI) for brainstorming and for writing parts of the code. All experiments were designed and run by the authors, and the AI-assisted code and analysis scripts were checked by the authors. After using these tools, the authors reviewed and edited the content as needed and take full responsibility for the content of the published article.

\section*{References}

\bibliographystyle{elsarticle-num}

\bibliography{references}

\end{document}